\documentclass[a4paper]{amsart}

\usepackage{amscd,amsthm,amsfonts,amssymb,amsmath,esint}
\usepackage[colorlinks=true]{hyperref}
\usepackage{fullpage}
\usepackage[all]{xy}

\usepackage{mathrsfs}

%cohomology groups

\newcommand{\msb}{\mathscr{B}}\newcommand{\msc}{\mathscr{C}}

\newcommand{\msi}{\mathscr{I}}\newcommand{\msj}{\mathscr{J}}\newcommand{\msl}{\mathscr{L}}
\newcommand{\msp}{\mathscr{P}}
\newcommand{\msr}{\mathscr{R}}\newcommand{\mst}{\mathscr{T}}

    \newcommand{\BA}{{\mathbb {A}}} 
    \newcommand{\BC}{{\mathbb {C}}} 
     \newcommand{\BF}{{\mathbb {F}}}
    \newcommand{\BG}{{\mathbb {G}}}

    \newcommand{\BM}{{\mathbb {M}}} 
     
    \newcommand{\BQ}{{\mathbb {Q}}}

     \newcommand{\BZ}{{\mathbb {Z}}}

     \newcommand{\CB}{{\mathcal {B}}}
     \renewcommand{\CD}{{\mathcal {D}}}
    \newcommand{\CE}{{\mathcal {E}}} \newcommand{\CF}{{\mathcal {F}}}
    \newcommand{\CG}{{\mathcal {G}}} 
     
     \newcommand{\CL}{{\mathcal {L}}}
     
    \newcommand{\CO}{{\mathcal {O}}}

    \newcommand{\CW}{{\mathcal {W}}}

    \newcommand{\fa}{{\mathfrak{a}}} \newcommand{\fb}{{\mathfrak{b}}}
    \newcommand{\fc}{{\mathfrak{c}}} \newcommand{\fd}{{\mathfrak{d}}}
    \newcommand{\fe}{{\mathfrak{e}}} \newcommand{\ff}{{\mathfrak{f}}}
    \newcommand{\fg}{{\mathfrak{g}}} \newcommand{\fh}{{\mathfrak{h}}}
    \newcommand{\fii}{{\mathfrak{i}}} 
     
    \newcommand{\fm}{{\mathfrak{m}}} 
     \newcommand{\fp}{{\mathfrak{p}}}
    \newcommand{\fq}{{\mathfrak{q}}} \newcommand{\fr}{{\mathfrak{r}}}
    \newcommand{\fs}{{\mathfrak{s}}} 
    \newcommand{\fu}{{\mathfrak{u}}}

     \newcommand{\fA}{{\mathfrak{A}}} \newcommand{\fB}{{\mathfrak{B}}}
    \newcommand{\fC}{{\mathfrak{C}}} \newcommand{\fD}{{\mathfrak{D}}}
    \newcommand{\fE}{{\mathfrak{E}}} \newcommand{\fF}{{\mathfrak{F}}}
    \newcommand{\fG}{{\mathfrak{G}}} \newcommand{\fH}{{\mathfrak{H}}}
    \newcommand{\fI}{{\mathfrak{I}}} \newcommand{\fJ}{{\mathfrak{J}}}
    \newcommand{\fK}{{\mathfrak{K}}} \newcommand{\fL}{{\mathfrak{L}}}
    \newcommand{\fM}{{\mathfrak{M}}} 
     \newcommand{\fP}{{\mathfrak{P}}}
    \newcommand{\fQ}{{\mathfrak{Q}}} \newcommand{\fR}{{\mathfrak{R}}}
    \newcommand{\fS}{{\mathfrak{S}}} 
     
    \newcommand{\fW}{{\mathfrak{W}}}

    \newcommand{\Gal}{{\mathrm{Gal}}} 
    
    \newcommand{\Hom}{{\mathrm{Hom}}}

    \newcommand{\Ker}{{\mathrm{Ker}}}

    \newcommand{\ord}{{\mathrm{ord}}}

    \renewcommand{\mod}{\ \mathrm{mod}\ }

    \newcommand{\Sel}{{\mathrm{Sel}}}

        % 2*2 matrix

    \font\cyr=wncyr10

    \newcommand{\Sha}{\hbox{\cyr X}}
    \newcommand{\wh}{\widehat}

    \newcommand{\ov}{\overline}

    \newcommand{\ra}{\rightarrow} 
    
    \newcommand{\nequiv}{\equiv\hspace{-10pt}/\ }

\newcommand*\xbar[1]{%
   \hbox{%
     \vbox{%
       \hrule height 0.5pt % The actual bar
       \kern0.3ex%         % Distance between bar and symbol
       \hbox{%
         \kern -0.05em%      % Shortening on the left side
         \ensuremath{#1}%
         \kern -0.05em%      % Shortening on the right side
       }%
     }%
   }%
}
    \theoremstyle{plain}
    \newtheorem{thm}{Theorem}[section] \newtheorem{cor}[thm]{Corollary}
    \newtheorem{lem}[thm]{Lemma}  \newtheorem{prop}[thm]{Proposition}
    \newtheorem {conj}[thm]{Conjecture}

\theoremstyle{remark} 
\theoremstyle{remark} 
\theoremstyle{remark} 

    %accented words
    \newcommand{\Neron}{N\'{e}ron~}

    \numberwithin{equation}{section}

 \begin{document}

\title{ Non-vanishing theorems for central $L$-values of some elliptic curves with complex multiplication.}

\author{John Coates, Yongxiong Li}

\begin{abstract}
The paper uses Iwasawa theory at the prime $p=2$ to prove non-vanishing theorems for the value at $s=1$ of the complex $L$-series
of certain quadratic twists of the Gross family of elliptic curves  with complex multiplication by the field $K = \BQ(\sqrt{-q})$, where $q$ is any prime $\equiv 7 \mod 8$. Our results establish some broad generalizations of the non-vanishing theorem first proven by
D. Rohrlich using complex analytic methods. Such non-vanishing theorems are important because it is known that they imply the finiteness of the Mordell-Weil group and the Tate-Shafarevich group of the corresponding elliptic curves over the Hilbert class field of $K$. It is essential for the proofs to study the Iwasawa theory of the higher dimensional
abelian variety with complex multiplication which is obtained by taking the restriction of scalars to $K$ of the particular elliptic curve with complex multiplication introduced by Gross.
\end{abstract}
\maketitle

\section{Introduction}

Let $K=\BQ(\sqrt{-q})$ be an imaginary quadratic field, where $q$ is any prime number with $q \equiv 7 \mod 8$. We fix throughout an embedding of $K$ into $\BC$. Let $\CO_K$ be the ring of integers of $K$, and write $h$ for the class number of $K$. Note that, since $q \equiv 7 \mod 8$, the prime 2 splits in $K$, say $2\CO_K=\fp\fp^*$, a fact which will underly all of our subsequent arguments with Iwasawa theory. We fix one of these primes $\fp$, and we assume from now on that we have chosen the sign of $\sqrt{-q}$ so that $\ord_\fp((1-\sqrt{-q})/2) > 0$. Let $H=K(j(\CO_K))$ denote the Hilbert class field of $K$, where $j$ denotes the classical modular function. Gross  (\cite{Gross0}, Theorem 12.2.1) has proven that there exists a unique elliptic curve $A$ defined over $\BQ(j(\CO_K))$, with complex multiplication by $\CO_K$, minimal discriminant $(-q^3)$, and which is a $\BQ$-curve in the sense that it is isogenous over $H$ to all of its conjugates.  An explicit  equation for $A$ over $H$ is given by
\begin{equation}\label{mg}
y^2 = x^3 + 2^{-4}3^{-1}mqx - 2^{-5}3^{-3}rq^2,
\end{equation}
where $m^3 = j(\CO_K)$, and $r^2 = ((12)^3 - j(\CO_K))/q$ with $r > 0$ (see \cite{Gross2}). Let $L(A/H, s)$ be the complex $L$-series of $A/H$, and write $L(A/H, \eta, s)$ for the twist of this $L$-series by any finite order abelian character $\eta$ of $H$. For $1 \leq n \leq \infty$,  let $A_{\fp^n}$ be the Galois module of $\fp^n$-division points on $A$, and define $\fF_\infty = H(A_{\fp^\infty})$. Let $\CG$ denote the Galois group of $\fF_\infty$ over $H$. The action of $\CG$ on $A_{\fp^\infty}$ defines an isomorphism $\rho_\fp: \CG \simeq \CO_\fp^{\times} = \BZ_2^{\times}$, where $\CO_\fp$
denotes the ring of integers of the completion of $\CO_K$ at $\fp$.

\begin{thm}\label{t1} Assume that $q$ is any prime such that $q \equiv 7 \mod 16.$ Then, for all characters $\nu$ of finite order of $\CG = Gal(\fF_\infty/H)$, we have $L(A/H, \nu, 1) \neq 0.$
\end{thm}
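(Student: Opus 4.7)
The plan is to deduce the theorem from the one-variable Iwasawa main conjecture at $p=2$ for the CM abelian variety $B := \Res_{H/K}(A)$, combined with a $2$-adic non-vanishing statement at the base of the tower which the strengthened hypothesis $q \equiv 7 \mod 16$ is designed to unlock. First, I would pass from $A/H$ to its Weil restriction $B = \Res_{H/K}(A)$, an $h$-dimensional abelian variety over $K$ with complex multiplication by $\CO_K$ and satisfying $L(B/K,s)=L(A/H,s)$; twists of $L(A/H,s)$ by finite-order characters of $\CG$ then become Hecke twists of $L(B/K,s)$. Decompose $\CG \simeq \BZ_2^{\times} = \Delta \times \Gamma$ with $\Delta$ of order $2$ and $\Gamma \simeq 1+4\BZ_2 \simeq \BZ_2$, so that any finite-order character $\nu$ factors uniquely as $\nu = \psi \chi$ with $\psi$ a character of $\Delta$ and $\chi$ a finite $2$-power order character of $\Gamma$.

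Next, for each of the two characters $\psi$ of $\Delta$, I would attach via a Katz--Yager type construction, adapted to the split prime $\fp$ above $p=2$, a $\fp$-adic $L$-function $\CL^{\psi}_{\fp}(T) \in \CO_\fp[[T]]$ that interpolates the algebraic parts $L^{\alg}(A/H,\psi\chi,1)$ as $\chi$ varies over finite-order characters of $\Gamma$. The one-variable Iwasawa main conjecture for CM abelian varieties, as proven by Rubin and suitably extended to $p=2$ using the splitting $2\CO_K = \fp\fp^{*}$, would then identify $\CL^{\psi}_{\fp}$, up to a unit of $\CO_\fp[[T]]$, with a characteristic power series of the $\psi$-isotypic component of the Pontryagin dual of the $\fp$-adic Selmer group of $B$ along $\fF_\infty/K$.

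With the main conjecture in hand, the theorem reduces to showing that both $\CL^{\psi}_{\fp}(T)$ are units in $\CO_\fp[[T]]$. By Weierstrass preparation this is in turn equivalent to the assertion $\ord_\fp\bigl(\CL^{\psi}_{\fp}(0)\bigr)=0$ for each $\psi$, that is, to the normalized algebraic value $L^{\alg}(A/H,\psi,1)$ being a $2$-adic unit. One would evaluate these base values as CM periods of Eisenstein or theta series (in the spirit of Rohrlich), using the explicit Gross equation \eqref{mg}, and verify that the refined congruence $q \equiv 7 \mod 16$ (as opposed to merely $q \equiv 7 \mod 8$) is precisely what forces the $\fp$-adic valuation to vanish. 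Once this is done, $\CL^{\psi}_{\fp}(\chi(\gamma_0)-1) \neq 0$ for every finite-order $\chi$ and every $\psi$, which proves the theorem.

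The main obstacle, as I see it, is this last step: controlling the exact $\fp$-adic valuation of $L^{\alg}(A/H,\psi,1)$, equivalently the vanishing of the Iwasawa $\mu$-invariant of the Selmer group of $B$ over $\fF_\infty$ together with the triviality of its finite part at the base. This needs genuinely $2$-adic input --- most naturally a careful $2$-descent on $B$ tailored to the Gross curve, or a delicate analysis of the Galois cohomology of the formal group of $A$ at the split prime $\fp$ --- and it is here that the strengthening from $q \equiv 7 \mod 8$ to $q \equiv 7 \mod 16$ must be used in an essential way.
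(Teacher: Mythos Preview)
Your broad architecture --- pass to $B = \Res_{H/K}(A)$, set up a $\fp$-adic $L$-function along the $\BZ_2$-tower, invoke a main conjecture, and reduce to showing that this $\fp$-adic $L$-function is a unit in the Iwasawa algebra --- matches the paper's. But you have the flow of the crucial step reversed, and you misjudge both where the difficulty lies and how the condition $q \equiv 7 \pmod{16}$ enters.

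The paper does \emph{not} compute the $\fp$-adic valuation of $L^{\alg}(A/H,\psi,1)$ directly, and it needs no $2$-descent on $B$ or formal-group cohomology for this step. Instead it works on the Galois side. Writing $F_\infty = K(B_{\fP^\infty})$ (a quadratic extension of the $\BZ_2$-extension $K_\infty/K$ unramified outside $\fp$) and $X(F_\infty)$ for the Galois group of the maximal abelian $2$-extension of $F_\infty$ unramified outside $\fp$, one proves by entirely elementary Iwasawa theory (Theorem~\ref{t4}) that $X(F_\infty)$ is free over $\BZ_2$ of rank at most $2^{k-2}-1$, where $k=\ord_\fp(\sqrt{-q}-1)$. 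The hypothesis $q\equiv 7\pmod{16}$ gives exactly $k=2$, hence $X(F_\infty)=0$. The argument is pure Nakayama: there are $2^{k-2}$ primes of $K_\infty$ above $\fq$, each with ramification index $2$ in $F_\infty$, and since $K$ has odd class number there is no other contribution. No $L$-value appears.

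Only \emph{after} $X(F_\infty)=0$ is in hand does the main conjecture enter, and here too your proposal is optimistic: Rubin's theorem is for odd $p$, and the paper builds the $2$-adic theory (elliptic units in the $F_n$, Coleman power series for the relative Lubin--Tate group at $w\mid\fp$, the map $U(F_{\infty,\fp})\widehat{\otimes}\msi\to\Lambda_\msi(G)$, and the $\epsilon_\pm$-decomposition) from scratch in \S\S4--7. The output is an exact sequence identifying $\epsilon_- X(F_\infty)\widehat{\otimes}\msi$ with $\Lambda_\msi(\Gamma)/\mu_A\Lambda_\msi(\Gamma)$ up to a finite piece; since $\epsilon_- X(F_\infty)=0$, the measure $\mu_A$ is a unit and the interpolation formula yields $L(\bar\phi\chi,1)\neq 0$ for $\chi(j)=1$. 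The case $\chi(j)=-1$ is handled by a separate and in fact easier argument (Theorem~\ref{7.16}) showing $\epsilon_+ Z(F_\infty)=0$ for all $q\equiv 7\pmod 8$. One then runs everything over the $h$ embeddings $\mst\hookrightarrow\BC$ to recover $L(A/H,\nu,1)$ via \eqref{7.50}.

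So the ``main obstacle'' you flag --- a direct $2$-adic evaluation of the base $L$-value --- is exactly the step the paper sidesteps by arguing on the Galois module first and letting the main conjecture carry the conclusion to the analytic side.
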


\noindent We remark that, in the special case when $\nu$ is the trivial character, D. Rohrlich \cite{Ro1} has proven that $L(A/H, 1) \neq 0$ for all primes $q \equiv 7 \mod 8$, by a completely different method using complex analytic arguments. However, it does not seem that his method can easily be extended to proving the non-vanishing of the $L(A/H, \nu, 1)$ for all characters $\nu$ of finite order of $\CG$ when $q \equiv 7 \mod 16$, and we do not know if this stronger assertion is even true for the primes $q \equiv 15 \mod 16$ (but see \cite{Ro2}, where it is shown, in particular, that $L(A/H, \nu, 1) \neq 0$ for all but a finite of characters $\nu$ of finite order of $\CG$ for all primes $q \equiv 7 \mod 8$).
If $J$ is any extension of $H$, we write, as usual, $A(J)$ for the group of $J$-rational points on $A$, and $\Sha(A/J)$ for the Tate-Shafarevich group of $A/J$. In \cite{Gross0}, Gross proved that
$A(H)$ is always finite. Parallel to Theorem \ref{t1}, we prove the following result.

\begin{thm}\label{tl} Assume that $q$ is any prime with $q \equiv 7 \mod 16.$ Then, for all finite extensions
$J$ of $H$ contained in $\fF_\infty$, both $A(J)$ and the $\fp$-primary subgroup of
$\Sha(A/J)$ are finite.
\end{thm}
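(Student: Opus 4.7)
The plan is to derive Theorem~\ref{tl} from the non-vanishing result of Theorem~\ref{t1} via Iwasawa theory at $\fp$ for the CM abelian variety $B=\Res_{H/K}A$, combined with a Mazur-style control theorem. First, for any finite $J\subset\fF_\infty$, the Kummer descent sequence
\[
0 \to A(J)\otimes K_\fp/\CO_\fp \to \Sel_{\fp^\infty}(A/J) \to \Sha(A/J)[\fp^\infty] \to 0
\]
shows that it suffices to prove that the $\fp^\infty$-Selmer group $\Sel_{\fp^\infty}(A/J)$ is finite for every such $J$: this both forces $A(J)$ to have Mordell-Weil rank zero (hence, being finitely generated, to be finite) and makes $\Sha(A/J)[\fp^\infty]$ finite as a quotient.

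Next, let $X_\infty$ denote the Pontryagin dual of $\Sel_{\fp^\infty}(A/\fF_\infty)$. Via the identification of $\fp^\infty$-Selmer groups for $A$ over subfields of $\fF_\infty/H$ and for $B$ over the corresponding subfields of the tower over $K$, $X_\infty$ is naturally a finitely generated module over the Iwasawa algebra $\Lambda=\CO_\fp[[\CG]]$. The one-variable Iwasawa main conjecture at $\fp$ for $B$, in the form originating in Rubin's work on CM elliptic curves, asserts that $X_\infty$ is $\Lambda$-torsion with characteristic ideal generated by a $\fp$-adic $L$-function $\CL_\fp$, whose interpolation identifies $\CL_\fp(\nu)$ with a non-zero algebraic multiple of $L(A/H,\nu,1)$ for every finite-order character $\nu$ of $\CG$.

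Theorem~\ref{t1} then supplies exactly the analytic input required: since $L(A/H,\nu,1)\neq 0$ for every finite-order $\nu$, $\CL_\fp$ does not vanish at any finite-order character of $\CG$, so the characteristic ideal of $X_\infty$ is coprime to the prime ideals of $\Lambda$ cutting out those characters. A standard structural argument for torsion $\Lambda$-modules then gives that the $\CG_J$-coinvariants $(X_\infty)_{\CG_J}$ are finite for every open subgroup $\CG_J\subset\CG$. A Mazur-style control theorem --- namely, that the restriction map $\Sel_{\fp^\infty}(A/J)\to\Sel_{\fp^\infty}(A/\fF_\infty)^{\CG_J}$ has finite kernel and cokernel --- then forces $\Sel_{\fp^\infty}(A/J)$ itself to be finite, completing the deduction.

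The main obstacle I anticipate is carefully establishing this control theorem at the prime $p=2$ in the present setting: one must show that the local cohomology contributions at primes above $\fp$, which are deeply ramified in $\fF_\infty/H$, and above $\fp^*$, which are unramified in the tower and where $A$ has good ordinary reduction, both give only finite kernels and cokernels. This is delicate in view of the fact that $\CG\simeq\BZ_2^\times$ has a non-trivial torsion part and that $p=2$ is the hardest prime for such cohomological arguments. Passing through $B=\Res_{H/K}A$, as emphasised in the abstract, is what permits a clean formulation of the Iwasawa algebra $\Lambda=\CO_\fp[[\CG]]$ and of the main conjecture; the technical heart of the argument is verifying the $p=2$ control statements in this slightly non-standard setting.
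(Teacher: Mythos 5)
Your overall route is essentially the paper's own: the authors likewise deduce Theorem \ref{tl} from the non-vanishing statement of Theorem \ref{t1} combined with an Iwasawa main conjecture for $A$ over $\fF_\infty$ and a ``standard'' descent/control argument whose details they omit. Your first reduction (finiteness of $\Sel_{\fp^\infty}(A/J)$ implies finiteness of $A(J)$ and of $\Sha(A/J)(\fp)$) is fine, and the control statements you single out as the delicate point at $p=2$ are indeed of exactly the kind carried out in the paper at the level of the restriction-of-scalars variety and of $A$ itself (compare the proofs of Theorem \ref{t9}, Proposition \ref{t12}, and the identity \eqref{8.6}); note that the kernels and cokernels there are finite at each finite layer but not bounded up the tower, which is consistent with Theorem \ref{8.3}.

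The genuine gap is your source for the key input. You quote the one-variable main conjecture ``in the form originating in Rubin's work'' as a known theorem, but Rubin's CM main conjecture results do not cover $p=2$, and handling $p=2$ is precisely the point of this paper; the authors themselves obtain this step by citing the companion paper \cite{CKLT}, where the main conjecture for $A/\fF_\infty$ is proven. Nor can you substitute what is proved in the present paper: Theorems \ref{t4}, \ref{t9} and \ref{7.25} control only the $\fP$-component of the Selmer group of $B$, where $\fP$ is the degree-one prime of $\mst$ above $\fp$, whereas your module $X_\infty$ is the dual of the full $\fp^\infty$-Selmer group of $A$ over $\fF_\infty$, which decomposes as $\oplus_{\fQ\mid\fp}\Sha(A/\fF_\infty)(\fQ)$ over all primes $\fQ$ of $\mst$ above $\fp$ (see \eqref{s1}); by Theorem \ref{8.3} the components with $\fQ\neq\fP$ are non-zero whenever $H$ has more than one prime above $\fp$, so their torsion-ness and characteristic-ideal description genuinely require the full main conjecture for $A/\fF_\infty$ at $p=2$. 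Relatedly, your formulation over $\Lambda=\CO_\fp[[\CG]]$ glosses over the fact that the natural coefficient rings for those other components are the completions $\mst_\fQ$, whose residue fields are strictly larger than $\BF_2$, so the elliptic-unit arguments of \S\S 4--7 do not transfer verbatim. Until that input is supplied (as the authors do via \cite{CKLT}), your deduction is incomplete, though its skeleton matches the intended proof.
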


\noindent We remark, however, that,  when  $q \equiv 7 \mod 16$, and in addition there is more than one prime of $H$ lying above $\fp$, then we show later (see Theorem \ref{8.3}) that we always have  $\Sha(A/\fF_\infty)(\fp) = (K_\fp/\CO_\fp)^{m_q}$ for some integer $m_q > 0.$ On the other hand it is already well known by an Euler system argument due to Kolyvagin-Gross-Zagier that $L(A/H, 1) \neq 0$ implies that both $A(H)$ and $\Sha(A/H)$ are finite.

\medskip

We further prove a non-vanishing theorem for the values at $s=1$ of the complex $L$-series of a large class of quadratic twists of $A$. Let $\msr$ denote the set of all square free positive integers $R$ of the form $R = r_1...r_k$ , where $k\geq 0$,  and $r_1, \dots, r_k$ are distinct primes such that (i) $r_i \equiv 1 \mod 4$, and  (ii) $r_i$ is inert in $K$, for $i=1,..., k$. For $R \neq 1 \in \msr$, let $A^{(R)}$ be the twist of $A$ by the quadratic extension $H(\sqrt{R})/H$. Note that such an extension is non-trivial since $K$ has odd class number. We write $L(A^{(R)}/H, s)$ for the complex $L$-series of $A^{(R)}/H$. By Deuring's theorem, $L(A^{(R)}/H, s)$ is a product of Hecke $L$-series with Grossencharacter. We shall prove the following theorem.

\begin{thm}\label{main}
Assume that  $q\equiv 7\mod 16$. Then, for all $R \in \msr$, we have $L(A^{(R)}/H, 1) \neq 0$.
\end{thm}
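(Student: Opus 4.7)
The plan is to reduce to a non-vanishing assertion for the untwisted Gross curve $A$ over the larger base field $H_R := H(\sqrt R)$, and then to prove that assertion by carrying the Iwasawa-theoretic proof of Theorem \ref{t1} through with $H$ replaced by $H_R$. The factorization $L(A/H_R, s) = L(A/H, s) \cdot L(A^{(R)}/H, s)$ together with the non-vanishing $L(A/H, 1) \neq 0$ provided by Theorem \ref{t1} reduces the theorem to the single statement $L(A/H_R, 1) \neq 0$. The hypotheses on $R \in \msr$ ensure compatibility with the existing Iwasawa setup: each $r_i \equiv 1 \pmod 4$ makes $H_R/H$ unramified at the primes above $2$, and each $r_i$ being inert in $K$ keeps $\sqrt{r_i}$ out of $\fF_\infty$. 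Consequently $H_R$ and $\fF_\infty$ are linearly disjoint over $H$, and setting $\fL_\infty := H_R \cdot \fF_\infty$ we have $\Gal(\fL_\infty/H_R) \simeq \CG \simeq \BZ_2^\times$ via $\rho_\fp$.

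The main step is then to prove the analog of Theorem \ref{t1} over $H_R$: for every finite-order character $\nu'$ of $\Gal(\fL_\infty/H_R)$, $L(A/H_R, \nu', 1) \neq 0$; the case $\nu' = 1$ is what is needed here. I would implement this by developing the $\fp$-adic Iwasawa theory of $A$ over the tower $\fL_\infty/H_R$ along exactly the same lines as the proof of Theorem \ref{t1}, now using the higher-dimensional CM abelian variety $B^{(R)} := \Res_{H/K}(A^{(R)})$ over $K$ (equivalently, replacing $\Res_{H/K} A$ by the summand $B^{(R)}$ of $\Res_{H_R/K}(A_{H_R}) \cong \Res_{H/K}A \times B^{(R)}$) in place of the Gross restriction of scalars. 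All of the local ingredients required for Theorem \ref{t1} transfer directly: $A$ has good reduction at every prime of $H_R$ not above $q$ (in particular at all primes above the $r_i$), the primes of $H_R$ above $q$ are still totally ramified in $\fL_\infty/H_R$, and the decomposition of the local Galois representation above $2$ into $\fp$ and $\fp^*$ pieces is preserved because $H_R/H$ is unramified at $2$.

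The principal obstacle will be to verify that the $\mu$-invariant of the relevant Iwasawa module vanishes and that the characteristic ideal of the $\fp$-primary Selmer group of $A/\fL_\infty$ remains a unit at the trivial character of $\Gal(\fL_\infty/H_R)$ after base change from $H$ to $H_R$. Over $H$ these rest on $2$-descent calculations controlled by the hypothesis $q \equiv 7 \pmod{16}$, and the definition of $\msr$ is tailored precisely so these calculations transfer verbatim to $H_R$: the inertness of each $r_i$ in $K$ prevents the quadratic character $\chi_R$ of $\Gal(H_R/H)$ from interfering with the splitting of the $\fp$-adic Tate module into its $\fp$- and $\fp^*$-eigenspaces, while $r_i \equiv 1 \pmod 4$ forbids any new $2$-adic ramification that could enlarge the local Selmer conditions above $2$. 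Once these structural properties are in place, the main-conjecture-type identification of the characteristic ideal with the $\fp$-adic $L$-function attached to $A/H_R$, evaluated at $\nu' = 1$, yields up to a nonzero $\fp$-adic period the complex value $L(A/H_R, 1)$, completing the proof.
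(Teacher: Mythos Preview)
Your reduction via $L(A/H(\sqrt R),s)=L(A/H,s)\,L(A^{(R)}/H,s)$ is correct, but the proposed Iwasawa-theoretic extension has a genuine gap. The paper does \emph{not} prove Theorem~\ref{main} by transporting the argument for Theorem~\ref{t1} to $H(\sqrt{R})$; instead it uses Zhao's induction method (\S9), an entirely different technique based on explicit Eisenstein-series formulae for the partial $L$-values (Proposition~\ref{3.3n}), an averaging identity over all divisors $d\mid R$ (Proposition~\ref{key-identity}), an integrality result (Proposition~\ref{inte-2}), and an induction on the number $k$ of prime factors of $R$ that pins down the exact valuation $\ord_\msp(\sqrt{R}\,L(\bar\phi_R,1)/\Omega_\infty)=k-1$ (Theorem~\ref{AR-nonzero}). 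The base case $k=0$ is the only place where the main conjecture of \S7 is invoked.

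The reason your approach does not go through is that the vanishing $X(F_\infty)=0$ in Theorem~\ref{t4}, which is what makes $\mu_A$ a unit and forces non-vanishing in Corollary~\ref{7.37}, rests on Lemma~\ref{t5}: the only primes of $K_\infty$ ramifying in $F_\infty$ beyond $\fp$ are those above $\fq$, and when $q\equiv 7\bmod 16$ there is exactly one such prime, giving $(X(F_\infty))_\Delta=0$. If you replace $B$ by $B^{(R)}$, the Serre--Tate character becomes $\phi\chi_R$ with conductor $R\fq$ (Lemma~\ref{3.1n}), so every prime $r_i\CO_K$ also ramifies in the analogous tower $F^{(R)}_\infty/K$. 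The bound in the analogue of Lemma~\ref{t5} then picks up contributions from all primes of $K_\infty$ above the $r_i$, and one cannot conclude $X(F^{(R)}_\infty)=0$. Indeed the exact valuation $k-1$ in Theorem~\ref{AR-nonzero} shows that the relevant $\fp$-adic $L$-value is divisible by increasing powers of $2$ as $k$ grows, so the corresponding measure is \emph{not} a unit in the Iwasawa algebra for $k\geq 2$. Your claim that the conditions defining $\msr$ are ``tailored precisely so these calculations transfer verbatim'' is therefore incorrect: those conditions are tailored for Zhao's method---$r_i$ inert in $K$ gives $\phi((r_i))=-r_i$ and hence the Euler factor $1+1/r_i$ of $2$-valuation exactly $1$, while $r_i\equiv 1\bmod 4$ controls conductors and the integrality argument---not for extending the structure theorem on $X(F_\infty)$.
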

\begin{cor}\label{maincor}
Assume that  $q\equiv 7\mod 16$. Then, for all $R \in \msr$, both $A^{(R)}(H)$ and $\Sha(A^{(R)}/H)$ are finite.
\end{cor}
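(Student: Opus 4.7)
The plan is to deduce Corollary \ref{maincor} directly from Theorem \ref{main} by invoking the standard Euler-system argument for elliptic curves with complex multiplication. Since $A^{(R)}/H$ has complex multiplication by $\CO_K$, nonvanishing of its central $L$-value propagates, via the classical results of Coates--Wiles, Rubin, and Kolyvagin--Gross--Zagier, to finiteness of the Mordell--Weil and Tate--Shafarevich groups. The substantive content of the corollary is therefore entirely in Theorem \ref{main}; what remains is a routine invocation of existing machinery.

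Concretely, Theorem \ref{main} provides $L(A^{(R)}/H, 1) \neq 0$ for every $R \in \msr$ under the hypothesis $q \equiv 7 \mod 16$. By Deuring's theorem, $L(A^{(R)}/H, s)$ factors as $L(\psi_R, s) L(\bar\psi_R, s)$ where $\psi_R$ is a Grossencharacter of $H$ of type $(1,0)$, and the nonvanishing at $s=1$ of the product forces $L(\psi_R, 1) \neq 0$. One then appeals to the Euler system of elliptic units together with Rubin's proof of the two-variable main conjecture for imaginary quadratic fields to deduce that the $p$-primary Selmer group $\Sel_p(A^{(R)}/H)$ is finite for every rational prime $p$; hence both $A^{(R)}(H) \otimes \BZ_p$ and $\Sha(A^{(R)}/H)(p)$ are finite, and since only finitely many $p$ contribute nontrivially, both $A^{(R)}(H)$ and $\Sha(A^{(R)}/H)$ are themselves finite. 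Alternatively, as the excerpt itself points out, one may invoke the Kolyvagin--Gross--Zagier Euler system applied to the weight-$2$ modular form attached to the Weil restriction of $A^{(R)}$ to $\BQ$, which yields the same conclusion.

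The main obstacle is entirely absorbed into Theorem \ref{main}; no new ideas are needed for the corollary. One minor bookkeeping point is to check that the CM Euler-system argument functions cleanly at the prime $\fp$ above $2$ that underlies this paper, and this causes no trouble: $A$ has good (ordinary) reduction at $\fp$ by inspection of its minimal discriminant $-q^3$, and since $R$ is odd (each $r_i \equiv 1 \mod 4$ in the definition of $\msr$) the same holds for its quadratic twist $A^{(R)}$, placing us in precisely the setting in which Rubin's finiteness theorem is most robust.
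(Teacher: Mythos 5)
Your proposal is correct and takes essentially the same route as the paper: deduce the corollary from Theorem~\ref{main} by invoking standard Euler-system results for the implication that nonvanishing of the central $L$-value forces finiteness of the Mordell--Weil and Tate--Shafarevich groups. The paper's own ``proof'' is the one-sentence remark that this implication is ``well known (see \cite{KK}, \S5),'' i.e.\ it appeals to Kato's Euler system, while the introduction also notes that Kolyvagin--Gross--Zagier gives the same conclusion. Your discussion is a more expansive version of the same idea.

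One cautionary remark on your choice of citation: Rubin's theorems for CM elliptic curves (the elliptic-unit Euler system and the two-variable main conjecture) are traditionally stated for odd primes $p$, often with $p>3$, and the technical obstructions at $p=2$ are not merely a matter of good ordinary reduction at $\fp$ (which you verify correctly from $R\equiv 1\bmod 4$); they concern, for example, the treatment of torsion in the groups of elliptic units and Iwasawa $\mu$-invariants at $p=2$. Your parenthetical assurance that Rubin's approach is ``most robust'' in this setting is thus not quite on target. The paper's citation of Kato, or your alternative via Kolyvagin--Gross--Zagier applied to the weight-two form attached to the Weil restriction of $A^{(R)}$ to $\BQ$ (which is a modular abelian variety because $A$ is a $\BQ$-curve), is the cleaner route because those Euler systems work uniformly at all primes including $2$. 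With that adjustment your argument matches the paper's.
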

\noindent Indeed, it is well known (see \cite{KK}, \S5) that if $E$ is any twist of $A$ by a quadratic extension of $K$, then $L(E/H, 1) \neq 0$ implies that both $E(H)$ and $\Sha(E/H)$ are finite.  However, at a much more elementary level exploiting the fact that the  torsion subgroup of $A^{(R)}(H)$ is equal to $\CO_K/2\CO_K$, it has been shown independently by several authors (J. Choi \cite{JC}, and K. Li and Y. Ren \cite{LR}) that a classical 2-descent argument on $A^{(R)}/H$ proves that $A^{(R)}(H)$ is in fact finite for all $R \in \msr$. However, such an argument tells us nothing about the finiteness of even  the 2-primary subgroup of the Tate-Shafarevich group $\Sha(A^{(R)})$ for $R \in \msr$.  We remark that for the special case $q=7$, the curve $A$ is the modular elliptic curve $X_0(49)$, and the above theorem is already proved in \cite{CLTZ} by two rather different methods. In fact, we use a modification of one of these methods, due originally to C. Zhao \cite{Zhao1}, \cite{Zhao2}, to prove the above theorem.   However, the proof is considerably more delicate when $q > 7$, and makes essential use of the restriction of scalars abelian variety and its quadratic twists.

\medskip

Let $\beta = \sqrt{-q}$, and let $A^{(-\beta)}$ be the twist of $A$ by the quadratic extension $H(\sqrt{-\beta})/H$. Thus $A^{(-\beta)}$ is also an elliptic curve defined over $H$ with complex multiplication by $\CO_K$.  These elliptic curves $A^{(-\beta)}$ seem, in some sense, to be even simpler than the Gross curves $A$, and they appear not to have been
discussed in earlier literature. From the equation \eqref{mg} for $A$, one easily shows that $A^{(-\beta)}$ has the nice explicit equation
\begin{equation}\label{mgm}
y^2 = x^3 - 2^{-4}3^{-1}(j(\CO_K))^{1/3}x + 2^{-5}3^{-3}(j(\CO_K) - (12)^3)^{1/2},
\end{equation}
where it is understood that, in this equation, we take the real cube root of $j(\CO_K)$, and the square root of $j(\CO_K) - (12)^3$ lying in the upper half complex plane.  Note that this curve is defined over $H$, but it is not a $\BQ$-curve in the sense of \cite{Gross0}. Of course, this equation is not a good one at the primes of $H$ above lying above 2 and 3, but it has nevertheless the rather striking property that its discriminant is equal to 1. In fact, we shall
see later that $A^{(-\beta)}$ has good reduction outside the set of primes of $H$ lying above $\fp$ for all primes $q \equiv 7 \mod 8$ (see Corollary \ref{op}).  More generally, we point out that, for any imaginary quadratic field $K$ the equation \eqref{mgm} defines an elliptic curve with complex multiplication by the full ring of integers of this imaginary quadratic field, which is defined over an extension of degree at most 6 of the Hilbert class field of $K$, and which has good reduction outside the set of primes dividing 2 and 3. In fact, classical results due to Weber and Sohngen (see \cite{BB}) show that the curve \eqref{mgm} is defined over $H$ itself whenever the discriminant of $K$ is prime to 6.   We hope to study systematically the arithmetic properties of this elliptic curve in a subsequent paper. In the present case,  when $K = \BQ(\sqrt{-q})$ with $q$ any prime  $\equiv 7 \mod 8$, we prove the following two results about the arithmetic  of our curve \eqref{mgm}. By class field theory, there is a unique $\BZ_2$-extension $K_\infty/K$ which is unramified outside $\fp$, and, for each integer $n \geq 0$, we define $K_n$ to be the unique intermediate field with $[K_n:K] = 2^n$. Put $H_n = HK_n$, and note that $[H_n:H] = 2^n$, since the class number of $K$ is odd. Write $L(A^{(-\beta)}/H_n, s)$ for the complex $L$-series of $A^{(-\beta)}/H_n$.

\begin{thm}\label{tm}  For all primes $q \equiv 7 \mod 8$, we have $L(A^{(-\beta)}/H_n, 1) \neq 0$ for all $n \geq 0$.
\end{thm}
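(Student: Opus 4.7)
The plan is to prove Theorem \ref{tm} via the Iwasawa theory at $\fp$ of the twist $A^{(-\beta)}$ over the $\BZ_2$-extension $H_\infty/H$, combined with the Iwasawa main conjecture for elliptic curves with complex multiplication due to Rubin. Set $\Gamma = \Gal(H_\infty/H) \cong \BZ_2$ and $\Lambda = \BZ_2[[\Gamma]]$. Since $\Gal(H_n/H)$ is a quotient of $\Gamma$ and $L(A^{(-\beta)}/H_n, s) = \prod_{\chi} L(A^{(-\beta)}/H, \chi, s)$, with $\chi$ running over characters of $\Gal(H_n/H)$, the theorem is equivalent to showing $L(A^{(-\beta)}/H, \chi, 1) \neq 0$ for every finite order character $\chi$ of $\Gamma$. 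Writing $\psi'$ for the Grossencharacter of $A^{(-\beta)}/H$, each such L-series factors as $L(\psi'\chi, s)L(\bar{\psi'}\chi, s)$.

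By Corollary \ref{op}, $A^{(-\beta)}$ has good reduction outside the primes of $H$ above $\fp$, so its $\fp$-adic Selmer group over $H_\infty$ admits the standard description. Let $X$ denote the Pontryagin dual of $\Sel_\fp(A^{(-\beta)}/H_\infty)$; this is a finitely generated $\Lambda$-module. Rubin's main conjecture identifies the characteristic ideal of $X$ with the ideal generated by the $\fp$-adic L-function attached to $\psi'$, whose values at finite order characters $\chi$ of $\Gamma$ interpolate the numbers $L(\psi'\chi, 1)$ up to non-zero transcendental period factors. Hence it suffices to show that $X = 0$, or, equivalently, that the $\fp$-adic L-functions attached to both $\psi'$ and $\bar{\psi'}$ are units in $\Lambda$.

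To prove $X = 0$ I would first establish the base case $\Sel_\fp(A^{(-\beta)}/H) = 0$ by a $\fp$-descent, and then propagate this vanishing to the tower via a Mazur-type control theorem. The base case exploits the rationality of $A^{(-\beta)}[\fp]$ over $H$ (automatic since $A^{(-\beta)}[\fp]$ has order $2$ and a quadratic twist does not change the Galois action on $\fp$-torsion) and the good reduction outside $\fp$ from Corollary \ref{op}, which reduces the descent to a local computation at the primes of $H$ above $\fp$. The main obstacle is precisely this local analysis, since $A^{(-\beta)}$ may have bad reduction at $\fp$ and $H_\infty/H$ is ramified there: one must compute the image of the local Kummer map explicitly at each layer of the tower and verify that the Euler characteristic contribution at $\fp$ does not obstruct the descent. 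The hypothesis $q \equiv 7 \mod 8$ is used only to guarantee the splitting $2\CO_K = \fp\fp^*$ that underlies the whole Iwasawa formalism for $A^{(-\beta)}$ at $\fp$; the stronger $q \equiv 7 \mod 16$ appearing in Theorem \ref{t1} is not needed here because one works only in the $\BZ_2$-extension $H_\infty/H$, never in the larger extension $\fF_\infty/H$ whose Galois group involves the full $\BZ_2^\times$.
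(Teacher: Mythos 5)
Your opening reduction (factor $L(A^{(-\beta)}/H_n,s)$ over characters of $\Gal(H_n/H)$ and hence into Grossencharacter $L$-values) is fine, but the two load-bearing inputs you then rely on are not available, and one of them is aimed at a statement stronger than the theorem and probably false. First, you invoke ``Rubin's main conjecture'' over $H_\infty$ at the prime $2$. Rubin's results do not cover $p=2$ in the form you need; the whole point of this paper (and of the companion paper \cite{CKLT}) is that the $2$-adic statements must be established by hand, via the elliptic-unit Euler system at the degree-one prime $\fP$ of $\mst$, so the equivalence ``$X=0$ iff the $2$-adic $L$-functions are units'' cannot simply be quoted. Second, the vanishing $X=0$, i.e. $\Sel_{\fp^\infty}(A^{(-\beta)}/H_\infty)=0$, is more than Theorem \ref{tm} needs and is not to be expected when $h>1$: by the restriction-of-scalars remark of Milne used in \S 8, this Selmer group is an $\msb$-module and splits into pieces indexed by the primes $\fQ$ of $\mst$ above $\fp$; only the piece at the degree-one prime $\fP$ (residue field $\BF_2$) is accessible to the Nakayama-type arguments that work at $2$, and Theorem \ref{8.3} shows that, in the closely parallel situation of $A$ over $\fF_\infty$, the pieces at $\fQ\neq\fP$ are genuinely non-zero as soon as there is more than one prime of $H$ above $\fp$. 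Your base case (a ``$\fp$-descent over $H$'' giving $\Sel_\fp(A^{(-\beta)}/H)=0$) plus a control theorem is therefore pointed at the wrong target; moreover the hard local work at the bad, ramified primes above $\fp$ is only announced, not carried out, and note that the paper deduces finiteness of $\Sha(A^{(-\beta)}/H)$ \emph{from} the non-vanishing of $L$-values, not conversely.

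The paper's actual route avoids Selmer groups over $H_\infty$ altogether. One writes $L(A^{(-\beta)}/H_n,s)$ as a product of Hecke $L$-series $L(\rho^{(r)}\omega\eta,s)$, where $\rho^{(r)}$ runs over the $h$ Grossencharacters of $B={\rm Res}_{H/K}(A)$, $\omega$ is the quadratic character of $F/K=K(\sqrt{-\beta})/K$, and $\eta$ runs over the characters of $\Gal(K_n/K)$; these are exactly the characters $\chi$ of $G=\Gal(F_\infty/K)$ with $\chi(j)=-1$. Non-vanishing then follows from Theorem \ref{7.44} and Corollary \ref{7.46}: the ``plus-part'' measure $\nu_A$ is a unit in $\Lambda_\msi(\Gamma)$ for \emph{every} $q\equiv 7\bmod 8$, because $\epsilon_+Z(F_\infty)=0$ --- universal local norms in $U(F_\fp)$ have norm $1$ to $K_\fp$, so their plus part is trivial --- and the interpolation formula of Theorem \ref{9.2} converts the unit-ness of $\nu_A$ into non-vanishing of the twisted $L$-values. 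This is also the true reason why $q\equiv 7\bmod 8$ suffices here while Theorem \ref{t1} needs $q\equiv 7\bmod 16$: the twist by $-\beta$ picks out the $\epsilon_+$-eigenspace, whose triviality is unconditional, whereas the untwisted values live in the $\epsilon_-$-eigenspace and require $X(F_\infty)=0$; it is not merely a matter of staying inside the $\BZ_2$-extension $H_\infty/H$.
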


\begin{cor}\label{tmc} For all primes $q \equiv 7 \mod 8$, $\Sha(A^{(-\beta)}/H)$ is finite, and also $A^{(-\beta)}(H_n)$ is finite for all $n \geq 0$.
\end{cor}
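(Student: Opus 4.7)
The plan is to derive the corollary from Theorem \ref{tm} by invoking the standard rank-zero converse results available for elliptic curves with complex multiplication, treating the base field $H$ and the higher layers $H_n$ separately since $\Sha$-finiteness is claimed only at $H$ while finiteness of the Mordell--Weil group is claimed at every layer.

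At the base level, taking $n=0$ in Theorem \ref{tm} gives $L(A^{(-\beta)}/H,1)\neq 0$. Now $A^{(-\beta)}$ is the twist of the Gross curve $A$ by the quadratic extension $H(\sqrt{-\beta})/H$; since $\beta=\sqrt{-q}\in K$ and $-\beta$ is not a square in $K$ (its norm is the prime $q$), the extension $K(\sqrt{-\beta})/K$ is a non-trivial quadratic extension of $K$ and $H(\sqrt{-\beta})=H\cdot K(\sqrt{-\beta})$. Thus $A^{(-\beta)}$ is a twist of $A$ by a quadratic extension of $K$, and the result of \cite{KK}, \S5 already invoked in the discussion following Corollary \ref{maincor} applies directly: the non-vanishing of $L(A^{(-\beta)}/H,1)$ forces both $A^{(-\beta)}(H)$ and $\Sha(A^{(-\beta)}/H)$ to be finite.

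For the higher layers $H_n$ with $n\geq 1$, the statement from \cite{KK}, \S5 no longer applies at that level, but only finiteness of the Mordell--Weil group is being asserted. Here I would appeal to the classical Coates--Wiles theorem in the form due to Arthaud and sharpened by Rubin's work on the main conjecture for imaginary quadratic fields: if $E$ is an elliptic curve with complex multiplication by $\CO_K$ defined over a finite extension $F$ of $K$, then $L(E/F,1)\neq 0$ implies $E(F)$ is finite. Since $A^{(-\beta)}$, base-changed to $H_n$, still has complex multiplication by the full ring $\CO_K$, and since Theorem \ref{tm} provides $L(A^{(-\beta)}/H_n,1)\neq 0$ for all $n\geq 0$, this yields finiteness of $A^{(-\beta)}(H_n)$ for every $n$. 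By Deuring's theorem, $L(A^{(-\beta)}/H_n,s)$ factors as a product of Hecke $L$-series attached to Grossencharacters of $H_n$, and non-vanishing of the product forces non-vanishing of each factor, so the hypothesis of the Coates--Wiles--Rubin theorem is satisfied factor by factor.

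No substantial obstacle remains once Theorem \ref{tm} is in hand; the corollary is a routine consequence of well-established rank-zero BSD results for CM elliptic curves. The only small points to verify are that $A^{(-\beta)}$ genuinely arises as a twist of $A$ by a quadratic extension coming from $K$ (so that \cite{KK}, \S5 applies at $H$), and that the Coates--Wiles--Rubin machinery is available over the finite abelian extensions $H_n$ of $K$, both of which are immediate.
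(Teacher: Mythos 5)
Your proposal is correct and follows essentially the same route the paper intends: Corollary \ref{tmc} is left in the paper as a routine consequence of Theorem \ref{tm}, justified exactly as you do — the remark following Corollary \ref{maincor} (citing \cite{KK}, \S5, applicable since $A^{(-\beta)}$ is the twist of $A$ by the quadratic extension of $H$ coming from $K(\sqrt{-\beta})/K$) gives finiteness of $A^{(-\beta)}(H)$ and $\Sha(A^{(-\beta)}/H)$, while the finiteness of $A^{(-\beta)}(H_n)$ follows from $L(A^{(-\beta)}/H_n,1)\neq 0$ by the standard Coates--Wiles type rank-zero theorem for CM elliptic curves over extensions of $K$.
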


\noindent We now explain a curious consequence of this theorem. Write $\fM_K$ for the set of all non-zero integers $M$ in $\CO_K$, which are prime to $q$, satisfy $M \equiv 1 \mod 4$, and are not squares in $K$.  For each $M \in \fM_K$, define
\begin{equation}\label{f1}
E = A^{(M)}
\end{equation}
to be the twist of $A$ by the extension $H(\sqrt{M})/H$, which is non-trivial because the class number of $K$ is odd. Define
$$
\CF = H(E_{\fp^2}), \, \CF_n = \CF H_n = H(E_{\fp^{n+2}}).
$$
Write $g_{E/H_n}$ and $g_{E/\CF_n}$ for the respective ranks of the Mordell-Weil groups of $E/H_n$ and $E/\CF_n$, and let $L(E/H_n, s)$ and $L(E/\CF_n,s)$ denote their respective complex $L$-series.

\begin{thm}\label{t2}  Assume that $q \equiv 7 \mod 8$, and that $M \in \fM_K$. Then, for all $n \geq 0$, we have that  $g_{E/H_n} = g_{E/\CF_n}$, and $\ord_{s=1}L(E/H_n, s) = \ord_{s=1}L(E/\CF_n, s)$.
\end{thm}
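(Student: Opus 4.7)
The plan is to apply Artin formalism to the quadratic extension $\CF_n/H_n$, reducing the theorem to the non-vanishing and finiteness statements of Theorem \ref{tm} and Corollary \ref{tmc} for the companion curve $A^{(-\beta)}$.

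First I would identify the extension $\CF_n/H_n$ explicitly. Since $[\CF_n : H] = |(\CO_\fp/\fp^{n+2})^\times| = 2^{n+1}$ and $[H_n : H] = 2^n$, one sees immediately that $\CF_n/H_n$ is quadratic; it remains only to describe the associated quadratic character. As $E = A^{(M)}$ is the twist of $A$ by $\chi_M$, we have $E_{\fp^k} \simeq A_{\fp^k}\otimes\chi_M$ as Galois modules for every $k$. The essential input -- presumably established earlier in the paper, and the reason the curve $A^{(-\beta)}$ plays its distinguished role in the introduction -- is that the mod $\fp^2$ Galois representation of $A/H$ coincides with $\chi_{-\beta}$; equivalently, the $\fp^2$-division points of $A^{(-\beta)}$ are $H$-rational. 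Combining with the twist by $\chi_M$ gives $\CF = H(E_{\fp^2}) = H(\sqrt{-\beta M})$ and hence $\CF_n = H_n(\sqrt{-\beta M})$.

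Next I would apply Artin formalism. Write $\chi = \chi_{-\beta M}$ for the non-trivial character of $\Gal(\CF_n/H_n)$. Then
\begin{equation*}
L(E/\CF_n, s) \;=\; L(E/H_n, s)\cdot L(E/H_n, \chi, s),
\end{equation*}
and the twisted factor equals
\begin{equation*}
L(E/H_n, \chi, s) \;=\; L(A^{(M\cdot(-\beta M))}/H_n, s) \;=\; L(A^{(-\beta M^2)}/H_n, s) \;=\; L(A^{(-\beta)}/H_n, s),
\end{equation*}
since $-\beta M^2$ and $-\beta$ differ by the square $M^2$. Theorem \ref{tm} then gives $L(A^{(-\beta)}/H_n, 1) \neq 0$, so $\ord_{s=1}L(E/H_n,\chi,s) = 0$, and the $L$-function equality $\ord_{s=1}L(E/\CF_n, s) = \ord_{s=1}L(E/H_n, s)$ follows.

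The same decomposition handles Mordell-Weil ranks: writing $\sigma$ for the generator of $\Gal(\CF_n/H_n)$, the $\pm$-eigenspace decomposition of $E(\CF_n)\otimes\BQ$ has $+$-part $E(H_n)\otimes\BQ$, while the $-$-part is canonically identified -- via the isomorphism $E \simeq E^{(-\beta M)} = A^{(-\beta)}$ defined over $\CF_n$ -- with $A^{(-\beta)}(H_n)\otimes\BQ$. By Corollary \ref{tmc} the group $A^{(-\beta)}(H_n)$ is finite, so the $-$-part vanishes and $g_{E/\CF_n} = g_{E/H_n}$. The main obstacle I anticipate is Step 1: identifying the mod $\fp^2$ Galois representation of the Gross curve as $\chi_{-\beta}$. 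This is a concrete computation for the explicit curves \eqref{mg} and \eqref{mgm}, and is the point at which detailed knowledge of the Grossencharacter of $A$ at $\fp$ (using the hypothesis $q\equiv 7\mod 8$) really enters the argument.
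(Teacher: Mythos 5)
Your proposal is correct, and for the $L$-function equality it follows the paper's route exactly: one identifies $\CF_n/H_n$ as the quadratic extension $H_n(\sqrt{-\beta M})/H_n$ (the content of Lemma~\ref{7.24}, transported to $H$ via $\fF = HF$), applies Artin formalism, notes that the twisted factor is $L(A^{(-\beta)}/H_n,s)$, and cites Theorem~\ref{tm} for the non-vanishing at $s=1$. Your hedge in Step 1 is unnecessary -- Lemma~\ref{7.24} supplies exactly the identification $F = K(\sqrt{-\beta})$, hence $H(A_{\fp^2}) = H(\sqrt{-\beta})$, that you correctly anticipate is needed.

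The difference lies in the rank equality. You argue via the $\pm 1$-eigenspace decomposition of $E(\CF_n)\otimes\BQ$ under $\Gal(\CF_n/H_n)$, identifying the $-$-eigenspace with $A^{(-\beta)}(H_n)\otimes\BQ$ and then quoting Corollary~\ref{tmc} to conclude it vanishes. This is clean and correct, but note that Corollary~\ref{tmc} is itself a consequence of Theorem~\ref{tm} via the Kolyvagin--Gross--Zagier Euler system machinery. The paper instead proves the rank equality through a Selmer-group argument (``entirely parallel to Theorem~\ref{t13}''): one shows $\Sel'_{\fP^\infty}(B^{(M)}/F(M)_n) = \Sel'_{\fP^\infty}(B^{(M)}/F(M)_n)^\Delta$ using Nakayama and the Greenberg theorem on finite submodules, then compares with the Selmer group over $K_n$. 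That approach is self-contained within the Iwasawa theory of $\S 3$ and does not invoke Heegner points; it also gives more (equality of $\BZ_2$-coranks of the relevant Tate--Shafarevich groups), though only the Mordell--Weil ranks are needed here. Your eigenspace argument is shorter given the results already stated in $\S 1$, while the paper's is more in keeping with its stated goal of exploiting the Iwasawa theory of the restriction-of-scalars variety $B$.

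One small point worth making explicit: your assertion that $[\CF_n:H] = |(\CO_\fp/\fp^{n+2})^\times| = 2^{n+1}$ uses that $E$ has good reduction at all primes of $H$ above $\fp$ (equivalently, that $B^{(M)}$ has good reduction at $\fp$), which is where the hypothesis $M\equiv 1 \bmod 4$ enters; the paper records this when defining $F(M)_n$ in $\S 7$.
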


\noindent We remark that Theorems \ref{t1} and \ref{t2} do not seem to have been known before even in the special case $q=7$, when $A = X_0(49)$ (see, however, the remarks at the end of \cite{CC}). Of course, we have no way at present of hoping to prove that the four integers $g_{E/H_n},  g_{E/\CF_n}, \ord_{s=1}L(E/H_n, s)$ , and $\ord_{s=1}L(E/\CF_n, s)$ are all equal, as is predicted by the conjecture of Birch and Swinnerton-Dyer.

\medskip

Finally, we make a conjecture about the elliptic curve \eqref{mgm} in the case when $K = \BQ(\sqrt{-q})$, with $q$ now a prime satisfying $q \equiv 3 \mod 8$, so that 2 is inert in $K$.  Then, provided we now take the square root of $j(\CO_K) - (12)^3$ lying in the lower half complex plane, and assume that $q >3$, the curve \eqref{mgm} is still the twist by the quadratic extension $H(\sqrt{-\beta})/H$ of the Gross curve $A /H$, whose existence is proven in \cite{Gross0}. Thus, since $A$ has good reduction outside the set of primes of $H$ above $q$, and \eqref{mgm} has discriminant 1, the curve $A^{(-\beta)}$ will always have good reduction outside the set of primes of $H$ lying above the prime $p=2$, assuming $q >3$.

\begin{conj}\label{um} For all primes $q$ with $q \equiv 3 \mod 8$, we have $L(A^{(-q)}/H, 1) \neq 0.$
\end{conj}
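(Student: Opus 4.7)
The plan is to adapt the Iwasawa-theoretic strategy used for Theorem~\ref{tm} (the split case $q\equiv 7\mod 8$) to the inert case $q\equiv 3\mod 8$. As pointed out just before the conjecture, the elliptic curve in \eqref{mgm} is identified with the twist $A^{(-\beta)}$ of the Gross curve by $H(\sqrt{-\beta})/H$, has discriminant~$1$, and has good reduction outside the primes of $H$ lying above~$2$. Since $2$ is now inert in $K$, there is a single prime $\fp$ of $H$ above~$2$, and $A^{(-\beta)}$ has good \emph{supersingular} reduction at~$\fp$, its formal group being a Lubin--Tate group of height~$2$. The required $2$-adic Iwasawa theory is therefore the supersingular CM analogue of the ordinary theory used in the main body of the paper, and not a direct extension of it.

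Concretely, the steps I would carry out are as follows. First, compute the global root number of $A^{(-\beta)}/H$ at $s=1$ and verify that it equals $+1$, via a local computation at $\fp$ and at the archimedean primes, using Deuring's description of $L(A^{(-\beta)}/H,s)$ as a product of Hecke $L$-series. Second, set up signed Selmer groups $\Sel^{\pm}(A^{(-\beta)}/HK_\infty)$ along a suitable $\BZ_2$-extension of $H$ unramified outside~$2$, following Kobayashi, Sprung, and Agboola--Howard. Third, factor the Katz two-variable $2$-adic $L$-function at $\fp$ into a pair of signed $2$-adic $L$-functions and invoke the corresponding signed main conjecture matching their characteristic ideals with those of the signed Selmer groups. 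Fourth, specialize at the trivial character to control the classical Selmer group $\Sel(A^{(-\beta)}/H)$. Fifth and finally, run a $2$-descent exploiting the torsion of $A^{(-\beta)}(H)$ and the restricted ramification at primes outside~$\fp$ to force this Selmer group to be trivial, which by the main conjecture forces $L(A^{(-\beta)}/H,1)\neq 0$.

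The hardest step will be the third one: the signed Iwasawa theory at the inert prime~$\fp$. In the split case exploited throughout the paper, the Katz $p$-adic $L$-function is an integral element of the Iwasawa algebra, Rubin's main conjecture applies directly, and the non-vanishing ultimately reduces to a congruence computation at $\fp$; when $p=2$ is inert, the Katz $p$-adic $L$-function is no longer integral, it must be decomposed along a signed basis of the Dieudonn\'e module of the formal group, and the control theorem relating Selmer groups over $H$ to those over $HK_\infty$ becomes considerably more delicate. Compounding this, one loses the unconditional Rohrlich-type analytic input $L(A/H,1)\neq 0$, since the complex-analytic argument of \cite{Ro1} is currently available only for $q\equiv 7\mod 8$. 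These two difficulties together explain why the statement appears here only as a conjecture.
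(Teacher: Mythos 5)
This statement is not a theorem of the paper at all: it is Conjecture~\ref{um}, and the authors state explicitly that they ``see absolutely no way at present for attacking such a conjecture using Iwasawa theory,'' offering only the numerical evidence of Dabrowski--Jedrzejak--Szymaszkiewicz for $q\le 10163$. So there is no proof in the paper for your proposal to be measured against, and what you have written is a research programme rather than a proof; it cannot be accepted as establishing the conjecture.

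The gaps are concrete. When $q\equiv 3\bmod 8$ the prime $2$ is inert in $K$ and $A^{(-\beta)}$ has supersingular reduction above $2$, so the Katz two-variable $p$-adic $L$-function you propose to factor in your third step does not exist: Katz's construction requires $p$ split in the CM field (the ordinary case), which is precisely the hypothesis $q\equiv 7\bmod 8$ exploited throughout the paper. Likewise, the signed main conjecture you invoke (Kobayashi, Sprung, Agboola--Howard) is developed for odd $p$ and is not available at $p=2$ in this CM setting; the elliptic-unit Euler system machinery of Sections 4--7 breaks down because the local formal group at the inert prime is a Lubin--Tate group of height $2$, so the local units module is no longer free of rank one over the Iwasawa algebra and the Coleman-map argument of Section~6 has no analogue. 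Your fifth step is also not a valid conclusion even granting the machinery: triviality of $\Sel(A^{(-\beta)}/H)$ together with a main conjecture does not by itself yield $L(A^{(-\beta)}/H,1)\neq 0$; one needs an interpolation formula identifying the relevant $2$-adic $L$-value with the complex $L$-value up to a manifestly nonvanishing factor, and at a supersingular prime the interpolation factors and possible trivial zeros are exactly the delicate point (in the split case the paper handles the analogous factor $1-\phi(\fp)/N\fp$ by Corollary~\ref{4.2n}, and no such control is available here). Finally, a classical $2$-descent over $H$ cannot in general force the $2$-Selmer group to vanish, and no Rohrlich-type analytic input is known for $q\equiv 3\bmod 8$. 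In short, your proposal correctly identifies why the inert case is hard, but every essential ingredient it relies on is currently unproven (or nonexistent), which is exactly why the statement stands in the paper as a conjecture.
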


\noindent However, we hasten to say that, in contrast to the proof of Theorem \ref{tm} above, we see absolutely no way at present for attacking such a conjecture using Iwasawa theory. Nevertheless, some remarkable numerical evidence in support of this conjecture has been found very recently by Dabrowski, Jedrzejak, and Szymaszkiewicz (see \cite{DJS}), who have shown that $L(A^{(-q)}/H, 1) \neq 0$ for all primes $q \equiv 3 \mod 8$ with $q \leq 10163$.
\medskip

The reader will quickly realize that the novelty of the arguments in the present paper is to show that it turns out to be much simpler to deal with some of the algebraic aspects of the Iwasawa theory of the $h$-dimensional abelian abelian variety $B/K$, which is the restriction of scalars from $H$ to $K$ of the elliptic curve $A/H$, rather than for the elliptic curve $A/H$ itself.  This technique has not been exploited widely in the literature, although it is used earlier in \cite{KK}. The key arguments used in this paper make essential use throughout of the fact that we are working with the prime $p=2$, notably in the use of Nakayama's lemma and the fact that both $K$ and the whole tower of fields $F_n \, (n\geq 0)$ all have odd class number in \S3, and in Zhao's induction method in \S9. We establish all of our analytic results for the  Iwasawa theory in sections 4 -7  to by employing the Euler system of elliptic units as defined in the Appendix of \cite{coates2}, showing that this Euler system works beautifully even for the prime $p=2$. Finally, \S9 of the paper explains
how to use Zhao's induction method for the abelian variety $B/K$.

\medskip

In further joint work in preparation with Y. Kezuka and Y. Tian \cite{CKLT}, we hope to use Iwasawa theory to prove the exact Birch-Swinnerton-Dyer formula for the order of the Tate-Shafarevich group of all the elliptic curves with complex multiplication appearing in Theorems \ref{main} and \ref{tm}. The two papers of Dabrowski, Jedrzejak, and Szymaszkiewicz \cite{DJS} and \cite{DJS2} contain some remarkable tables of numerical values of the orders of these Tate-Shafarevich groups, assuming the exact Birch-Swinnerton-Dyer formula for their order.

\medskip

The present paper grew out of \cite{CC}, which discussed only the case $q = 7$. We would like to thank the organizers of the conference "Iwasawa 2017" held at Tokyo University in July 2017 for providing us with an excellent opportunity to initially discuss the ideas developed here.  The first author would also like to thank Tsinghua University and the Morningside Center of the Chinese Academy of Sciences for generous hospitality while much of the subsequent detailed work was being developed. Finally, we thank Zhibin Liang for making  some numerical computations related to Theorem \ref{8.11}, and Jianing Li for informing us of his extremely ingenious elementary proof of Corollary \ref{8.13}.

\section{Notation and preliminaries}

We shall use the following notation throughout the rest of this paper. We write
\begin{equation}\label{1.1}
B ={\rm Res}_{H/K}(A),
\end{equation}
for the abelian variety which is obtained from $A$ by restriction of scalars from $H$ to $K$ (see \cite{Gross0}, \S 15). We define
$$
\msb = End_K(B), \, \, \mst = \msb \otimes \BQ.
$$
Then $\mst$ is a CM field of degree $h$ of $K$.  Let
$$
\psi_{A/H}: \BA^\times_H \to K^\times, \, \, \phi: \BA^\times_K \to \mst^\times
$$
be the Serre-Tate characters (see \cite{ST}, Theorem 10) attached to $A/H$ and $B/K$, respectively, where $\BA^\times_H$ (resp. $\BA^\times_K$) denotes the idele group of $H$ (resp. of $K$). Then we have
$$
\psi_{A/H}=\phi\circ N_{H/K}
$$
where $N_{H/K}$ is the norm map from  the idele group of $H$ to the idele group of $K$. The conductor of  $\phi$ is equal to $\fq = \sqrt{-q}\CO_K$. For a more detailed discussion of the following facts, see \cite{BG}, $\S 1$, and \cite{Gross0}, $\S 13$. The endomorphism
ring $\msb$ is generated over $\CO_K$ by the values $\phi(\fc)$ for $\fc$ running over all integral ideals of $K$ prime to $\fq$, and has discriminant $h^h\CO_K$ as an $\CO_K$-module. Thus  $\msb$ is an order in the field $\mst$, which is not necessarily maximal. However, it is always maximal when localized away from $h$. Also, the field $\mst$ contains only 2 roots of unity, and we have $H\cap\mst = K.$ We fix any embedding of $\mst$ in $\BC$
\begin{equation}\label{em}
i: \mst \to \BC.
\end{equation}
which extends our given embedding of $K$ in $\BC$. When there is no danger of confusion, we will omit  the embedding $i$ from the notation.

\begin{lem}\label{t3} The prime 2 is unramified in the field $\mst$, and there is a prime $\fP$ of $\mst$ above $\fp$ with residue field of order 2.
\end{lem}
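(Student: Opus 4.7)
The strategy is to identify $\mst$ explicitly as $K(\pi)$ with $\pi := \phi(\fp)$, and then read off the decomposition of $\fp$ in $\mst$ from the factorization of the minimal polynomial of $\pi$ over $K_\fp = \BQ_2$.

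First I would set up $\pi$ and compute a large power of it. Since $\fp$ is coprime to the conductor $\fq$ of $\phi$, the value $\pi$ lies in $\CO_\mst$, and the standard norm relation for the Grossencharacter gives $\pi \cdot \pi^{c} = N_{K/\BQ}(\fp) = 2$, where $c$ denotes complex conjugation on $\mst$. Let $\fL$ be any prime of $H$ above $\fp$. Since $H/K$ is the Hilbert class field, $\fL/\fp$ is unramified with residue degree equal to the order $d$ of $[\fp]$ in $\Cl_K$. From $\psi_{A/H} = \phi\circ N_{H/K}$ one obtains $\psi_{A/H}(\fL) = \phi(\fp^d) = \pi^d$, and since $\psi_{A/H}$ takes values in $K^\times$, this forces $\pi^d \in \CO_K$ with $(\pi^d) = N_{H/K}(\fL) = \fp^d$. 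The case $q=7$ has $h=1$ and is immediate. For $q>7$ with $q\equiv 7 \mod 8$, the equation $a^2+qb^2 = 8$ has no integer solutions, so $\fp$ is not principal, and combined with the fact that $\Cl_K$ is cyclic of odd order $h$ one has $d = h$. Thus $\alpha := \pi^h \in \CO_K$ is a generator of $\fp^h$ with $\alpha\bar\alpha = 2^h$.

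Because $\fp^r$ is not principal for $0 < r < h$, the element $\alpha$ is not an $m$-th power in $K$ for any divisor $m > 1$ of $h$, so $X^h - \alpha$ is irreducible over $K$. Hence $[K(\pi):K] = h = [\mst:K]$ and $\mst = K(\pi)$. I would then factor $X^h - \alpha$ over $K_\fp = \BQ_2$. From $(\alpha) = \fp^h$ and $e(\fp/2) = 1$ one has $\alpha = 2^h u$ for some $u \in \BZ_2^\times$. Since $h$ is odd, the $h$-th power map on $\BZ_2^\times$ is a bijection, so there is a unique $w \in \BZ_2^\times$ with $w^h = u$, producing a linear factor $X - 2w$ of $X^h - 2^h u$ over $\BQ_2$. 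The remaining roots $2w\zeta_h^i$ ($1\le i\le h-1$) lie in the cyclotomic extension $\BQ_2(\zeta_h)$, which is unramified over $\BQ_2$ because $\gcd(h,2)=1$. Consequently
\[
\mst \otimes_K K_\fp \;\cong\; K_\fp[X]/(X^h - \alpha) \;\cong\; \BQ_2 \times E_1 \times \cdots \times E_r,
\]
where each $E_j$ is an unramified finite extension of $\BQ_2$. Both conclusions follow at once: $2$ is unramified in $\mst/\BQ$, and the $\BQ_2$-factor in the decomposition corresponds to a prime $\fP$ of $\mst$ above $\fp$ with $\mst_\fP = \BQ_2$, hence with residue field of order $2$.

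The main obstacle I expect is the step $d = h$, i.e., that $[\fp]$ generates $\Cl_K$. The non-principality of $\fp$ for $q > 7$ is elementary via the norm form, but the further assertion that $\Cl_K$ is cyclic, so that a non-principal element automatically generates it, is more delicate; it may require invoking genus theory for imaginary quadratic fields of prime discriminant, or an explicit class-field-theoretic input for the primes $q \equiv 7 \mod 8$ at hand.
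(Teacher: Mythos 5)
Your argument hinges on the step $d=h$, i.e.\ that the class of $\fp$ generates $\Cl_K$, and this is a genuine gap, not just a loose end. The inference you offer (``$\fp$ non-principal, and $\Cl_K$ cyclic of odd order, hence $d=h$'') fails twice over: a non-trivial element of a cyclic group of odd \emph{composite} order need not be a generator (you would need $h$ prime for that), and cyclicity of $\Cl_K$ is not available — genus theory for the prime discriminant $-q$ only gives that $h$ is odd, and there do exist imaginary quadratic fields of prime discriminant with non-cyclic class groups, so no general theorem can be invoked here. If $d<h$, then $\pi^d\in K$ forces $[K(\pi):K]\le d<h$, so $\mst\neq K(\pi)$ and your whole identification $\mst\otimes_K K_\fp\cong K_\fp[X]/(X^h-\alpha)$ collapses; knowing that $2$ is unramified in the proper subfield $K(\pi)$, or that it has a degree-one prime there, says nothing about $\mst$ itself. (Two smaller points: the irreducibility of $X^h-\alpha$ over $K$ again secretly uses $d=h$, since $\alpha=\beta^p$ would only contradict non-principality of $\fp^{h/p}$; and for ``$2$ unramified in $\mst$'' you must also treat the place $\fp^*$, though that part is easy because $\alpha$ is a unit at $\fp^*$ and $X^h-\alpha$ is separable modulo $\fp^*$.)

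For comparison, the paper's proof avoids any explicit description of $\mst$: the unramifiedness of $2$ follows because the order $\msb$ has discriminant $h^h\CO_K$ over $\CO_K$ with $h$ odd (and is maximal away from $h$), and the degree-one prime $\fP$ above $\fp$ is produced from the $\msb$-action on the torsion subgroup of $A(H)=B(K)$, which is $\CO_K/2\CO_K$: this gives an $\CO_K$-algebra surjection $\msb\to\CO_K/2\CO_K$, and composing with projection to the $\fp$-component yields a surjection onto $\BF_2$ whose kernel is the required prime $\fP$. If you want to salvage your approach, you would have to prove $\mst=K(\phi(\fp))$ (equivalently $d=h$), which is exactly the statement your argument cannot supply.
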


\begin{proof} Since the class number $h$ of $K$ is odd, it follows that from the above remarks that 2 will be unramified in the field $\mst$. Furthermore, the finitely generated abelian group $A(H) = B(K)$ is a module for the algebra $\msb$, and the torsion subgroup of this abelian group is $\CO_K/2\CO_K$ (see \cite{Gross0}, $\S 13$). This action of $\msb$ stabilizes the torsion, and thus gives an $\CO_K$-algebra surjection
$$
\msb \to \CO_K/2\CO_K.
$$
The kernel of this homomorphism is the product of two conjugate primes $\fP, \fP^*$ in $\mst$, and $\fP$ has the desired property.
\end{proof}

\medskip

Gross \cite{Gross2} has proven the existence of a global minimal Weierstrass equation for $A$ over $H$, and we fix one such minimal equation
\begin{equation}\label{2.6}
y^2 + a_1xy +a_3y = x^3 + a_2x^2 + a_4x + a_6,
\end{equation}
whose coefficients are algebraic integers in $H$. Since $H = K(j(\CO_K))$, our fixed embedding of $K$ in $\BC$ induces an embedding of $H$ into $\BC$. The \Neron differential $\omega = dx/(2y + a_1x +a_3)$ then has a complex period lattice of the form $\CL = \Omega_\infty(A)\CO_K$, where $\Omega_\infty(A)$ is uniquely determined up to sign. Further, we have the Weierstrass isomorphism $\CW(z, \CL) : \BC/\CL \simeq A(\BC)$ given by
\begin{equation}\label{2.11}
\CW(z, \CL) = (\wp(z, \CL) - b_2/12, \frac{1}{2}(\wp'(z, \CL) - a_1(\wp(z, \CL) - b_2/12) - a_3),
\end{equation}
where $b_2 = a_1^2 + 4a_2$, and $\wp(z, \CL)$ is the classical Weierstrass $\wp$-function of the lattice $\CL$.  Put $\fG = \Gal(H/K)$. If $\fa$ is an integral ideal of $K$ with $(\fa, \fq) = 1$, we write $\sigma_\fa$ for the Artin symbol of $\fa$ in $\fG$. Given such an ideal $\fa$, $A^\fa$ will denote the elliptic curve over $H$ which is obtained by applying $\sigma_\fa$ to the coefficients of the equation \eqref{2.6} for $A$. We write $(x_\fa, y_\fa)$ for a generic point on $A^\fa$. The endomorphism $\phi(\fa)$ of the abelian variety $B$ defines a canonical $H$-isogeny
\begin{equation}\label{2.16}
\eta_{A}(\fa) : A \to A^\fa,
\end{equation}
with kernel precisely the Galois module $A_\fa$ of $\fa$-division points on $A$. Then, as is explained in $\S4$ of \cite{GS}, the pull back by $\eta_A(\fa)$ of the \Neron differential $\omega_\fa$ on $A^\fa$ must be of the form $\xi(\fa)\omega$, where $\xi(\fa)$ is a uniquely determined non-zero element of $H$.
Further, always with our fixed embedding of $H$ into $\BC$, the complex period lattice $\CL_\fa$ of $\omega_\fa$ is then equal to $\xi(\fa)\Omega_\infty(A)\fa^{-1}$, with Weierstrass isomorphism $\CW(z, \CL_\fa): \BC/\CL_\fa \simeq A^\fa(\BC)$.

\medskip

For $\alpha \in \msb$, let $B_\alpha$ to be the kernel of the endomorphism $\alpha$ on $B(\bar{K})$, and, for each integer $n \geq 1$, define $B_{\fP^n} = \cap_{\alpha \in \fP^n} B_\alpha$. We shall mainly be studying the Iwasawa theory of $B$ over the tower of fields $F_\infty/K$, where
\begin{equation}\label{1.3}
F = K(B_{\fP^2}), \, \, F_n = K(B_{\fP^{n+2}})   \, \, (n \geq 0), \, \, F_\infty = K(B_{\fP^\infty}),
\end{equation}
and $B_{\fP^\infty} = \cup_{n \geq 1}B_{\fP^n}$. Lurking in the background, we will also consider the Iwasawa theory of $A$ over the tower of fields
\begin{equation}\label{y1.3}
\fF = H(A_{\fp^2}), \, \, \fF_n = H(A_{\fp^{n+2}})   \, \, (n \geq 0), \, \, \fF_\infty = H(A_{\fp^\infty}).
\end{equation}
Moreover, as in the Introduction, let $K_\infty$ be the unique $\BZ_2$-extension of $K$ which is unramified outside $\fp$, and write $K_n$ for the unique intermediate field with $[K_n:K] =2^n$. Put
$$
G = \Gal(F_\infty/K), \, \, \, \CG = \Gal(\fF_\infty/H),
$$
and let
\begin{equation}\label{y1.4}
\rho_{\fP}: G \to \BZ_2^\times, \rho_{\fp}: \CG \to \BZ_2^\times
\end{equation}
be the characters giving the action of these two Galois groups on $B_{\fP^\infty}$ and $A_{\fp^\infty}$, respectively. In fact, for reasons which we explain in the next paragraph,  both of these characters are isomorphisms. We write $\msi$ for the ring of integers of the completion of the maximal unramified extension of $K_\fp$. For any $p$-adic Lie group
$\fH$, $\Lambda_\msi(\fH)$ will denote the Iwasawa algebra of $\fH$ with coefficients in $\msi$.

\medskip

Throughout the paper, $w$ will denote a place of $H$ lying above $\fp$. We write $H_w$ for the completion of $H$ at $w$, and $\CO_{H, w}$ for the ring of integers of $H_w$. For $\fc$ any integral ideal of $K$, with $(\fc, \fq) = 1$, let  $\wh{A^\fc_w}$ be the formal group of $A^\fc$ at $w$. It is a formal group defined over $\CO_{H, w}$ with local parameter $t_{\fc,w}  = - x_\fc/y_\fc$.

\begin{lem}\label{lt} For each integral ideal $\fc$ of $K$ prime to $\fq$, $\wh{A^\fc_w}$ is a relative
Lubin-Tate formal group in the sense of \cite{DS2} for the unramified extension $H_w/K_\fp$.
\end{lem}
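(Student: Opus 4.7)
The plan is to verify directly that $\wh{A^\fc_w}$ satisfies the defining axioms of a relative Lubin-Tate formal group from \cite{DS2}, using three inputs: good reduction of $A^\fc$ at $w$, the CM action of $\CO_K$ on $A^\fc$, and the description via CM of the Frobenius of the reduction at $w$.

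First I would check good reduction. By Gross's equation \eqref{mg}, $A$ has minimal discriminant $(-q^3)$ over $H$. Since $\sigma_\fc$ is a field automorphism of $H/K$ fixing the rational number $-q^3$, the Galois twist $A^\fc$ also has minimal discriminant $(-q^3)$, and hence good reduction at every prime of $H$ not above $q$; in particular at $w$, because $w \mid \fp$ and $(\fp,q) = 1$. The CM action of $\CO_K$ on $A^\fc$ (inherited from the one on $A$ via the $K$-linearity of $\sigma_\fc$) makes $\wh{A^\fc_w}$ into a formal $\CO_K$-module over $\CO_{H,w}$, and therefore, by $\fp$-adic completion, a formal $\CO_{K,\fp}$-module. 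Because $2\CO_K = \fp\fp^*$ splits and $w$ corresponds to $\fp$ (not $\fp^*$) under our fixed conventions, this $\CO_{K,\fp}$-module has height $1$. Moreover $H_w/K_\fp$ is unramified, since $H/K$ is everywhere unramified.

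Next I would identify the Frobenius lift via CM. By Deuring's theorem, applied to the ordinary CM elliptic curve $A^\fc$ at $w$, the Frobenius $\Frob_w$ of the reduction $\widetilde{A^\fc_w}$ over the residue field $k_w$ corresponds, under the CM embedding $\CO_K \hookrightarrow \End(\widetilde{A^\fc_w})$, to an element $\pi_w \in \CO_K$ generating the ideal $\fp^{f_w}$, where $f_w = [k_w:\BF_2]$. Under the induced embedding $\CO_{K,\fp} \hookrightarrow \End_{\CO_{H,w}}(\wh{A^\fc_w})$, the endomorphism $\pi_w$ is a canonical lift of the $|k_w|$-th power Frobenius, so the power series giving its action satisfies
\begin{equation*}
[\pi_w](T) \equiv \pi_w T \pmod{T^2}, \qquad [\pi_w](T) \equiv T^{|k_w|} \pmod{\fm_w},
\end{equation*}
where $\fm_w$ denotes the maximal ideal of $\CO_{H,w}$. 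Combined with the classification of height-$1$ formal $\CO_{K,\fp}$-modules over the unramified extension $\CO_{H,w}/\CO_{K,\fp}$ given in \cite{DS2} (Chapter I), this identifies $\wh{A^\fc_w}$ as a relative Lubin-Tate formal group for $H_w/K_\fp$.

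The main obstacle is pinning down the precise normalization in that classification. The relative Lubin-Tate formal groups of \cite{DS2} are indexed by a choice of uniformizer $\xi \in \CO_{H,w}$ together with a uniformizer $\pi$ of $K_\fp$ satisfying $N_{H_w/K_\fp}(\xi) = \pi$, and one must exhibit the specific $\xi$ associated to $\wh{A^\fc_w}$. This reduces to tracking the Galois-equivariance of the CM embedding—namely, that for $\sigma \in \Gal(H_w/K_\fp)$ the conjugate $\wh{A^\fc_w}^\sigma$ carries the correct relative Lubin-Tate class after Frobenius twist—and matching it against the Frobenius-twist compatibility built into the definition in \cite{DS2}. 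Once this matching is spelled out, the lemma follows directly from \cite{DS2}.
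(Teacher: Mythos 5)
Your preparatory steps are fine as far as they go: good reduction of $A^\fc$ at $w$, the height-one formal $\CO_\fp$-module structure on $\wh{A^\fc_w}$, and the fact that the Frobenius of the reduction is the reduction of a CM endomorphism $[\pi_w]$ with $\pi_w\CO_K=\fp^{f_w}$, so that $[\pi_w]$ lifts the $|k_w|$-power Frobenius and has linear term $\pi_w$. But this stops short of what the definition in \cite{DS2} actually requires. A relative Lubin--Tate group for $H_w/K_\fp$ is not characterized by an endomorphism lifting the $|k_w|$-power Frobenius: its defining datum is an isogeny over $\CO_{H,w}$ from the formal group to its \emph{Frobenius twist}, congruent to $t^2$ modulo the maximal ideal (the residue field of $K_\fp$ has two elements) and with linear coefficient whose norm to $K_\fp$ has valuation $[H_w:K_\fp]$. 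Your endomorphism is the $[H_w:K_\fp]$-fold composite of such maps, and extracting a single factor from it is exactly the point at issue whenever $H_w\neq K_\fp$. The ``classification of height-$1$ formal $\CO_{K,\fp}$-modules over $\CO_{H,w}$'' that you invoke to bridge this is not in \cite{DS2}, which is a four-page note with no chapters containing only the construction of the groups and their basic properties; the converse statement you need (every height-one formal $\CO_\fp$-module over $\CO_{H,w}$ whose reduced Frobenius is $[\xi]$ is the relative Lubin--Tate group of parameter $\xi$) is true, and provable by rigidity of height-one lifts plus unramified descent, but as written you have asserted it rather than proved or correctly referenced it -- and your closing paragraph explicitly defers the matching of normalizations, which is precisely the substance of the lemma.

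The paper's proof produces the missing isogeny directly and cheaply: the canonical $H$-isogeny $\eta_{A^\fc}(\fp)\colon A^\fc\to A^{\fc\fp}$ induces a map of formal groups $\wh{A^\fc_w}\to\wh{A^{\fc\fp}_w}$ over $\CO_{H,w}$; since the Artin symbol of $\fp$ acts as the Frobenius of $H_w/K_\fp$ on coefficients, $\wh{A^{\fc\fp}_w}$ \emph{is} the Frobenius twist of $\wh{A^\fc_w}$, and because $\phi$ is the Serre--Tate character of $B$ this map is $\equiv t_{\fc,w}^2$ modulo $w$ with linear coefficient $m_\fc$ satisfying $\ord_\fp(N_{H_w/K_\fp}(m_\fc))=[H_w:K_\fp]$; these are verbatim de Shalit's conditions. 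To repair your argument you must either construct this isogeny to the twist (at which point you have reproduced the paper's proof) or state and prove the classification theorem you are relying on; citing \cite{DS2} for it is not sufficient.
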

\begin{proof} The canonical isogeny $\eta_{A^\fc}(\fp) : A^\fc \to A^{\fc\fp}$ induces a map of formal groups defined over  $\CO_{H, w}$
\begin{equation}\label{u3.1}
\wh{\eta_{A^\fc,\fp,w}}: \wh{A^\fc_w} \to \wh{A^{\fc\fp}_w}
 \end{equation}
 which can be realized by a formal power series $\wh{\eta_{A^\fc,\fp,w}}(t_{\fc,w})$ lying in
 $\CO_{H, w}[[t_{\fc,w}]]$. Moreover, since $\phi$ is the Serre-Tate character of $B$, it is not difficult to see that
 \begin{equation}\label{u3.2}
 \wh{\eta_{A^\fc,\fp,w}}(t_{\fc,w}) \equiv t_{\fc,w}^2 \mod w, \, \, \, \wh{\eta_{A^\fc,\fp,w}}(t_{\fc,w}) \equiv m_\fc t_{\fc,w} \mod t_{\fc,w}^2\CO_{H, w}[[t_{\fc,w}]],
 \end{equation}
 where $N_{H_w/K_\fp}(m_\fc)$ has $\fp$-order equal to $[H_w:K_\fp]$. These are precisely the
 conditions required to define a Lubin-Tate formal group relative to $H_w/K_\fp$.
 \end{proof}

 \begin{cor} The prime $w$ is totally ramified in the extension $H_w(A_{\fp^\infty})/H_w$, and the Galois group of this extension is isomorphic to $\CO_\fp^\times$.
 \end{cor}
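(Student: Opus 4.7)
The plan is to deduce this directly from Lemma \ref{lt} combined with the general theory of relative Lubin--Tate formal groups, after verifying that the $\fp^\infty$-torsion of $A$ all sits inside the formal group at $w$.

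First I would observe that, since $A$ has good reduction outside the primes of $H$ above $q$ and $w$ lies above $\fp \mid 2$ with $\fp \neq \fq$, the elliptic curve $A$ has good reduction at $w$. Moreover, because $2 = \fp \fp^{*}$ splits in $K$, the reduction of $A$ at $w$ is ordinary, and the formal group $\wh{A_w}$ has height one, corresponding to the prime $\fp$ rather than $\fp^{*}$ in the sense that the full $\fp^{\infty}$-divisible group of $A$ is identified with $\wh{A_w}[\fp^{\infty}]$ (the action of $\CO_\fp$ on $\wh{A_w}$ coming from the CM). Consequently, for every finite extension $L/H_w$ inside $\overline{H_w}$, we have
\[
A_{\fp^{n}}(L) \;=\; \wh{A_w}[\fp^{n}](\fm_L),
\]
where $\fm_L$ is the maximal ideal of the ring of integers of $L$. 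In particular $H_w(A_{\fp^{\infty}}) = H_w(\wh{A_w}[\fp^{\infty}])$.

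Next, applying Lemma \ref{lt} with $\fc = \CO_K$, the formal group $\wh{A_w}$ is a relative Lubin--Tate formal group for the unramified extension $H_w/K_\fp$. At this point I would invoke the general theory of such formal groups developed in \cite{DS2}: for any relative Lubin--Tate formal group $\CF$ over $H_w$ associated to $H_w/K_\fp$, the tower $H_w(\CF[\fp^{\infty}])$ is a totally ramified abelian extension of $H_w$, and the Galois action on the Tate module $T_\fp(\CF)$ yields a canonical isomorphism
\[
\Gal\bigl(H_w(\CF[\fp^{\infty}])/H_w\bigr) \;\xrightarrow{\ \sim\ }\; \CO_\fp^{\times}.
\]
Combining with the identification $H_w(A_{\fp^{\infty}}) = H_w(\wh{A_w}[\fp^{\infty}])$ of the previous paragraph, this yields both conclusions of the corollary.

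The only real content beyond citing \cite{DS2} is the identification of the $\fp^{\infty}$-torsion of $A$ with that of the formal group at $w$; this is the one place a reader might wish to see detail, and the main thing to check is that the $\CO_\fp$-action on $\wh{A_w}$ induced from the CM (via $\phi$ at $\fp$) agrees with the Lubin--Tate action made explicit by the second congruence in \eqref{u3.2}, which is essentially the content of Lemma \ref{lt}. Everything else is formal.
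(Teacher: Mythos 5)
Your proposal is correct and follows essentially the same route the paper intends: the Corollary is stated in the paper without a separate proof precisely because it is meant to be read off from Lemma \ref{lt} together with de Shalit's theory of relative Lubin--Tate formal groups. The one extra point you flag — that $A_{\fp^\infty}$ sits inside $\wh{A_w}$, because the reduction is ordinary and $\phi(\fp)$ reduces to Frobenius at $w$ (this is exactly what the congruence $\wh{\eta_{A,\fp,w}}(t_w) \equiv t_w^2 \bmod w$ in \eqref{u3.2} encodes) — is the right thing to check, and your treatment of it is adequate. The argument is complete.
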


\begin{thm}\label{ge} The abelian variety $B$ has good reduction everywhere over the field $F = K(B_{\fP^2})$, and the elliptic curve $A$ has good reduction everywhere over the field $\fF = H(A_{\fp^2})$. \end{thm}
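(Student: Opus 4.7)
The plan is to invoke the N\'eron--Ogg--Shafarevich criterion on a suitably chosen Tate module, using that $B/K$ (resp.\ $A/H$) is a CM abelian variety whose Serre--Tate character $\phi$ (resp.\ $\psi_{A/H}=\phi\circ N_{H/K}$) has conductor $\fq$ (resp.\ dividing $\fq\CO_H$). The essential observation is that at the primes above $\fq$---the only primes where inertia can act nontrivially at all---the $\fP$-adic (resp.\ $\fp$-adic) representation acts through a character of image contained in $\{\pm1\}\subset\BZ_2^\times$, and the fields $F$ and $\fF$ are exactly large enough to trivialize any such character.

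For $B/K$ I split the primes $\fl$ of $F$ into two cases. When $\fl$ lies above a prime $\fl_0\neq\fq$ of $K$, $\phi$ is unramified at $\fl_0$, so by Serre--Tate the $\fP'$-adic representation of $G_K$ on $T_{\fP'}B$ is unramified at $\fl_0$ for any prime $\fP'$ of $\mst$ of residue characteristic distinct from that of $\fl_0$; good reduction at $\fl$ (in fact already over $K$) is immediate. When $\fl$ lies above $\fq$ I use the $\fP$-adic Tate module, which is permissible since $\fP$ has residue characteristic $2\neq q$ by Lemma \ref{t3}. Via local class field theory, Serre--Tate identifies the action of $I_\fq$ on $T_\fP B$ with the restriction of $\phi$ to $\CO_{K,\fq}^\times=\mu_{q-1}\times(1+\fq\CO_{K,\fq})$, followed by the embedding $\mst\hookrightarrow\mst_\fP=K_\fp$. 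Any continuous homomorphism of this locally compact group into $\BZ_2^\times$ must kill the pro-$q$ factor and send the finite cyclic $\mu_{q-1}$ into the torsion subgroup $\{\pm1\}$ of $\BZ_2^\times$, so the image of $I_\fq$ has order at most $2$. Since $F$ is the fixed field of $\ker(\rho_\fP\bmod\fP^2)$, which via the isomorphism $G\cong\BZ_2^\times$ corresponds to $1+4\BZ_2$, and $\{\pm1\}\cap(1+4\BZ_2)=\{+1\}$, the inertia image is trivialized on $G_F$, so $B$ has good reduction at $\fl$.

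For $A/H$ the argument is parallel: $\psi_{A/H}$ is unramified away from primes of $H$ above $\fq$, and at such a prime $\fQ$ the completion $H_\fQ$ is an unramified extension of $K_\fq$ (since $H/K$ is everywhere unramified), so $\CO_{H,\fQ}^\times$ again decomposes as a finite cyclic group $\mu_{q^f-1}$ times a pro-$q$ factor; the same continuity argument forces the image in $\BZ_2^\times$ to lie in $\{\pm1\}$, which becomes trivial upon restriction to $G_\fF=\ker(\rho_\fp\bmod 4)$.

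The only non-mechanical ingredient is the Serre--Tate identification of the local inertia character at $\fq$ with $\phi\rvert_{\CO_{K,\fq}^\times}$ (up to the standard sign convention of local class field theory); I would cite this as the standard fact for CM abelian varieties rather than reprove it, after which the whole argument reduces to the elementary observation that a continuous character from the units at an odd prime into a $2$-adic group lands in $\{\pm 1\}$.
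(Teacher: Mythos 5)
Your proof is correct and follows essentially the same strategy as the paper: both reduce good reduction over $F$ to showing that the image of inertia at $\fq$ under the $\fP$-adic character lands in $1+\fP^2\CO_\fp = 1+4\BZ_2$, and then use that $1+4\BZ_2$ contains no nontrivial $2$-power roots of unity. The one genuine (minor) difference is how you show the inertia image is torsion: the paper invokes potential good reduction together with N\'eron--Ogg--Shafarevich to conclude that $\nu_\fP(\xi_K(x))$ is a root of unity for any local unit $x$, whereas you compute the image directly from the structure $\CO_{K,\fq}^\times \cong \mu_{q-1}\times(\text{pro-}q)$ and the continuity of the character into $\BZ_2^\times$. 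Your version is a bit more self-contained (no appeal to potential good reduction); the paper's is a bit more uniform across bad primes. One small point you elide: N\'eron--Ogg--Shafarevich as usually stated concerns the full $\ell$-adic Tate module $T_2B$, not the single factor $T_\fP B$, so strictly one should add that $\phi$ takes values in $\mst^\times$ and $\mst\hookrightarrow\mst_\fP$ is injective, so triviality of inertia on $T_\fP B$ forces $\phi$ itself to be unramified and hence inertia acts trivially on every $T_\fQ B$ with $\fQ\mid 2$ --- the paper sidesteps this by working with $\phi_F$ directly and using the Serre--Tate criterion in terms of $\phi$ rather than a fixed Tate module.
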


\begin{proof}  We give the proof for $B$, and the proof for $A$ is entirely similar. The conductor of $\phi$ is prime to 2, and
thus the abelian variety $B$ has good reduction at the primes of $K$ lying above 2. Let $\phi_{ F}$ be the Serre-Tate homomorphism attached to $B$ over the field $F$, so that
$\phi_{F}= \phi \circ N_{F/K}$, where $N_{F/K}$ denotes the norm map from the idele group of $F$ to the idele group of $K$. Let $v$ be any place of $F$ which does not lie above 2,
and let $U_v$ be the group of units of the completion $F_v$ of $F$ at $v$. As is shown in \cite{ST}, $B$ will have good reduction at $v$ if and only if $\phi_{F}(U_v) = 1.$ But
$\phi_{F}(U_v)=\phi(J_{v'})$, where $v'$ denotes the place of $K$ below $v$, and $J_{v'}$ denotes the image of $U_v$ under the local norm from $F_{v}$ to $K_{v'}$. Let $\xi_K:  \BA^\times_K \to G_K^{ab}$,
where $G_K^{ab}$ denotes the Galois group of the maximal abelian extension of $K$, be Artin's global reciprocity map. Note that $\xi_K(J_{v'})$ fixes $F.$ We write $\nu_{\fP} : G_K^{ab} \to \CO_\fp^\times$ for the character giving the action of $G_K^{ab}$ on $B_{\fP^\infty}$. Now $B$ has potential good reduction everywhere, since the same assertion is true for the elliptic curve $A$ and its conjugate curves over $H$. Hence, since $v'$ does not lie above 2, by the criterion of \Neron-Ogg-Shafarevich,  we must have that $\nu_{\fP}(\xi_K(x))$ is a root of unity for each $x$ in the local units at $v'$. Moreover, by another basic property of the Serre-Tate homomorphism (see \cite{ST},  Theorem 11), we have $\nu_{\fP}(\xi_K(x)) = \phi(x)$ for every $x$ in the local units at $v'$. Now assume that $x$ lies in $J_{v'}$, so that $\nu_{\fP}(\xi_K(x))$ must belong to the subgroup $1 + \fP^2$ of $\CO_{\fp}^\times$. But this subgroup contains no non-trivial roots of unity, whence we must have that $\nu_{\fP}(\xi_K(x)) = 1$, and so $\phi(x) = 1$. Hence $B$ has good reduction everywhere over $F$, as claimed.
\end{proof}

\begin{lem} We have strict inclusions $K_\infty \subset F_\infty \subset \fF_\infty$, and $F_\infty = FK_\infty, \fF_\infty = HF_\infty$. Moreover, the two characters \eqref{y1.4} are both isomorphisms, the prime $\fp$ of $K$ is totally ramified in $F_\infty$, and all primes of $H$ above $\fp$ are totally ramified in $\fF_\infty$.
\end{lem}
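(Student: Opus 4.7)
The plan is to first handle the tower $\fF_\infty/H$ by invoking the local Lubin-Tate structure from the corollary to Lemma~\ref{lt}, then transfer the information to $F_\infty/K$ using the Serre-Tate relation $\psi_{A/H}=\phi\circ N_{H/K}$.

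First, I would fix a prime $w$ of $H$ above $\fp$ and apply the corollary to Lemma~\ref{lt}: the local Galois group $\Gal(H_w(A_{\fp^\infty})/H_w)$ is isomorphic to $\CO_\fp^\times$ and the extension is totally ramified. The decomposition subgroup $D_w\subset\CG$ therefore surjects via $\rho_\fp$ onto this local Galois group; since $\rho_\fp$ is injective by construction, $D_w=\CG$ and $\rho_\fp\colon\CG\to\BZ_2^\times$ is an isomorphism. Local total ramification forces the inertia $I_w$ to equal $D_w=\CG$, so $w$ is totally ramified in $\fF_\infty/H$; the same applies at every prime of $H$ above $\fp$.

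Next I would establish $\fF_\infty=HF_\infty$. By Lemma~\ref{t3}, the completion of $\mst$ at $\fP$ has ring of integers $\CO_\fp$, so projecting the Serre-Tate identity $\psi_{A/H}=\phi\circ N_{H/K}$ onto the $\fP$-component yields $\psi_{A/H,\fp}=\phi_\fP\circ N_{H/K}$. Hence $A_{\fp^\infty}$ and $B_{\fP^\infty}$ are isomorphic as $G_H$-modules with $\CO_\fp$-action, both cut out by the same character of $G_H^{\ab}$, whence $\fF_\infty=HF_\infty$ by functoriality of class field theory. The restriction map $r\colon\CG\to G$ arising from $\fF_\infty\supset F_\infty$ has kernel $\Gal(\fF_\infty/HF_\infty)=1$ and satisfies $\rho_\fp=\rho_\fP\circ r$; since $\rho_\fp$ is surjective and $\rho_\fP$ injective, both $r$ and $\rho_\fP$ are isomorphisms. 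Since $H/K$ is the Hilbert class field and hence unramified everywhere, inertia at $\fp$ in $\Gal(\fF_\infty/K)$ lies inside $\CG$ and equals all of $\CG$ by the previous step; under $r$ this becomes the full inertia at $\fp$ in $G$, giving total ramification of $\fp$ in $F_\infty/K$.

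Finally I would prove $K_\infty\subset F_\infty$ and $F_\infty=FK_\infty$. The character $\phi$ has conductor $\fq$ and values in $\mst^\times$, and since the only roots of unity in $\mst$ are $\pm 1$, the character $\phi_\fP$ is unramified outside $\fp$ and $\fq$, with inertia image at $\fq$ contained in $\{\pm 1\}\subset\BZ_2^\times$. Decomposing $\BZ_2^\times=\{\pm 1\}\times(1+4\BZ_2)$ and using $\rho_\fP$, the fixed field $F_\infty^{\{\pm 1\}}$ is a $\BZ_2$-extension of $K$ in which $\fq$ becomes unramified (its inertia is killed in the quotient) while $\fp$ remains totally ramified; by uniqueness of the $\BZ_2$-extension of $K$ unramified outside $\fp$, it coincides with $K_\infty$, so $K_\infty\subset F_\infty$. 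Since $F=K(B_{\fP^2})$ is the fixed field of $1+4\BZ_2$, it is quadratic over $K$ with $F\cap K_\infty=K$, and a degree count gives $F_\infty=FK_\infty$. Strictness of $K_\infty\subsetneq F_\infty$ follows from $[F:K]=2$; the inclusion $F_\infty\subsetneq\fF_\infty$ is strict whenever $h>1$, since then $F_\infty\cap H=K$ (total ramification of $\fp$ in $F_\infty/K$ versus non-ramification in $H/K$) forces $H\not\subset F_\infty$. The main obstacle will be the middle paragraph: carefully matching $\psi_{A/H}=\phi\circ N_{H/K}$ at the level of the $\fP$-versus-$\fp$ components, and applying class field theory functorially so that the norm of ideles corresponds precisely to the restriction of Galois characters from $G_K$ to $G_H$.
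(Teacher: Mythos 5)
Your proof is correct, and it takes a genuinely different route from the paper's. The paper's argument is rather terse: it invokes ``classical CM theory'' directly to get $K_\infty \subset \fF_\infty$, uses Lemma~\ref{lt} for the isomorphism $\rho_\fp$ and total ramification of $w\mid\fp$ in $\fF_\infty/H$, deduces $\fF_\infty = \fF K_\infty$ by comparing $\BZ_2$-extensions of $\fF$, obtains $F_\infty \subset \fF_\infty$ from the fact that $B$ is isomorphic over $H$ to the product of the $h$ conjugate curves $A^\sigma$, and finally uses good reduction of $B$ over $F$ to conclude $F \neq K$ and hence $\rho_\fP$ surjective. You instead derive $\fF_\infty = HF_\infty$ directly by $\fP$-adic completion of the Serre--Tate identity $\psi_{A/H}=\phi\circ N_{H/K}$, which gives $A_{\fp^\infty}\cong B_{\fP^\infty}$ as $G_H$-modules with $\CO_\fp$-action (the identification $\msb_\fP=\CO_\fp$ coming from Lemma~\ref{t3}), and then push the isomorphism $\rho_\fp$ down to $\rho_\fP$ via the restriction map $r$. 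Your treatment of $K_\infty\subset F_\infty$ and $F_\infty = FK_\infty$ is also more explicit than the paper's: rather than importing $K_\infty\subset\fF_\infty$ from CM theory and working down, you note that $\phi$ has conductor $\fq$ and $\mst$ has only $\pm1$ as roots of unity, so inertia at $\fq$ in $G$ lands in $\{\pm1\}\subset\BZ_2^\times$, and the eigenspace decomposition $\BZ_2^\times=\{\pm1\}\times(1+4\BZ_2)$ identifies $F_\infty^{\rho_\fP^{-1}(\{\pm1\})}$ with $K_\infty$ by the uniqueness from Lemma~\ref{b}. The net effect is that you make explicit the equalities $\fF_\infty=HF_\infty$, $F_\infty=FK_\infty$ and the inclusion $K_\infty\subset F_\infty$ which the paper's proof leaves somewhat implicit; the paper, conversely, gets to $F\neq K$ (and hence surjectivity of $\rho_\fP$) more quickly by invoking Theorem~\ref{ge}. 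One small caveat worth noting for both arguments: the inclusion $F_\infty\subset\fF_\infty$ is an equality when $h=1$ (i.e.\ $q=7$), so ``strict'' in the lemma statement should be read as applying to $K_\infty\subsetneq F_\infty$ only.
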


\begin{proof}  We first remark that the classical theory of complex multiplication shows that
$K_\infty \subset \fF_\infty$. Let $w$ be any prime of $H$ above $\fp$. Then, by Lemma \ref{lt}, $w$ is totally ramified in $\fF_\infty$, and the character $\rho_\fp$ is an isomorphism. Thus we must have $\fF_\infty = \fF K_\infty$. Also, we must have $F_\infty \subset \fF_\infty$ since $B$ is isomorphic over $H$ to the product of the $h$ curves conjugate to $A$ under the action of $\Gal(H/K)$. It then follows that $\fp$ must be totally ramified in $F_\infty$. Furthermore, since $B$ has good reduction everywhere over $F$,  we have $F \neq K$,
and thus $\rho_\fP$ must be an isomorphism. This completes the proof.
\end{proof}

If $\fb$ is any ideal of $K$ prime to $\fp\fq$, we shall write $\tau_\fb$ for the Artin symbol of $\fb$ in
$\Gal(\fF_\infty/K)$. Note that, since $\phi$ is the Serre-Tate character of $B$, the Artin symbol $\tau_\fb$ will fix the field $F_n$ if and only if
\begin{equation}\label{y1.6}
\phi(\fb) \equiv 1 \, \mod \, \fP^{n+2}.
\end{equation}
For each $n \geq 0$, we fix a set $\fC_n$ of integral ideals of $K$, prime to $\fp\fq$ such that
\begin{equation}\label{y1.7}
\Gal(\fF_n/F_n) = \{ \tau_\fc|\fF_n \, : \fc \in \fC_n \}.
\end{equation}
Thus the elements of $\fC_n$ satisfy \eqref{y1.6}, and also give a complete set of representatives
of the ideal class group of $K$ since the restriction map from $\Gal(\fF_n/F_n)$ to $\Gal(H/K)$ is an isomorphism.

\medskip

The restriction map defines an isomorphism from  $\Gal(\fF_\infty/F_\infty)$ to $\Gal(H/K)$, and we define $\delta$ to be the unique element of $\Gal(\fF_\infty/F_\infty)$ whose restriction to $H$ is the Artin symbol $\sigma_\fp$ of $\fp$. Finally, we fix a set $\{V_n\, : \, n \geq 0\}$ of primitive $\fp^{n+2}$-division points on $A$, which are compatible in the  sense that
  \begin{equation}\label{y1.9}
\eta_A(\fp)(V_{n+1}) = V_n^\delta \, \, \, (n \geq 0).
\end{equation}
Note that $V_n^\delta$ is a primitive $\fp^{n+2}$-division point on $A^\fp$.

\section{Iwasawa theory for the abelian variety $B$ over the field $F_\infty = K(B_{\fP^\infty})$}

The aim of this section is to use some very elementary arguments from Iwasawa theory to study
descent theory on $B$ over $F_\infty = K(B_{\fP^\infty})$. Note that the proof of Theorem \ref{t4} depends crucially on the fact that $p=2$. We define $M(F_\infty)$ to be the maximal abelian $2$-extension of $F_\infty = K(B_{\fP^\infty})$, which is unramified outside the primes lying above $\fp$, and put
$$
X(F_\infty)  = Gal(M(F_\infty)/F_\infty).
$$
We recall that we have chosen the sign of $\sqrt{-q}$ so that $\ord_\fp(\sqrt{-q} - 1)/2) > 0$.

 \begin{thm}\label{t4} For all primes $q$ with $q \equiv 7 \mod 8$,  $X(F_\infty)$
 is a free finitely generated $\BZ_2$-module of rank at most $2^{k-2} - 1$, where $k = ord_\fp(\sqrt{-q} - 1).$ In particular, $X(F_\infty) = 0$ when $q \equiv 7 \mod 16$.
 \end{thm}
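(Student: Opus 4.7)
The plan is to combine global class field theory with Iwasawa-theoretic descent in the tower $F_\infty/F$, where $\Gamma := \Gal(F_\infty/F) \cong \BZ_2$; write $\Lambda = \BZ_2[[\Gamma]]$. By the last lemma of Section~2, $\fp$ is totally ramified in $F_\infty/K$, and in particular there is a unique prime of $F_n$ above $\fp$ at every finite level, totally ramified in $F_\infty/F_n$. The first step I would establish separately --- and this is flagged in the Introduction as an essential use of $p=2$ --- is that every $F_n$ has odd class number; this follows from the odd class number of $K$ by a genus-theoretic argument, since $F_\infty/K$ is a pro-$2$ extension ramified only at the primes above $\fp$ and $\fq$.

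With odd class numbers in hand, the pro-$2$ quotient of global class field theory identifies
\[
X(F_n) \;\cong\; U_n^{(1)} \,\big/\, \overline{E_{F_n}\otimes\BZ_2},
\]
where $U_n^{(1)}$ is the group of principal units of the completion of $F_n$ at its unique prime above $\fp$, and $\overline{E_{F_n}\otimes\BZ_2}$ is the closure of the diagonal image of the global units. Since $\fp$ is totally ramified in $F_\infty/F$, standard descent identifies the $\Gamma$-coinvariants $X(F_\infty)_\Gamma$ with this $F$-level quotient up to a finite controlled kernel; Nakayama's lemma applied to the $\Lambda$-module $X(F_\infty)$ then reduces the theorem to an explicit rank bound at the base.

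The base computation proceeds as follows. The completion of $F$ at its unique prime above $\fp$ is a ramified quadratic extension of $K_\fp = \BQ_2$, so $U^{(1)}$ is a free $\BZ_2$-module of rank $2$; the global units of the quartic CM field $F$ have free rank $1$. The image of a generator of $E_F \otimes \BZ_2$ inside $U^{(1)}$ is controlled by the $\fp$-adic valuation of $1 - \sqrt{-q} \in K \subset F$, namely by $k$, and more carefully by the ``depth'' at which the element $(1-\sqrt{-q})/2$ sits inside the Lubin--Tate filtration on $U^{(1)}$ supplied by Lemma~\ref{lt}. A direct computation then yields the bound $2^{k-2}-1$ on the $\BZ_2$-rank of $U^{(1)}/\overline{E_F\otimes\BZ_2}$, and torsion-freeness of this cokernel at $p=2$ follows from the fact that $U^{(1)}$ itself is $\BZ_2$-torsion-free and that $\CO_\fp = \BZ_2$ has only $\pm 1$ as roots of unity.

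The main obstacle is precisely this explicit computation of the image of the global units modulo high powers of the prime above $\fp$, both to pin down the rank $2^{k-2}-1$ and to verify $\BZ_2$-torsion-freeness; the analysis is tractable precisely because we are at $p = 2$, so that Nakayama's lemma applies without twisting. Once this is done, the case $q \equiv 7 \mod 16$ drops out as a numerical corollary: $N_{K/\BQ}((1-\sqrt{-q})/2) = (1+q)/4 \equiv 2 \mod 4$, combined with the sign convention $\ord_\fp((1-\sqrt{-q})/2) > 0$, forces $\ord_\fp((1-\sqrt{-q})/2)=1$, hence $k=2$ and $2^{k-2}-1 = 0$, yielding $X(F_\infty) = 0$.
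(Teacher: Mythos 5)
There is a genuine gap, and it sits precisely where you put the main work: a $\Gamma$-descent to a computation at $\fp$ over the base field $F$ cannot produce either the finite generation of $X(F_\infty)$ over $\BZ_2$ or the bound $2^{k-2}-1$. Nakayama's lemma for the $\BZ_2[[\Gamma]]$-module $X(F_\infty)$ only bounds the number of generators over $\BZ_2[[\Gamma]]$, so it says nothing about the $\mu$-invariant, i.e.\ about finite generation over $\BZ_2$; and even for a module already known to be $\BZ_2$-free, the size of $X(F_\infty)_\Gamma$ places no bound on the $\BZ_2$-rank (a topological generator of $\Gamma$ can act on $\BZ_2^N$ without eigenvalue $1$, giving finite coinvariants for any $N$). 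Moreover the base-level object is misidentified. With $F$ of odd class number, $U^{(1)}/\overline{E_F\otimes\BZ_2}$ is $X(F)=\Gal(M(F)/F)$, which has $\BZ_2$-rank exactly $1$ because $M(F)\supset F_\infty$; the coinvariants $X(F_\infty)_\Gamma$ are instead $\Gal(M(F)/F_\infty)=U'(F_\fp)/\bar{\CE}'(F)$, where $U'(F_\fp)$ is the norm-one (universal norm) subgroup of the local units, and this group is \emph{finite} (Lemma \ref{7.5}, Proposition \ref{7.10}). So your asserted ``direct computation'' that $U^{(1)}/\overline{E_F\otimes\BZ_2}$ has $\BZ_2$-rank $2^{k-2}-1$ is impossible: that quotient always has rank at least $1$, so it could never vanish when $q\equiv 7 \mod 16$. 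The finite index $[M(F):F_\infty]=2^t$, $t=(\ord_v(\log_v\eta)-2)/2$ (Theorem \ref{8.11}), is exactly the quantity that cannot be evaluated directly in terms of $k$: the equality $\ord_v(\log_v\eta)=2$ for $q\equiv 7\mod 16$ is deduced in the paper as a corollary of Theorem \ref{t4} (Corollary \ref{8.13}), not used to prove it.

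The number $2^{k-2}-1$ in Theorem \ref{t4} does not come from the prime $\fp$ over $F$ at all: $2^{k-2}=r_q$ is the number of primes of $K_\infty$ lying above $\fq$, and the decisive descent is with respect to $\Delta=\Gal(F_\infty/K_\infty)$, not $\Gamma$. Since $F_\infty/K_\infty$ is ramified only at the primes above $\fq$ (each with inertia of order $2$), and $K_\infty$ has no abelian $2$-extension unramified outside $\fp$ because $h$ is odd (Lemma \ref{b}), the coinvariants $X(F_\infty)_\Delta$ form an $\BF_2$-space of dimension at most $r_q-1$ (Lemma \ref{t5}). Nakayama over the group ring $\BZ_2[\Delta]$ --- which is local precisely because $|\Delta|=2=p$, and this is the essential use of $p=2$ --- then gives finite generation over $\BZ_2$; Greenberg's theorem (no nonzero finite $\Gamma$-submodule, valid at $p=2$) and the $\Delta$-module lemma (Lemmas \ref{t7} and \ref{t8}) upgrade this to $\BZ_2$-freeness of rank at most $r_q-1$. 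Your outline contains no analogue of this $\Delta$-step nor of the count of $\fq$-primes in $K_\infty$, and without them neither assertion of the theorem is reachable. (The oddness of the class numbers of the $F_n$, your first step, is correct but is Theorem \ref{odd}, proved independently of and after Theorem \ref{t4}; it is not what this proof requires.)
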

\noindent As is explained in more detail at the very end of \S8, the recent ingenious elementary work of Li \cite{JL} does in fact imply that $X(F_\infty) \neq 0$ for all primes $q \equiv 15 \mod 16$. We now closely follow the arguments of elementary Iwasawa theory given in \cite{CC}, \S 2 to prove Theorem \ref{t4}. Of course, $M(F_\infty)$ is Galois over $K$ by maximality, and thus $G=\Gal(F_\infty/K)$ has the usual natural continuous action of Iwasawa theory on $X(F_\infty)$. We also remark that, since $\rho_\fP$ is an isomorphism, the Galois group
$G$ is of the form $G = \Delta \times \Gamma$, where $\Delta$ is cyclic of order 2 and $\Gamma$ is isomorphic to $\BZ_2$, and all of our arguments will be based on Nakayama's lemma for either of the natural $\Delta$-actions or $\Gamma$-actions. Let $\fR=\BZ_2[\Delta]$ be the group ring of $\Delta$ over $\BZ_2$. If $V$ is any $\fR$-module, we write
as usual $V_\Delta$ for the largest quotient of $V$ on which $\Delta$ acts trivially. Similarly, if
$V$ is a compact $\Gamma$-module which is a $\BZ_2$-module, $(V)_\Gamma$ will be the largest quotient of $V$ on which $\Gamma$ acts trivially.

\begin{lem}\label{b}
The field $K_\infty$ has no non-trivial abelian $2$-extension, which is unramified outside the unique prime of $K_\infty$ above $\fp$.
\end{lem}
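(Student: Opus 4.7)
The plan is to establish the stronger statement that the maximal abelian pro-$2$ extension $M_\fp$ of $K$ unramified outside $\fp$ coincides with $K_\infty$, and then to deduce the lemma by applying topological Nakayama to the Iwasawa module $Y = \Gal(M/K_\infty)$, where $M$ denotes the maximal abelian $2$-extension of $K_\infty$ unramified outside the unique prime of $K_\infty$ above $\fp$.

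First I would compute $\Gal(M_\fp/K)$ via class field theory. For each $n \geq 2$, the ray class group exact sequence for the conductor $\fp^n$ reads
\[
1 \to \{\pm 1\} \to (\CO_K/\fp^n)^\times \to \Cl_{\fp^n}(K) \to \Cl(K) \to 1,
\]
using that $-1 \not\equiv 1 \mod \fp^n$ for $n \geq 2$ (since $\ord_\fp 2 = 1$). Passing to the inverse limit over $n$ and restricting to pro-$2$ parts gives
\[
1 \to \{\pm 1\} \to \CO_\fp^\times \to \Gal(M_\fp/K) \to 0,
\]
where the right-hand term $\Cl(K)_{(2)}$ has vanished because genus theory forces the class number of $\BQ(\sqrt{-q})$ to be odd for every prime $q$. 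Now $\CO_\fp^\times = \BZ_2^\times = \mu_2 \times (1 + 4\BZ_2)$, and the global unit $-1$ maps onto the first factor, so $\Gal(M_\fp/K) \cong 1 + 4\BZ_2 \cong \BZ_2$. Since $K_\infty \subseteq M_\fp$ is already a $\BZ_2$-extension, uniqueness forces $M_\fp = K_\infty$.

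Next, by maximality $M/K$ is Galois; write $G = \Gal(M/K)$, so $1 \to Y \to G \to \Gamma \to 1$, and the Iwasawa algebra $\Lambda = \BZ_2[[\Gamma]]$ acts continuously on the compact abelian group $Y$ through conjugation. A standard calculation shows that $[G,G] = (\gamma - 1)Y$ inside $Y$, where $\gamma$ is a topological generator of $\Gamma$; hence $G^{ab} = \Gal(\wh{M}/K)$ for $\wh{M} \subseteq M$ the maximal subfield abelian over $K$, and
\[
Y_\Gamma = Y/(\gamma-1)Y = \Gal(\wh{M}/K_\infty).
\]
But $\wh{M}$ is an abelian pro-$2$ extension of $K$ unramified outside $\fp$, so $\wh{M} \subseteq M_\fp = K_\infty$ by the first step, giving $Y_\Gamma = 0$.

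Finally, from $Y = (\gamma - 1) Y$ we obtain $Y = \fm_\Lambda Y$ with $\fm_\Lambda = (2, \gamma-1)$, and the topological form of Nakayama's lemma for compact $\Lambda$-modules concludes $Y = 0$. The only non-formal ingredient is the class field theory computation in the first step; the crux there is that the global unit $-1$ accounts precisely for the torsion factor $\mu_2 \subset \BZ_2^\times$, which is what makes the argument go through at the prime $p = 2$.
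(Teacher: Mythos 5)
Your argument is correct and follows the same two-step route as the paper: show the maximal abelian pro-$2$ extension of $K$ unramified outside $\fp$ is exactly $K_\infty$, then apply topological Nakayama to $Y_\Gamma = \Gal(\wh{M}/K_\infty) = 0$; the paper simply asserts the class field theory input from the odd class number of $K$, where you spell out the ray class sequence. One small inaccuracy worth noting: your parenthetical that genus theory makes $\BQ(\sqrt{-q})$ have odd class number ``for every prime $q$'' is false for $q\equiv 1\bmod 4$ (discriminant $-4q$ has two prime factors), but it holds for $q\equiv 3\bmod 4$, which covers the paper's $q\equiv 7\bmod 8$.
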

\begin{proof}
Denote by $M(K_\infty)$ the maximal abelian $2$-extension over $K_\infty$ which is unramified outside the unique prime above $\fp$, and let $X(K_\infty)$ be the Galois group $\Gal(M(K_\infty)/K_\infty)$. Since $M(K_\infty)$ is Galois over $K$ by maximality,  the Galois group $\Gamma=\Gal(K_\infty/K)$ acts on it continuously by lifting inner automorphisms. We claim that
$(X(K_\infty))_\Gamma=0$, which will suffice to prove what we want by Nakayama's lemma.  Denote by $\fJ$ the maximal abelian extension of $K$ in $M(K_\infty)$, so that
$Gal(\fJ/K_\infty)=(X(K_\infty))_\Gamma.$ Now, since the class number of $K$ is odd, class field theory shows immediately that $K_\infty$ itself is the maximal abelian 2-extension of $K$ which is unramified outside $\fp$. Hence $\fJ = K_\infty$, and the proof is complete.
\end{proof}

\begin{lem} Let $r_q$  denote the number of primes of $K_\infty$ lying above the prime $\fq = \sqrt{-q}\CO_K$ of $K$.  Then $r_q = 2^{k-2}$, where $k = ord_\fp(\sqrt{-q} - 1).$
\end{lem}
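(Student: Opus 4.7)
The plan is to compute $r_q$ via global class field theory by pinpointing the image of the Frobenius at $\fq$ inside $\Gamma := \Gal(K_\infty/K)$. Because $K_\infty/K$ is unramified outside $\fp$, the prime $\fq$ is unramified in $K_\infty$ and admits a well-defined Frobenius $\Frob_\fq \in \Gamma$; then $r_q$ is simply the index $[\Gamma : \overline{\langle \Frob_\fq \rangle}]$ of the closed subgroup it topologically generates.

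The first step would be to identify $\Gamma$ explicitly. Since the class number $h$ of $K$ is odd and $\CO_K^\times = \{\pm 1\}$ (as $q > 3$), the maximal pro-$2$ abelian extension of $K$ unramified outside $\fp$ corresponds under the reciprocity map to the quotient $\CO_\fp^\times / \{\pm 1\}$ of the idele class group: the odd class number kills any contribution from $\Cl_K$, and the global units $\pm 1$ are already closed in $\CO_\fp^\times$. Identifying $\CO_\fp = \BZ_2$, one has $-1 \in 1+2\BZ_2 \setminus (1+4\BZ_2)$, so the inclusion $U_\fp^{(2)} := 1+4\BZ_2 \hookrightarrow \CO_\fp^\times$ descends to an isomorphism
\[
U_\fp^{(2)} \;\xrightarrow{\sim}\; \CO_\fp^\times/\{\pm 1\} \;\cong\; \Gamma,
\]
so in particular $\Gamma \cong \BZ_2$ and $K_\infty$ is already the maximal abelian extension of $K$ unramified outside $\fp$.

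Next I would locate $\Frob_\fq$ under this identification. Since $\sqrt{-q}$ is a global uniformizer at $\fq$ and a unit at every other finite place, the standard idele manipulation (multiply the idele that is $\sqrt{-q}$ at $\fq$ and $1$ elsewhere by the global principal idele $\sqrt{-q}^{-1}$) shows that $\Frob_\fq$ corresponds to the image of $\sqrt{-q}^{-1} \in \CO_\fp^\times$. By our sign convention, $\sqrt{-q}-1 \in \fp^k \setminus \fp^{k+1}$, so in particular $k \geq 2$ and $\sqrt{-q}^{\pm 1} \in U_\fp^{(k)} \setminus U_\fp^{(k+1)}$, where $U_\fp^{(j)} := 1+2^j\BZ_2$. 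For $k \geq 2$ the $2$-adic exponential gives a topological $\BZ_2$-module isomorphism $2^k\BZ_2 \simeq U_\fp^{(k)}$, so $U_\fp^{(k)}$ is topologically cyclic and any element of $U_\fp^{(k)} \setminus U_\fp^{(k+1)}$ is a topological generator. Hence $\overline{\langle \Frob_\fq \rangle} = U_\fp^{(k)}$ inside $\Gamma = U_\fp^{(2)}$, whence
\[
r_q \;=\; [U_\fp^{(2)} : U_\fp^{(k)}] \;=\; 2^{k-2}.
\]

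The only delicate step is the class field theory identification $\Gamma \cong \CO_\fp^\times/\{\pm 1\}$, which relies essentially on the odd class number of $K$; everything else is a direct computation with the standard $\fp$-adic unit filtration.
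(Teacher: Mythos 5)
Your argument is correct and is essentially the paper's own proof: both amount to the class field theory computation that, under the reciprocity map for the $\BZ_2$-extension $K_\infty/K$ (using the odd class number and $\CO_K^\times=\{\pm1\}$), the Frobenius of $\fq$ corresponds to $\sqrt{-q}$ sitting in $U_\fp^{(k)}\setminus U_\fp^{(k+1)}$, so its decomposition group has index $2^{k-2}$. The paper merely phrases this via $K_n$ being the $2$-part of the ray class field modulo $\fp^{n+2}$ (so $\fq$ splits completely in $K_{k-2}$ and is inert thereafter), while you work idelically with the explicit identification $\Gamma\simeq\CO_\fp^\times/\{\pm1\}$; the content is the same.
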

\begin{proof} It follows from the definition of $k$ that $\sqrt{-q}\in 1+\fp^k$ but $\sqrt{-q} \notin 1+\fp^{k+1}$, where $k \geq 2$. Noting that $K_{n}$ is the 2-part of the ray class field of $K$ modulo $\fp^{n+2}$ for all $n \geq 0$, we then conclude easily from class field theory that $\fq = \sqrt{-q}\CO_K$ splits completely in the extension $K_{k-2}$, and that each prime of $K_{k-2}$ above $\fq$ is inert in  $K_\infty$.  Thus there are precisely $r_q$ primes of $K_\infty$ above $\fq$ ,
and the proof is complete.
\end{proof}

\begin{lem}\label{t5}
We have $(X(F_\infty))_\Delta$ is an $\BF_2$-vector space of dimension at most $r_q-1$, where
$\BF_2$ denotes the field with 2 elements.
\end{lem}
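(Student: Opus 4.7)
The plan is to interpret $(X(F_\infty))_\Delta$ as a Galois group via abelianization, then bound it through Lemma \ref{b} combined with a ramification analysis at the primes above $\fq$.

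First I would let $L$ denote the maximal subextension of $M(F_\infty)/K_\infty$ which is abelian over $K_\infty$; equivalently, $L$ is the fixed field of the commutator subgroup of $\Gal(M(F_\infty)/K_\infty)$. A short group-theoretic computation, starting from the short exact sequence
$$
1 \lra X(F_\infty) \lra \Gal(M(F_\infty)/K_\infty) \lra \Delta \lra 1,
$$
identifies the commutator subgroup with $(\delta-1)X(F_\infty)$, where $\delta$ is the non-trivial element of $\Delta$, and yields
$$
1 \lra (X(F_\infty))_\Delta \lra \Gal(L/K_\infty) \lra \Delta \lra 1.
$$
The problem thus reduces to bounding $\Gal(L/K_\infty)$ as an abelian 2-group.

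Next, I would observe that $L/K_\infty$ is an abelian 2-extension unramified outside the primes of $K_\infty$ above $\fp$ and $\fq$; the $\fq$-ramification can enter only through the sub-extension $F_\infty \subseteq L$. For any prime $v$ of $K_\infty$ above $\fq$, the inertia subgroup $I_v \subseteq \Gal(L/K_\infty)$ injects into $\Gal(F_\infty/K_\infty) = \Delta$, because $L/F_\infty$ is unramified at primes above $v$ (as $M(F_\infty)$ is unramified outside $\fp$). Hence $|I_v| \leq 2$. The key step is then to apply Lemma \ref{b}: the fixed field in $L$ of the subgroup $\langle I_v : v \mid \fq \rangle$ is an abelian 2-extension of $K_\infty$ unramified outside $\fp$, hence must equal $K_\infty$. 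Therefore $\Gal(L/K_\infty)$ is generated by the $r_q$ inertia subgroups $I_v$, each of order at most $2$, making it an $\BF_2$-vector space of dimension at most $r_q$. Since $L \supseteq F_\infty \supsetneq K_\infty$, the surjection $\Gal(L/K_\infty) \twoheadrightarrow \Delta$ is non-zero, and taking its kernel yields $\dim_{\BF_2}(X(F_\infty))_\Delta \leq r_q - 1$.

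The main subtlety lies in the ramification bookkeeping at $\fq$: one must verify both that $L/K_\infty$ is unramified outside $\{\fp,\fq\}$ and, critically, that the constraint $L \subseteq M(F_\infty)$ caps the inertia at each $\fq$-prime at order $2$. Once these are in place, the remainder is a clean application of Lemma \ref{b} and the surjection onto $\Delta$.
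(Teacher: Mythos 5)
Your proposal is correct and follows essentially the same route as the paper: your field $L$ is exactly the paper's $\fI$ (the maximal abelian extension of $K_\infty$ inside $M(F_\infty)$), with $\Gal(\fI/F_\infty)=(X(F_\infty))_\Delta$, the inertia subgroups at the $r_q$ primes above $\fq$ bounded by order $2$, Lemma \ref{b} forcing these inertia groups to generate $\Gal(\fI/K_\infty)$, and the final $-1$ coming from the index-$2$ quotient $\Delta$. Your derivation of $|I_v|\leq 2$ by injecting inertia into $\Delta$ is just a slightly more explicit version of the paper's remark that the ramification index at $\fq$ equals that in $F$, so there is no substantive difference.
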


\begin{proof} By the definition of the $\Delta$ action, we have $\Gal(\fI/F_\infty)=(X(F_\infty))_\Delta$, where $\fI$ is the maximal abelian extension of $K_\infty$ contained in  $M(F_\infty).$ But the only primes of $K_\infty$ which ramify in $\fI$ are the unique prime above $\fp$, and the $r_q$ primes above $\fq$. Moreover, the ramification index of each of these primes above $\fq$ in $\fI$ is precisely 2, because this is the ramification index of $\fq$ in $F$. Let
$\CD$ denote the subgroup of $\Gal(\fI/K_\infty)$ generated by the inertial subgroups of these primes above $\fq$. Thus $\CD$ is a vector space of dimension at most $r_q$ over $\BF_2$.
Now the fixed field of $\CD$ is an abelian 2-extension of $K_\infty$ unramified outside $\fp$.
Thus, by Lemma \ref{b}, this fixed field must be equal to $K_\infty$. Hence $\CD = \Gal(\fI/K_\infty)$, and the assertion of the lemma follows because $[F_\infty:K_\infty] =2$
\end{proof}

\begin{cor}\label{t6} $X(F_\infty)$ is a finitely generated $\fR$-module, which is generated by at most $2^{k-2}-1$ elements over $\fR$. In particular, $X(F_\infty))$ is a finitely generated $\BZ_2$-module.
\end{cor}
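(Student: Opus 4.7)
The plan is to derive the corollary as an essentially immediate consequence of Lemma \ref{t5} via the topological version of Nakayama's lemma, exploiting the crucial fact that $p=2$ so that $\fR = \BZ_2[\Delta]$ is a compact local ring.

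First, I would observe that since $\Delta$ has order $2$ and we are working at the prime $2$, the group ring $\fR = \BZ_2[\Delta]$ is a compact (noetherian) local ring whose unique maximal ideal is
\[
\fm = (2, I_\Delta),
\]
where $I_\Delta$ denotes the augmentation ideal of $\BZ_2[\Delta]$; its residue field is $\BF_2$. Also, $X(F_\infty)$, being a Galois group of a profinite abelian $2$-extension, is naturally a compact $\fR$-module under the lifting-of-inner-automorphisms action of $G = \Delta \times \Gamma$ (restricted to $\Delta$).

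Next, I would unwind the identification $X(F_\infty)/I_\Delta X(F_\infty) = (X(F_\infty))_\Delta$. Lemma \ref{t5} asserts that this quotient is an $\BF_2$-vector space of dimension at most $r_q - 1 = 2^{k-2} - 1$. In particular, the element $2 \in \fR$ already acts trivially on $X(F_\infty)/I_\Delta X(F_\infty)$, so
\[
X(F_\infty)/\fm X(F_\infty) \;=\; (X(F_\infty))_\Delta,
\]
which has $\BF_2$-dimension at most $2^{k-2} - 1$.

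Finally, I would invoke the topological Nakayama lemma for compact modules over the compact local ring $\fR$: if $x_1,\dots, x_s \in X(F_\infty)$ are any lifts of an $\BF_2$-basis of $X(F_\infty)/\fm X(F_\infty)$, they topologically generate $X(F_\infty)$ as an $\fR$-module. Hence $X(F_\infty)$ is generated by at most $2^{k-2} - 1$ elements over $\fR$. Since $\fR$ itself is free of rank $2$ over $\BZ_2$, this also shows $X(F_\infty)$ is finitely generated as a $\BZ_2$-module, proving the second assertion.

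The only delicate point is the very first one, namely that $\fR$ is genuinely local, which is why the whole argument is special to $p=2$; otherwise $\BZ_p[\Delta]$ for $|\Delta|=2$ prime to $p$ would split into two factors and Nakayama would have to be applied to each idempotent component separately. There is no real obstacle here beyond making sure the topological version of Nakayama is invoked correctly (i.e. for the compact $\fR$-module $X(F_\infty)$ with its natural profinite topology).
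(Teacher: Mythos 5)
Your proposal is correct and is essentially the paper's own argument: the paper likewise deduces the corollary from Lemma \ref{t5} by applying (topological) Nakayama to the compact $\fR$-module $X(F_\infty)$, the point being exactly the one you isolate, namely that for $p=2$ the ring $\fR=\BZ_2[\Delta]$ is local with maximal ideal $(2, I_\Delta)$ and residue field $\BF_2$, so that $X(F_\infty)/\fm X(F_\infty)=(X(F_\infty))_\Delta$ has dimension at most $2^{k-2}-1$. Your closing remarks on locality and on passing from $\fR$-generation to finite generation over $\BZ_2$ just make explicit what the paper leaves implicit.
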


\begin{proof}
As $X(F_\infty)$ is a compact $\fR$-module, the corollary follows immediately from Lemma \ref{t5} and the  Nakayama lemma.
\end{proof}

\begin{lem}\label{t7}
$X(F_\infty)$ is a free $\BZ_2$-module, and $X(F_\infty)^\Delta=0$.
\end{lem}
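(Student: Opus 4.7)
The plan is to prove $X(F_\infty)^\Delta = 0$ first; the freeness as a $\BZ_2$-module will then follow automatically. Indeed, any element $y \in X(F_\infty)$ with $2y=0$ lies in $X(F_\infty)^\Delta$: the non-trivial element of $\Delta$ acts via $\rho_\fP$ as $-1 \in \CO_\fp^\times$, but on $2$-torsion $-y=y$. Hence $X(F_\infty)^\Delta=0$ forces $X(F_\infty)[2]=0$, and by Corollary \ref{t6}, a finitely generated $\BZ_2$-module with no $2$-torsion is free.

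To prove $X(F_\infty)^\Delta=0$, the plan is a proof by contradiction, producing from a non-zero $\Delta$-fixed element an abelian $2$-extension of $K_\infty$ of degree $4$ whose existence is incompatible with Lemma \ref{b}. Given $x \in X(F_\infty)^\Delta$ non-zero, by dividing out a maximal power of $2$ one may assume $x \notin 2 X(F_\infty)$. The image of $x$ in the $\BF_2$-vector space $X(F_\infty)/2X(F_\infty)$ is then a non-zero $\Delta$-invariant vector; choose a $\Delta$-equivariant $\BF_2$-linear functional $X(F_\infty)/2X(F_\infty) \to \BF_2$ not vanishing on it, and pull back to obtain a closed $\Delta$-stable subgroup $\mathcal{H} \subset X(F_\infty)$ of index $2$ avoiding $x$. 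The fixed field $L := M(F_\infty)^{\mathcal H}$ is Galois over $K_\infty$, and $\Gal(L/K_\infty)$ is an extension of $\Delta \cong \BZ/2$ by $X(F_\infty)/\mathcal H \cong \BZ/2$ on which $\Delta$ acts trivially, hence abelian of order $4$.

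Since $M(F_\infty)/F_\infty$ is unramified outside primes above $\fp$, the ramification of $L/K_\infty$ away from $\fp$-primes occurs only at $\fq$-primes with ramification index at most $2$ (inherited from $F_\infty/K_\infty$). Applying Lemma \ref{b} to the maximal subextension of $L/K_\infty$ fixed by inertia at $\fq$-primes shows that this subextension is $K_\infty$, so the inertia subgroups at $\fq$-primes generate the whole order-$4$ group $\Gal(L/K_\infty)$. Each such inertia subgroup has order exactly $2$ and surjects onto $\Delta$ via $\Gal(L/K_\infty) \twoheadrightarrow \Delta$ (since $F_\infty/K_\infty$ is already ramified at these primes with index $2$); this excludes $\Gal(L/K_\infty) \cong \BZ/4$, whose unique order-$2$ subgroup lies in the kernel of the projection to $\Delta$. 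Thus $\Gal(L/K_\infty) \cong (\BZ/2)^2$.

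The main obstacle is to rule out this last case. When $q \equiv 7 \pmod{16}$, one has $r_q=1$, so there is a unique $\fq$-prime of $K_\infty$ and thus a single inertia subgroup of order $2$, which cannot generate $(\BZ/2)^2$, giving the contradiction immediately. When $q \equiv 15 \pmod{16}$ with $r_q \geq 2$, the inertia subgroups at distinct $\fq$-primes could in principle furnish two distinct order-$2$ subgroups that together generate $(\BZ/2)^2$; ruling this out will require a more delicate argument, presumably exploiting the $\Gamma$-conjugacy of the $\fq$-primes in $K_\infty$ together with a $\Gamma$-invariant refinement in the choice of $\mathcal H$, and ultimately the bound $\dim_{\BF_2} X(F_\infty)_\Delta \leq r_q-1$ of Lemma \ref{t5}. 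This delicate case is the technical heart of the argument.
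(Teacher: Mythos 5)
Your argument has a genuine gap, and it sits exactly where the lemma has content. The contradiction you derive from the degree-$4$ field $L$ only materialises when $r_q=1$, i.e.\ when $q \equiv 7 \mod 16$; but in that case Lemma \ref{t5} and Corollary \ref{t6} already give $X(F_\infty)=0$ by Nakayama, so the lemma is vacuous there. For $q \equiv 15 \mod 16$ the statement is the nontrivial one (the paper even shows later that $X(F_\infty)\neq 0$ then), and your sketch stops at ``presumably a more delicate argument''. Worse, the route cannot be repaired along these lines: your construction never uses the hypothesis $x \in X(F_\infty)^\Delta$ in an essential way. Since $\epsilon$ acts unipotently on $X(F_\infty)/2X(F_\infty)$ (characteristic $2$), a nonzero $\Delta$-fixed vector mod $2$ exists whenever $X(F_\infty)\neq 0$, so the field $L$ with $\Gal(L/K_\infty)\cong(\BZ/2)^2$ genuinely exists when $q \equiv 15 \mod 16$ (inertia at two distinct $\fq$-primes can generate it), and no contradiction specific to $X(F_\infty)^\Delta\neq 0$ can come out of it. The missing idea is the one the paper uses: from the four-term exact sequence $0\to X(F_\infty)^\Delta \to X(F_\infty)\xrightarrow{1-\epsilon} X(F_\infty)\to X(F_\infty)_\Delta\to 0$ and the finiteness of $X(F_\infty)_\Delta$ (Lemma \ref{t5}), one deduces that $X(F_\infty)^\Delta$ is a \emph{finite} $\Gamma$-submodule; Greenberg's theorem \cite{Greenberg0} that $X(F_\infty)$ has no nonzero finite $\Gamma$-submodule (valid here even for $p=2$) then kills it, and applied to the $\BZ_2$-torsion submodule it also gives freeness. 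Some input of this Iwasawa-theoretic kind over $\Gamma$ is indispensable; pure ramification bookkeeping on small quotients is not enough.

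Two smaller points. Your justification that $X(F_\infty)^\Delta=0$ forces $2$-torsion-freeness is based on a false premise: $\Delta$ does not act on $X(F_\infty)$ through $\rho_\fP$ (that is its action on $B_{\fP^\infty}$, as in Proposition \ref{t11}); it acts by lifting inner automorphisms, and there is no reason $\epsilon$ should be $-1$ on $X(F_\infty)$. The implication itself is still true, but for a different reason: the torsion subgroup is a finite $2$-group stable under the $2$-group $\Delta$, so if nonzero it has nonzero $\Delta$-fixed points. Similarly, the reduction ``divide by a maximal power of $2$ to assume $x\notin 2X(F_\infty)$'' tacitly assumes torsion-freeness: if $x=2^m y$, one only gets that $\epsilon y - y$ is torsion, not that the image of $y$ mod $2$ is $\Delta$-invariant.
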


\begin{proof} We have the exact sequence of finitely generated $\BZ_2$-modules
\[0\ra (X(F_\infty))^\Delta\ra X(F_\infty)\ra X(F_\infty)\ra (X(F_\infty))_\Delta\ra 0,\]
where the middle map is given by multiplication $1-\epsilon$, where $\epsilon$ denotes the non-trivial element of $\Delta$. Since $(X(F_\infty))_\Delta$ is finite by Lemma \ref{t5}, it follows that
 $X(F_\infty)^\Delta$ is also finite. But  $X(F_\infty)^\Delta$ is also a $\Gamma$-module,  whence by the theorem of Greenberg \cite{Greenberg0}, p. 94,  asserting that $X(F_\infty) $ has no nonzero finite $\Gamma$-submodule in our case  even when$p =2$,  we conclude that $(X(F_\infty))^\Delta=0$,  and also that the torsion subgroup of $X(F_\infty)$ must be zero. This completes the proof.
\end{proof}

We omit the proof (see \cite{CC}, Lemma 2.8) of the following simple algebraic lemma, whose proof was pointed out to one of us by Romyar Sharifi.

\begin{lem}\label{t8}
Let $Y$ be a free $\BZ_2$-module of finite rank, which is also a $\Delta$-module, and assume that $(Y)_\Delta=\left(\BZ/2\BZ\right)^r(r\geq 0)$.  Then $Y$ is a free $\BZ_2$-module of rank $r$.
\end{lem}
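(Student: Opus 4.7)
The plan is to pin down the $\Delta$-action on $Y$ by first studying the base change $Y\otimes_{\BZ_2}\BQ_2$, exploiting the fact that $\BQ_2[\Delta]$ splits as a product of two copies of $\BQ_2$ via the orthogonal idempotents $(1\pm\epsilon)/2$, where $\epsilon$ denotes the generator of $\Delta$. First I would decompose $Y\otimes_{\BZ_2}\BQ_2 = V_+\oplus V_-$ into the $\pm 1$-eigenspaces of $\epsilon$, so that $\rank_{\BZ_2}(Y) = \dim_{\BQ_2} V_+ + \dim_{\BQ_2} V_-$. Since $1-\epsilon$ acts as zero on $V_+$ and as multiplication by $2$ on $V_-$, and since multiplication by $2$ is invertible over $\BQ_2$, we have $(1-\epsilon)(Y\otimes\BQ_2) = V_-$, so that the coinvariants $(Y\otimes\BQ_2)_\Delta$ are canonically isomorphic to $V_+$.

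Next, because the tensor product over $\BZ_2$ is right exact, we obtain $(Y)_\Delta\otimes_{\BZ_2}\BQ_2\cong (Y\otimes\BQ_2)_\Delta = V_+$. The hypothesis that $(Y)_\Delta$ is finite then forces $V_+=0$, so $\epsilon$ acts as $-1$ on $Y\otimes\BQ_2$. Since $Y$ is torsion-free (being $\BZ_2$-free), the natural map $Y\hookrightarrow Y\otimes\BQ_2$ is injective, and therefore $\epsilon$ must act as $-1$ on $Y$ itself. Consequently $(1-\epsilon)Y = 2Y$, and $(Y)_\Delta = Y/2Y$ is an $\BF_2$-vector space whose dimension equals $\rank_{\BZ_2}(Y)$. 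Comparing with the assumption $(Y)_\Delta\cong(\BZ/2\BZ)^r$ yields $\rank_{\BZ_2}(Y)=r$, as required.

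I do not anticipate any serious obstacle in carrying this out; the only slightly subtle point is that $\BZ_2[\Delta]$ itself is \emph{not} the product of two copies of $\BZ_2$ (the idempotents $(1\pm\epsilon)/2$ fail to be integral), so one genuinely has to pass through $\BQ_2$ to split the representation. Once the trivial isotypic component of $Y\otimes\BQ_2$ is shown to vanish, the sign character acts on all of $Y$ and the conclusion reduces to a transparent $\BF_2$-dimension count.
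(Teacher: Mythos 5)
Your argument is correct, and every step checks out: coinvariants are a cokernel, so they commute with the flat base change $-\otimes_{\BZ_2}\BQ_2$; finiteness of $(Y)_\Delta$ then kills the $+1$-eigenspace; and the passage from "$\epsilon=-1$ on $Y\otimes\BQ_2$" to "$\epsilon=-1$ on $Y$" is justified by torsion-freeness of $Y$. From there $(1-\epsilon)Y=2Y$ and the $\BF_2$-dimension count gives $\rank_{\BZ_2}(Y)=r$. Note that the paper itself omits the proof of this lemma, referring to Lemma 2.8 of \cite{CC} (an argument attributed to R.~Sharifi), so there is no in-paper proof to compare with; your eigenspace-over-$\BQ_2$ argument is a complete, self-contained substitute (an alternative route would be to invoke the classification of $\BZ_2$-free $\BZ_2[\Delta]$-modules into summands of types $\BZ_2(+)$, $\BZ_2(-)$ and $\BZ_2[\Delta]$ and compute coinvariants of each, but your proof avoids needing that structure theory).
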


\noindent Combining Corollary \ref{t6}, and Lemmas \ref{t7} and \ref{t8}, the proof of Theorem \ref{t4} is complete.

\medskip

Remarkably, the following result is valid for all primes $q$ with $q \equiv 7 \mod 8$.

\begin{thm}\label{odd}  For all primes $q$ with $q \equiv 7 \mod 8$, the field $F_n  = K(B_{\fP^{n+2}})$ has odd class number for all $n \geq 0$,
and $F_\infty$ has no unramified abelian $2$-extension.
\end{thm}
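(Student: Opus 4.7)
The strategy is to reduce to showing $F = F_0$ has odd class number, propagate this to all $F_n$ via a standard Iwasawa-theoretic principle for $\BZ_2$-extensions with a single totally ramified prime, and then deduce that $F_\infty$ admits no nontrivial unramified abelian $2$-extension.

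For the base case $F/K$, the plan is to apply the Chevalley--Furtw\"{a}ngler ambiguous class number formula to this cyclic extension of degree $2$. By construction, $F/K$ is ramified only at $\fp$ (the Lubin--Tate ramification from Lemma \ref{lt}) and at $\fq$ (since the conductor of $\phi$ is $\fq$, the image of $\CO_{K,\fq}^\times$ under $\phi$ lies in the roots of unity $\{\pm 1\}$ of $\mst$, and the nontrivial element acts nontrivially on $B_{\fP^2}$ because $(\CO_{\mst,\fP}/\fP^2)^\times = \{\pm 1\}$), each with ramification index $2$; the archimedean place is complex-to-complex and hence unramified. Since $E_K = \{\pm 1\}$ for $q \geq 7$ and $h_K$ is odd, the formula yields
$$|\Cl(F)^\Delta| = \frac{h_K \cdot 4}{2 \cdot [E_K : E_K \cap N_{F/K}(F^\times)]} = \frac{2 h_K}{[E_K : E_K \cap N_{F/K}(F^\times)]}.$$
Thus oddness of $|\Cl(F)^\Delta|$ is equivalent to $-1 \notin N_{F/K}(F^\times)$, and by the Hasse norm theorem for cyclic extensions it suffices to verify this locally at a single place. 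I would use $\fp$: by Lemma \ref{lt} and the fact that $\rho_\fP$ is an isomorphism, the completion $F_\fp$ is the level-$\fP^2$ Lubin--Tate extension of $K_\fp = \BQ_2$ attached to some uniformizer $\pi$ of $\BQ_2$, and local class field theory identifies its norm subgroup as $\pi^{\BZ}(1 + \pi^2 \BZ_2) = \pi^{\BZ}(1 + 4 \BZ_2)$. Since $-1$ has trivial valuation and $-1 \equiv 3 \pmod 4$, it lies in no such subgroup for any choice of $\pi$. Hence $|\Cl(F)^\Delta| = h_K$ is odd, and the fixed-point theorem for the action of the $2$-group $\Delta$ on the finite $2$-group $\Cl(F)[2^\infty]$ forces $\Cl(F)[2^\infty] = 0$.

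To extend this to all $F_n$, I would invoke the standard result (e.g.\ Washington's \emph{Introduction to Cyclotomic Fields}, Prop.~13.22) applied to the $\BZ_2$-extension $F_\infty / F$: the unique prime of $F$ above $\fp$ is the only prime of $F$ ramifying in $F_\infty / F$ and is totally ramified there, so oddness of $h_F$ propagates to oddness of $h_{F_n}$ for every $n \geq 0$. The second assertion then follows: under the same total-ramification hypothesis, one has $\Gal(L(F_\infty)/F_\infty) = \varprojlim_n \Cl(F_n)[2^\infty]$, where $L(F_\infty)$ is the maximal unramified abelian $2$-extension of $F_\infty$, and since every term in the inverse limit is trivial, so is the limit, giving $L(F_\infty) = F_\infty$. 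The main technical obstacle in this outline is the local Lubin--Tate norm computation at $\fp$; this is essential because the conclusion holds for all $q \equiv 7 \pmod 8$, unlike Theorem \ref{t4} which gives $X(F_\infty) = 0$ only under the stronger congruence $q \equiv 7 \pmod{16}$, so the ``unramified'' refinement genuinely enters.
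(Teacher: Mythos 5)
Your proposal is correct, but it reaches the crucial base case (oddness of $h_F$) by a genuinely different route from the paper. The paper works with the $2$-Hilbert class field $L(F)$, identifies $Y(F)_\Delta=\Gal(J/F)$ with $J$ the maximal subextension abelian over $K$, and pins down $J=F$ using that the only abelian $2$-extensions of $K$ unramified outside $\fp$ are the layers $K_m$ (odd class number of $K$) together with the ramification of $\fq$; Nakayama then gives $Y(F)=0$. You instead quantify the same genus-theoretic content via the Chevalley ambiguous class number formula, reducing everything to the single arithmetic fact that $-1$ is not a norm from $F$, which you verify locally at $\fp$; the fixed-point argument for the $2$-group $\Delta$ then kills $\Cl(F)[2^\infty]$. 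Your numerology is right ($e_\fp=e_\fq=2$, the complex place unramified, $E_K=\{\pm1\}$ since $q\geq 7$, $h_K$ odd), and your propagation to all $F_n$ and to $F_\infty$ (Washington, Prop.\ 13.22, plus $\Gal(L(F_\infty)/F_\infty)=\varprojlim_n \Cl(F_n)[2^\infty]$) is the same classical totally-ramified-one-prime argument the paper uses, so both proofs buy the full statement for all $q\equiv 7\bmod 8$. The one place you should tighten is the local step: Lemma \ref{lt} concerns the formal group over $H_w$, not over $K_\fp$, and the paper itself notes at the start of \S 6 that $K_\fp(B_{\fP^\infty})/K_\fp$ does not arise naturally from a Lubin--Tate group over $K_\fp$, so asserting outright that $F_\fp$ is ``the level-$\fP^2$ Lubin--Tate extension for some uniformizer'' is not yet justified as written. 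The conclusion you need is nevertheless easy to secure: since the prime above $\fp$ is totally ramified in $F_\infty/K$ and $\rho_\fP$ is an isomorphism, the composite $U(K_\fp)\to\Gal(F_{\infty,\fp}/K_\fp)\xrightarrow{\ \rho_\fP\ }\BZ_2^\times$ is a continuous surjection between groups isomorphic to $\{\pm1\}\times\BZ_2$, hence an isomorphism; it therefore carries $-1$ to the unique element of order $2$, which acts nontrivially on $B_{\fP^2}$, so $-1$ is not a local norm from $F_\fp$ (this is the same fact the paper later records as $N_{F_n/K_\fp}(U(F_{n,\fp}))=1+\fp^{n+2}\CO_\fp$ in \S 7; alternatively, once one knows $F=K(\sqrt{-\beta})$ with $-\beta\equiv 3\bmod 4$ at $\fp$, the Hilbert symbol $(-1,-\beta)_\fp=-1$ gives it directly). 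With that sentence supplied, your argument is complete.
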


\begin{proof} We first show that $F$ has odd class number. Let $L(F)$ be the $2$-Hilbert class field of $F$, and put $Y(F) = \Gal(L(F)/F)$.  By maximality, $L(F)$ is Galois over $K$, and so $\Delta$ acts on $Y(F)$ in the usual fashion.  Thus $Y(F)_\Delta = \Gal(J/F)$, where $J$ is the maximal abelian extension of $K$ contained in $L(F)$. Now the only primes of $K$ which are ramified in $J$ are $\fp$ and
$\fq$.  Let $\Phi$ be the inertial subgroup of $\fq$ in $\Gal(J/K)$. Then the fixed field $J^\Phi$
of $\Phi$ must be an abelian 2-extension of $K$ which is unramified outside of $\fp$. But, since $K$ has odd class number, the only abelian 2-extensions of $K$ unramified outside of $\fp$ are the fields $K_n (n \geq 0)$, and so we must have $J^\Phi = K_m$ for some $m$. But then it follows that $K_mF \subset J$, and so the extension $K_mF/F$ is unramified. But the unique prime above $\fp$ is totally ramified in the extension $K_mF/F$. Thus we must have $K_m \subset F$. However, $K_m \neq F$ because $\fq$ is ramified in $F$, and so we conclude that $m =0$.   Hence we have shown that $K$ is the fixed field of
$\Phi$, and so $\Gal(J/K) = \Phi$. But $\fq$ has ramification index 2 in the extension $J/K$, so that $\Phi$ has order 2, whence also $\Gal(J/K)$ has order 2.  It follows that necessarily $J=F$, and  so $Y(F)_\Delta = 0$. But then by Nakayama's lemma, we must have $Y(F) = 0$, proving that $F$ has odd class number.

Now let $L(F_\infty)$ be the maximal unramified abelian 2-extension of $F_\infty$, and put
$Y(F_\infty) = \Gal(L(F_\infty)/F_\infty)$. Recall that $\Gamma = \Gal(F_\infty/F)$.
Now the $\BZ_2$-extension $F_\infty/F$ is totally ramified at the unique prime of $F$ above $\fp$,
and no other prime of $F$ is ramified in it. A classical argument in Iwasawa theory then proves that
$Y(F_\infty)_\Gamma \simeq \Gal(L(F)/F)$, where again $L(F)$ denotes the 2-Hilbert class field of $F$. But we have just shown that $L(F) = F$. Hence $Y(F_\infty)_\Gamma = 0$, and so by the topological Nakayama's lemma, we must have $Y(F_\infty) = 0$. But, if we write $L(F_n)$ for the 2-Hilbert class field of $F_n$ for any $n \geq 0$, the same classical argument in Iwasawa theory
shows that $Y(F_\infty)_{\Gamma_n} \simeq \Gal(L(F_n)/F_n)$, where $\Gamma_n$ denotes the unique closed subgroup of $\Gamma$ of index $2^n$. Hence $L(F_n) = F_n$,
and so $F_n$ has odd class number for all $n \geq 0$. This completes the proof.
\end{proof}

\medskip

The above arguments make essential use of the fact that we are working with Galois groups. However, for the arithmetic applications, it is important that we translate all into assertions about Selmer groups, as is done in \cite{CC} in the special case $q = 7$. We make use of the standard notation for the Galois cohomology of Galois modules and abelian varieties. Recall that $\msb$ is the ring of $K$-endomorphisms of
the abelian variety $B$. We fix any non-zero element $\pi$ of $\msb$ such that the ideal factorization of $\pi$ in the ring of integers of $\mst$ is $\fP^r$ for some integer $r \geq 1$. Now let $L$ be any algebraic extension of $K$. As usual, we define, for each integer $n \geq 1$, the Selmer group $\Sel_{\pi^n}(B/L)$ by the exact sequence
$$
\Sel_{\pi^n}(B/L)=\Ker\left(H^1(L, B_{\pi^n})\ra \prod_v H^1(L_v, B)_{\pi^n}\right),
$$
where $v$ runs over all finite places of $L$, and $L_v$ is the compositum of the completions at $v$ of all finite extensions of $K$ contained in $L$. Passing to the inductive limit over all $n \geq 1$, and noting that $B_{\pi^\infty} = B_{\fP^\infty}$, we then define the Selmer group $\Sel_{\fP^\infty}(B/L)$
to be the inductive limit of the Selmer groups  $\Sel_{\pi^n}(B/L)$, so that we have
$$
\Sel_{\fP^\infty}(B/L)=\Ker\left(H^1(L, B_{\fP^\infty})\ra \prod_{v}H^1(L_v, B)(\fP)\right);
$$
here, for any $\msb$-module V, we write $V(\fP)$ for the submodule of elements which are annihilated by some power of $\pi$. In an entirely similar manner, the modified Selmer group  $\Sel'_{\fP^\infty}(B/L)$ is defined by
$$
\Sel'_{\fP^\infty}(B/L)=\Ker\left(H^1(L,B_{\fP^\infty})\ra \prod_{v\nmid \fp}H^1(L_v, B)(\fP)\right),
$$
where now the product is taken over all primes $v$ of $L$ which do not lie above the prime $\fp$ of $K$.

\begin{thm}\label{t9}
We have
$$
\Sel_{\fP^\infty}(B/F_\infty)= \Sel'_{\fP^\infty}(B/F_\infty) = \Hom(X(F_\infty), B_{\fP^\infty}).
$$
\end{thm}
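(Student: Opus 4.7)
The plan is to split the proof into two parts: first identify $\Sel'_{\fP^\infty}(B/F_\infty)$ with the $\Hom$-group using the Galois-theoretic description of $H^1(F_\infty,B_{\fP^\infty})$, and then show that the extra local conditions at primes above $\fp$ which distinguish $\Sel$ from $\Sel'$ are automatically satisfied. Since by construction $F_\infty = K(B_{\fP^\infty})$ contains every $\fP$-power division point, the absolute Galois group $G_{F_\infty}$ acts trivially on $B_{\fP^\infty}$, so
\[
H^1(F_\infty, B_{\fP^\infty}) \;=\; \Hom_{\mathrm{cts}}\!\bigl(G_{F_\infty}^{\mathrm{ab}}(2),\, B_{\fP^\infty}\bigr),
\]
where $G_{F_\infty}^{\mathrm{ab}}(2)$ denotes the maximal pro-$2$ abelian quotient (the target being a $2$-group).

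Next I would analyze the local Kummer image at each prime $v$ of $F_\infty$ not lying above $\fp$, and show it equals the unramified subgroup of $H^1(F_{\infty,v},B_{\fP^\infty})$. By Theorem \ref{ge} the abelian variety $B$ has good reduction at every such $v$, so when the residue characteristic is not $2$ this is the classical consequence of the criterion of N\'eron–Ogg–Shafarevich. For $v$ above the other prime $\fp^*$ of $K$ over $2$, the residue characteristic is $2$, but since $A/H$ is ordinary at primes above $2$ and the formal group at such a place of $H$ is the $\fp^*$-divisible component of the Barsotti–Tate group, the Galois module $B_{\fP^\infty}$ is still unramified at $v$, and the standard comparison of Kummer image with $H^1_{\mathrm{ur}}$ goes through. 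Consequently $\Sel'_{\fP^\infty}(B/F_\infty)$ is exactly the subgroup of homomorphisms $G_{F_\infty}^{\mathrm{ab}}(2) \to B_{\fP^\infty}$ unramified outside $\fp$, that is,
\[
\Sel'_{\fP^\infty}(B/F_\infty) \;=\; \Hom\bigl(X(F_\infty),\, B_{\fP^\infty}\bigr).
\]

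Finally, to establish the equality $\Sel_{\fP^\infty}(B/F_\infty) = \Sel'_{\fP^\infty}(B/F_\infty)$, I would show that at each prime $v$ of $F_\infty$ above $\fp$ the local Kummer map is surjective onto $H^1(F_{\infty,v},B_{\fP^\infty})$; equivalently, $H^1(F_{\infty,v},B)(\fP) = 0$, i.e., the group $B(F_{\infty,v})$ is $\fP$-divisible. Here I would exploit Lemma \ref{lt} and the accompanying corollary: the formal group of each conjugate $A^\fc$ at the prime above $\fp$ is a relative Lubin–Tate formal group for the unramified extension $H_w/K_\fp$, and $F_\infty/F$ is totally ramified at $v$ with $F_{\infty,v}$ containing the entire associated Lubin–Tate tower. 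Passing from $A$ to $B = \mathrm{Res}_{H/K}(A)$ via the $H$-isomorphism $B_{/H} \simeq \prod_{\sigma\in\fG} A^\sigma$, one obtains $\fP$-divisibility of the formal group points over $F_{\infty,v}$; combining this with the $\fP$-divisibility of the \'etale quotient (controlled by roots of unity inside $F_{\infty,v}$) yields $\fP$-divisibility of $B(F_{\infty,v})$ itself.

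The principal obstacle is this last step: proving that $H^1(F_{\infty,v}, B)(\fP) = 0$ at primes above $\fp$. The difficulty lies in transferring the Lubin–Tate picture for $A$ over $H_w$ to a statement about the formal group of the higher-dimensional variety $B$ over the deeply ramified completion $F_{\infty,v}/F_v$, and in checking $\fP$-divisibility carefully at the prime $p=2$, where standard arguments often need adjustment. Once this local divisibility is established, the equality of the two Selmer groups is immediate, and combining with Step 2 completes the proof.
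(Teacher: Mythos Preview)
Your identification of $\Sel'_{\fP^\infty}(B/F_\infty)$ with $\Hom(X(F_\infty),B_{\fP^\infty})$ follows the same line as the paper, which simply cites the argument of Theorem~12 in \cite{coates1} using that $B$ has good reduction everywhere over $F$ and that $B_{\fP^\infty}$ is unramified outside~$\fp$.

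For the local vanishing at the unique place $v\mid\fp$, however, there is a genuine gap in your proposal, and the paper proceeds quite differently. Your chain ``Kummer map surjective $\Leftrightarrow H^1(F_{\infty,v},B)(\fP)=0$, i.e.\ $B(F_{\infty,v})$ is $\fP$-divisible'' breaks at the last step: $\fP$-divisibility of $B(F_{\infty,v})$ forces the Kummer \emph{image} in $H^1(F_{\infty,v},B_{\fP^\infty})$ to be zero, not the Kummer map to be surjective. These two conditions coincide only if $H^1(F_{\infty,v},B_{\fP^\infty})=0$, which fails here since $G_{F_{\infty,v}}$ acts trivially on $B_{\fP^\infty}\simeq K_\fp/\CO_\fp$ and this group has plenty of continuous homomorphisms into it. So even if you succeed in showing $\fP$-divisibility via Lubin--Tate theory, it would not by itself give $H^1(F_{\infty,v},B)(\fP)=0$.

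The paper sidesteps all of this with a short duality-and-limit argument at finite level. For each $n$, Tate local duality identifies $H^1(F_{n,v},B)_{\pi^m}$ with the dual of $B'(F_{n,v})/(\pi^*)^m B'(F_{n,v})$, where $B'$ is the dual abelian variety and $\pi^*$ the complex conjugate of $\pi$. Since $v\mid\fp$, the endomorphism $\pi^*$ is an automorphism of the formal group of $B'$ at $v$, so this quotient equals $\fB'_v(k_v)/(\pi^*)^m\fB'_v(k_v)$ with $k_v=\BF_2$ the (unchanging) residue field. Passing to the limit in $m$, $H^1(F_{n,v},B)(\fP)$ is dual to the finite group $\fB'_v(k_v)(\fP^*)$, independent of $n$; and since $\Gal(F_\infty/F)$ acts trivially on this fixed finite group, the projective limit under the norm maps is zero. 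This gives $H^1(F_{\infty,v},B)(\fP)=0$ directly, with no need to analyse divisibility of $B(F_{\infty,v})$ or to transport Lubin--Tate statements from $A/H_w$ to $B/F_{\infty,v}$.
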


\begin{proof} Since $B$ has good reduction everywhere over $F$, the  $G_{F}$-module $B_{\fP^\infty}$ is unramified outside the set of primes of $F$ lying above $\fp$.
Combining this with the fact that  $B_{\fP^\infty}$ is fixed by $\Gal(\bar{F}/F_\infty)$, an entirely similar argument to that given in the proof of Theorem 12 of \cite{coates1}  shows that
$$
 \Sel'_{\fP^\infty}(B/F_\infty) = \Hom(X(F_\infty), B_{\fP^\infty}).
$$
Hence the assertion of the theorem will follow once we have shown that, for the unique place $v$ of $F_\infty$ above $\fp$, we have
\begin{equation}\label{t10}
H^1(F_{\infty,v}, B)(\fP)=0.
\end{equation}
Since $F_\infty/K$ is totally ramified at the unique prime above $\fp$, it follows that the residue field of $v$ restricted to $F_n$ is always equal to  $k_{v} = \BZ/2\BZ$. Let
$B'$ denote the dual abelian variety of $B$ over $K$, so that $B'$ also has good reduction everywhere over $F$. We write $\fB'_v$ for the reduction of $B'$ modulo $v$.
Fix at first the integer $n \geq 1$. Then Tate local duality at $v$ shows that, for all integers
$m \geq 1$, $H^1(F_{n,v}, B)_{\pi^n}$ is dual to $B'(F_{n,v})/{\pi^*}^mB'(F_{n,v})$,
where $\pi^*$ denotes the complex conjugate of $\pi$ for the CM field $\mst$. Now
$v$ lies above $\fp$, and so ${\pi^*}^m$ is an automorphism of the formal group of $B'$ at $v$,  whence
$$
B'(F_{n,v})/{\pi^*}^mB'(F_{n,v}) = \fB'_v(k_v)/{\pi^*}^m\fB'_v(k_v).
$$
Passing to the inductive limit, we conclude that $H^1(F_{n,v}, B)(\fP)$ is dual to
$\fB'_v(k_v)(\fP^*)$, and so $H^1(F_{\infty,v}, B)(\fP)$ will be dual to the projective limit of the
$\fB'_v(k_v)(\fP^*)$ with respect to the norm maps up the tower $F_\infty/F$.  But the Galois group of $F_\infty/F$ acts trivially on the finite group $\fB'_v(k_v)(\fP^*)$, and so the projective limit of these groups with respect to the norm map is clearly zero. Thus completes the proof of
\eqref{t10} and the theorem.
\end{proof}

\begin{prop}\label{t11} Recalling that $\Delta = \Gal(F_\infty/K_\infty)$, we have
$$
\Sel'_{\fP^\infty}(B/F_\infty) = \Sel'_{\fP^\infty}(B/F_\infty)^\Delta.
$$
\end{prop}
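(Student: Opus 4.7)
The plan is to exploit the identification given by Theorem \ref{t9} and reduce the claim to a simple linear algebra statement about how $\Delta$ acts on the two factors. By Theorem \ref{t9}, we have a $G$-equivariant identification
$$
\Sel'_{\fP^\infty}(B/F_\infty) = \Hom_{\BZ_2}(X(F_\infty), B_{\fP^\infty}),
$$
where the right hand side carries the diagonal $G$-action $(g\cdot f)(x) = g\cdot f(g^{-1}x)$. Thus it suffices to show that $\Delta$ acts trivially on this $\Hom$ group.

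Let $\epsilon$ denote the non-trivial element of $\Delta$. First I would pin down the action of $\epsilon$ on $B_{\fP^\infty}$: since $\rho_\fP\colon G\to \CO_\fp^\times = \BZ_2^\times$ is an isomorphism and $\Delta$ is the unique subgroup of $G$ of order $2$, it must map onto the unique subgroup $\{\pm 1\}\subset \BZ_2^\times$, so $\epsilon$ acts as multiplication by $-1$ on $B_{\fP^\infty}$.

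Next I would show that $\epsilon$ also acts as $-1$ on $X(F_\infty)$. From Lemma \ref{t7}, $X(F_\infty)$ is a free $\BZ_2$-module with $X(F_\infty)^\Delta = 0$, so the endomorphism $1-\epsilon$ is injective. Combined with the identity $(1-\epsilon)(1+\epsilon) = 1-\epsilon^2 = 0$, this yields
$$
(1+\epsilon) X(F_\infty) \subseteq \ker(1-\epsilon) = X(F_\infty)^\Delta = 0,
$$
so $\epsilon = -1$ on $X(F_\infty)$. Putting the two sign calculations together, for any $f\in \Hom(X(F_\infty), B_{\fP^\infty})$ and any $x\in X(F_\infty)$,
$$
(\epsilon\cdot f)(x) = \epsilon\cdot f(\epsilon^{-1}x) = (-1)\,f(-x) = f(x),
$$
so $\epsilon\cdot f = f$, and $\Delta$ acts trivially on $\Sel'_{\fP^\infty}(B/F_\infty)$, which is what we want.

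There is no substantive obstacle: all the serious work is already encoded in the freeness and vanishing statements for $X(F_\infty)$ from Lemma \ref{t7}, in the identification with $\Hom$ from Theorem \ref{t9}, and in the fact that $\rho_\fP$ is an isomorphism. The only point that requires a moment of care is the deduction that $\epsilon$ acts by $-1$ (rather than just non-trivially) on $X(F_\infty)$; this is handled by the short argument $(1+\epsilon)X(F_\infty)\subseteq X(F_\infty)^\Delta = 0$, which is where the hypothesis $p=2$ enters in an essential way, since on $B_{\fP^\infty}$ the only $2$-torsion automorphism is $-1$.
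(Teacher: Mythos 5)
Your proof is correct and follows essentially the same route as the paper: both identify $\Sel'_{\fP^\infty}(B/F_\infty)$ with $\Hom(X(F_\infty), B_{\fP^\infty})$ via Theorem \ref{t9}, use that $\epsilon$ acts by $-1$ on $B_{\fP^\infty}$, and conclude from $(1+\epsilon)X(F_\infty)\subseteq X(F_\infty)^\Delta=0$ (Lemma \ref{t7}) that every homomorphism is $\Delta$-invariant. Your intermediate observation that $\epsilon$ acts as $-1$ on $X(F_\infty)$ is just an equivalent repackaging of the paper's step, so there is nothing to add.
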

\begin{proof} As before, let $\epsilon$ be the non-trivial element of $\Delta$, so that $\epsilon$ acts on $B_{\fP^\infty}$ by $-1$. Thus, if $f$ belongs to $\Hom(X(F_\infty), B_{\fP^\infty})$, we have
$(\epsilon f)(x) = -f(\epsilon x)$ for all $x$ in $X(F_\infty)$. Hence
$$
\Sel'_{\fP^\infty}(B/F_\infty)^\Delta = \Hom(X(F_\infty)/(1+\epsilon)X(F_\infty), B_{\fP^\infty}).
$$
But $(1+\epsilon)X(F_\infty) \subset X(F_\infty)^\Delta$, and this latter group is zero by Lemma \ref{t7}, whence the assertion of the proposition follows.
\end{proof}

\begin{prop}\label{t12}
For all $n\geq 0$, the restriction map yields an isomorphism
\[\Sel'_{\fP^\infty}(B/F_n)\simeq \Sel'_{\fP^\infty}(B/F_\infty)^{\Gamma_n},\]
where $\Gamma_n=\Gal(F_\infty/F_n)$.
\end{prop}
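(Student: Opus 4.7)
The plan is to run a standard Iwasawa-theoretic control argument, applying the snake lemma to the commutative diagram
\[
\xymatrix@C=10pt{
0 \ar[r] & \Sel'_{\fP^\infty}(B/F_n) \ar[r] \ar[d]^{s_n} & H^1(F_n, B_{\fP^\infty}) \ar[r] \ar[d]^{f_n} & \prod_{v' \nmid \fp} H^1(F_{n, v'}, B)(\fP) \ar[d]^{g_n} \\
0 \ar[r] & \Sel'_{\fP^\infty}(B/F_\infty)^{\Gamma_n} \ar[r] & H^1(F_\infty, B_{\fP^\infty})^{\Gamma_n} \ar[r] & \bigl(\prod_{v \nmid \fp} H^1(F_{\infty, v}, B)(\fP)\bigr)^{\Gamma_n}
}
\]
whose rows are the defining (left-exact) sequences of the modified Selmer group. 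The aim is to show that $f_n$ is an isomorphism and that $g_n$ is injective; a diagram chase then forces $s_n$ to be an isomorphism as well.

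For $f_n$, I would use inflation-restriction: its kernel and cokernel lie inside $H^1(\Gamma_n, B_{\fP^\infty})$ and $H^2(\Gamma_n, B_{\fP^\infty})$ respectively. Since $\Gamma_n \simeq \BZ_2$ has $2$-cohomological dimension one, the $H^2$ vanishes. For the $H^1$, choosing a topological generator $\gamma_n$ of $\Gamma_n$ gives $H^1(\Gamma_n, B_{\fP^\infty}) = B_{\fP^\infty}/(\gamma_n - 1)B_{\fP^\infty}$. Because $\rho_\fP$ is an isomorphism, the element $\rho_\fP(\gamma_n) - 1$ is a non-zero element of $\CO_\fp$, and multiplication by any such is surjective on the divisible $\CO_\fp$-module $B_{\fP^\infty} \simeq K_\fp/\CO_\fp$; hence $H^1(\Gamma_n, B_{\fP^\infty}) = 0$ and $f_n$ is an isomorphism.

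The main obstacle is the local step. Here I would invoke Theorem \ref{ge} to see that $B$ has good reduction at every place of $F$, and hence of $F_n$. Moreover, since $F_\infty = FK_\infty$ with $K_\infty/K$ unramified outside $\fp$, the extension $F_\infty/F_n$ is unramified at every place $v' \nmid \fp$, so for any $v | v'$ the completion $F_{\infty, v}$ is contained in the maximal unramified extension $F_{n, v'}^{\mathrm{ur}}$. By Lang's theorem, the unramified cohomology $H^1(F_{n, v'}^{\mathrm{ur}}/F_{n, v'}, B(F_{n, v'}^{\mathrm{ur}}))$ vanishes because $B$ has good reduction at $v'$, so $H^1(F_{n, v'}, B) \hookrightarrow H^1(F_{n, v'}^{\mathrm{ur}}, B)$ and a fortiori $H^1(F_{n, v'}, B) \to H^1(F_{\infty, v}, B)$ is injective. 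Passing to $\fP$-primary parts yields the required componentwise injectivity of $g_n$.

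Once these two facts are in hand, the snake lemma immediately gives $\ker s_n \hookrightarrow \ker f_n = 0$, so $s_n$ is injective. For surjectivity, I would take any $\alpha \in \Sel'_{\fP^\infty}(B/F_\infty)^{\Gamma_n}$, lift it via $f_n^{-1}$ to $\tilde\alpha \in H^1(F_n, B_{\fP^\infty})$, and observe that its image in $\prod_{v' \nmid \fp} H^1(F_{n, v'}, B)(\fP)$ maps under $g_n$ to the zero image of $\alpha$ at the bottom right; injectivity of $g_n$ then forces this image to be zero, so $\tilde\alpha$ is itself a Selmer class with $s_n(\tilde\alpha) = \alpha$.
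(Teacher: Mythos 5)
Your proof follows the paper's argument exactly: the same commutative diagram, the same use of $\mathrm{cd}_2(\Gamma_n) = 1$ for the vanishing of $H^2$, and the same observation that good reduction plus unramifiedness outside $\fp$ forces the right-hand vertical map to be injective. The only difference is that you supply the explicit computation of $H^1(\Gamma_n, B_{\fP^\infty}) = B_{\fP^\infty}/(\gamma_n - 1)B_{\fP^\infty} = 0$ via divisibility and $\rho_\fP(\gamma_n) \neq 1$, where the paper defers to a citation; your computation is correct and is exactly the ``well known argument'' being invoked.
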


\begin{proof}
By the definition of the Selmer group, the restriction maps gives rise to the following commutative diagram with exact rows:
\[\xymatrix{
  0  \ar[r]^{} & \Sel'_{\fP^\infty}(B/F_n) \ar[d]_{} \ar[r]^{} & H^1(F_n, B_{\fP^\infty}) \ar[d]_{} \ar[r]^{} & \prod_{v\nmid\fp}H^1(F_{n,v},B)(\fP)\ar[d]_{} \\
  0 \ar[r]^{} & \Sel'_{\fP^\infty}(B/F_\infty)^{\Gamma_n} \ar[r]^{} & H^1(F_\infty, B_{\fP^\infty})^{\Gamma_n} \ar[r]^{} & \left(\prod_{w\nmid\fp}H^1(F_{\infty, w}, B)(\fP)\right)^{\Gamma_n}.}\]
Now the middle vertical map is an isomorphism. Indeed $H^2(\Gamma_n, B_{\fP^\infty}) = 0$ because $\Gamma_n$ has 2-cohomological dimension equal to 1, and a well known argument (see the proof of Lemma 2.11 in \cite{CC}) shows that also $H^1(\Gamma_n, B_{\fP^\infty}) = 0$.  Moreover, the right vertical map is injective because $B$ has good reduction at all primes $v$, and the extension $F_{\infty, w}/F_{n,v}$ is unramified when $w$ does not lie above the prime $\fp$ of $K$. The assertion of the lemma now follows.
\end{proof}

\begin{thm}\label{t13} For all $n \geq 0$, we have
${\rm Rank}_{\BZ}(B(F_n))={\rm Rank}_{\BZ}(B(K_n))$,
and the $\BZ_2$-corank of $\Sha(B/F_n)(\fP)$ is equal to the $\BZ_2$-corank of $\Sha(B/K_n)(\fP)$, for all $n \geq 0$.
\end{thm}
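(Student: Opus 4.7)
The plan is to translate the $\Delta$-invariance of $\Sel'_{\fP^\infty}(B/F_\infty)$ established in Proposition \ref{t11} into the desired rank and corank equalities at finite level by combining it with the Kummer sequence and a restriction–corestriction comparison between $K_n$ and $F_n$.

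First I would observe that for every $n\geq 0$ the Selmer group $\Sel_{\fP^\infty}(B/F_n)$ is pointwise fixed by $\Delta$. Indeed, $\Sel_{\fP^\infty}(B/F_n)\subset \Sel'_{\fP^\infty}(B/F_n)$, and by Proposition \ref{t12} the latter equals $\Sel'_{\fP^\infty}(B/F_\infty)^{\Gamma_n}$, which is $\Delta$-fixed by Proposition \ref{t11} because the actions of $\Delta$ and $\Gamma_n$ on $\Sel'_{\fP^\infty}(B/F_\infty)$ commute. Any subgroup of a $\Delta$-fixed module is itself $\Delta$-fixed.

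Next I would deduce the rank equality. The Kummer sequence yields a $\Delta$-equivariant injection
\[
B(F_n)\otimes_{\msb} K_\fP/\CO_\fP \;\hookrightarrow\; \Sel_{\fP^\infty}(B/F_n),
\]
in which $\Delta$ acts on the source solely through its action on $B(F_n)$. Since the target is $\Delta$-fixed and the map is injective, $\Delta$ must act trivially on the source. On the rational side, $B(F_n)\otimes \BQ$ decomposes under $\Delta$ into a trivial eigenspace of $\mst$-dimension $r'_n:=\dim_\mst\bigl(B(K_n)\otimes\BQ\bigr)$ and a sign eigenspace of $\mst$-dimension $r''_n:=r_n-r'_n$, where $r_n:=\dim_\mst\bigl(B(F_n)\otimes\BQ\bigr)$. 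A direct lattice comparison shows that the cofree $\CO_\fP$-module $B(F_n)\otimes_\msb K_\fP/\CO_\fP$ contains a $\Delta$-stable subgroup of finite index isomorphic to $(K_\fP/\CO_\fP)^{r'_n}$ with trivial action, direct sum with $(K_\fP/\CO_\fP)^{r''_n}$ on which $\Delta$ acts by $-1$. Triviality of $\Delta$ on the sign summand would force multiplication by $2$ to annihilate $(K_\fP/\CO_\fP)^{r''_n}$, which is possible only if $r''_n=0$. Hence $r_n=r'_n$, equivalently $\rank_\BZ B(F_n)=\rank_\BZ B(K_n)$.

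For the Tate–Shafarevich statement I would compare the Selmer groups over $K_n$ and $F_n$ via restriction and corestriction. Both operations preserve local Kummer conditions, giving
\[
\mathrm{res}\colon \Sel_{\fP^\infty}(B/K_n)\to\Sel_{\fP^\infty}(B/F_n),\qquad \mathrm{cor}\colon \Sel_{\fP^\infty}(B/F_n)\to\Sel_{\fP^\infty}(B/K_n),
\]
with $\mathrm{cor}\circ\mathrm{res}=[F_n:K_n]=2$ and $\mathrm{res}\circ\mathrm{cor}=1+\epsilon$. Since $\Sel_{\fP^\infty}(B/F_n)$ is $\Delta$-fixed, the second composition also acts as multiplication by $2$. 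On any cofinitely generated $\BZ_2$-module multiplication by $2$ has finite kernel and cokernel, so $\mathrm{res}$ has finite kernel and cokernel, and the two Selmer groups share the same $\BZ_2$-corank. Combined with the Kummer exact sequences
\[
0\to B(L)\otimes_\msb K_\fP/\CO_\fP \to \Sel_{\fP^\infty}(B/L) \to \Sha(B/L)(\fP)\to 0 \qquad (L\in\{K_n,F_n\})
\]
and the rank equality from the previous paragraph, this yields the equality of $\BZ_2$-coranks of $\Sha(B/F_n)(\fP)$ and $\Sha(B/K_n)(\fP)$. The main subtlety lies in the $\Delta$-decomposition used in the rank step: because $|\Delta|=2$ coincides with the residue characteristic at $\fP$, the $\CO_\fP[\Delta]$-module structure is not semisimple and the clean splitting only exists after tensoring with $K_\fP$, so one must argue with the divisible part up to a finite subgroup.
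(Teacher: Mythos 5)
Your argument is correct and reaches the same conclusions, but by a genuinely different route at the key step. Both proofs rest on the $\Delta$-invariance of the Selmer group over $F_n$ (Propositions \ref{t11} and \ref{t12}) and the Kummer exact sequence, but they diverge in how the rank equality is extracted. The paper first establishes the equality of $\BZ_2$-coranks of $\Sel'_{\fP^\infty}$ over $K_n$ and $F_n$ from $\Delta$-invariance and restriction, then obtains \emph{both} $g_{K_n}=g_{F_n}$ and $t_{K_n}=t_{F_n}$ simultaneously from the single corank identity $g_{K_n}+t_{K_n}=g_{F_n}+t_{F_n}$ combined with the easy inequalities $g_{K_n}\leq g_{F_n}$ and $t_{K_n}\leq t_{F_n}$ (coming from the finite kernels of restriction), and finishes with a Tate local duality argument to replace the modified $\Sha'$ by $\Sha$. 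You instead prove the rank equality first and directly: $\Delta$-triviality of $\Sel_{\fP^\infty}(B/F_n)$ plus injectivity of the Kummer map forces $\Delta$ to act trivially on $B(F_n)\otimes_\msb \mst_\fP/\msb_\fP$, and a lattice comparison then kills the $-1$-eigenspace of $B(F_n)\otimes\BQ$; you then recover the Selmer corank equality by restriction--corestriction and deduce the $\Sha$ corank equality from the two. Working with the unmodified $\Sel$ and $\Sha$ throughout lets you avoid the paper's final duality step, which is a genuine simplification. The price is having to manage the non-semisimplicity of $\msb_\fP[\Delta]$: one small imprecision in your write-up is the claim that $B(F_n)\otimes_\msb\mst_\fP/\msb_\fP$ ``contains a $\Delta$-stable subgroup of finite index'' of the stated form, since a divisible group has no proper finite-index subgroups. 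The correct statement is that it \emph{receives a $\Delta$-equivariant surjection with finite kernel} from $(L^+\oplus L^-)\otimes_{\msb_\fP}\mst_\fP/\msb_\fP$, where $L^\pm$ are the eigensubmodules of a $\Delta$-stable lattice in $B(F_n)\otimes\msb_\fP$; this is still enough, since triviality of $\Delta$ on the target then forces the image of the minus summand to be killed by $2$ and hence finite, so $r''_n=0$. By comparison, the paper's inequality-chase is shorter and sidesteps these lattice subtleties entirely.
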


\begin{proof} Note first that it follows immediately from Propositions \ref{t11} and \ref{t12} that, for all $n \geq 0$, we have
\begin{equation}\label{t14}
\Sel'_{\fP^\infty}(B/F_n)^\Delta=\Sel'_{\fP^\infty}(B/F_n).
\end{equation}
Now since $\Delta$ is of order 2, one sees easily that the kernel and cokernel of the restriction map from $\Sel'_{\fP^\infty}(B/K_n)$
to $\Sel'_{\fP^\infty}(B/F_n)^\Delta$ are annihilated by 2, and so it follows from \eqref{t14} that, for all $n \geq 0$, we have
\begin{equation}\label{t15}
\BZ_2-{\rm corank \, of}  \,  \Sel'_{\fP^\infty}(B/K_n) = \BZ_2-{\rm corank \,  of} \,  \Sel'_{\fP^\infty}(B/F_n).
\end{equation}
Define the modified Shafarevich-Tate group via the exactness of the sequence
\[0\ra \Sha'(B/K_n)\ra H^1(K_n, B)\ra \prod_{v\nmid \fp}H^1(K_{n,v}, B),\]
where $v$ runs over all the finite places of $K_n$ distinct from $\fp$. Note that we then have the exact sequence
\begin{equation}\label{t16}
  0\ra B(K_n)\otimes_{\msb}(\mst_{ \fP}/\msb_{ \fP})\ra \Sel_{\fP^\infty}'(B/K_n)\ra \Sha'(B/K_n)({\fP})\ra 0.
\end{equation}
We have an entirely similar exact sequence for the field $F_n$. Denote by $g_{K_n}, t_{K_n}$ the $\BZ_2$-corank of $B(K_n)\otimes_{\msb}(\mst_\fP/\msb_{\fP})$, and of $\Sha'(B/K_n)({\fP})$, respectively. Define $g_{F_n},t_{F_n}$ in an entirely analogous fashion for the field $F_n$. It follows immediately from \eqref{t15} that we have
\begin{equation}\label{t17}
g_{K_n}+t_{K_n}=g_{F_n}+t_{K_n}.
\end{equation}
Note further that $t_{K_n}\leq t_{F_n}$ and $g_{K_n}\leq g_{F_n}$, because the restriction maps
\[\Sha'(B/K_n)(\fP) \ra \Sha'(B/F_n)(\fP) \,\, {\rm and} \,\, B(K_n)\otimes_{\msb}(\mst_{\fP}/\msb_{\fP}) \ra B(F_n)\otimes_{\msb}(\mst_{\fP}/\msb_{ \fP}) \]
have finite kernels. Therefore, we conclude from \eqref{t17} that $g_{K_n}=g_{F_n}, t_{K_n}=t_{F_n}$. The first assertion of the theorem now follows easily.  For the second assertion, note that we have the exact sequence
\[0\ra \Sha(B/K_n)(\fP) \ra \Sha'(B/K_n)(\fP) \ra H^1(K_{n,w}, B)(\fP),\]
where $w$ denotes the unique prime of $K_n$ above $\fp$. But an entirely similar argument with Tate local duality to that used above in the proof of Theorem \ref{t9} shows that the group on the extreme right of this last exact sequence is finite. Hence $t_{K_n}$ is equal to the $\BZ_2$-corank of $\Sha(B/K_n)(\fP)$. Similarly, we find that $t_{F_n}$ is the  $\BZ_2$-corank of $\Sha(B/F_n)(\fP)$, and the proof of the theorem is now complete.
\end{proof}

\section{Ellitpic units for the field $F_\infty = K(B_{\fP^\infty}).$}

The aim of the present section is to define, for every $n \geq 0$, a suitable group of elliptic units for the field $F_n = K(B_{\fP^{n+2}})$, which we will denote by $C(F_n)$, and to prove the existence of a suitable interpolating power series for them. We use a variant of the method first pioneered in \cite{CW3}.

\medskip

\medskip

We first  determine the conductors of some of the abelian extensions of $K$ which arise in the rest of the section.

\begin{lem}\label{g2.6} Let $\ff$ be any integral ideal of $K$ which is divisible by the conductor $\fq$ of $\phi$. Then $H(A_\ff)$ is equal to the ray class field of $K$ modulo $\ff$.
\end{lem}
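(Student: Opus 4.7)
The approach is the three-step strategy from CM theory: show $H(A_\ff)/K$ is abelian, bound its conductor by $\ff$, and finish by matching degrees.

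The first two steps rest on the Serre--Tate theory of $B$. Over $H$ we have the canonical isomorphism $B\cong\prod_{\sigma\in\Gal(H/K)}A^\sigma$ compatibly with the CM actions, identifying $B_{\ff\msb}$ with $\prod_\sigma A^\sigma_\ff$. Since $H(A_\ff)/K$ is already Galois (as $H/K$ is Galois and $A_\ff$ is stable under $G_H$), it contains every conjugate $A^\sigma_\ff$, so $H(A_\ff)=H\cdot K(B_{\ff\msb})$. The action of $G_K$ on $B_{\ff\msb}$ factors through the abelian group $(\msb/\ff\msb)^\times$ via $\phi\bmod\ff\msb$, so $K(B_{\ff\msb})/K$ is abelian, and hence so is $H(A_\ff)/K$ (since $H/K$ is also abelian). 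Because the conductor of $\phi$ is $\fq$ and $\fq\mid\ff$, the character $\phi\bmod\ff\msb$ factors through the ray class group $\Cl_\ff(K)$ of $K$ modulo $\ff$; hence $K(B_{\ff\msb})\subseteq K(\ff)$, and combined with $H\subseteq K(\ff)$ (the Hilbert class field sits inside every ray class field) this gives $H(A_\ff)\subseteq K(\ff)$.

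For the degree match, the ray class number formula yields $[K(\ff):K]=h_K\cdot|(\CO_K/\ff)^\times|/|\bar W|$, where $\bar W\subseteq(\CO_K/\ff)^\times$ is the image of $\CO_K^\times=\{\pm 1\}$. Since $\fq\mid\ff$ and $\fq\nmid 2$, we have $2\notin\ff$, so $|\bar W|=2$. On the other hand $[H(A_\ff):K]=h_K\cdot[H(A_\ff):H]$, and $\Gal(H(A_\ff)/H)$ embeds in $(\CO_K/\ff)^\times$ via the $\CO_K$-linear action on $A_\ff\cong\CO_K/\ff$. Using $\psi_{A/H}=\phi\circ N_{H/K}$ together with the standard Grossencharacter normalization $\phi((\alpha))=\alpha$ for $\alpha\in K^\times$ coprime to $\fq$ with $\alpha\equiv 1\bmod\fq$, one identifies the image of this embedding as the index-$2$ subgroup of $(\CO_K/\ff)^\times$ corresponding to $(\CO_K/\ff)^\times/\bar W$ under the natural projection. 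Thus $[H(A_\ff):K]=[K(\ff):K]$, and equality of fields follows.

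The main obstacle I expect is the explicit matching of images in the last step, where sign ambiguities arising from $\CO_K^\times=\{\pm 1\}$ interact with the Grossencharacter normalization. A cleaner way to finish is to show directly that the Artin map $\Cl_\ff(K)\twoheadrightarrow\Gal(H(A_\ff)/K)$ is injective: an element in the kernel necessarily has trivial image in $\Cl_K$ (it fixes $H$), and then using reciprocity for the $A_\ff$-action it corresponds to some $\alpha\in K^\times$ coprime to $\ff$ with $\phi((\alpha))\equiv 1\bmod\ff$, which forces $\alpha\equiv\pm 1\bmod\ff$ and hence the class is already trivial in $\Cl_\ff(K)$.
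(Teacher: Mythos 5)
Your proposal takes a genuinely different route from the paper. The paper quotes the classical CM inclusion (the ray class field $K(\ff)$ lies in $H(A_\ff)$) and then proves the reverse inclusion directly: for $\alpha\equiv 1\bmod\ff$ it shows the Artin symbol of $(\alpha)_K$ in $\Gal(H(A_\ff)/K)$ fixes $A_\ff$, by passing to the ideal $(\alpha)_H$ of $H$, whose symbol acts on $A_\ff$ through $\psi_{A/H}((\alpha)_H)=\phi((\alpha)^h)=\alpha^h\equiv 1\bmod\ff$; there is no degree count and no use of $B$. You instead obtain $H(A_\ff)\subseteq K(\ff)$ from the restriction of scalars $B$ (a conductor bound for $\phi\bmod\ff\msb$) and recover equality by a degree count, or by injectivity of the Artin map, never invoking the classical inclusion. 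This organization is viable, and it has the merit of making the abelianness over $K$ and the conductor bound transparent via the Serre--Tate character of $B$, which is in the spirit of the rest of the paper.

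Two steps, however, need repair. First, your justification that $H(A_\ff)/K$ is Galois is circular: stability of $A_\ff$ under $G_H$ only gives that $H(A_\ff)/H$ is Galois; Galois-ness over $K$ amounts to the conjugate division points $A^\sigma_\ff$ having coordinates in $H(A_\ff)$, which is part of what is being proved. Fortunately you do not need it: since the $\CO_K$-action on $B$ is the diagonal CM action on $\prod_\sigma A^\sigma$ over $H$, one has $A_\ff\subset B_\ff=B_{\ff\msb}$, hence directly $H(A_\ff)\subseteq H(B_{\ff\msb})\subseteq K(\ff)$, which is all your argument uses. Second, the real content is the concluding identification --- either that the image of $\Gal(H(A_\ff)/H)$ in $(\CO_K/\ff)^\times$ has index exactly $2$, or, in your alternative finish, that the Artin symbol over $K$ of a principal ideal $(\alpha)$ acts on $A_\ff$ by multiplication by $\phi((\alpha))$. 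Neither follows from the ``standard normalization'' alone: you must pass between Artin symbols over $K$ and over $H$ using $\psi_{A/H}=\phi\circ N_{H/K}$ together with the fact, from class field theory for $H/K$, that norms of ideals of $H$ prime to $\ff$ represent exactly the principal classes in $\Cl_\ff(K)$; this is precisely the maneuver the paper's converse step performs with the symbols of $(\alpha)_K$ and $(\alpha)_H$, so it should not be treated as routine bookkeeping. Once it is in place, the sign worry you flag evaporates: writing $\phi((\beta))=\varepsilon(\beta)\beta$ with $\varepsilon$ quadratic of conductor dividing $\fq$, well-definedness on the ideal $(\beta)=(-\beta)$ forces $\varepsilon(-1)=-1$, so $\beta\mapsto\phi((\beta))\bmod\ff$ has kernel exactly $\{\pm 1\}$; equivalently $\phi((\alpha))\equiv 1\bmod\ff$ forces $\alpha\equiv\pm 1\bmod\ff$ and the class of $(\alpha)$ is trivial in $\Cl_\ff(K)$, which completes either version of your argument.
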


\begin{proof} The classical theory of complex multiplication shows that the ray class field of $K$ modulo $\ff$ is contained in the field $H(A_\ff)$. Conversely, suppose that $\alpha$ is an element of $K$
with $\ord_v(\alpha - 1) \geq \ord_v(\ff)$ for all places $v$ of $K$ dividing $\ff$, and write $(\alpha)_K$ and $(\alpha)_H$ for the respective principal ideals of $K$ and $H$ generated by $\alpha$.
Note that the abelian extension $H(A_\ff)/K$ is unramified outside the set of primes of $K$ dividing $\ff$, because all primes of bad reduction for $A/H$ must divide the ideal of $H$ generated by $\ff$.
Write $m_K$ for the Artin symbol of $(\alpha)_K$ in $\Gal(H(A_\ff)/K)$. We must show that $m_K$ fixes $A_\ff$.  To do this, let us consider the Artin symbol $m_H$ of $(\alpha)_H$ for the abelian extension
$H(A_\ff)/H$. By the definition of the Grossencharacter, the Artin symbol $m_H$ acts on $A_\ff$ by multiplication by the endomorphism
$$
\psi_{A/H}((\alpha)_H) = \phi((\alpha)_K^h) = \alpha^h,
$$
whence it is clear that $m_H$ fixes $A_\ff$.  But by the functorality of the Artin symbol, we know that $m_K = m_H^h$, and so it follows that $m_K$ also fixes $A_\ff$, as required.
\end{proof}

Recall that $\fF_n = H(A_{\fp^{n+2}}) = HF_n$.

\begin{lem}\label{2.7} For all $n \geq 0$, the conductors of the abelian extensions $\fF_n/K$ and  $F_n/K$ are both equal to $\ff_n = \fq\fp^{n+2}$.
\end{lem}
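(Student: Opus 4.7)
The strategy is to sandwich both conductors between an upper bound from Lemma~\ref{g2.6} and a matching lower bound from a direct analysis of the Galois character defining $F_n/K$. For the upper bound, I apply Lemma~\ref{g2.6} with $\ff = \ff_n = \fq\fp^{n+2}$ (divisible by $\fq$ as required); this yields $H(A_{\ff_n}) = K(\ff_n)$, the ray class field mod $\ff_n$. Since $A_{\fp^{n+2}} \subset A_{\ff_n}$, we get $\fF_n \subset K(\ff_n)$, and hence the conductor of $\fF_n/K$ divides $\ff_n$. Because conductors of subfields divide those of the containing fields and $F_n \subset \fF_n$, the conductor of $F_n/K$ also divides $\ff_n$; it now suffices to prove a matching lower bound for the conductor of $F_n/K$, which will then force the conductor of $\fF_n/K$ to equal $\ff_n$ as well.

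For this lower bound, the key observation is that $B_{\fP^{n+2}}$ is free of rank one over $\msb/\fP^{n+2}$, so the Galois action defines a faithful character $\chi_n : \Gal(F_n/K) \hookrightarrow (\msb/\fP^{n+2})^\times$, and the conductor of $F_n/K$ equals the conductor of $\chi_n$. Via reciprocity, $\chi_n$ corresponds to the reduction modulo $\fP^{n+2}$ of the Serre--Tate character $\phi$, so I compute the local contributions separately. At $\fp$, since $\rho_\fP$ identifies $\Gal(F_\infty/K)$ with $\CO_\fp^\times$, the kernel on local units at $\fp$ is precisely $1 + \fp^{n+2}$, giving an $\fp$-conductor of exactly $\fp^{n+2}$. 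At all primes outside $\{\fp,\fq\}$, $\phi$ is unramified, and $\chi_n$ inherits this.

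The delicate step is at $\fq$. Since $\phi$ has conductor exactly $\fq$, its restriction to $\CO_{K,\fq}^\times$ factors through a non-trivial character $\BF_q^\times \to \mst^\times$; as $\mst$ contains only $\pm 1$ as roots of unity (noted in the preliminaries), this must be the unique quadratic character $\BF_q^\times \to \{\pm 1\}$. I need to verify that its reduction modulo $\fP^{n+2}$ remains non-trivial for all $n \geq 0$, i.e.\ that $-1 \not\equiv 1 \pmod{\fP^{n+2}}$. Here Lemma~\ref{t3} enters crucially: since $2$ is unramified in $\mst$ and $\fP$ has residue field $\BF_2$ lying above $\fp$, we have $v_\fP(2) = v_\fP(\fp) = 1$, so $-1 = 1 - 2 \not\equiv 1 \pmod{\fP^2}$ and a fortiori $-1 \not\equiv 1 \pmod{\fP^{n+2}}$. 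Thus the quadratic character survives reduction and contributes exactly $\fq$ to the conductor.

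Combining these local contributions, the conductor of $F_n/K$ equals $\fq\fp^{n+2} = \ff_n$; together with the upper bound from Lemma~\ref{g2.6}, this forces the conductor of $\fF_n/K$ to equal $\ff_n$ as well. The main (if mild) obstacle in the argument is the persistence of the quadratic character at $\fq$ after reduction modulo $\fP^{n+2}$, which rests entirely on the structural fact about $\fP$ supplied by Lemma~\ref{t3}; everything else follows by standard local--global conductor bookkeeping.
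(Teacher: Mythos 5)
Your proof is correct, and the upper bound step (applying Lemma~\ref{g2.6} with $\ff = \ff_n$ and using $F_n \subset \fF_n \subset H(A_{\ff_n})$) is identical to the paper's. The divergence is in how the two lower bounds are obtained. For the $\fp$-part you compute directly that the local Artin map kills exactly $1+\fp^{n+2}$ via the isomorphism $\rho_\fP$; the paper instead observes $K_n \subset F_n$ and that $K_n/K$ already has conductor $\fp^{n+2}$ (else $K_n \subset HK_{n-1}$, contradicting the ramification index of $\fp$). These are equivalent in spirit. The genuine difference is at $\fq$: the paper invokes Theorem~\ref{ge} (good reduction of $B$ everywhere over $F$, hence over $F_n$), noting that the Serre--Tate character $\phi \circ N_{F_n/K}$ must then have trivial conductor, which forces $\fq$ to ramify in $F_n$. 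You instead argue purely locally: $\phi|_{\CO_{K,\fq}^\times}$ is the quadratic character with values in $\{\pm 1\} \subset \mst^\times$, and since $\fP$ is unramified over $2$ with residue field $\BF_2$ (Lemma~\ref{t3}), one has $v_\fP(2)=1 < n+2$, so $-1 \not\equiv 1 \pmod{\fP^{n+2}}$ and the character survives reduction. Your route is more elementary and self-contained, avoiding dependence on the good-reduction theorem; the paper's route is shorter given that Theorem~\ref{ge} has already been established and is used elsewhere. One cosmetic caveat: you should write the target of $\chi_n$ as $(\msb_\fP/\fP^{n+2}\msb_\fP)^\times \simeq (\CO_\fp/\fp^{n+2})^\times$ rather than $(\msb/\fP^{n+2})^\times$, and phrase the valuation computation as $v_\fP(2)=1$ (the expression $v_\fP(\fp)$ is not quite meaningful, since $\fp$ is an ideal of $K$, not of $\mst$), but these do not affect the substance.
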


\begin{proof} By the previous lemma, we know that $H(A_{\ff_n})$ is equal to the ray class field of $K$ modulo $\ff_n$. But
$$
F_n \subset \fF_n \subset H(A_{\ff_n}),
$$
and so the conductors of $F_n/K$  and $\fF_n/K$ must both divide $\ff_n$. But $K_n/K$ has conductor equal to $\fp^{n+2}$, since otherwise it would be contained in the field $HK_{n-1}$, and this is impossible because $\fp$ has ramification index $2^n$ in $K_n$.  Since $K_n \subset F_n$, it follows that the conductors of $F_n/K$ and $\fF_n/K$ must both be divisible by $\fp^{n+2}$.
Moreover, $B$ has good reduction everywhere over the field $F_n$, and thus its Grossencharacter over this field, which is $\phi \circ N_{F_n/K}$, must have trivial conductor.
Hence the conductor $\fq$ of $\phi$ must divide the conductor of $F_n/K$.  Similarly, $A$ has good reduction everywhere over $\fF_n$, whence again $\fq$ must divided the conductor of $\fF_n/K$. As $\fp$ and $\fq$ are relatively prime, this completes the proof.
\end{proof}

While our aim is to define a group of elliptic units for each of the fields $F_n = K(B_{\fP^{n+2}})$, we need first to discuss the appropriate group of elliptic units for the fields $\fF_n = H(A_{\fp^{n+2}})$.   Let us introduce the index set
\begin{equation}\label{ni}
\msj = \{\alpha: \, \alpha \in \CO_K, \alpha \neq 0, \pm 1, (\alpha, 6\fq) = 1, \, \alpha \equiv 1 \mod \fp^2 \}.
\end{equation}
The congruence $\alpha \equiv 1 \mod \fp^2$ imposed on the elements of $\msj$ is not strictly necessary, but we will use it to avoid some technical complications in later arguments.
For each $\alpha \in \msj$, we defined the rational function $g_{\alpha,A}(P)$ on $A/H$ by
\begin{equation}\label{2.8}
g_{\alpha,A}(P) = \prod_{V}(x(P) - x(V))^{-1},
\end{equation}
where $V$ runs over any set of representatives of the set of non-zero elements of the Galois module $A_\alpha$ modulo $\pm 1$. Here $P = (x,y)$ is a generic point on \eqref{2.6}. As is shown by a very elementary argument in the Appendix of \cite{coates2},
there exists a unique non-zero $c_\alpha(A) \in H$ such that the normalized function
\begin{equation}\label{2.9}
\fg_{\alpha,A}(P) = c_\alpha(A)g_{\alpha,A}(P)
\end{equation}
satisfies
\begin{equation}\label{2.10}
\fg_{\alpha,A}(\beta(P)) = \prod_{W \in A_\beta}\fg_{\alpha,A}(P\oplus W)
\end{equation}
for all non-zero $\beta$ in $\CO_K$ with $(\beta, \alpha) = 1$; here the symbol $\oplus$ denotes the group law on the elliptic curve $A$. Recalling  that $\CL = \Omega_\infty(A)\CO_K$ is the period lattice of the \Neron differential on our generalized Weierstrass equation \eqref{2.6} for $A/H$, we define a primitive $\sqrt{-q}$-division point on $A$ by
\begin{equation}\label{2.12}
Q = \CW(\Omega_\infty(A)/\sqrt{-q}, \CL).
\end{equation}
We then define
\begin{equation}\label{2.13}
\fR_{\alpha, A}(P) = \prod_{\tau \in \Gal(H(A_\fq)/H)} \fg_{\alpha,A}(P \oplus Q^\tau),
\end{equation}
which is thus a rational function on $A$ with coefficients in $H$. Plainly $ \fR_{\alpha, A}(P)$  depends only on the orbit of $Q$ under the action of $\Gal(H(A_\fq)/H)$, but note that, in view of Lemma \ref{g2.6}, this Galois group does not act transitively on the set of all primitive $\fq$-division points of $A$. It is this which guarantees the all important fact that $\fR_{\alpha, A}(P) \neq
\fR_{\alpha, A}(\ominus P)$, where $\ominus$ denotes the subtraction on the elliptic curve.
It is also important to define intrinsic analogues of the rational function $\fR_{\alpha, A}(P)$
for every conjugate curve  of $A/H$ under the action of the Galois group $\fG = \Gal(H/K)$.   Let $Q(\fa)$ be the $\fq$-division point on $A^\fa$  defined by
\begin{equation}\label{2.14}
Q(\fa) = \CW(\xi(\fa)\Omega_\infty(A)/\sqrt{-q}, \CL_{\fa}),
\end{equation}
and define the rational function on $A^\fa/H$ by
\begin{equation}\label{2.15}
\fR_{\alpha, A^\fa}(P) = \prod_{\tau \in \Gal(H(A^\fa_\fq)/H)} \fg_{\alpha,A^\fa}(P \oplus Q(\fa)^\tau).
\end{equation}
Thanks to Lemma \ref{g2.6}, the fields $H(A^\fa_\fq)$ are all equal to $H(A_\fq)$, and then (see
$\S4$ of \cite{GS}) the Artin symbol of $\fa$ in $\Gal(H(A_\fq)/K)$ maps $Q$ to $Q(\fa)$. It follows easily that, whenever $\fa$ and $\fb$ are integral ideals of $K$,  which are prime to $\fq$ and lie in the same ideal class, then necessarily $\fR_{\alpha, A^\fa}(P) =  \fR_{\alpha, A^\fb}(P) $.

\medskip

We first define the group of elliptic units for the field $\fF_n = H(A_{\fp^{n+2}})$ for any integer $n \geq 0$.  Recall that $V_n$ denotes a primitive
$\fp^{n+2}$-division point on the curve $A$. We then define $C(\fF_n)$ to be the subgroup of
$\fF_n^\times$ which is generated by all conjugates of $\fR_{\alpha, A}(V_n)$ under the action
of the $\Gal(\fF_n/K)$ and all $ \alpha \in \msj$. We shall show below that the elements of $C(\fF_n)$ are indeed global units. We recall that, if $\fc$ is any integral ideal of $K$ prime to $\fp\fq$, $\tau_\fc$ denotes the Artin symbol of $\fc$ in $\Gal(\fF_\infty/K)$. Since $\tau_\fc(A) = A^\fc$ and $\tau_c(V_n) = \eta_A(\fc)(V_n)$, the action of $\Gal(\fF_n/K)$ on $\fR_{\alpha, A}(V_n)$ is given by the following lemma.

\begin{lem}\label{nc0} For all $n \geq 0$, and all integral ideals $\fc$ of $K$ prime to $\fp\fq$, we have
\begin{equation}\label{2.16}
\tau_\fc(\fR_{\alpha, A}(V_n)) = \fR_{\alpha, A^\fc}(\eta_A(\fc)(V_n)).
\end{equation}
\end{lem}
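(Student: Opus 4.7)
The strategy is to unpack the definition of $\fR_{\alpha,A}(V_n)$ as a product of values of $H$-rational functions at specific torsion points, and then track how the Artin symbol $\tau_\fc$ acts separately on (a) the coefficients of the rational functions and (b) the torsion points at which they are evaluated. The key observation is that $\tau_\fc$ restricts to the Artin symbol $\sigma_\fc$ on $H$, so for any rational function $f$ on $A$ defined over $H$ and any point $P$ of $A$ with coordinates in some finite extension of $H$, one has $\tau_\fc(f(P)) = f^{\sigma_\fc}(\tau_\fc P)$, where $f^{\sigma_\fc}$ is the rational function on $A^\fc$ obtained by conjugating the coefficients of $f$.

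First I would verify the intrinsic conjugation of $\fg_{\alpha,A}$. Because $A^\fc$ is obtained by applying $\sigma_\fc$ to the coefficients of \eqref{2.6}, the definition \eqref{2.8} immediately gives $g_{\alpha,A}^{\sigma_\fc} = g_{\alpha,A^\fc}$, and since the normalizing constant $c_\alpha(A) \in H$ is uniquely determined by the functional equation \eqref{2.10}, its conjugate $c_\alpha(A)^{\sigma_\fc}$ is forced to be the normalizing constant $c_\alpha(A^\fc)$ for $A^\fc$. Hence $\fg_{\alpha,A}^{\sigma_\fc} = \fg_{\alpha,A^\fc}$, and consequently $\fR_{\alpha,A}^{\sigma_\fc}$ equals the rational function on $A^\fc$ given by the same formula \eqref{2.13}, but with $Q^\tau$ replaced by $\tau_\fc(Q^\tau)$.

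Next I would handle the points. By the standard property of the Serre–Tate character (the analogue of which is recalled in the discussion preceding \eqref{2.16}), one has $\tau_\fc(V_n) = \eta_A(\fc)(V_n)$ and $\tau_\fc(Q) = Q(\fc)$ since $V_n$ and $Q$ are division points of orders prime to $\fc$. For the conjugates $Q^\tau$ with $\tau \in \Gal(H(A_\fq)/H)$, I would use that $H(A_\fq)$ is abelian over $K$ (Lemma \ref{g2.6}), so $\tau_\fc$ commutes with each such $\tau$ once restricted to this ray class field; consequently $\tau_\fc(Q^\tau) = (\tau_\fc(Q))^\tau = Q(\fc)^\tau$. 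As $\tau$ ranges over $\Gal(H(A_\fq)/H) = \Gal(H(A^\fc_\fq)/H)$, this produces exactly the orbit appearing in the definition of $\fR_{\alpha,A^\fc}$ at the point $\eta_A(\fc)(V_n)$.

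Combining these two observations and using that $\tau_\fc$ is a ring automorphism that distributes over the product, one obtains
\[
\tau_\fc(\fR_{\alpha,A}(V_n)) = \prod_{\tau \in \Gal(H(A_\fq)/H)} \fg_{\alpha,A^\fc}\bigl(\eta_A(\fc)(V_n) \oplus Q(\fc)^\tau\bigr) = \fR_{\alpha,A^\fc}(\eta_A(\fc)(V_n)),
\]
which is the claim. The only subtle point is the commutativity argument in the third step: without the abelianness of $\Gal(H(A_\fq)/K)$, $\tau_\fc$ would conjugate the orbit $\{Q^\tau\}$ rather than translate it to $\{Q(\fc)^\tau\}$. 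Everything else is a direct bookkeeping of the $H$-rationality of the coefficients and the Serre–Tate action on torsion.
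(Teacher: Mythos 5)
Your proposal is correct and follows essentially the same route as the paper, which treats the lemma as an immediate consequence of the CM facts $\tau_\fc(A)=A^\fc$, $\tau_\fc(V_n)=\eta_A(\fc)(V_n)$ and the fact (from \cite{GS}, \S4) that the Artin symbol of $\fc$ sends $Q$ to $Q(\fc)$; you simply make explicit the conjugation of $\fg_{\alpha,A}$ and of the normalizing constant. One small point of wording: since $H(A_\fq)\not\subset \fF_\infty$, the symbol $\tau_\fc\in\Gal(\fF_\infty/K)$ does not literally act on the points $Q^\tau$, so one should fix an extension of $\tau_\fc$ to $H(A_{\fq\fp^{n+2}})$ (e.g.\ the Artin symbol of $\fc$ there); because $\fR_{\alpha,A}$ involves the product over the full $\Gal(H(A_\fq)/H)$-orbit of $Q$, the value $\tau_\fc(\fR_{\alpha,A}(V_n))$ is independent of this choice and your computation goes through verbatim.
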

\noindent We next discuss the behaviour of $C(\fF_n)$ under the norm map $N_{\fF_n/\fF_{n-1}}$
from $\fF_n$ to $\fF_{n-1}$.  Now the Artin symbol of $\fp$ cannot be defined in $\Gal(\fF_\infty/K)$. However, we recall that
$\Gal(\fF_\infty/F_\infty)$ is isomorphic to $\fG$ under restriction, and $\delta$
is the unique element of $\Gal(\fF_\infty/F_\infty)$  whose restriction to $\fG$ is the Artin symbol of $\fp$. Now we have the $H$-isogeny
$$
\eta_A(\fp): A \to A^\fp,
$$
whose kernel is precisely $A_\fp$. Thus $\eta_A(\fp)(V_n)$ must be a primitive $\fp^{n+1}$-division
point on the curve $A^\fp$. On the other hand, it is shown in Theorem 4 of the Appendix of \cite{coates2} that the rational functions $\fg_{\alpha, A}(P)$ behave nicely with respect to isogenies of degree prime to $\alpha$. Since $(\fp, \alpha) = 1$ because $(\alpha, 6) = 1$, the following lemma then follows easily.

\begin{lem} We have
\begin{equation}\label{nc8}
 {\fR}_{\alpha, A^\fp}(\eta_A(\fp)(P)) = \prod_{V \in A_\fp} \fR_{\alpha, A}(P \oplus V).
\end{equation}
\end{lem}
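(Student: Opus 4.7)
My plan is to reduce the identity to the distribution relation of Theorem~4 of the Appendix of \cite{coates2}, which generalizes \eqref{2.10} from endomorphisms to isogenies between elliptic curves whose degree is prime to $\alpha$. Since $(\alpha, 6) = 1$ forces $(\alpha, \fp) = 1$, that relation applies to the $H$-isogeny $\eta_A(\fp):A \to A^\fp$ with kernel $A_\fp$, yielding for any point $R$ on $A$
\begin{equation*}
\prod_{V \in A_\fp} \fg_{\alpha, A}(R \oplus V) \;=\; \fg_{\alpha, A^\fp}\bigl(\eta_A(\fp)(R)\bigr).
\end{equation*}

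First I would unfold the definition \eqref{2.13} of $\fR_{\alpha,A}$ on the right-hand side and swap the two products:
\begin{equation*}
\prod_{V \in A_\fp} \fR_{\alpha, A}(P \oplus V)
\;=\; \prod_{\tau \in \Gal(H(A_\fq)/H)} \prod_{V \in A_\fp} \fg_{\alpha, A}\bigl(P \oplus Q^\tau \oplus V\bigr).
\end{equation*}
Applying the cited distribution relation with $R = P \oplus Q^\tau$, and using that $\eta_A(\fp)$ is a group homomorphism, the inner product over $V$ collapses to $\fg_{\alpha, A^\fp}\bigl(\eta_A(\fp)(P)\oplus \eta_A(\fp)(Q^\tau)\bigr)$.

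Next I would identify $\eta_A(\fp)(Q^\tau)$ with $Q(\fp)^\tau$. On complex points, the isogeny $\eta_A(\fp)$ corresponds, via the Weierstrass uniformizations $\CW(\cdot, \CL)$ and $\CW(\cdot, \CL_\fp)$, to multiplication by $\xi(\fp)$: indeed its pullback of invariant differentials is $\xi(\fp)\omega$, while $\CL_\fp = \xi(\fp)\Omega_\infty(A)\fp^{-1}$. Applied to $Q = \CW(\Omega_\infty(A)/\sqrt{-q}, \CL)$ this gives $\eta_A(\fp)(Q) = \CW(\xi(\fp)\Omega_\infty(A)/\sqrt{-q}, \CL_\fp) = Q(\fp)$ by the defining formula \eqref{2.14}. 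Since $\eta_A(\fp)$ is defined over $H$, applying $\tau$ to both sides yields $\eta_A(\fp)(Q^\tau) = Q(\fp)^\tau$ for every $\tau \in \Gal(H(A_\fq)/H)$.

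Finally, by Lemma \ref{g2.6} we have $H(A^\fp_\fq) = H(A_\fq)$, so the Galois groups indexing $\fR_{\alpha,A}$ and $\fR_{\alpha, A^\fp}$ agree, and reassembling the outer product reconstructs $\fR_{\alpha, A^\fp}(\eta_A(\fp)(P))$, as required. The only non-formal step is invoking the generalized distribution relation; I expect this to be the main (though already available) ingredient, while the identification $\eta_A(\fp)(Q) = Q(\fp)$ via the Weierstrass parametrization and the equality $H(A^\fp_\fq) = H(A_\fq)$ are essentially bookkeeping.
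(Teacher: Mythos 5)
Your proof is correct and follows the same route as the paper, which simply cites Theorem 4 of the Appendix of \cite{coates2} (the distribution relation for $\fg_{\alpha}$ under isogenies of degree prime to $\alpha$, applicable since $(\alpha,6)=1$ gives $(\alpha,\fp)=1$) and says the lemma "follows easily." Your unfolding of \eqref{2.13}, the identification $\eta_A(\fp)(Q^\tau)=Q(\fp)^\tau$ via $\eqref{m4}$-type reasoning, and the appeal to Lemma \ref{g2.6} are exactly the bookkeeping the paper leaves implicit.
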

\noindent Now for $n \geq 1$, we have $[\fF_n:\fF_{n-1}] = 2$ and the conjugates of $V_n$ under the action of $\Gal(\fF_n/\fF_{n-1})$ are precisely the $V_n \oplus V$ with $V \in A_\fp$. Hence
we immediately obtain

\begin{cor} For all $n \geq 1$, we have $N_{\fF_n/\fF_{n-1}}(\fR_{\alpha, A}(V_n)) =  {\fR}_{\alpha, A^\fp}(\eta_A(\fp)(V_n))$.
\end{cor}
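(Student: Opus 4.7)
The proof should be essentially a one-line combination of the immediately preceding lemma with the description of the Galois action on $V_n$, so my plan is to spell out exactly that. First I would observe that the rational function $\fR_{\alpha,A}$ has coefficients in $H$, hence in particular in $\fF_{n-1}$, so that for any $\sigma \in \Gal(\fF_n/\fF_{n-1})$ we have $\sigma(\fR_{\alpha,A}(V_n)) = \fR_{\alpha,A}(\sigma V_n)$. By definition of the norm map, this gives
\[
N_{\fF_n/\fF_{n-1}}(\fR_{\alpha,A}(V_n)) = \prod_{\sigma \in \Gal(\fF_n/\fF_{n-1})} \fR_{\alpha,A}(\sigma V_n).
\]

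Next, I would invoke the fact (stated just before the corollary) that $[\fF_n:\fF_{n-1}] = 2$ and that the Galois conjugates of $V_n$ under $\Gal(\fF_n/\fF_{n-1})$ are precisely the translates $V_n \oplus V$ as $V$ ranges over $A_\fp$ (which has order $2$ since $\fp$ has residue degree one over $\BQ$). Substituting, the product on the right becomes $\prod_{V \in A_\fp} \fR_{\alpha,A}(V_n \oplus V)$. Finally, applying the previous lemma \eqref{nc8} with $P = V_n$ identifies this product with $\fR_{\alpha, A^\fp}(\eta_A(\fp)(V_n))$, giving the claimed equality.

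There is no real obstacle here; the only points that need to be verified carefully are (i) that $\fR_{\alpha,A}$ really has $H$-rational coefficients, which follows from its construction out of the $H$-rational function $\fg_{\alpha,A}$ and the orbit sum over $\Gal(H(A_\fq)/H)$ applied to the fixed division point $Q$, and (ii) the matching of the two $2$-element sets $\{\sigma V_n : \sigma \in \Gal(\fF_n/\fF_{n-1})\}$ and $\{V_n \oplus V : V \in A_\fp\}$, which holds because both consist of primitive $\fp^{n+2}$-division points of $A$ differing by an element of $A_\fp$, and the Galois group of $\fF_n/\fF_{n-1}$ acts on $A_{\fp^{n+2}}$ through the subgroup $1+\fp^{n+1}$ modulo $1+\fp^{n+2}$ of $(\CO_\fp/\fp^{n+2})^\times$, which is exactly translation by $A_\fp$.
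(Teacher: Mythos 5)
Your argument is correct and is exactly the one the paper uses: expand the norm as a product over $\Gal(\fF_n/\fF_{n-1})$-conjugates of $V_n$, identify these conjugates with the translates $V_n \oplus V$ for $V \in A_\fp$, and then apply the preceding lemma \eqref{nc8} with $P = V_n$. Your extra justification in point (ii), using that $\Gal(\fF_n/\fF_{n-1})$ corresponds to $(1+\fp^{n+1})/(1+\fp^{n+2})$ under $\rho_\fp$, is a correct unpacking of the assertion the paper states without proof.
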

\noindent On the other hand, we have already fixed in section 2 (see \eqref{y1.9}) the convention that we have chosen the primitive $\fp^{n+1}$-division point $V_{n-1}$ so that $\delta(V_{n-1}) = \eta_A(\fp)(V_n)$. Hence the above corollary can be rewritten as giving
$$
N_{\fF_n/\fF_{n-1}}(\fR_{\alpha, A}(V_n)) = \delta(\fR_{\alpha, A}(V_{n-1})) \, \, \, (n \geq 1).
$$
Hence we immediately obtain the theorem.

\begin{thm}\label{nc1} For all $n \geq 1$, we have
$$
N_{\fF_n/\fF_{n-1}}(\delta^{-n}(\fR_{\alpha, A}(V_n))) = \delta^{-(n-1)}(\fR_{\alpha, A}(V_{n-1})).
$$
\end{thm}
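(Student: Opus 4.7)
The plan is to deduce this essentially as a direct reformulation of the corollary displayed just above the theorem, which reads
\[
N_{\fF_n/\fF_{n-1}}(\fR_{\alpha, A}(V_n)) = \fR_{\alpha, A^\fp}(\eta_A(\fp)(V_n)).
\]
The intermediate step, already noted in the surrounding text, is to rewrite the right hand side using the compatibility \eqref{y1.9}. Namely, since $\delta$ restricts to the Artin symbol $\sigma_\fp$ on $H$, we have $\delta(A) = A^\fp$ and the conjugate by $\delta$ of a rational function on $A/H$ evaluated at a point $P$ is the same function on $A^\fp/H$ evaluated at $\delta(P)$. Applying this to $P = V_{n-1}$ and using $\delta(V_{n-1}) = \eta_A(\fp)(V_n)$ gives
\[
\fR_{\alpha, A^\fp}(\eta_A(\fp)(V_n)) = \delta(\fR_{\alpha, A}(V_{n-1})),
\]
so the corollary becomes $N_{\fF_n/\fF_{n-1}}(\fR_{\alpha, A}(V_n)) = \delta(\fR_{\alpha, A}(V_{n-1}))$.

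The remaining point is to pull the factor $\delta^{-n}$ through the norm. Here the key fact is that $\fF_\infty/K$ is an abelian extension, since $H/K$ is abelian and, by the classical theory of complex multiplication applied to the CM elliptic curve $A/H$, the field $K(A_{\fp^\infty}) = K \cdot \fF_\infty$ is abelian over $K$. Consequently $\Gal(\fF_\infty/K)$ is abelian, so $\delta \in \Gal(\fF_\infty/F_\infty)$ commutes with every element of $\Gal(\fF_n/\fF_{n-1})$. Hence $\delta^{-n}$ commutes with the norm map $N_{\fF_n/\fF_{n-1}}$.

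Putting these together:
\[
N_{\fF_n/\fF_{n-1}}\bigl(\delta^{-n}(\fR_{\alpha, A}(V_n))\bigr)
= \delta^{-n}\bigl(N_{\fF_n/\fF_{n-1}}(\fR_{\alpha, A}(V_n))\bigr)
= \delta^{-n}\bigl(\delta(\fR_{\alpha, A}(V_{n-1}))\bigr)
= \delta^{-(n-1)}(\fR_{\alpha, A}(V_{n-1})),
\]
which is the required norm compatibility.

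There is no real obstacle here; the content of the statement is entirely in the previous corollary, and the $\delta$-twist is merely a renormalization that turns the system $\{\fR_{\alpha, A}(V_n)\}_{n \geq 0}$, which satisfies a norm relation twisted by $\delta$, into a genuine norm compatible sequence $\{\delta^{-n}(\fR_{\alpha, A}(V_n))\}_{n \geq 0}$. The only ingredient I would take a moment to double check when writing this out carefully is that $\delta$ and the norm commute, i.e.\ the abelianness of $\Gal(\fF_\infty/K)$; this is standard but worth flagging since everything that follows in the Iwasawa-theoretic construction of the elliptic unit Euler system will rest on the fact that these elements form a genuine norm coherent system up the tower $\fF_\infty/\fF_0$.
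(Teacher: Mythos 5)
Your argument is correct and is essentially the paper's own proof: the displayed corollary together with the convention \eqref{y1.9} (and the Galois-equivariance of the $\fR$-functions under $\delta$) gives $N_{\fF_n/\fF_{n-1}}(\fR_{\alpha, A}(V_n)) = \delta(\fR_{\alpha, A}(V_{n-1}))$, and twisting by $\delta^{-n}$ then yields the theorem exactly as you write. The one point you flag, that $\delta$ commutes with the norm, is indeed the implicit step, and rather than a general appeal to CM theory it follows at once from Lemma \ref{g2.6}, since $\fF_n \subset H(A_{\fq\fp^{n+2}})$ is contained in the ray class field of $K$ modulo $\fq\fp^{n+2}$, so $\Gal(\fF_\infty/K)$ is abelian.
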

\begin{cor} Every element of $C(\fF_n)$ is a global unit.
\end{cor}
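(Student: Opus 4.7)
The plan is to show that each generator $\fR_{\alpha,A}(V_n)$ of $C(\fF_n)$, together with all its Galois conjugates, is a global unit by combining the divisor description of $\fR_{\alpha,A}$ as a rational function on $A$ with the fact that $A$ has good reduction everywhere over $\fF_n$ (Theorem \ref{ge}).

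First I would compute the divisor of $\fR_{\alpha,A}$ on $A$. The divisor of $g_{\alpha,A}(P) = \prod_V (x(P)-x(V))^{-1}$ is supported on $A_\alpha$, with a zero of order $N(\alpha)-1$ at $O$ and a simple pole at each non-zero point of $A_\alpha$, and $\fg_{\alpha,A}$ has the same divisor. Consequently $\fR_{\alpha,A}(P) = \prod_\tau \fg_{\alpha,A}(P\oplus Q^\tau)$ has divisor supported on the $\sqrt{-q}$-division points $\{\ominus Q^\tau\}$ and their $A_\alpha$-translates, i.e.\ on torsion points whose order divides $\alpha\sqrt{-q}$. Crucially, $O$ itself is \emph{not} in this support, since $(\alpha,\sqrt{-q})=1$ prevents any $Q^\tau$ or $Q^\tau\ominus V$ (with $V\in A_\alpha\setminus\{O\}$) from being $O$. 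Since $V_n$ has order $\fp^{n+2}$ and $(\fp,\alpha\sqrt{-q})=1$, the point $V_n$ avoids the support, so $\fR_{\alpha,A}(V_n) \in \fF_n^\times$ is a well-defined non-zero element.

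Next I would show that $v(\fR_{\alpha,A}(V_n)) = 0$ for every finite place $v$ of $\fF_n$, by reducing modulo $v$ and analyzing cases. At places $v$ not dividing $\alpha\fp\fq$, all the relevant torsion points have order prime to the residue characteristic, reduction is injective on them, the supports of the reduced divisor and of $\widetilde V_n$ remain disjoint in $\widetilde A$, and the reduced value lies in the residue field as a unit. At places $v$ above $\fp$, the point $V_n$ lies in the formal group and reduces to $O$, which however is not in the support of the divisor of $\fR_{\alpha,A}$; so the function extends regularly through $O$ with value $\fR_{\alpha,A}(O) = \prod_\tau \fg_{\alpha,A}(Q^\tau)$, a product of evaluations of $\fg_{\alpha,A}$ at $\sqrt{-q}$-division points outside $A_\alpha$, and a formal-group Taylor expansion around $O$ identifies $\fR_{\alpha,A}(V_n)$ with this value times a unit in the local ring at $v$. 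The case $v\mid\fq$ is symmetric, with the roles of $V_n$ and the $Q^\tau$ swapped relative to reduction to $O$.

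The delicate case is $v\mid\alpha$, where the $\alpha$-torsion fails to reduce injectively and the naive divisor argument collapses; I expect this to be the main obstacle. The resolution is the very reason for the normalization constant $c_\alpha(A)$ in $\fg_{\alpha,A} = c_\alpha(A)\,g_{\alpha,A}$: as explained in the Appendix of \cite{coates2}, $c_\alpha(A)$ is pinned down precisely so that $\fg_{\alpha,A}$ becomes a Robert-type elliptic unit whose value at any torsion point of order prime to $\alpha$ is a global unit, a fact established via the theta-function representation and the distribution relation \eqref{2.10}. Accepting this, $\fg_{\alpha,A}(V_n\oplus Q^\tau)$ is a global unit for every $\tau$, and hence so is $\fR_{\alpha,A}(V_n)$. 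Finally, by Lemma \ref{nc0} every $\Gal(\fF_n/K)$-conjugate of this element has the form $\fR_{\alpha,A^\fc}(\eta_A(\fc)(V_n))$ on a conjugate curve $A^\fc$ (which also has good reduction everywhere over $\fF_n$, and whose period lattice is $\xi(\fa)\Omega_\infty(A)\fa^{-1}$), so the entire case analysis applies verbatim. This will complete the proof that every element of $C(\fF_n)$ is a global unit.
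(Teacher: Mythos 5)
Your argument reaches the right conclusion, but by a genuinely different route from the paper, and it needs more classical input than you acknowledge. The paper disposes of \emph{every} prime of $\fF_n$ not lying above $\fp$ in one stroke: by Theorem \ref{nc1} the elements $\delta^{-n}(\fR_{\alpha,A}(V_n))$ form a norm-compatible system in the tower $\fF_\infty/\fF_n$, and since each such prime is unramified in $\fF_\infty$ with decomposition group of finite index, the universal norm property (Lemma 5 of \cite{coates2}) forces its valuation on $\fR_{\alpha,A}(V_n)$ to vanish --- no divisor computation and no case distinction between $v\mid\fq$, $v\mid\alpha$ and the remaining places. At the primes above $\fp$ the paper then simply evaluates the unit power series of Lemma \ref{u} at $t_w(V_n)\in\fm_{H,w}$. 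Your proof instead performs the local analysis place by place, via the divisor of $\fR_{\alpha,A}$, reduction of torsion points, and the unit properties of the normalized Robert-type functions; this is more explicit, but it buys nothing that the norm relation does not already give, and it imports the deeper classical facts about values of $\fg_{\alpha,A}$ that the paper's argument deliberately avoids away from $\fp$.

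Two points in your sketch need repair before it is complete. First, at $v\mid\fq$ the fixed minimal model \eqref{2.6} over $H$ has discriminant generating $(-q^3)$, so it is \emph{not} a good model at $v$ even though $A$ has good reduction over $\fF_n$ there; in particular $c_\alpha(A)$ is not a $v$-unit for that model. Your ``symmetric'' case therefore requires passing to a good Weierstrass model over $\fF_{n,v}$ and using the (standard, but unstated) fact that the normalized function $\fg_{\alpha,A}$ is independent of the chosen model. Second, the fact you attribute to the Appendix of \cite{coates2} is over-stated: it is false that $\fg_{\alpha,A}$ takes global unit values at \emph{every} torsion point of order prime to $\alpha$ --- at a point of order $\fp^{m}$ the value fails to be a unit at the places above $\fp$, which is precisely why the construction translates by the $\fq$-division point $Q$. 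What is true, and all you need at $v\mid\alpha$, is that the value is a unit at places not dividing the order of the point (here the order is $\fq\fp^{n+2}$, prime to $\alpha$); this is the classical valuation computation in which the non-unit constant $c_\alpha(A)$ is balanced against the $x$-coordinates of the $\alpha$-torsion lying in the kernel of reduction. With these two points made precise your argument does prove the corollary.
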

\noindent Indeed, since every prime of $\fF_n$ which does not lie above $\fp$ is unramified in $\fF_\infty$
and its decomposition group is of finite index, it follows easily from the universal norm property
of Theorem \ref{nc1} (see Lemma 5 of \cite{coates2}) that the only primes which can possibly divide $\fR_{\alpha, A}(V_n)$ must divide
$\fp$. On the other hand, the following classical lemma shows that  $\fR_{\alpha, A}(V_n)$
is a unit at each prime $w$ of $H$ above $\fp$ since the power series in Lemma \ref{u}, in the special case when $\fb = \CO_K$, will certainly converge at $t_w(V_n)$ to a unit at $w$. Let $\fb$ be any integral ideal of $K$ prime to $\fq$.  Recall that  $P_\fb = (x_\fb, y_\fb)$ is the generic point on the equation
for $A^\fb$, and that $t_{\fb, w} = -x_\fb/y_\fb$ is a parameter for the formal group of $A^\fb$ at $w$, where $w$ is any prime of $H$ above $\fp$.

\begin{lem}\label{u} For each integral ideal $\fb$ of $K$ prime to $\fq$, the $t_{\fb, w}$-expansion of $\fR_{\alpha, A^\fb}(P_\fb)$ is a unit in the power series ring $\CO_{H,w}[[t_{\fb, w}]]$.
\end{lem}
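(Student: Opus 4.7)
The plan is to show, for each $\tau$, that $\fg_{\alpha, A^\fb}(P_\fb \oplus Q(\fb)^\tau)$ has $t_{\fb,w}$-expansion lying in $\CO_{H,w}[[t_{\fb,w}]]$ with a unit constant term; the product \eqref{2.15} defining $\fR_{\alpha, A^\fb}(P_\fb)$ is then a unit in $\CO_{H,w}[[t_{\fb,w}]]$. The preliminary point is that $A^\fb$ has good reduction at $w$: the discriminant of the minimal equation \eqref{2.6} for $A$ is a unit at $w$ (since $w\mid \fp$ and $\fp\nmid \fq$), and the same holds after applying $\sigma_\fb$. Consequently $\wh{A^\fb_w}$ is a genuine formal group with parameter $t_{\fb,w}$, and the usual Laurent expansions of $x_\fb, y_\fb$ in $t_{\fb,w}$ have non-pole coefficients in $\CO_{H,w}$.

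I would next carry out a torsion analysis. The point $Q(\fb)^\tau$ has exact order $\fq$, and every non-trivial $V \in A^\fb_\alpha$ appearing in \eqref{2.8} has order dividing $\alpha$. Because $(\alpha,6\fq)=1$, both orders are odd, hence prime to the residue characteristic $2$ of $w$, so reduction modulo $w$ is injective on the finite subgroup generated by $Q(\fb)^\tau$ and all such $V$. In particular all these points have $(x_\fb,y_\fb)$-coordinates in $\CO_{H,w}$, reduce to nontrivial points of the reduction of $A^\fb$, and any two of them with coincident reduction would be equal — forcing a common torsion order dividing both $\fq$ and $\alpha$, hence triviality. Thus $x_\fb(Q(\fb)^\tau)-x_\fb(V)\in\CO_{H,w}^\times$ for every such $V$.

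By standard formal group theory, translation by the integral point $Q(\fb)^\tau\neq O$ is given by a power-series map with coefficients in $\CO_{H,w}$ sending the formal neighborhood of $O$ to that of $Q(\fb)^\tau$; concretely, $x_\fb(P_\fb\oplus Q(\fb)^\tau), y_\fb(P_\fb\oplus Q(\fb)^\tau)\in \CO_{H,w}[[t_{\fb,w}]]$ with constant terms $x_\fb(Q(\fb)^\tau), y_\fb(Q(\fb)^\tau)$. Plugging into $g_{\alpha, A^\fb}(P)=\prod_V(x_\fb(P)-x_\fb(V))^{-1}$, each factor has constant term the unit $(x_\fb(Q(\fb)^\tau)-x_\fb(V))^{-1}$ and so is a unit in $\CO_{H,w}[[t_{\fb,w}]]$; hence so is the product $g_{\alpha, A^\fb}(P_\fb\oplus Q(\fb)^\tau)$.

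The step I expect to be the main obstacle is to pass from $g_{\alpha, A^\fb}$ to the normalized $\fg_{\alpha, A^\fb}=c_\alpha(A^\fb)\cdot g_{\alpha, A^\fb}$, i.e.\ to show $\ord_w(c_\alpha(A^\fb))=0$. For this I would use the defining functional equation \eqref{2.10}: rewritten, it yields
\[c_\alpha(A^\fb)^{N\beta-1}\cdot\prod_{W\in A^\fb_\beta}g_{\alpha, A^\fb}(P\oplus W)=g_{\alpha, A^\fb}(\beta P)\]
for any $\beta\in\CO_K$ with $(\beta,\alpha)=1$. Evaluating at a torsion point $P$ whose order is coprime to $2\alpha\beta$ and applying the argument of the previous paragraphs to each of the torsion points $\beta P$ and $P\oplus W$ (all of odd order, hence reducing injectively at $w$), one sees that both the right-hand side and the product on the left lie in $\CO_{H,w}^\times$; thus $c_\alpha(A^\fb)^{N\beta-1}\in\CO_{H,w}^\times$, and choosing any $\beta$ with $N\beta\neq 1$ forces $\ord_w(c_\alpha(A^\fb))=0$. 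Combining all of the above yields the lemma.
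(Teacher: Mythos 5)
Your proposal is correct, and its main body is precisely the classical argument that the paper invokes by citing the proof of Lemma 8 of \cite{coates2}: good reduction of $A^\fb$ at $w \mid \fp$, integrality of the $t_{\fb,w}$-expansions of the coordinates of $P_\fb \oplus Q(\fb)^\tau$ because $Q(\fb)^\tau$ reduces to a nontrivial point, and unit constant terms $x_\fb(Q(\fb)^\tau)-x_\fb(V)$ because all the torsion points involved have order prime to $2$, so reduction at $w$ is injective on them and a common value would force an order dividing both $\fq$ and $(\alpha)$. (Since each individual factor a priori has coefficients in the integers of an unramified extension of $H_w$, you should add the one-line remark that $\fR_{\alpha, A^\fb}$ is a rational function with coefficients in $H$, so the full expansion does lie in $\CO_{H,w}[[t_{\fb,w}]]$.) Where you genuinely depart from the paper is the normalization constant: the paper simply quotes from the Appendix of \cite{coates2} the identity $c_\alpha(A^\fb)^{12} = \Delta(A^\fb)^{N\alpha-1}/\alpha^{12}$, which gives $\ord_w(c_\alpha(A^\fb))=0$ at once since $(\fp, \Delta(A^\fb)\alpha)=1$, whereas you extract the same fact from the distribution relation \eqref{2.10} evaluated at auxiliary odd-order torsion points, concluding that $c_\alpha(A^\fb)^{N\beta-1}$ is a unit at $w$ and hence so is $c_\alpha(A^\fb)$. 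Your route is more self-contained, using only the defining property of the normalization, at the cost of an auxiliary choice of $\beta$ and a second round of torsion analysis; the paper's route is shorter but rests on the explicit formula for $c_\alpha(A^\fb)^{12}$.

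One small point you should make explicit in the last step: the element $\beta$ must be chosen prime to $2$ (for instance $\beta=3$ works, since $(\alpha, 6\fq)=1$ gives $(\beta,\alpha)=1$ and $N\beta = 9 \neq 1$). If $\beta$ were even, $A_\beta$ would contain $2$-power torsion, the points $P \oplus W$ need not have odd order, and the injectivity-of-reduction argument for the factors $x_\fb(P\oplus W)-x_\fb(V)$ would break down; with $\beta$ odd and the order of $P$ prime to $2\alpha\beta$, everything you wrote goes through.
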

\begin{proof} This is a well known classical argument (see the proof of
Lemma 8 of \cite{coates2}) and we omit the details. Note that we also use the fact that $c_{A^\fb}(\lambda)$ is a unit in $\CO_{H, w}$ because it is shown in the Appendix to \cite{coates2}
that $c_{\alpha}(A^\fb)^{12} = \Delta(A^\fb)^{(N\alpha - 1)}/\alpha^{12}$, where $\Delta(A^\fb)$
is the discriminant of our global minimal equation for $A^\fb$, and we have $(\fp, \Delta(A^\fb)\alpha) = 1$.
\end{proof}

\medskip

We now turn to the fields $F_n = K(B_{\fP^{n+2}})$, where, of course, the only natural thing to do is to define the group of elliptic units $C(F_n)$ of $F_n$ by
\begin{equation}\label{nc2}
C(F_n) = N_{\fF_n/F_n}(C(\fF_n)),
\end{equation}
where $N_{\fF_n/F_n}$ denotes the norm map from $\fF_n$ to $F_n$. Thus, writing
\begin{equation}\label{nc3}
u_{\alpha, n} = N_{\fF_n/F_n}(\fR_{\alpha, A}(V_n)),
\end{equation}
these $u_{\alpha, n}$ are global units in $F_n$, and, since the restriction of $\delta$
to $\fF_n$ lies in $\Gal(\fF_n/F_n)$, we conclude immediately from Theorem \ref{nc1} that
\begin{equation}\label{nc4}
N_{F_n/F_{n-1}}(u_{\alpha, n}) = u_{\alpha, n-1} \, \, \, (n \geq 1).
\end{equation}
We write
\begin{equation}\label{eu}
u_{\alpha, \infty} = (u_{\alpha, n})
\end{equation}
for this norm compatible system of elliptic units. However, unlike the situation for the field $\fF_\infty$, it seems that these units $u_{\alpha, n}$ cannot be obtained for all $n \geq 0$ by evaluating a single rational function on $A$ at the point $V_n$. All we can do in this direction is the following. Recall that $\fC_n$ denotes a set of integral
ideals of $K$ prime to $\fp\fq$ whose Artin symbols in $\Gal(\fF_n/K)$ give precisely $\Gal(\fF_n/F_n)$, and define the rational function $D_{\alpha, n}(P)$ on $A/H$ by
\begin{equation}\label{nc5}
D_{\alpha, n}(P) = \prod_{\fc \in \fC_n}\fR_{\alpha, A^\fc}(\eta_A(\fc)(P)).
\end{equation}

\begin{lem}\label{nc6} For all integers $n$ and $k$ with $n \geq 0$ and $0 \leq k \leq n$, we have
$D_{\alpha, n}(V_k) = u_{\alpha, k}.$
\end{lem}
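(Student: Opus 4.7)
The plan is to reduce the product defining $D_{\alpha,n}(V_k)$ to the Galois-norm $N_{\fF_k/F_k}(\fR_{\alpha,A}(V_k))$ by reinterpreting the ideals in $\fC_n$ as representatives of $\Gal(\fF_k/F_k)$ via restriction.

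First I would rewrite each factor of $D_{\alpha,n}(V_k)$ using Lemma \ref{nc0}. The content of that lemma is that for any primitive $\fp^{m+2}$-division point $V$ and any ideal $\fc$ prime to $\fp\fq$ one has $\tau_\fc(\fR_{\alpha, A}(V)) = \fR_{\alpha, A^\fc}(\eta_A(\fc)(V))$. Applying this with $V = V_k$ gives
\[
D_{\alpha,n}(V_k) \;=\; \prod_{\fc \in \fC_n}\fR_{\alpha,A^\fc}(\eta_A(\fc)(V_k)) \;=\; \prod_{\fc \in \fC_n}\tau_\fc(\fR_{\alpha,A}(V_k)),
\]
where each $\tau_\fc$ is restricted to $\fF_k$.

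Second, I would verify that as $\fc$ runs over $\fC_n$ the restrictions $\tau_\fc|_{\fF_k}$ range exactly once over $\Gal(\fF_k/F_k)$. By definition of $\fC_n$ (see \eqref{y1.6}--\eqref{y1.7}), each $\fc \in \fC_n$ satisfies $\phi(\fc) \equiv 1 \mod \fP^{n+2}$, and in particular $\phi(\fc) \equiv 1 \mod \fP^{k+2}$ because $k \le n$, so $\tau_\fc$ fixes $F_k$ and thus $\tau_\fc|_{\fF_k} \in \Gal(\fF_k/F_k)$. Moreover, the restriction maps $\Gal(\fF_n/F_n) \to \Gal(H/K)$ and $\Gal(\fF_k/F_k) \to \Gal(H/K)$ are both isomorphisms (as noted right after \eqref{y1.7}), and by construction $\{\tau_\fc|_H : \fc \in \fC_n\}$ is exactly $\Gal(H/K) = \fG$. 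Composing these isomorphisms, the map $\fc \mapsto \tau_\fc|_{\fF_k}$ is a bijection onto $\Gal(\fF_k/F_k)$.

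Combining these two steps,
\[
D_{\alpha,n}(V_k) \;=\; \prod_{\sigma \in \Gal(\fF_k/F_k)} \sigma(\fR_{\alpha,A}(V_k)) \;=\; N_{\fF_k/F_k}(\fR_{\alpha,A}(V_k)) \;=\; u_{\alpha,k},
\]
which is the desired equality. I do not expect any serious obstacle here: the only point that deserves care is the bookkeeping with the two index sets $\fC_n$ and $\fC_k$, and this is handled cleanly by the fact that both $\Gal(\fF_n/F_n)$ and $\Gal(\fF_k/F_k)$ map isomorphically to $\fG$ under restriction, so that the congruence $\phi(\fc) \equiv 1 \mod \fP^{n+2}$ forces $\fC_n$ to give representatives of $\Gal(\fF_k/F_k)$ for every $k \le n$.
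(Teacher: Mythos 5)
Your proof is correct and is essentially the same argument the paper gives: the paper's one-line proof invokes Lemma \ref{nc0} together with the fact that $\Gal(\fF_n/F_n)$ is isomorphic under restriction to $\Gal(\fF_k/F_k)$ for $0 \leq k \leq n$, which is precisely the content you spell out. You simply make explicit the reindexing of the Galois product and the identification of the resulting product with the norm $N_{\fF_k/F_k}(\fR_{\alpha,A}(V_k)) = u_{\alpha,k}$.
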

\begin{proof} This clear from Lemma \ref{nc0} and the fact that $\Gal(\fF_n/F_n)$ is isomorphic
under restriction to $\Gal(\fF_k/F_k)$ for $0 \leq k \leq n$. \end{proof}
\begin{lem}\label{nc7} For each prime $w$ of $H$ above $\fp$, the rational function
$D_{\alpha, n}(P)$ has a $t_w$-expansion $d_{\alpha,n}(t_w)$ which lies in $\CO_{H,w}[[t_w]]$,
and is a unit in this ring.
\end{lem}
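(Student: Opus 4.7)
The plan is to reduce the claim to Lemma \ref{u} applied to each factor of the product defining $D_{\alpha,n}(P)$, and then show that the substitution coming from the isogeny $\eta_A(\fc)$ preserves the property of being a unit power series in $\CO_{H,w}[[t_w]]$. By the definition
\[
D_{\alpha, n}(P) = \prod_{\fc \in \fC_n}\fR_{\alpha, A^\fc}(\eta_A(\fc)(P)),
\]
it suffices, since $\fC_n$ is finite and the ring $\CO_{H,w}[[t_w]]$ is closed under taking products of units, to prove the claim for each factor separately.

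Fix $\fc \in \fC_n$. By Lemma \ref{u}, the $t_{\fc, w}$-expansion of $\fR_{\alpha, A^\fc}(P_\fc)$ is a unit in $\CO_{H,w}[[t_{\fc, w}]]$; call it $f_\fc(t_{\fc,w})$. The isogeny $\eta_A(\fc): A \to A^\fc$ is defined over $H$ and has kernel $A_\fc$. Since by hypothesis $\fc$ is prime to $\fp$ (every $\fc \in \fC_n$ is prime to $\fp\fq$), the kernel $A_\fc$ is étale over $\CO_{H, w}$, and therefore $\eta_A(\fc)$ induces an isomorphism of formal groups
\[
\wh{\eta_{A,\fc,w}} : \wh{A_w} \xrightarrow{\sim} \wh{A^\fc_w}
\]
over $\CO_{H, w}$. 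In particular, writing $t_{\fc,w}$ in terms of $t_w$ gives a power series $\wh{\eta_{A,\fc,w}}(t_w) = \mu_\fc t_w + O(t_w^2)$ with $\mu_\fc \in \CO_{H,w}^\times$ (compare \eqref{u3.2}, where the analogous statement for $\fp$ shows the constant of $t_w$ lies in a uniformizer precisely because $\fp$ ramifies; here, with $\fc$ prime to $\fp$, it is a unit).

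The $t_w$-expansion of $\fR_{\alpha, A^\fc}(\eta_A(\fc)(P))$ is then obtained by formal substitution
\[
f_\fc\bigl(\wh{\eta_{A,\fc,w}}(t_w)\bigr) \in \CO_{H,w}[[t_w]].
\]
Since $\wh{\eta_{A,\fc,w}}(t_w)$ has zero constant term, this composition is a well-defined power series, and its constant term equals $f_\fc(0)$, which is a unit because $f_\fc$ is a unit in $\CO_{H,w}[[t_{\fc,w}]]$. Hence each factor is a unit in $\CO_{H,w}[[t_w]]$, and the product $d_{\alpha,n}(t_w)$ over $\fc \in \fC_n$ is also a unit in this ring, as required.

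The only delicate point is verifying that the formal group map $\wh{\eta_{A,\fc,w}}$ is an isomorphism over $\CO_{H,w}$ (and not merely over the generic fiber), which rests on the fact that $\fc$ is prime to $\fp$, so that $\eta_A(\fc)$ has degree prime to the residue characteristic at $w$; this is the genuine input from arithmetic, while everything else is formal manipulation with power series.
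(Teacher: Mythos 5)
Your proof is correct and follows essentially the same route as the paper: apply Lemma \ref{u} to each factor $\fR_{\alpha, A^\fc}(P_\fc)$, substitute the formal power series $t_{\fc,w}=\wh{\eta_{A,\fc,w}}(t_w)\in\CO_{H,w}[[t_w]]$ with zero constant term, note the constant term of the composite is the unit $f_\fc(0)$, and take the finite product over $\fc\in\fC_n$. The only remark is that your claim that $\wh{\eta_{A,\fc,w}}$ is an isomorphism of formal groups is not needed for this, and its stated justification is slightly off --- the ideals $\fc\in\fC_n$ are only required to be prime to $\fp\fq$, so $N\fc$ may be divisible by $2$ via $\fp^*$; what is true (and suffices, though unnecessary here) is that the kernel $A_\fc$ meets the formal group trivially because $(\fc,\fp)=1$ and $\wh{A_w}$ carries only $\fp$-power torsion.
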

\begin{proof} By Lemma \ref{u}, for each $\fc \in \fC_n$, the rational function $\fR_{\alpha, A^\fc}(P_\fc)$ has a $t_{\fc,w}$-expansion which is a unit in $\CO_{H,w}[[t_{\fc, w}]]$. Now the isogeny $\eta_{A}(\fc) : A \to A^\fc$ induces a homomorphism of formal groups
$$
\widehat{\eta_{A,\fc,w}}: \widehat{A_w} \to \widehat{A^\fc_w},
$$
where $\widehat{A_w}$ and $\widehat{A^\fc_w}$ denote the respective formal groups of $A$ and $A^\fc$ at $w$. In particular, such a homomorphism is realized by a formal power series $t_{\fc,w} = \widehat{\eta_{A,\fc,w}}(t_w)$ in $\CO_{H, w}[[t_w]]$ with $\widehat{\eta_{A,\fc,w}}(0) = 0$. Substituting this formal power series in the unit power series expansion in $\CO_{H, w}[[t_{\fc,w}]]$ of $\fR_{\alpha, A^\fc}(P_\fc)$, and taking the product over all $\fc \in \fC_n$, the assertion of the lemma follows.
\end{proof}

Now the power series ring $\CO_{H, w}[[t_w]]$ is a regular local ring of dimension  2, which is complete for the topology defined by the powers of its maximal ideal $\fm.$

\begin{prop}\label{g3.3}  For each prime $w$ of $H$ above $\fp$, the sequence of unit power series $d_{\alpha,m}(t_w) \, (m=0, 1, \cdots)$ in $\CO_{H, w}[[t_w]]$ converges to a unique unit power series $d_{\alpha, \infty}(t_w)$ in
$\CO_{H, w}[[t_w]]$, satisfying $d_{\alpha, \infty}(t_w(V_n)) = u_{\alpha, n}$ for all $n \geq 0$.
\end{prop}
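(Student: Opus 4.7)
The strategy is to prove the quantitative comparison
\begin{equation*}
d_{\alpha,m+1}(t_w)/d_{\alpha,m}(t_w) \;\in\; 1+\fp^{m+2}\CO_{H,w}[[t_w]] \qquad (m \geq 0),
\end{equation*}
from which the proposition follows easily. Indeed, the ring $R = \CO_{H,w}[[t_w]]$ is complete for its $\fm$-adic topology with $\fm = (\pi_w, t_w)$, each $d_{\alpha,m}(t_w)$ is already a unit by Lemma \ref{nc7}, and $\fp^{m+2}R \subset \fm^{m+2}$, so the partial products converge to a unit $d_{\alpha, \infty}(t_w) \in R^\times$. The interpolation identity $d_{\alpha, \infty}(t_w(V_n)) = u_{\alpha, n}$ is then automatic: for any fixed $n$, Lemma \ref{nc6} gives $d_{\alpha, m}(t_w(V_n)) = u_{\alpha, n}$ as soon as $m \geq n$, and evaluation at $t_w(V_n)$, which lies in the maximal ideal of the ring of integers of a finite extension of $H_w$ because $V_n$ is a point of the formal group $\wh{A_w}$, is continuous for the relevant topology.

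To verify the congruence, I would compare $d_{\alpha, m+1}$ and $d_{\alpha, m}$ ideal class by ideal class. Given a class, pick $\fc_m \in \fC_m$ and $\fc_{m+1} \in \fC_{m+1}$ representing it, and write $\fc_{m+1} = \fc_m \cdot (\lambda)$ with $\lambda \in K^\times$. After multiplying $\lambda$ by $\pm 1$ we may assume $\lambda \equiv 1 \mod \fq$, so that $\phi((\lambda)) = \lambda$ by the defining property of the Gr\"ossencharacter of conductor $\fq$. Combining $\phi(\fc_m) \equiv 1 \mod \fP^{m+2}$ with $\phi(\fc_{m+1}) \equiv 1 \mod \fP^{m+3}$, and using the identification $\CO_{\mst, \fP} = \CO_{K, \fp}$ from Lemma \ref{t3}, this forces $\lambda \in 1 + \fp^{m+2}\CO_{K, \fp}$. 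Since $\fc_m$ and $\fc_{m+1}$ lie in the same ideal class, $A^{\fc_{m+1}} = A^{\fc_m}$; tracing through the complex-analytic formulas gives $\xi(\fc_{m+1}) = \lambda \xi(\fc_m)$ and $\CL_{\fc_{m+1}} = \CL_{\fc_m}$, so $Q(\fc_{m+1}) = Q(\fc_m) \oplus W_m$, where $W_m \in A^{\fc_m}$ corresponds to $(\lambda-1)\xi(\fc_m)\Omega_\infty(A)/\sqrt{-q}$ modulo $\CL_{\fc_m}$. In particular $W_m$ reduces to the identity modulo every prime of $H$ above $\fp$, and its local parameter at $w$ lies in $\fp^{m+2}$. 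Finally, at the level of formal groups one has $\wh{\eta_A(\fc_{m+1})} = \wh{\eta_A(\fc_m)} \circ [\lambda]$, and the Lubin--Tate series $[\lambda](t_w) \in R$ satisfies $[\lambda](t_w) - t_w \in \fp^{m+2} R$ by continuity of $\mu \mapsto [\mu]$.

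Plugging these two modifications into the definition \eqref{nc5} of the $\fc_{m+1}$-factor of $d_{\alpha, m+1}$ and expanding both $P \mapsto P \oplus W_m$ and $P \mapsto [\lambda](P)$ around the identity while using that each $\fg_{\alpha, A^{\fc_m}}(\,\cdot \oplus Q(\fc_m)^\tau)$ has $t_w$-expansion which is a unit in $R$, one obtains a factor in $1 + \fp^{m+2} R$ for each ideal class; multiplying over the $h$ classes stays in $1 + \fp^{m+2} R$, proving the desired congruence. The main obstacle I anticipate is the careful first-order expansion in the last step: one must verify that both the precomposition by $[\lambda]$ and the translation of the base point by $W_m$ contribute only $\fp^{m+2} R$-errors, and not some cruder estimate that would fail to give Cauchy-ness. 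The cleanest packaging is the auxiliary fact that for any unit $f(t) \in R^\times$ and any point $V$ of the formal group with $t_w(V) \in \fp^k$, the ratio $f(t +_{\wh{A_w}} t_w(V))/f(t)$ lies in $1 + \fp^k R$; applying this to the translation by $W_m$ and to the reparametrization $t_w \leadsto [\lambda](t_w)$ reduces the whole bookkeeping to standard continuity of formal group operations.
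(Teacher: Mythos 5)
Your overall plan (make the successive terms close, invoke completeness of $\CO_{H,w}[[t_w]]$, then get the interpolation property by continuity of evaluation at $t_w(V_n)$) is the same Cauchy-sequence idea as the paper's, but the quantitative congruence you base it on, namely $d_{\alpha,m+1}(t_w)/d_{\alpha,m}(t_w)\in 1+\fp^{m+2}\CO_{H,w}[[t_w]]$, is not established by your argument and is in fact the wrong kind of estimate. First, the translation step is faulty: $W_m$ is a $\fq$-division point, and since $\fq$ is prime to $\fp$ the non-trivial $\fq$-torsion is \'etale at $w$ and injects under reduction modulo $w$; so $W_m$ does not lie in the formal group $\wh{A_w}$ at all, and its parameter is certainly not in $\fp^{m+2}$. (Nor can you force $\lambda\equiv 1 \mod \fq$ by a sign change: $(\CO_K/\fq)^\times$ has order $q-1>2$; what a sign change buys is only $\phi((\lambda))=\lambda$.) This particular difficulty can be removed, because, as remarked after \eqref{2.15}, $\fR_{\alpha,A^\fa}$ depends only on the ideal class of $\fa$: the Artin symbol of $(\lambda)$ lies in $\Gal(H(A_\fq)/H)$, so $Q(\fc_{m+1})$ lies in the same orbit as $Q(\fc_m)$ and the two base-point products coincide exactly, no estimate needed. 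The second step, however, is a genuine error: for $\lambda=1+2^{m+2}u$ one has $[\lambda](t)=t\,[+]\,[2^{m+2}u](t)$, and since $\wh{A_w}$ has height one, $[2](t)\equiv (\mathrm{unit})\,t^{2} \mod \fm_{H,w}$, whence $[2^{m+2}u](t)\equiv(\mathrm{unit})\,t^{2^{m+2}} \mod \fm_{H,w}$. Thus $[\lambda](t_w)-t_w$ is not divisible by $2$, let alone by $2^{m+2}=\fp^{m+2}$ in $\CO_{H,w}[[t_w]]$; ``continuity of $\mu\mapsto[\mu]$'' only gives smallness in the $(2,t_w)$-adic topology (the difference lies in the ideal generated by $[2^{m+2}](t_w)\subset\fm^{m+3}$). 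The same objection applies to your auxiliary fact: $f(t\,[+]\,c)/f(t)$ lies in $1+c\cdot R'$, an $\fm$-adic bound, not in $1+\fp^{k}R$. So the claimed $2$-adic congruence between consecutive $d_{\alpha,m}$ is unsupported (and, comparing with the shape of the actual difference, false in general), and the Cauchy property has not been proved as written.

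The paper gets the needed $\fm$-adic closeness much more directly: by Lemma \ref{nc6}, for $m_2\geq m_1$ the difference $d_{\alpha,m_2}(t_w)-d_{\alpha,m_1}(t_w)$ vanishes at the points $t_w(V_k)$, $0\leq k\leq m_1$, and at all their conjugates over $H_w$, hence is divisible in $\CO_{H,w}[[t_w]]$ by the monic (distinguished) polynomial $P_{m_1}(t_w)$ with exactly those roots; since $P_{m_1}(t_w)\to 0$ in the $\fm$-adic topology, completeness gives the limit, and the interpolation property passes to the limit. If you wish to salvage your route, replace the asserted congruence modulo $\fp^{m+2}$ by the correct statement that the class-by-class ratio lies in $1+\fm^{m+2}$ (using the ideal-class invariance of $\fR_{\alpha,A^\fa}$ together with $[\lambda](t_w)\equiv t_w \mod ([2^{m+2}](t_w))$); that weaker, $\fm$-adic version of your estimate does suffice for convergence and is essentially a reparametrized form of the paper's divisibility argument.
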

\begin{proof} For all $m_2 \geq m_1$, it follows from Lemma \ref{nc6} that the power series
$$
d_{\alpha, m_2}(t_w) - d_{\alpha, m_1}(t_w)
$$
vanishes at the points $t_w(V_k) \, (k=0, \cdots, m_1)$ and all conjugates of these points over $H_w$. Thus $d_{\alpha, m_2}(t_w) - d_{\alpha, m_1}(t_w)$ is divisible in $\CO_{H,w}[[t_w]]$ by the monic polynomial $P_{m_1}(t_w)$ whose roots are given by  all conjugates of the $t_w(V_k) (k = 0, \cdots, m_1)$ over $H_w$. Since it is easily seen that $P_{m_1}(t_w)$ tends to zero in the $\fm$-adic topology as $m_1 \to \infty$, it follows by completeness that the limit power series $d_{\alpha, \infty}(t_w) = \lim_{m \to \infty} d_{\alpha,m}(t_w)$ exists, and satisfies $d_{\alpha, \infty}(t_w(V_n)) = u_{\alpha, n}$ for all $n \geq 0$. This completes the proof.

\end{proof}

\section {Canonical measures attached to elliptic units.}

The aim of this section is to show how to relate norm compatible systems of elliptic units in the
tower $F_\infty /F$ to the the complex $L$-values $L(\bar{\phi}^k, k)$ for all integers  $k \geq 1$, again broadly following the method introduced in \cite{CW3}.  As always, $\delta$ always denotes the unique element of $\Gal(\fF_\infty/F_\infty)$ whose restriction to $\fG$ is the
Artin symbol of $\fp$. Thus, for any place $w$ of $H$ above $\fp$, we can view $\delta$ as a generator of $\Gal(H_w/K_\fp)$. If $J(P)$
is any rational function on $A/H$, we shall write $J^\delta(P_\fp)$ for the rational function on
$A^\fp/H$ obtained by applying $\delta$ to the coefficients of $J(P)$.

We first establish the following analogue of \eqref{nc8}.

\begin{lem} For all $n \geq 0$, we have
\begin{equation}\label{5.1}
D_{\alpha,n}^\delta(\eta_A(\fp)(P)) = \prod_{V \in A_\fp} D_{\alpha, n}(P \oplus V).
\end{equation}
\end{lem}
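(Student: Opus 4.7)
The plan is to unwind the definition of $D_{\alpha,n}$, apply $\delta$-conjugation factor by factor, and then apply the already proved identity \eqref{nc8} to each factor separately, reducing to a double product that can be reordered.

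First I would observe how $\delta$-conjugation acts on each ingredient of $D_{\alpha,n}(P) = \prod_{\fc \in \fC_n}\fR_{\alpha, A^\fc}(\eta_A(\fc)(P))$. Since $\delta|_H = \sigma_\fp$, conjugation by $\delta$ sends $A^\fc$ to $A^{\fc\fp}$ and the $\fq$-division point $Q(\fc)$ to $Q(\fc\fp)$ (as recorded after \eqref{2.15}), so $\fR_{\alpha, A^\fc}(P)^\delta = \fR_{\alpha, A^{\fc\fp}}(P)$. Similarly, the isogeny $\eta_A(\fc)$ (which comes from the endomorphism $\phi(\fc)$ of $B$) becomes $\eta_{A^\fp}(\fc): A^\fp \to A^{\fc\fp}$ under $\delta$. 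Evaluating at $\eta_A(\fp)(P) \in A^\fp$ and using the composition $\eta_{A^\fp}(\fc)\circ \eta_A(\fp) = \eta_A(\fc\fp)$, I obtain
$$D_{\alpha,n}^\delta(\eta_A(\fp)(P)) = \prod_{\fc \in \fC_n}\fR_{\alpha, A^{\fc\fp}}(\eta_A(\fc\fp)(P)).$$

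Next I would apply the intrinsic version of \eqref{nc8}, with $A$ replaced by $A^\fc$ and the isogeny $\eta_{A^\fc}(\fp): A^\fc \to A^{\fc\fp}$ (whose proof is identical since only the fact that $\fp$ is prime to $\alpha$ is used). Writing $\eta_A(\fc\fp)(P) = \eta_{A^\fc}(\fp)(\eta_A(\fc)(P))$, this gives
$$\fR_{\alpha, A^{\fc\fp}}(\eta_A(\fc\fp)(P)) = \prod_{W \in A^\fc_\fp}\fR_{\alpha, A^\fc}(\eta_A(\fc)(P) \oplus W).$$
Because $\fc$ is prime to $\fp$, the isogeny $\eta_A(\fc)$ is étale at $\fp$ and restricts to a group isomorphism $A_\fp \xrightarrow{\sim} A^\fc_\fp$, so I may substitute $W = \eta_A(\fc)(V)$ with $V$ running over $A_\fp$. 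Since $\eta_A(\fc)$ is a homomorphism, $\eta_A(\fc)(P) \oplus \eta_A(\fc)(V) = \eta_A(\fc)(P \oplus V)$, and the right-hand side becomes $\prod_{V \in A_\fp}\fR_{\alpha, A^\fc}(\eta_A(\fc)(P \oplus V))$.

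Assembling these, I get
$$D_{\alpha,n}^\delta(\eta_A(\fp)(P)) = \prod_{\fc \in \fC_n}\prod_{V \in A_\fp}\fR_{\alpha, A^\fc}(\eta_A(\fc)(P \oplus V)) = \prod_{V \in A_\fp}D_{\alpha,n}(P \oplus V),$$
which is the claim. The only mildly delicate step is the bookkeeping in the first paragraph: correctly identifying how $\delta$ acts on the data $(A^\fc, Q(\fc), \eta_A(\fc))$ that define the factors of $D_{\alpha,n}$, and checking that the composed isogeny $\eta_{A^\fp}(\fc)\circ \eta_A(\fp)$ really agrees with $\eta_A(\fc\fp)$ on the nose; both are consequences of the fact that the $\eta$'s come from a single Serre–Tate character $\phi$ on $B$.
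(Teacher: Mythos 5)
Your proof is correct and follows essentially the same route as the paper: applying $\delta$ factor by factor via the isogeny identity $\eta_{A^\fp}(\fc)\circ\eta_A(\fp)=\eta_{A^\fc}(\fp)\circ\eta_A(\fc)=\eta_A(\fc\fp)$, invoking the behaviour of the $\fR$-functions under the isogeny $\eta_{A^\fc}(\fp)$ (Theorem 4 of \cite{coates2}, as in \eqref{nc8}), and using that $\eta_A(\fc)$ maps $A_\fp$ isomorphically onto $A^\fc_\fp$ since $(\fc,\fp)=1$. Your extra bookkeeping on how $\delta$ acts on the data $(A^\fc, Q(\fc), \eta_A(\fc))$ just makes explicit what the paper compresses into ``whence it follows easily.''
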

\begin{proof}  For each $\fc \in \fC_n$, we have the equality of isogenies
$$
\eta_{A^\fp}(\fc) \circ \eta_A(\fp) = \eta_{A^\fc}(\fp) \circ \eta_A(\fc) = \eta_A(\fc\fp),
$$
whence it follows easily that
\begin{equation}\label{5.2}
D_{\alpha,n}^\delta(\eta_A(\fp)(P)) = \prod_{\fc \in \fC_n}\fR_{\alpha, A^{\fc\fp}}(\eta_{A^\fc}(\fp)(\eta_A(\fc)(P)).
\end{equation}
Since $\fp$ is prime to $\alpha$, the good behaviour of the $R$-functions with respect to isogeny (see Theorem 4 of \cite{coates2}) shows that
$$
 \fR_{\alpha, A^{\fc\fp}}(\eta_{A^\fc}(\fp)(P_\fc)) = \prod_{W \in A_\fp^\fc}\fR_{\alpha, A^\fc}(P_\fc \oplus W).
 $$
 The assertion of the lemma then follows on noting that the isogeny $\eta_A(\fc)$ maps $A_\fp$ isomorphically to $A^\fc_\fp$
because of our assumption that $(\fc, \fp) = 1$ for $\fc \in \fC_n$. This completes the proof.
\end{proof}

As in the previous section, we write $d_{\alpha,n}(t_w)$ for the formal power series expansion of the rational function $D_{\alpha,n}(P)$ on $A/H$ in terms of the parameter $t_w$ of the formal group of $A$ at $w$, say $d_{\alpha,n}(t_w) = \sum_{k\geq0}e_{\alpha,n}(k)t_w^k.$  It  is then clear that the rational function $D_{\alpha,n}^\delta(P_\fp )$ on $A^\fp$ has the expansion in terms of the parameter $t_{\fp,w}$ of the formal group of $A^\fp$ at $w$ given by $d_{\alpha,n}^\delta(t_{\fp,w}) = \sum_{k\geq0}\delta(e_{\alpha,n}(k))t_{\fp,w}^k.$ Moreover, since $d_{\alpha, \infty}(t_w) =
\lim_{n \to \infty} d_{\alpha,n}(t_w)$, we see that $\lim_{n \to \infty} d_{\alpha,n}^\delta(t_{\fp,w})$
also exists, and we denote this limit by  $d_{\alpha, \infty}^\delta(t_{\fp,w})$. All of these formal power series are, of course, units. Then the key first step for constructing our measures is to define, for $0 \leq n \leq \infty$,  the power series
\begin{equation}\label{5.3}
\fD_{\alpha, n, w}(t_w) = d_{\alpha, n}(t_w)^2/d_{\alpha, n}^\delta(\widehat{\eta_{A,\fp,w}}(t_w)),
\end{equation}
where, as before,  $\widehat{\eta_{A,\fp,w}} : \widehat{A}_w \to \widehat{A^\fp_w}$ is the map
between formal groups induced by the isogeny $\eta_A(\fp)$.

\begin{lem}\label{5.4} For $0 \leq n \leq \infty$, the power series $\fD_{\alpha, n, w}(t_w)$ lies in $1+\fm_{H, w}[[t_w]]$, where $\fm_{H, w}$ denotes the maximal ideal of $\CO_{H,w}$.
\end{lem}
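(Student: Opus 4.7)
The plan is to use identity \eqref{5.1} to rewrite $\fD_{\alpha,n,w}(t_w)$ as a ratio whose numerator and denominator become congruent modulo $w$, which will give $\fD_{\alpha,n,w}(t_w) \equiv 1 \bmod w$, exactly the desired conclusion.

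First I would translate \eqref{5.1} into an identity of formal power series in $t_w$. Writing $F(X,Y) \in \CO_{H,w}[[X,Y]]$ for the formal group law of $\wh{A}_w$ and using $\wh{\eta_{A,\fp,w}}(t_w)$ as the expansion of the parameter $t_{\fp,w}$, one obtains
$$d_{\alpha,n}^\delta\bigl(\wh{\eta_{A,\fp,w}}(t_w)\bigr) \;=\; \prod_{V \in A_\fp} d_{\alpha,n}\bigl(F(t_w, t_w(V))\bigr).$$
Since $N\fp = 2$, we have $A_\fp = \{O, V'\}$ for a unique nontrivial point $V'$; as $t_w(O) = 0$ and $F(X, 0) = X$, the identity collapses to
$$d_{\alpha,n}^\delta\bigl(\wh{\eta_{A,\fp,w}}(t_w)\bigr) \;=\; d_{\alpha,n}(t_w)\cdot d_{\alpha,n}\bigl(F(t_w, t_w(V'))\bigr),$$
so that $\fD_{\alpha,n,w}(t_w) = d_{\alpha,n}(t_w)\,/\,d_{\alpha,n}\bigl(F(t_w, t_w(V'))\bigr)$.

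Second I would verify that $t_w(V') \in \fm_{H,w}$. By the corollary following Lemma \ref{lt}, the Galois group of $H_w(A_{\fp^\infty})/H_w$ is isomorphic to $\CO_\fp^\times = \BZ_2^\times$ and acts on $A_\fp$ through the trivial quotient $(\CO_\fp/\fp)^\times$; hence $A_\fp \subset A(H_w)$ and $t_w(V') \in H_w$. Since $V'$ lies in the kernel of the formal group homomorphism $\wh{\eta_{A,\fp,w}}$, whose expansion by \eqref{u3.2} has the form $m\cdot t_w + (\text{unit})\cdot t_w^2 + \cdots$ with $m$ a uniformizer of $H_w$, the unique nonzero root must lie in $\fm_{H,w}$.

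The conclusion follows at once: because $t_w(V') \in \fm_{H,w}$ and $F(X,Y) \equiv X + Y$ modulo $(X,Y)^2$, we have $F(t_w, t_w(V')) \equiv t_w$ modulo $\fm_{H,w}\cdot \CO_{H,w}[[t_w]]$; and because $d_{\alpha,n}$ is a unit in $\CO_{H,w}[[t_w]]$ by Lemma \ref{nc7}, this forces
$$d_{\alpha,n}\bigl(F(t_w, t_w(V'))\bigr) \;\equiv\; d_{\alpha,n}(t_w) \pmod{\fm_{H,w}\cdot \CO_{H,w}[[t_w]]},$$
so $\fD_{\alpha,n,w}(t_w) \in 1 + \fm_{H,w}[[t_w]]$. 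The case $n = \infty$ is handled identically, using Proposition \ref{g3.3} in place of Lemma \ref{nc7} to guarantee that $d_{\alpha,\infty}(t_w)$ is also a unit in $\CO_{H,w}[[t_w]]$. I expect the main delicate point to be the bookkeeping in converting \eqref{5.1} into the formal power series identity above, so as to correctly match the formal group law $F$ on $\wh{A}_w$ with the group law $\oplus$ used on $A$ in \eqref{5.1}; once this translation is clean, the reduction modulo $w$ is immediate.
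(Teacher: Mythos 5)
Your proof is correct, but it takes a genuinely different route from the paper's. The paper proves the congruence by a direct coefficient computation in characteristic $2$: it uses that $\delta$ acts as the Frobenius $x\mapsto x^2$ on the residue field $\CO_{H,w}/\fm_{H,w}$, together with $\widehat{\eta_{A,\fp,w}}(t_w)\equiv t_w^2\bmod\fm_{H,w}$, so that both $d_{\alpha,n}^\delta\bigl(\widehat{\eta_{A,\fp,w}}(t_w)\bigr)$ and $d_{\alpha,n}(t_w)^2$ reduce to $\sum_k e_{\alpha,n}(k)^2 t_w^{2k}$ modulo $\fm_{H,w}$ (the latter by the freshman's dream, since $2\in\fm_{H,w}$), whence their ratio is $\equiv 1$. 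You instead invoke the distribution relation \eqref{5.1} to factor $d_{\alpha,n}^\delta\bigl(\widehat{\eta_{A,\fp,w}}(t_w)\bigr)=d_{\alpha,n}(t_w)\cdot d_{\alpha,n}\bigl(F(t_w,t_w(V'))\bigr)$, then show $t_w(V')\in\fm_{H,w}$, so $F(t_w,t_w(V'))\equiv t_w$ and $d_{\alpha,n}\bigl(F(t_w,t_w(V'))\bigr)\equiv d_{\alpha,n}(t_w)$ modulo $\fm_{H,w}[[t_w]]$. The paper's argument is purely formal and isolates the role of the prime $p=2$ via the Frobenius; yours is more structural, exploiting that $\fD_{\alpha,n,w}$ is built precisely to kill the $\fp$-torsion contribution, and avoids the explicit appeal to Frobenius and the square map on coefficients. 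One small caveat you should make explicit: for $n=\infty$, the relation \eqref{5.1} is stated for the rational functions $D_{\alpha,n}$ with finite $n$, so the factorization must be carried over to $n=\infty$ by passing to the $\fm$-adic limit (as the paper does in its treatment of \eqref{5.8}); Proposition \ref{g3.3} alone guarantees that $d_{\alpha,\infty}$ is a unit but not the distribution identity itself.
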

\begin{proof} Since $\phi$ is the Serre-Tate character of $B/K$, the reduction of the endomorphism $\phi(\fp)$ of $B$ must be the Frobenius endomorphism of the reduction of $B$ modulo $\fp$, from which it follows easily that
\begin{equation}\label{5.5}
\widehat{\eta_{A, \fp, w}}(t_w) \equiv t_w^2 \mod \, \fm_{H, w}.
\end{equation}
Hence
$$
d_{\alpha, n}^\delta(\widehat{\eta_{A,\fp,w}}(t_w)) = \sum_{k=0}^{\infty}\delta(e_{\alpha,n}(k))(\widehat{\eta_{A,\fp,w}}(t_w))^k \equiv \sum_{k=0}^{\infty}e_{\alpha,n}(k)^2t_w^{2k} \mod \, \fm_{H, w}.
$$
On the other hand, $d_{\alpha, n}(t_w)^2$ has $t_w$-expansion
$$
(\sum_{k=0}^{\infty}e_{\alpha, n}(k)t_w^k)^2 \equiv \sum_{k=0}^{\infty}e_{\alpha,n}(k)^2t_w^{2k} \mod \, \fm_{H, w},
$$
whence the power series \eqref{5.3} does indeed lie in $1+\fm_{H, w}[[t_w]]$, as claimed.
\end{proof}

\begin{lem}\label{5.6} For $0 \leq n \leq \infty$, the power series $J_{\alpha,n,w}(t_w) = \frac{1}{2}log(\fD_{\alpha, n, w}(t_w))$ lies in $\CO_{H, w}[[t_w]]$, and satisfies the identity
\begin{equation}\label{5.7}
\sum_{V \in A_\fp}J_{\alpha,n,w}(t_w[+]t_w(V)) = 0,
\end{equation}
where $[+]$ denotes the group law of the formal group of $\widehat{A_w}$.
\end{lem}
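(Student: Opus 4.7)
The plan is to verify the two assertions separately, with the integrality resting on the fact that $2$ is a uniformizer of $\CO_{H,w}$, and the vanishing identity being a formal-power-series transcription of the norm relation \eqref{5.1}.

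For integrality, first I would observe that $H_w/K_\fp$ is unramified by Lemma \ref{lt}, while $K_\fp/\BQ_2$ is unramified of degree $1$ because $\fp$ splits in $K$; hence $H_w/\BQ_2$ is unramified and $\fm_{H,w} = 2\CO_{H,w}$. By Lemma \ref{5.4} I can then write $\fD_{\alpha,n,w}(t_w) = 1 + 2 g(t_w)$ for some $g \in \CO_{H,w}[[t_w]]$. Expanding the formal logarithm,
\begin{equation*}
\log \fD_{\alpha,n,w}(t_w) = \sum_{k \geq 1} (-1)^{k+1} \frac{(2g(t_w))^k}{k},
\end{equation*}
and noting $v_2((2g)^k/k) \geq k - v_2(k) \geq 1$ for every $k \geq 1$, I conclude $\log \fD_{\alpha,n,w}(t_w) \in 2\CO_{H,w}[[t_w]]$, so $J_{\alpha,n,w}(t_w) \in \CO_{H,w}[[t_w]]$.

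For the vanishing identity, since $\log$ converts products to sums it is enough to prove
\begin{equation*}
\prod_{V \in A_\fp} \fD_{\alpha,n,w}(t_w [+] t_w(V)) = 1.
\end{equation*}
Writing $t_V = t_w(V)$ and using \eqref{5.3}, the left-hand side equals
\begin{equation*}
\frac{\prod_{V \in A_\fp} d_{\alpha,n}(t_w [+] t_V)^2}{\prod_{V \in A_\fp} d_{\alpha,n}^\delta(\wh{\eta_{A,\fp,w}}(t_w [+] t_V))}.
\end{equation*}
The numerator will be handled by passing the identity \eqref{5.1} into $t_w$-expansions at $w$: this yields
\begin{equation*}
\prod_{V \in A_\fp} d_{\alpha,n}(t_w [+] t_V) = d_{\alpha,n}^\delta(\wh{\eta_{A,\fp,w}}(t_w)),
\end{equation*}
so the numerator is $d_{\alpha,n}^\delta(\wh{\eta_{A,\fp,w}}(t_w))^2$. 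For the denominator, $\wh{\eta_{A,\fp,w}}$ is a homomorphism of formal groups over $\CO_{H,w}$ whose kernel corresponds precisely to $A_\fp$, so $\wh{\eta_{A,\fp,w}}(t_V) = 0$ and hence $\wh{\eta_{A,\fp,w}}(t_w [+] t_V) = \wh{\eta_{A,\fp,w}}(t_w)$ for every $V \in A_\fp$; since $|A_\fp| = N\fp = 2$, the denominator is also $d_{\alpha,n}^\delta(\wh{\eta_{A,\fp,w}}(t_w))^2$, giving the desired cancellation.

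The step I expect to require the most care is the transfer of the rational-function identity \eqref{5.1} into the corresponding identity of formal power series. This amounts to showing that the $t_w$-expansion of $D_{\alpha,n}(P \oplus V)$ is obtained by substituting $t_w [+] t_V$ into the expansion of $D_{\alpha,n}$, and likewise that the $t_w$-expansion of $D_{\alpha,n}^\delta \circ \eta_A(\fp)$ is $d_{\alpha,n}^\delta(\wh{\eta_{A,\fp,w}}(t_w))$, both of which use the explicit description of $\wh{\eta_{A,\fp,w}}$ via Lubin-Tate theory (Lemma \ref{lt}) and the definition of the formal group law $[+]$ on $\wh{A_w}$.
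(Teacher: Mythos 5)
Your proof is correct and takes essentially the same approach as the paper: integrality follows from Lemma \ref{5.4} together with the unramifiedness of $H_w/\BQ_2$, and the vanishing identity is reduced to $\prod_{V \in A_\fp}\fD_{\alpha, n, w}(t_w [+] t_w(V)) = 1$, which you derive from the power-series transcription of \eqref{5.1}. The only detail to add is the case $n = \infty$: since \eqref{5.1} is a rational-function identity for finite $n$, the identity $\prod_{V \in A_\fp}\fD_{\alpha, \infty, w}(t_w [+] t_w(V)) = 1$ must be obtained by passing to the limit from the finite-$n$ cases (as the paper points out), using $d_{\alpha,\infty} = \lim_{n \to \infty} d_{\alpha,n}$.
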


\begin{proof} The first assertion follows immediately from Lemma \ref{5.4} because $H_w$ is an unramified extension of $\BQ_2$. Moreover, for $0 \leq n \leq \infty$, Lemma \ref{5.1} shows that
\begin{equation}\label{5.8}
\prod_{V \in A_\fp}\fD_{\alpha, n}(t_w [+] t_w(V)) = 1.
\end{equation}
But then we conclude that this identity must also hold for $n = \infty$ by passage to the limit as $n \to \infty$. The second assertion of the lemma now follows on taking logarithms, and noting that the points in $A_\fp$ do indeed lie on the formal group $\widehat{A_w}$.
\end{proof}

We now fix an embedding of $H$ into the maximal unramified extension of $K_\fp$ which induces the prime $w$ of $H$. We recall that $\msi$ denotes the ring of integers of the completion of the maximal
unramified extension of $K_\fp$. Since the formal group $\widehat{A_w}$ has height 1, a classical theorem asserts that it is isomorphic over $\msi$ to the formal multiplicative group $\widehat{\BG}_m,$ and we fix such an isomorphism
\begin{equation}\label{5.9}
j_w: \widehat{\BG}_m \simeq \widehat{A_w}.
\end{equation}
Writing $W$ for the parameter of the formal
multiplicative group, the isomorphism $j_w$ can be viewed as being given by a formal power series $t_w = j_w(W)$ in $W$ with coefficients in $\msi$ of the form $j_w(W) = \Omega_w(A) W + \cdots$, where $\Omega_w(A)$ is a unit in $\msi$.  For $0 \leq n \leq \infty$, we can then define the power series
\begin{equation}\label{5.10}
\fJ_{\alpha,n,w}(W) = J_{\alpha,n,w}(j_w(W)),
\end{equation}
which, in view of \eqref{5.7}, satisfies
\begin{equation}\label{5.11}
\sum_{\zeta \in \mu_2}\fJ_{\alpha, n,w} (\zeta(1+W)-1) = 0,
\end{equation}
where $\mu_2 = \{\pm1\}.$ Recall that $\Lambda_{\msi}(\CO_\fp)$ (resp. $\Lambda_{\msi}(\CO_\fp^\times)$)
denotes the ring of $\msi$-valued measures on $\CO_\fp$ (resp. $\CO_\fp^\times$). Thanks to Mahler's beautiful theorem on the characterization of continuous $2$-adic valued functions on
$\CO_\fp = \BZ_2$, we have the topological ring isomorphism
\begin{equation}\label{5.12}
\BM : \Lambda_{\msi}(\CO_\fp) \simeq \msi[[W]],
\end{equation}
which is defined by $\BM(\mu) = \sum_{n \geq 0}a_n(\mu)W^n$, where $a_n(\mu) = \int_{\CO_\fp}\binom{x}{n}d\mu$ for all $n \geq 0.$ Now we have the inclusion $i: \Lambda_{\msi}(\CO_\fp^\times) \to
\Lambda_{\msi}(\CO_\fp)$ given by extending a measure on $\CO_\fp^\times$ to $\CO_\fp$ by zero.
Moreover, it is well known that a measure $\mu$ belongs to $i(\Lambda_{\msi}(\CO_\fp^\times))$ if and only if $\BM(\mu)$ satisfies the equation
\begin{equation}\label{5.13}
\sum_{\zeta \in \mu_2}\BM(\mu)(\zeta(1+W)-1) = 0.
\end{equation}
In particular, we conclude from \eqref{5.11} that, for all $n$ with $0 \leq n \leq \infty$ there exists a unique measure $\mu_{\alpha,n,w}$
in $\Lambda_{\msi}(\CO_\fp^\times)$ such that $\BM(i(\mu_{\alpha,n,w})) = \fJ_{\alpha,n,w}(W)$.
Recalling that we can  canonically identify the Galois group $G$ with $\CO_\fp^\times$ via the character $\rho_\fP$, we shall in what follows always view the $\mu_{\alpha,n,w}$ as $\msi$-valued measures on $G$. Moreover, we have
\begin{equation}\label{5.14}
\mu_{\alpha,\infty,w} = \lim_{n \to \infty} \mu_{\alpha,n,w}.
\end{equation}
 In the following, when there is no danger of confusion about the place $w$ of $H$ lying above $\fp$, we shall simply write $\mu_{\alpha, \infty}$ for $\mu_{\alpha, \infty, w}$.

\medskip

We recall that we have fixed an embedding of  the field $\mst$ into $\BC$ which extends our embedding of $K$ into $\BC$, so that we can then consider the complex Hecke $L$-functions
$L(\bar{\phi}^k, s)$ for all integers $k \geq 1$. Now the Hecke character $\bar{\phi}^k$ has conductor $\fq$ or $\CO_K$, according as $k$ is odd or even. For all $k \geq 1$, we shall write $L_\fq(\bar{\phi}^k, s)$ for the Euler product with the Euler factor at $\fq$ removed (so that this $L$-series is imprimitive when $k$ is even). Finally, we fix an embedding of the compositum $\mst H$
into the fraction field of $\msi$ which induces the prime $w$ of $H$ and the prime $\fP$ of $\mst$.
This is possible because $H\cap\mst = K$.

\begin{thm}\label{5.15} For all integers $k \geq 1$, the values $\Omega_\infty(A)^{-k}L_\fq(\bar{\phi}^k, k)$ belong to $\mst H$, and we have
\begin{equation}\label{5.16}
\Omega_\fp(A)^{-k}\int_{G} \rho_\fP^kd\mu_{\alpha, \infty} = b_k(\alpha)(k-1)!\Omega_\infty(A)^{-k}L_\fq(\bar{\phi}^k, k)(1-\phi^k(\fp)/N\fp),
\end{equation}
where $b_k(\alpha) = (-1)^{k-1}(\sqrt{-q})^k(\phi^k((\alpha)) - N\alpha)$.
\end{thm}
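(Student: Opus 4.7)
The plan is a Coates--Wiles style computation. First I would use Mahler's theorem and the characterization \eqref{5.13} of measures supported on $\CO_\fp^\times$ to rewrite the $\rho_\fP^k$-integral as
\[
\int_{G}\rho_\fP^k\,d\mu_{\alpha,\infty} \;=\; \bigl(D^k\fJ_{\alpha,\infty,w}(W)\bigr)\big|_{W=0},
\]
where $D=(1+W)\tfrac{d}{dW}$ is the invariant derivation on $\widehat{\BG}_m$. Pulling back along $j_w$, which is an isomorphism of formal groups with $j_w'(0)=\Omega_\fp(A)$, the derivation $D$ becomes $\Omega_\fp(A)\cdot\theta$, where $\theta$ is the invariant derivation of $\widehat{A_w}$. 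Thus the left-hand side of \eqref{5.16} equals $\theta^kJ_{\alpha,\infty,w}(t_w)\big|_{t_w=0}$.

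Next, since $d_{\alpha,m}\to d_{\alpha,\infty}$ in the $\fm$-adic topology of $\CO_{H,w}[[t_w]]$, for every fixed $k$ the $k$-th Taylor coefficient stabilizes, so for $n$ sufficiently large I can replace $J_{\alpha,\infty,w}$ by $J_{\alpha,n,w}$ in the computation. Using \eqref{5.3}, this reduces the problem to computing
\[
\theta^k\log d_{\alpha,n}(t_w)\big|_{t_w=0} \quad\text{and}\quad \tfrac{1}{2}\theta^k\log d_{\alpha,n}^\delta(\widehat{\eta_{A,\fp,w}}(t_w))\big|_{t_w=0}.
\]
The factor of $2$ in the denominator of $J_{\alpha,n,w}$ will cancel with the doubling that comes from $\widehat{\eta_{A,\fp,w}}(t_w)\equiv t_w^2\bmod\fm_{H,w}$: the second term produces $\phi^k(\fp)/N\fp$ times the first, yielding the Euler-factor correction $(1-\phi^k(\fp)/N\fp)$ in \eqref{5.16}. (This mirrors the classical Frobenius trick in the Coates--Wiles construction.)

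Now the main calculation. Recall $D_{\alpha,n}(P)=\prod_{\fc\in\fC_n}\fR_{\alpha,A^\fc}(\eta_A(\fc)(P))$, and each $\fR_{\alpha,A^\fc}$ is regular and nonzero at the origin $O$ of $A^\fc$. The classical logarithmic-derivative formula for the Robert--Ramachandra functions $\fg_{\alpha,A^\fc}$ gives, for each $\tau\in\Gal(H(A^\fc_\fq)/H)$,
\[
\theta_{A^\fc}^k\log\fg_{\alpha,A^\fc}(P\oplus Q(\fc)^\tau)\Big|_{P=O} \;=\; -(k-1)!\,\bigl(\phi(\alpha)^k - N\alpha\bigr)\cdot E_k\bigl(z(Q(\fc)^\tau);\,\CL_\fc\bigr),
\]
where $E_k$ is the Kronecker--Eisenstein series of weight $k$ for the lattice $\CL_\fc$. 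Transporting $\theta^k$ through the isogeny $\eta_A(\fc)$ introduces a factor $\phi(\fc)^k$, and summing over $\fc\in\fC_n$ and $\tau$ produces a partial Hecke $L$-sum. Because $Q(\fc)$ corresponds via the Weierstrass map to $\xi(\fc)\Omega_\infty(A)/\sqrt{-q}$, the factor $(\sqrt{-q})^k$ in $b_k(\alpha)$ arises from expanding $E_k(\xi(\fc)\Omega_\infty(A)/\sqrt{-q};\CL_\fc)$ with respect to the normalized period $\Omega_\infty(A)$, and the sign $(-1)^{k-1}$ absorbs the $-(k-1)!$ together with the relation between $E_k$ and $L_\fq(\bar\phi^k,k)$. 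The algebraicity of $\Omega_\infty(A)^{-k}L_\fq(\bar\phi^k,k)$ follows from the resulting formula, since each term lies in $\mst H$.

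The main obstacle is the bookkeeping in the final step: one must verify, as $n\to\infty$, that the finite sum $\sum_{\fc\in\fC_n}\phi(\fc)^k E_k(\xi(\fc)\Omega_\infty(A)/\sqrt{-q};\CL_\fc)$ converges (after the Frobenius correction at $\fp$) to the imprimitive Hecke $L$-value $L_\fq(\bar\phi^k,k)$ with precisely the constant $(-1)^{k-1}(\sqrt{-q})^k(\phi^k((\alpha))-N\alpha)(k-1)!\,\Omega_\infty(A)^{-k}$, and that no missing Euler factors other than the one at $\fp$ remain, using that the $\fC_n$ exhaust the ideal class group modulo $\fp^{n+2}$ and that $(\alpha,6\fq)=1$ with $\alpha\equiv 1\bmod\fp^2$. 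Once this identification is secured, \eqref{5.16} follows by combining it with Steps~1--3 above.
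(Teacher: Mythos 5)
Your proposal is correct and follows essentially the same route as the paper: moments of $\mu_{\alpha,\infty}$ as derivatives of $\fJ_{\alpha,n,w}$ at the origin, conversion of the derivation through $j_w$ (producing $\Omega_\fp(A)^k$), the $\tfrac12\log$ of $\fD_{\alpha,n,w}$ yielding the Euler factor $(1-\phi^k(\fp)/N\fp)$, the classical Kronecker--Eisenstein/theta-function logarithmic-derivative formula (the paper's Proposition \ref{5.18}) expressing the result through partial Hecke $L$-values, and the passage to the limit over $\fC_n$ using $\phi(\fc)\equiv 1\bmod\fP^{n+2}$. The only slight imprecision is that the Taylor coefficients of $d_{\alpha,n}$ converge $2$-adically rather than stabilize, so one computes at each finite level $n$ and then takes the limit (which also uses the $w$-integrality of the partial $L$-values, the paper's Corollary \ref{5.20}), exactly as your final step in effect does.
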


\noindent The crucial step in proving this theorem is the following classical result. If $\fb$ is any integral ideal of $K$ prime to $\fq$, we write $\gamma_\fb$ for the class of $\fb$. We then define the partial Hecke $L$-function $L_\fq(\bar{\phi}^k, \gamma_\fb, s)$ by
\begin{equation}\label{5.17}
L_\fq(\bar{\phi}^k, \gamma_\fb, s) = \sum_{ \fc \in \gamma_\fb} \frac{\bar{\phi}^k(\fc)}{(N\fc)^s},
\end{equation}
where the sum is taken over all integral ideals $\fc$ of $K$, which are prime to $\fq$, and which lie in the class $\gamma_\fb$.
\begin{prop}\label{5.18} Let $\fb$ be any integral ideal of $K$ prime to $\fq$. Then, for all $\alpha \in \msj$, we have
\begin{equation}\label{5.19}
\frac{d}{dz}\log (\fR_{\alpha, A^\fb}(\eta_{A}(\fb)(\CW(z, \CL))) = \sum_{k=1}^{\infty}b_k(\alpha)\phi(\fb)^k \Omega_\infty(A)^{-k}L_\fq(\bar{\phi}^k, \gamma_\fb, k)z^{k-1}.
\end{equation}
where $b_k(\alpha) = (-1)^{k-1}(\sqrt{-q})^k(\phi((\alpha))^k - N\alpha).$
\end{prop}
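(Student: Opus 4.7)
My plan is to derive the identity from the classical relation between the normalised elliptic function $\fg_{\alpha, A^\fb}$ and the Eisenstein--Kronecker series, whose Taylor coefficients at the origin are known to encode the partial Hecke $L$-values $L_\fq(\bar{\phi}^k, \gamma_\fb, k)$. I would proceed in three steps.

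First, I translate the left hand side of \eqref{5.19} into a purely analytic object. Using the fact recalled after \eqref{2.16} that the pullback of $\omega_\fb$ by $\eta_A(\fb)$ is $\xi(\fb)\omega$ and that $\CL_\fb = \xi(\fb)\Omega_\infty(A)\fb^{-1}$, one has $\eta_A(\fb)(\CW(z, \CL)) = \CW(\xi(\fb)z, \CL_\fb)$. Unfolding the definition \eqref{2.15} of $\fR_{\alpha, A^\fb}$ then gives
$$\log \fR_{\alpha, A^\fb}(\CW(\xi(\fb)z, \CL_\fb)) = \sum_{\tau \in \Gal(H(A_\fq)/H)} \log \fg_{\alpha, A^\fb}(\CW(\xi(\fb)z + q_\tau, \CL_\fb)),$$
where $q_\tau \in \BC$ is an elliptic logarithm of $Q(\fb)^\tau$. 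Using the classical representation (see the Appendix of \cite{coates2}) of $\fg_{\alpha, A^\fb}(\CW(u, \CL_\fb))$ as a ratio $\Theta(u; \CL_\fb)^{N\alpha}/\Theta(u; \alpha^{-1}\CL_\fb)$ of suitably normalised Robert--Siegel theta functions, the differentiation in $z$ reduces the problem to evaluating derivatives of $\log\Theta$ at the translated points $\xi(\fb)z + q_\tau$.

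Second, I would Taylor-expand in $z$ using the classical identity
$$\frac{d^k}{du^k}\log\Theta(u;\CL)\Big|_{u=u_0} = -(k-1)!\, E_k^*(u_0;\CL) \qquad (k \geq 1),$$
where $E_k^*$ denotes the Eisenstein--Kronecker--Lerch series attached to $\CL$. By Damerell's theorem, summing $E_k^*(q_\tau;\CL_\fb)$ over the half-orbit of $\fq$-division points $Q(\fb)^\tau$ (and accounting for the chain rule factor $\xi(\fb)^k$) yields a multiple of $\Omega_\infty(A)^{-k}\phi(\fb)^k L_\fq(\bar{\phi}^k, \gamma_\fb, k)$. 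The pure $(\sqrt{-q})^k$ factor in $b_k(\alpha)$ arises precisely because one sums over only half of the primitive $\fq$-division points, as emphasised after \eqref{2.13}; had one summed over the full orbit, this factor would be absent. The $(\phi(\alpha)^k - N\alpha)$ factor comes from combining the numerator and denominator contributions of the ratio $\Theta(u;\CL_\fb)^{N\alpha}/\Theta(u; \alpha^{-1}\CL_\fb)$, using the distribution relation under which $E_k^*$ scales by $\phi(\alpha)^k$ when $\CL_\fb$ is replaced by $\alpha^{-1}\CL_\fb$. The overall sign $(-1)^{k-1}$ comes from the $(k-1)$-th derivative of $-\log$.

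The main obstacle is the precise bookkeeping of constants in the second step: correctly identifying the Eisenstein--Kronecker values at half-orbits of $\fq$-division points with the partial Hecke $L$-series, and tracking the normalisation constant $c_\alpha(A^\fb)$ through the distribution relation. For this I would follow the detailed treatment in the Appendix of \cite{coates2} (or equivalently the classical presentations of Damerell's theorem and Kronecker's second limit formula, as in the books of de Shalit or Lang on the arithmetic of CM elliptic curves). Once these normalisations are pinned down, the assertion of \eqref{5.19} holds as an identity of formal power series in $z$ and extends to an analytic identity for $|z|$ small by standard convergence estimates.
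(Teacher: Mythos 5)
Your proposal follows essentially the same route as the paper's proof: translate via $\eta_A(\fb)(\CW(z,\CL)) = \CW(\xi(\fb)z,\CL_\fb)$, represent $\fg_{\alpha,A^\fb}$ as a ratio of normalised theta functions, take logarithmic derivatives to obtain an Eisenstein--Kronecker expansion, and invoke the Damerell-type identity (Proposition~5.5 of \cite{GS}, the displayed equation \eqref{m3}) to identify the sum of $\CE_k^*$ over the Galois half-orbit of $\fq$-division points with $\Omega_\infty(A)^{-k}\phi(\fb)^kL_\fq(\bar\phi^k,\gamma_\fb,k)$. One small correction to your heuristic: the $(\sqrt{-q})^k$ factor comes from the weight-$k$ homogeneity $\CE_k^*(\lambda z,\lambda L)=\lambda^{-k}\CE_k^*(z,L)$ applied to the normalisation $Q=\CW(\Omega_\infty(A)/\sqrt{-q},\CL)$, not from passing to the half-orbit; the restriction to the $\Gal(H(A_\fq)/H)$-orbit (rather than the full $(\CO_K/\fq)^\times$-orbit) is instead what keeps the sum from vanishing when $k$ is odd.
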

\begin{proof} The proof rests upon a miraculous product formula from the 19th century theory of elliptic functions. We recall rapidly
this product formula, and its link with our rational functions $\fR_{\alpha, A^\fb}$, without giving a fully detailed, but essentially straightforward,  proof of the Proposition.
Let $L$ be any lattice in $\BC$, say $L = u\BZ + v\BZ$, with $v/u$ having positive imaginary part.  We define
$$
A(L) = (\ov{u}v - \ov{v}u)/2\pi i, \, \, s_2(L) = \lim_{s \to 0} \sum_{\omega \neq 0 \in L}\omega^{-2}|\omega|^{-2s},
$$
where the limit is taken over real values of $s > 0$. For each integer $k \geq 1$, we have the Kronecker-Eisenstein series
$$
H_k(z, s, L) = \sum_{\omega \in L}\frac{(\ov{z} + \ov{\omega})^k}{|z + \omega|^{2s}},
$$
where we assume that $z \notin L$. This series converges in the half plane $R(s) > 1 + k/2$, but it has a holomorphic continuation to the whole $s$-plane. We then define
$$
\CE_k^*(z, L) = H_k(z, k, L).
$$
The all important classical infinite product which we use is
$$
\sigma(z, L) = z\prod_{\omega \in L\setminus 0}(1-z/\omega)\exp(z/\omega + \frac{1}{2}(z/\omega)^2),
$$
and we then define
$$
\theta(z, L) = \exp(-s_2(L)z^2/2)\sigma(z,L).
$$
Taking logarithmic derivatives, we easily obtain from the infinite product that, for all $z_0 \in \BC$ with $z_0 \notin L$, we have
\begin{equation}\label{m1}
\frac{d}{dz} \log(\theta(z+z_0, L)) = \ov{z_0}/A(L) + \sum_{k=1}^{\infty}(-1)^{(k-1)}\CE_k^*(z_0, L)z^{k-1}.
\end{equation}
We now take these functions for the lattice $\CL_\fb$, and we have the following equality (see \cite{GS}, Theorem 1.9):
\begin{equation}\label{m2}
R_{\alpha, A^{\fb}}(\CW(z, \CL_{\fb}))^2 = c_\alpha(A_\fb)^2 \theta(z, \CL_\fb)^{2N\alpha}/\theta(z, \alpha^{-1}\CL_\fb)^2.
\end{equation}
Now take $\fE$ to be any set of integral  of $K$, prime to $\fq$, whose Artin symbols in $\Gal(H(A_\fq)/K)$ give precisely $\Gal(H(A_\fq)/H)$. Recalling that $H(A_\fq) = H(A_\fq^\fb)$, and taking the $\fq$-division point on $A^\fb$ given by $Q(\fb) = \CW(\xi(\fb)\Omega_\infty(A)/\sqrt{-q}, \CL_{\fe})$, we then have (see \cite{GS}, Proposition 5.5)
\begin{equation}\label{m3}
(\sqrt{-q})^k(\phi(\fb)/\xi(\fb))^k\Omega_\infty(A)^{-k}L(\ov{\phi}^k, \gamma_\fb, k)  = \sum_{\fe \in \fE}\CE_k^*(\phi(\fe)\xi(\fb)\Omega_\infty(A)/\sqrt{-q}, \CL_\fb).
\end{equation}
On combining equations \eqref{m1}, \eqref{m2}, \eqref{m3}, recalling the definition \eqref{2.15}, and noting that
\begin{equation}\label{m4}
\eta_A(\fb)(\CW(z, \CL)) = \CW(\xi(\fb)z, \CL_\fb),
\end{equation}
the formula \eqref{5.19} follows easily. This completes the proof.
\end{proof}

\begin{cor} \label{5.20} For all integral ideals $\fb$ of $K$ prime to $\fq$ and all integers $k \geq 1$,
$$
b_k(\alpha)\phi(\fb)^k (k-1)!\Omega_\infty(A)^{-k}L_\fq(\bar{\phi}^k, \gamma_\fb, k)
$$
belongs to $H$ and is integral at $w$.
\end{cor}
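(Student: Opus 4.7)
The plan is to compare the $z$-expansion given by Proposition~\ref{5.18} with the $t_w$-expansion at a prime $w\mid\fp$ of $H$, and to extract integrality (with $(k-1)!$ denominators) from the formal group structure at $w$.

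First, by Proposition~\ref{5.18} the quantity $c_k:=b_k(\alpha)\phi(\fb)^k\Omega_\infty(A)^{-k}L_\fq(\bar\phi^k,\gamma_\fb,k)$ is the coefficient of $z^{k-1}$ in $f(z):=\frac{d}{dz}\log\fR_{\alpha,A^\fb}(\eta_A(\fb)(\CW(z,\CL)))$. By Lemma~\ref{u} together with the fact that $\eta_A(\fb)$ induces a morphism of formal groups over $\CO_{H,w}$, the rational function $\fR_{\alpha,A^\fb}(\eta_A(\fb)(P))$ has $t_w$-expansion equal to a unit power series $G(t_w)\in\CO_{H,w}[[t_w]]^\times$. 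Writing the N\'eron differential as $\omega=\lambda(t_w)\,dt_w$ with $\lambda\in 1+t_w\CO_{H,w}[[t_w]]$, and noting that $\omega=dz$ under the complex uniformization, one obtains the $t_w$-expansion $f=h(t_w):=G'(t_w)/(G(t_w)\lambda(t_w))\in\CO_{H,w}[[t_w]]$. That $c_k$ lies in $H$ then follows from the explicit Kronecker--Eisenstein expansion used in the proof of Proposition~\ref{5.18} (cf.~\eqref{m3}), combined with $b_k(\alpha)\in K$, $\xi(\fb)\in H$, and Damerell's theorem that $\Omega_\infty^{-k}$ times the values of $\CE_k^*$ at torsion points of the CM lattice lie in $H$.

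For integrality, take $z=\ell(t_w):=\int_0^{t_w}\lambda(s)\,ds$, so that the equality $f(z)=h(t_w)$ becomes the formal power series identity $\sum_{k\geq 1}c_k\,\ell(t_w)^{k-1}=h(t_w)$. Setting $h(t_w)=\sum_m h_m t_w^m$ and $B_{j,k}:=[t_w^{k-1}]\ell(t_w)^{j-1}$ and equating coefficients of $t_w^{k-1}$ yields $h_{k-1}=c_k+\sum_{j=1}^{k-1}c_jB_{j,k}$, hence
$$(k-1)!\,c_k=(k-1)!\,h_{k-1}-\sum_{j=1}^{k-1}\tfrac{(k-1)!}{(j-1)!}B_{j,k}\cdot(j-1)!\,c_j.$$
I would then prove $(k-1)!\,c_k\in\CO_{H,w}$ by induction on $k$: the base $c_1=h_0\in\CO_{H,w}$ is immediate, and the inductive step reduces to the combinatorial claim that $\tfrac{(k-1)!}{(j-1)!}B_{j,k}\in\CO_{H,w}$ for all $1\leq j\leq k$.

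For this combinatorial claim, expanding $\ell(t_w)=\sum_{n\geq 0}\lambda_n t_w^{n+1}/(n+1)$ with $\lambda_n\in\CO_{H,w}$ and $\lambda_0=1$ gives
$$\tfrac{(k-1)!}{(j-1)!}B_{j,k}=\sum_{\substack{m_1,\dots,m_{j-1}\geq 1\\ m_1+\cdots+m_{j-1}=k-1}}\tfrac{(k-1)!}{(j-1)!\,m_1\cdots m_{j-1}}\prod_{i=1}^{j-1}\lambda_{m_i-1}.$$
Grouping by the cycle type $(1^{\ell_1}2^{\ell_2}\cdots)$ of the multiset $\{m_i\}$, with $\sum_i\ell_i=j-1$ and $\sum_i i\ell_i=k-1$, each class contributes $\tfrac{(k-1)!}{\prod_i i^{\ell_i}\,\ell_i!}\prod_i\lambda_{i-1}^{\ell_i}$; the first factor is the classical count of permutations of $k-1$ letters with the prescribed cycle type and is therefore a nonnegative integer. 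This settles the claim and hence the corollary. The main delicate point is precisely this combinatorial identity; every other ingredient is a routine manipulation of the formal logarithm together with the integrality of the elliptic units furnished by Lemma~\ref{u}.
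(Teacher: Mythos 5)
Your proof is correct, and it follows the paper's general strategy (identify the quantity as $(k-1)!$ times the coefficient of $z^{k-1}$ in \eqref{5.19}, then transfer to the $t_w$-expansion at $w$ using Lemma \ref{u} and the formal group of $A$ at $w$), but the way you extract the $(k-1)!$-integrality is genuinely different from the paper's. The paper simply observes that $d/dz = (\lambda_w'(t_w))^{-1}\,d/dt_w$, where $\lambda_w'(t_w)$ is a \emph{unit} power series in $\CO_{H,w}[[t_w]]$, so that this operator preserves $\CO_{H,w}[[t_w]]$; applying it $k$ times to $\log$ of the unit series $G(t_w)$ and evaluating at $t_w=0$ gives $(k-1)!\,c_k \in \CO_{H,w}$ in one stroke, with no induction and no combinatorics. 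You instead substitute the formal logarithm $z=\ell(t_w)$ (which is \emph{not} integral) into the $z$-expansion, compare coefficients, and control the denominators of $\ell(t_w)^{j-1}$ by the cycle-type identity $(k-1)!/\prod_i i^{\ell_i}\ell_i!\in\BZ$; your identity and the induction are correct, so the argument stands, but it in effect re-proves by hand the integrality that the operator identity gives for free. Two small remarks: (i) for the rationality assertion, your appeal to Damerell is slightly imprecise as stated — the individual values $\Omega_\infty(A)^{-k}\CE_k^*(\cdot,\CL_\fb)$ lie in abelian extensions of $K$ (here in $H(A_\fq)$), and it is only the sum over $\fE$ in \eqref{m3}, i.e.\ the trace to $H$, that lies in $H$; the paper's one-line argument (the $z$-expansion coefficients of a rational function on $A/H$ composed with $\CW(z,\CL)$ lie in $H$, since $x,y$ have $t_w$-, hence $z$-, expansions over $H$) is cleaner; (ii) your unit series $G(t_w)$ should be justified exactly as in Lemma \ref{nc7}, by composing the unit expansion of $\fR_{\alpha,A^\fb}(P_\fb)$ from Lemma \ref{u} with the integral power series $\widehat{\eta_{A,\fb,w}}(t_w)$ having zero constant term, which is what you indicate.
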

\begin{proof} Since $\fR_{\alpha, A^\fb}(\eta_{A}(\fb)(P))$ is a rational function on $A/H$, the first
assertion is clear from \eqref{5.19}. Moreover, we have already seen (see Lemma \ref{u}) that this rational function has an expansion, in terms of the parameter $t_w$ of the formal group $\widehat{A_w}$, which is a unit in $\CO_{H,w}[[t_w]]$. Now we can interpret the differential operator $d/dz$ in terms of the formal group $\widehat{A_w}$ as follows. The exponential map of $\widehat{A_w}$ is given by expanding $t_w = \nu_w(z)$, where $\nu_w(z)$ is given by expanding the right hand side of
\begin{equation}\label{5.21}
t_w = -2(\wp(z, \CL) - b_2/12)/(\wp'(z, \CL) -a_1(\wp(z, \CL) -b_2/12) -a_3),
\end{equation}
as a power series in $z$. The logarithm map of $\widehat{A_w}$ is given by the inverse series under composition, say
\begin{equation}\label{5.22}
z = \lambda_w(t_w).
\end{equation}
We shall simply write $\lambda_w'(t_w)$ for the formal derivative of  the power series $\lambda_w(t_w)$ with respect to $t_w$. By one of the basic properties of such a logarithm map, $\lambda_w'(t_w)$ is in fact a unit power series in $\CO_{H,w}[[t_w]]$. But we have $d/dz = (\lambda'(t_w))^{-1}d/dt_w$. Hence, since the $t_w$-expansion of $\fR_{\alpha, A^\fb}(\eta_{A}(\fb)(P))$ is a unit in $\CO_{H,w}[[t_w]]$,
we conclude that, for all integers $k \geq 1$, the $t_w$-expansion of $(d/dz)^k \log \fR_{\alpha, A^\fb}(\eta_{A}(\fb)(P))$ will lie in $\CO_{H,w}[[t_w]]$.  In particular, its constant term will lie in $\CO_{H,w}$, proving the second assertion of the corollary.
\end{proof}

\begin{prop} For all integers $n \geq 0$ and $k \geq 1$, we have
\begin{equation}\label{5.23}
\Omega_\fp(A)^{-k}\int_{G} \rho_\fP^kd\mu_{\alpha, n} = b_k(\alpha)(k-1)!\Omega_\infty(A)^{-k} \sum_{\fc \in \fC_n}\phi^k(\fc) (L_\fq(\bar{\phi}^k, \gamma_\fc, k) - \frac{\phi(\fp)^k}{N\fp}L_\fq(\bar{\phi}^k, \gamma_{\fc\fp}, k)).
\end{equation}
\end{prop}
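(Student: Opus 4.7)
The plan is to express $\int_G \rho_\fP^k d\mu_{\alpha,n}$ as a $k$-fold derivative at $W=0$ of the power series $\fJ_{\alpha,n,w}(W)$, then translate to derivatives in the Weierstrass parameter $z$, and finally evaluate term by term using Proposition \ref{5.18}.

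Since $\BM(i(\mu_{\alpha,n,w})) = \fJ_{\alpha,n,w}(W)$ and $\mu_{\alpha,n}$ is supported on $\CO_\fp^\times$, Mahler's theorem gives
$$\int_G \rho_\fP^k d\mu_{\alpha,n} = \left((1+W)\tfrac{d}{dW}\right)^k \fJ_{\alpha,n,w}(W)\Big|_{W=0}.$$
The operator $(1+W)d/dW$ is the invariant derivation on $\widehat{\BG}_m$. Under $j_w : \widehat{\BG}_m \simeq \widehat{A_w}$, whose derivative at $0$ is $\Omega_\fp(A)$, it is carried to $\Omega_\fp(A)\cdot d/dz$, where $z = \lambda_w(t_w)$ is the formal logarithm of $\widehat{A_w}$. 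Since $\lambda_w$ is normalized by pulling back the N\'eron differential $\omega = dx/(2y+a_1x+a_3)$ to $dz$, this formal $z$ coincides with the Weierstrass parameter in \eqref{5.21}--\eqref{5.22} and with the $z$ appearing in Proposition \ref{5.18}. Consequently
$$\Omega_\fp(A)^{-k}\int_G \rho_\fP^k d\mu_{\alpha,n} = \left(\tfrac{d}{dz}\right)^k J_{\alpha,n,w}\Big|_{z=0}.$$

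Next, I would expand $J_{\alpha,n,w} = \log d_{\alpha,n}(t_w) - \tfrac12 \log d_{\alpha,n}^\delta(\widehat{\eta_{A,\fp,w}}(t_w))$ and evaluate each derivative separately. For the first term, $d_{\alpha,n}(t_w)$ is the $t_w$-expansion of $D_{\alpha,n}(P) = \prod_{\fc\in\fC_n}\fR_{\alpha,A^\fc}(\eta_A(\fc)(P))$, so summing Proposition \ref{5.18} over $\fc$ and extracting the $(k-1)$-th Taylor coefficient at $z=0$ gives $(k-1)!\,b_k(\alpha)\Omega_\infty(A)^{-k}\sum_\fc \phi(\fc)^k L_\fq(\bar\phi^k,\gamma_\fc,k)$. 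For the second term, the composition identity $\eta_{A^\fp}(\fc)\circ\eta_A(\fp) = \eta_A(\fc\fp)$ together with the definition of $D_{\alpha,n}^\delta$ yields
$$D_{\alpha,n}^\delta(\eta_A(\fp)(P)) = \prod_{\fc\in\fC_n}\fR_{\alpha,A^{\fc\fp}}(\eta_A(\fc\fp)(P)),$$
so Proposition \ref{5.18} applied with $\fb = \fc\fp$, together with the multiplicativity $\phi(\fc\fp)^k = \phi(\fc)^k\phi(\fp)^k$, produces $(k-1)!\,b_k(\alpha)\phi(\fp)^k\Omega_\infty(A)^{-k}\sum_\fc \phi(\fc)^k L_\fq(\bar\phi^k,\gamma_{\fc\fp},k)$ at $z=0$.

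Subtracting half of the second expression from the first and invoking $N\fp = 2$ to identify $\tfrac12$ with $1/N\fp$ assembles exactly the right-hand side of \eqref{5.23}. I expect the main technical difficulty to be the coordinate bookkeeping among $W$ on $\widehat{\BG}_m$, $t_w$ on $\widehat{A_w}$, and the Weierstrass/formal-logarithm variable $z$, in particular verifying that the scaling by $\Omega_\fp(A)^k$ matches the one on the left-hand side of the statement. It is striking that the factor $\tfrac12$ built into the definition of $J_{\alpha,n,w}$, combined with the splitting $2 = N\fp$, is exactly what forces the natural Euler factor $1 - \phi(\fp)^k/N\fp$ to appear; this step relies essentially on $p = 2$.
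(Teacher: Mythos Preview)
Your proposal is correct and follows essentially the same route as the paper: convert the integral into $((1+W)\tfrac{d}{dW})^k\fJ_{\alpha,n,w}|_{W=0}$, change variables via $j_w$ to pick up the factor $\Omega_\fp(A)^k$, rewrite $J_{\alpha,n,w}$ as $\log d_{\alpha,n} - \tfrac{1}{2}\log d_{\alpha,n}^\delta(\widehat{\eta_{A,\fp,w}}(\cdot))$, and then apply Proposition~\ref{5.18} with $\fb=\fc$ and $\fb=\fc\fp$ (using the isogeny identity from \eqref{5.2}) to each piece, with $N\fp=2$ converting the $\tfrac{1}{2}$ into the Euler factor. The paper phrases the coordinate change slightly differently, via the explicit substitution $W=e^z-1$ and the rescaling $z\mapsto z/\Omega_\fp(A)$ (equations \eqref{5.24}--\eqref{5.26}), but this is exactly your invariant-derivation argument in other clothing.
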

\begin{proof} The exponential map of the formal group $\widehat{\BG_m}$ is given by $W = e^z -1$.
Thus, by the uniqueness of the exponential map for $\widehat{A_w}$, it follows that we must have
\begin{equation}\label{5.24}
t_w = \nu_w(z) = j_w(e^{z/{\Omega_v(A)}} - 1).
\end{equation}
Now it is very well known (see, for example, \cite{CS}) that, for all integers $k \geq 1$, we have
\begin{equation}\label{5.25}
\int_{G}\rho_\fP^kd\mu_{\alpha, n}  = \int_{\BZ_2}x^k d\mu_{\alpha, n} = (d/dz)^k\fJ_{\alpha, n}(W)|_{z=0}.
\end{equation}
But obviously
$$
(d/dz)^k\fJ_{\alpha, n}(e^z - 1) =  \Omega_v(A)^k(d/dz)^k\fJ_{\alpha, n}(e^{z/{\Omega_v(A)}} - 1).
$$
In view of \eqref{5.24}, we conclude from \eqref{5.10} that
\begin{equation}\label{5.26}
\Omega_v(A)^{-k}\int_{G}\rho_\fP^kd\mu_{\alpha, n} = (d/dz)^k\fM_{\alpha, n}(t_w)|_{z=0}.
\end{equation}
But
$$
\fM_{\alpha, n}(t_w) = \frac{1}{2} \sum_{\fc \in \fC_n} \log (\fR_{\alpha, A^\fc}(\eta_{A}(\fc)(\CW(z, \CL))^2/\fR_{\alpha, A^{\fc\fp}}(\eta_{A}(\fc\fp)(\CW(z, \CL))).
$$
Hence the conclusion of the proposition follows immediately from \eqref{5.26} and applying Proposition \ref{5.18} with $\fb = \fc$ and $\fb = \fc\fp$. This completes the proof.
\end{proof}

\medskip

\noindent
 We can now prove Theorem \ref{5.15}. In view of \eqref{5.14}, we
have, for all integers $k\geq 1$,
$$
\int_{G} \rho_\fP^k d\mu_{\alpha, \infty} = \lim_{n \to \infty}\int_{G} \rho_\fP^k d\mu_{\alpha,n}.
$$
We recall that we have fixed an embedding of the compositum $H\mst$ into the fraction field of $\msi$ which induces the prime $w$ on $H$ and the prime $\fP$ on $\mst$. Recall also that for $\fc \in \fC_n$, we have $\phi(\fc) \equiv 1 \mod \fP^{n+2}$. Moreover, for each $n \geq 0$, $\fC_n$ gives a complete set of representatives of the ideal class group of $K$.
It is therefore clear that Theorem \ref{5.15} will follow from on passing to the limit as $n \to \infty$ from  \eqref{5.23}. This completes the proof.
\medskip

\section{Iwasawa theory of the local tower $K_\fp(B_{\fP^\infty})/K_\fp$}

We need to establish the local theory at the prime $\fp$ for the tower $F_\infty/K$ in order to prove the main conjecture. The local extension $K_\fp(B_{\fP^\infty})/K_\fp$ idoes not arise naturally from the points of finite order on a Lubin-Tate group. However, we take a prime $w$ of $H$ above $\fp$, and we show that, since the class number of $K$ is odd, we can fairly easily derive what is needed from the classical theory for the Lubin-Tate extension $H_w(A_{\fp^\infty})/H_w$.

\medskip

Put $r = [H_w : K_\fp]$, so that $r$ is the order of both $\delta$, and the ideal class of $\fp$, and, of course, $r$ is odd because it divides the class number of $K$. Define
\begin{equation}\label{6.1n}
H_{w, \infty} = H_w(A_{\fp^\infty}), \, \,  F_{\infty, \fp} = K_\fp(B_{\fP^\infty}),
\end{equation}
and put
\begin{equation}
\CG_w = \Gal(H_{w, \infty}/H_w ), \, \, \fG_{w, \infty} = \Gal(H_{w, \infty}/K_\fp), \, \, \Delta_w = \Gal(H_{w, \infty}/F_{\infty, \fp}),
\end{equation}
so that $\fG_{w, \infty} = \CG_w \times \Delta_w$. Of course, $H_w/K_\fp$ is unramified, $\fp$ is totally ramified in $F_{\infty, \fp}$, and $w$ is totally ramified in $H_{w, \infty}$. Let $F_{n, \fp} =K_\fp(B_{\fP^{n+2}})$ and $H_{n, w} = H_w(A_{\fp^{n+2}})$, and write $U(F_{n, \fp})$ and $U(H_{n, w})$ for their respective groups of local units.  Define
\begin{equation}\label{6.2n}
U(F_{\infty,\fp}) = \varprojlim_n U(F_{n, \fp}), \, \, U(H_{\infty,w}) = \varprojlim_n U(H_{n, w}),
\end{equation}
where the projective limits are taken with respect to the local norm maps. We recall that we have fixed a set $\{V_n\}$ of primitive $\fp^{n+2}$-division points on $A$, which are compatible in the sense that \eqref{y1.9} holds for all $n \geq 0$. We write, as before, $t_w$ for the parameter of the formal group $\widehat{A_w}$ of $A$ at $w$, and $\CO_{H, w}$ for the ring of integers of $H_w$.
The following is de Shalit's extension of Coleman's theorem \cite{deshalit}.
\begin{thm}\label{6.2n}
For each $\fu_\infty = (\fu_n)$ in $U(H_{\infty,w})$, there exists a unique power series $C_{\fu_\infty}(t_w)$ in $\CO_{H,w}[[t_w]]$ such that $\fu_n = \delta^{-n}(C_{\fu_\infty}(t_w(V_n)))$
for all $n \geq 0$.
\end{thm}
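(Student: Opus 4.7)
The plan is to carry out the Coleman--de Shalit construction of the interpolating power series, paying careful attention to the relative Lubin--Tate twist by $\delta$ coming from Lemma \ref{lt}. By that lemma and its corollary, $H_{n,w}/H_w$ is totally ramified of degree $2^{n+1}$, with $t_w(V_n)$ a uniformizer of $\CO_{H_{n,w}}$, so that every unit $\fu_n \in U(H_{n,w})$ admits (non-unique) representations as $f_n(t_w(V_n))$ for some $f_n \in \CO_{H,w}[[t_w]]$.

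First I would construct a norm operator $\CN$ on $\CO_{H,w}[[t_w]]$ as follows. For $f \in \CO_{H,w}[[t_w]]$, the product $\prod_{V \in A_\fp} f(t_w [+] t_w(V))$ is invariant under translation by the kernel $A_\fp$ of the formal isogeny $\wh{\eta_{A,\fp,w}}$, so it factors uniquely as a power series in $t_{\fp,w}$ with coefficients in $\CO_{H,w}$; the factorization is available because $\wh{\eta_{A,\fp,w}}(t_w) \equiv t_w^2 \mod w$ by \eqref{u3.2}. Applying $\delta^{-1}$ to these coefficients (to account for the identification of $\wh{A^\fp_w}$ with the $\delta$-conjugate of $\wh{A_w}$) yields $\CN f \in \CO_{H,w}[[t_w]]$. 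Using the compatibility \eqref{y1.9} that $\eta_A(\fp)(V_n) = V_{n-1}^\delta$, one then verifies the key identity
\[
N_{H_{n,w}/H_{n-1,w}}\bigl(\delta^{-n} f(t_w(V_n))\bigr) = \delta^{-(n-1)} (\CN f)(t_w(V_{n-1})),
\]
so a power series $C$ interpolates $\fu_\infty$ in the sense of the theorem if and only if $\CN C = C$ and $C(t_w(V_0)) = \fu_0$.

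Next I would solve the fixed-point equation $\CN C = C$: starting from any $g \in \CO_{H,w}[[t_w]]$ lifting $\fu_0$, the iterates $\CN^m g$ should form a Cauchy sequence in the $(w, t_w)$-adic topology and converge to a power series $C_{\fu_\infty}$ fixed by $\CN$. Granted this, norm-compatibility of $\fu_\infty$ combined with the key identity above lets one promote the base case $C_{\fu_\infty}(t_w(V_0)) = \fu_0$ inductively to $\fu_n = \delta^{-n} C_{\fu_\infty}(t_w(V_n))$ for all $n \geq 0$. Uniqueness is immediate: the difference of two candidate power series would vanish on the infinite set $\{t_w(V_n) : n \geq 0\}$ together with all its $\Gal(H_{n,w}/H_w)$-conjugates, which accumulate at $0$ in the maximal ideal, forcing the difference to be zero by Weierstrass preparation applied at each level.

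The main obstacle will be establishing the contractive property of $\CN$ needed for convergence of $\CN^m g$. The underlying mechanism --- that $\wh{\eta_{A,\fp,w}}$ reduces to the Frobenius $t \mapsto t^2$ modulo $w$, so that $\CN$ strictly increases $w$-adic depth of discrepancies between approximations --- is classical (Coleman, and de Shalit in the Lubin--Tate setting), but the presence of the $\delta$-twist peculiar to our relative Lubin--Tate situation requires careful bookkeeping of which coefficients transform by $\delta$ at each step of the iteration.
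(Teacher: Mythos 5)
First, note that the paper gives no proof of this statement: it is quoted as de Shalit's extension of Coleman's theorem, with a citation to \cite{deshalit}, and the only comment offered is that uniqueness follows from the Weierstrass Preparation Theorem (exactly your final argument, which is correct). So your proposal has to stand as a reconstruction of the Coleman--de Shalit argument, and as such it has a genuine gap in the existence part. Your claimed equivalence ``$C$ interpolates $\fu_\infty$ if and only if $\CN C = C$ and $C(t_w(V_0)) = \fu_0$'' is false in the ``if'' direction, and the inductive promotion built on it does not work: from $\CN C = C$, your key identity, and the inductive hypothesis $\delta^{-(n-1)}C(t_w(V_{n-1})) = \fu_{n-1}$, you only learn that $\delta^{-n}C(t_w(V_n))$ and $\fu_n$ have the same norm to $H_{n-1,w}$, and since $N_{H_{n,w}/H_{n-1,w}}$ has a large kernel on units this does not force equality. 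Equivalently, an element of $U(H_{\infty,w})$ is very far from being determined by its level-zero component, whereas your construction (iterating $\CN$ on a single lift $g$ of $\fu_0$) uses only $\fu_0$; whatever $\lim_m \CN^m g$ might be, it cannot in general be the Coleman series of $\fu_\infty$. Moreover the Cauchy claim for $\CN^m g$ is itself unjustified: $\CN^{m+1}g/\CN^m g = \CN^m(\CN g/g)$ and $\CN g/g$ need not be congruent to $1$ modulo $w$, so the iteration on a single lift need not contract at all.

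The genuine argument must use lifts at every level. One chooses, for each $n$, some $f_n \in \CO_{H,w}[[t_w]]$ with $f_n(t_w(V_n)) = \delta^{n}(\fu_n)$ (possible because $t_w(V_n)$ is a uniformizer of the totally ramified extension $H_{n,w}/H_w$), and shows that $\CN^m f_{n+m}$ converges as $m \to \infty$, uniformly in $n$; the limit is $\CN$-fixed and interpolates the whole system because the norm compatibility of $(\fu_n)$ together with your key identity fixes the value of $\CN^m f_{n+m}$ at $t_w(V_n)$ exactly, for every $n$ and $m$. The contraction lemma needed is of the multiplicative form: if two \emph{unit} power series agree at the level-$(n+m)$ division points (hence are congruent modulo a suitable power of $w$), then applying $\CN$ improves the congruence by one power of $w$; applied to $\CN f_{n+m+1}$ versus $f_{n+m}$, this is what makes the diagonal sequence Cauchy. (Alternatively, a compactness argument works: the sets of power series interpolating $\fu_0,\dots,\fu_n$ are nonempty, closed and nested in $\CO_{H,w}[[t_w]]$, so their intersection is nonempty.) Your handling of the relative Lubin--Tate twist by $\delta$, the definition of the norm operator, the key norm identity, and the uniqueness argument are all correct; the gap is solely that your existence construction discards the data $\fu_n$ for $n \geq 1$.
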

\noindent We shall also need the following corollary of this theorem.
\begin{cor}  For each $u_\infty = (u_n)$ in $U(F_{\infty,\fp})$, there exists a unique power series $C_{u_\infty}(t_w)$ in $\CO_{H,w}[[t_w]]$ such that $u_n = C_{u_\infty}(t_w(V_n))$
for all $n \geq 0$.
\end{cor}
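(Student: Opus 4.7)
The plan is to deduce this corollary directly from Theorem \ref{6.2n} by embedding the local unit tower for $F_{\infty,\fp}$ into that for $H_{\infty,w}$ and then using the fact that the Galois group $\Gal(H_{\infty,w}/F_{\infty,\fp})$ is generated by $\delta$.

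First I would check that the inclusion $F_{n,\fp} \hookrightarrow H_{n,w}$ is compatible with norms, so that a norm-coherent sequence $u_\infty = (u_n) \in U(F_{\infty,\fp})$ can be regarded as an element of $U(H_{\infty,w})$. Since $H_w/K_\fp$ is unramified of odd degree $r$ while $F_{n,\fp}/K_\fp$ is totally ramified, $H_w$ and $F_{n,\fp}$ are linearly disjoint over $K_\fp$, so $H_{n,w} = H_w \cdot F_{n,\fp}$ and restriction gives an isomorphism
\[
\Gal(H_{n,w}/H_{n-1,w}) \;\stackrel{\sim}{\longrightarrow}\; \Gal(F_{n,\fp}/F_{n-1,\fp}).
\]
Consequently, for any $u_n \in F_{n,\fp}$,
\[
N_{H_{n,w}/H_{n-1,w}}(u_n) \;=\; N_{F_{n,\fp}/F_{n-1,\fp}}(u_n),
\]
so the norm-coherence of $(u_n)$ in $U(F_{\infty,\fp})$ implies norm-coherence in $U(H_{\infty,w})$.

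Next I would apply Theorem \ref{6.2n} to $u_\infty$ viewed in $U(H_{\infty,w})$, obtaining a unique power series $C_{u_\infty}(t_w) \in \CO_{H,w}[[t_w]]$ such that
\[
u_n \;=\; \delta^{-n}\bigl(C_{u_\infty}(t_w(V_n))\bigr) \qquad (n \geq 0).
\]
The final step is to remove the $\delta^{-n}$. Because $H_w/K_\fp$ is unramified and $\sigma_\fp$ is the Frobenius of $H_w/K_\fp$, the global automorphism $\delta$ (whose restriction to $H$ is $\sigma_\fp$) restricts, at the prime above $w$, to a generator of $\Gal(H_{\infty,w}/F_{\infty,\fp})$, a cyclic group of order $r$. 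Since $u_n$ lies in $F_{n,\fp}$, it is fixed by $\delta$, hence by $\delta^n$. Applying $\delta^n$ to the identity above yields
\[
u_n \;=\; \delta^n(u_n) \;=\; C_{u_\infty}(t_w(V_n)),
\]
which is the required formula. Uniqueness is inherited from Theorem \ref{6.2n} (alternatively, two power series agreeing at the sequence of points $t_w(V_n)$ must be equal by the same $\fm$-adic convergence argument used in Proposition \ref{g3.3}).

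The only thing to be careful about is the linear disjointness step that makes the norm from $H_{n,w}$ to $H_{n-1,w}$ of an element of $F_{n,\fp}$ literally equal to its norm from $F_{n,\fp}$ to $F_{n-1,\fp}$; everything else is a formal consequence of de Shalit's theorem. I do not foresee any serious obstacle.
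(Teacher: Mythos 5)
Your proof is correct and takes essentially the same route as the paper: the paper also regards $u_\infty$ as a norm-coherent system in $U(H_{\infty,w})$ (justified by the restriction isomorphism between the local Galois groups, which is exactly your linear-disjointness and norm-compatibility step), applies de Shalit's theorem to get $u_n=\delta^{-n}(C_{u_\infty}(t_w(V_n)))$, and removes the twist using $\delta^n(u_n)=u_n$, with uniqueness coming from the Weierstrass Preparation Theorem. Your write-up merely makes explicit what the paper dismisses as clear, so there is nothing to correct.
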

\noindent To deduce the corollary from the theorem, we note that, for all $m \geq n$, $\Gal(\fF_{m,w}/\fF_{n,w})$ is isomorphic to $\Gal(F_{m, \fp}/F_{n,\fp})$ under restriction. Hence $u_\infty$ clearly can be viewed as an element of $U(H_{\infty,w})$, and so has its Coleman power series $C_{u_\infty}(t_w)$. This Coleman power series then has the interpolation property stated in the corollary because $\delta^n(u_n) = u_n$ for all $n \geq 0$.  The uniqueness in both results is obvious from the Weierstrass Preparation Theorem.

Let $\fu_\infty = (\fu_n)$ be any element of $U(H_{\infty,w})$,  with Coleman power series $C_{\fu_\infty}(t_w)$.  Define
\begin{equation}\label{6.3n}
\fC_{\fu_\infty}(t_w) = C_{\fu_\infty}(t_w)^2/C_{\fu_\infty}^\delta(\widehat{\eta_{A, \fp, w}}(t_w)).
\end{equation}

\begin{lem}\label{6.4n}The power series $\fA_{\fu_\infty}(t_w) = \frac{1}{2}\log \fC_{\fu_\infty}(t_w)$ lies in $\CO_{H, w}[[t_w]]$, and satisfies the identity
\begin{equation}\label{6.5n}
\sum_{V \in A_\fp}\fA_{\fu_\infty}(t_w[+]t_w(V)) = 0,
\end{equation}
where $[+]$ denotes the group law of the formal group of $\widehat{A_w}$.
\end{lem}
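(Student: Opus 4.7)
The plan is to closely parallel the proof of Lemma \ref{5.6}, replacing the elliptic-unit power series $d_{\alpha,n}(t_w)$ by the Coleman power series $C_{\fu_\infty}(t_w)$ furnished by Theorem \ref{6.2n}. There are two assertions to establish: (i) the integrality $\fA_{\fu_\infty}(t_w) \in \CO_{H,w}[[t_w]]$, and (ii) the sum identity \eqref{6.5n}.

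For (i), the first step is to show $\fC_{\fu_\infty}(t_w) \in 1 + 2\CO_{H,w}[[t_w]]$. Write $C_{\fu_\infty}(t_w) = \sum_{k \geq 0} c_k t_w^k$ with $c_k \in \CO_{H,w}$. Since $H_w/K_\fp$ is unramified with $K_\fp = \BQ_2$, we have $\fm_{H,w} = 2\CO_{H,w}$, and $\delta$, being the Frobenius generator of $\Gal(H_w/K_\fp)$, satisfies $\delta(c_k) \equiv c_k^2 \pmod{2}$ for all $k$. Combining this with the congruence $\widehat{\eta_{A,\fp,w}}(t_w) \equiv t_w^2 \pmod{2}$ from \eqref{5.5} yields
$$
C_{\fu_\infty}^\delta(\widehat{\eta_{A,\fp,w}}(t_w)) \equiv \sum_k c_k^2 t_w^{2k} \equiv C_{\fu_\infty}(t_w)^2 \pmod{2},
$$
so $\fC_{\fu_\infty}(t_w) \in 1 + 2\CO_{H,w}[[t_w]]$, exactly as in Lemma \ref{5.4}. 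The standard convergence of the $2$-adic logarithm on $1 + 2\CO_{H,w}$ in the unramified extension $H_w/\BQ_2$ then gives $\fA_{\fu_\infty}(t_w) = \frac{1}{2}\log \fC_{\fu_\infty}(t_w) \in \CO_{H,w}[[t_w]]$.

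For (ii), the key ingredient is the relative-Lubin-Tate norm-operator identity
$$
\prod_{V \in A_\fp} C_{\fu_\infty}(t_w [+] t_w(V)) = C_{\fu_\infty}^\delta(\widehat{\eta_{A,\fp,w}}(t_w)).
$$
Granting this, \eqref{6.5n} is almost immediate: since $A_\fp$ is the kernel of the formal-group homomorphism $\widehat{\eta_{A,\fp,w}}$, one has $\widehat{\eta_{A,\fp,w}}(t_w [+] t_w(V)) = \widehat{\eta_{A,\fp,w}}(t_w)$ for every $V \in A_\fp$, so the denominator of $\prod_{V \in A_\fp}\fC_{\fu_\infty}(t_w [+] t_w(V))$ equals $C_{\fu_\infty}^\delta(\widehat{\eta_{A,\fp,w}}(t_w))^{|A_\fp|} = C_{\fu_\infty}^\delta(\widehat{\eta_{A,\fp,w}}(t_w))^2$, while the numerator is the square of the left-hand side of the displayed identity. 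Hence $\prod_{V \in A_\fp} \fC_{\fu_\infty}(t_w [+] t_w(V)) = 1$, and applying $\frac{1}{2}\log$ produces \eqref{6.5n}.

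To prove the displayed identity, both sides lie in $\CO_{H,w}[[t_w]]$, and the plan is to verify that they take the same value at $t_w(V_{n+1})$ for every $n \geq 0$; then Weierstrass preparation in $\CO_{H,w}[[t_w]]$, combined with the fact that the $t_w(V_n)$ together with their Galois conjugates accumulate at $0$, forces the two series to coincide. On the right-hand side, the compatibility \eqref{y1.9} gives $\widehat{\eta_{A,\fp,w}}(t_w(V_{n+1})) = t_{\fp,w}(V_n^\delta)$, so the Galois equivariance $\delta(t_w(V_n)) = t_{\fp,w}(V_n^\delta)$ together with the interpolation $C_{\fu_\infty}(t_w(V_n)) = \delta^n(\fu_n)$ from Theorem \ref{6.2n} yields
$$
C_{\fu_\infty}^\delta(\widehat{\eta_{A,\fp,w}}(t_w(V_{n+1}))) = \delta(C_{\fu_\infty}(t_w(V_n))) = \delta^{n+1}(\fu_n).
$$
On the left-hand side, the relative Lubin-Tate description of $\Gal(H_{n+1,w}/H_{n,w})$ shows that the conjugates of $V_{n+1}$ over $H_{n,w}$ are precisely $\{V_{n+1} \oplus V : V \in A_\fp\}$, so the product at $t_w(V_{n+1})$ equals $N_{H_{n+1,w}/H_{n,w}}(\delta^{n+1}(\fu_{n+1})) = \delta^{n+1}(\fu_n)$, using that $\delta$ commutes with $\Gal(H_{n+1,w}/H_{n,w})$ and the norm-compatibility $N(\fu_{n+1}) = \fu_n$. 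The main obstacle is this careful bookkeeping around the $\delta$-action and the identification of Galois conjugates via relative Lubin-Tate theory; once those are in place, everything else is a faithful transcription of the proof of Lemma \ref{5.6}.
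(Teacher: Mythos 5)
Your proof is correct. Part (i) is exactly the paper's argument: the Frobenius congruence for the coefficients of the Coleman power series together with $\widehat{\eta_{A,\fp,w}}(t_w)\equiv t_w^2\pmod 2$ shows $\fC_{\fu_\infty}(t_w)\in 1+2\CO_{H,w}[[t_w]]$, and integrality of $\tfrac12\log$ follows; this is literally the argument of Lemma~\ref{5.4}, as the paper says.

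For part (ii) you take a genuinely different route. The paper simply quotes de Shalit (\emph{loc. cit.}, Prop.~2.1 and Cor.~2.3(ii)) for the Coleman norm-operator identity
$C_{\fu_\infty}^\delta(\widehat{\eta_{A,\fp,w}}(t_w))=\prod_{V\in A_\fp}C_{\fu_\infty}(t_w[+]t_w(V))$,
and then takes logarithms. You instead reprove that identity from scratch: you evaluate both sides at $t_w(V_{n+1})$ for every $n$, using $\eqref{y1.9}$ to identify $\widehat{\eta_{A,\fp,w}}(t_w(V_{n+1}))=t_{\fp,w}(V_n^\delta)$ on the right, the interpolation property $C_{\fu_\infty}(t_w(V_n))=\delta^n(\fu_n)$ of Theorem~\ref{6.2n} on both sides, the identification of $\Gal(H_{n+1,w}/H_{n,w})$-conjugates of $V_{n+1}$ with $\{V_{n+1}\oplus V:V\in A_\fp\}$ on the left, the norm-compatibility $N_{H_{n+1,w}/H_{n,w}}(\fu_{n+1})=\fu_n$, and commutativity of the abelian group $\fG_{w,\infty}=\CG_w\times\Delta_w$ to move $\delta^{n+1}$ past the norm; then the Weierstrass-preparation accumulation argument from Proposition~\ref{g3.3} forces the two unit power series to coincide. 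The derivation of $\prod_V\fC_{\fu_\infty}(t_w[+]t_w(V))=1$ from the displayed identity, using that $\widehat{\eta_{A,\fp,w}}$ kills $A_\fp$, is also correct. Your version is more self-contained (it does not outsource the key step to the cited reference), at the cost of a page of bookkeeping around the $\delta$-action and the Lubin-Tate Galois theory that the paper avoids by citation; the paper's route is shorter and makes clear that the identity is a standard fact about Coleman power series rather than something special to this situation.
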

\begin{proof} An entirely similar argument to that given in the proof of Lemma \ref{5.4} shows that
$\fC_{\fu_\infty}(t_w)$ belongs to $1+\fm_{H, w}[[t_w]]$, and so the first assertion of the
lemma follows. Moreover, it is shown in \cite{deshalit} (see Proposition 2.1 on p. 12 and Corollary 2.3, (ii) on p. 14) that
$$
C_{\fu_\infty}^\delta(\widehat{\eta_{A, \fp, w}}(t_w)) = \prod_{V \in A_\fp}C_{\fu_\infty}(t_w[+]t_w(V)),
$$
whence the second assertion of the lemma also follows easily on taking logarithms.
\end{proof}

Recall  \eqref{5.9} that we have fixed an $\msi$-isomorphism $j_w: \widehat{\BG}_m \simeq \widehat{A_w}$, and so, writing $W$ for the parameter of the formal multiplicative group, we can then define the power series
\begin{equation}\label{6.6n}
\fB_{\fu_\infty}(W) = \fA_{\fu_\infty}(j_w(W)).
\end{equation}
In view of \eqref{6.5n}, it follows, on applying Mahler's theorem to the power series $\fB_{\fu_\infty}(W)$, that there exists an $\msi$-valued measure $\mu(\fu_\infty)$
on  $\CO_\fp^\times$ such that $\BM(i(\mu(\fu_\infty))) = \fB_{\fu_\infty}(W)$. However, we now view $\mu(\fu_\infty)$ as a measure on the Galois group $\CG_w$ via the canonical isomorphism $\rho_\fp: \CG_w \ra \CO^\times_\fp$. Recall that $\fG_{w, \infty} = \CG_w \times \Delta_w$.  Following \cite{deshalit}, we then define the measure $\tilde{\mu}(\fu_\infty)$ in $\Lambda_\msi(\fG_{w, \infty})$ by
\begin{equation}\label{6.7n}
\tilde{\mu}(\fu_\infty) = \sum_{\sigma \in \Delta_w}\sigma\mu(\sigma^{-1}(\fu_\infty)).
\end{equation}
This enables us to define the $\msi$-linear $\fG_{w, \infty}$-homomorphism
\begin{equation}\label{6.8n}
   j_{H_{w,\infty}} : U(H_{w,\infty})\widehat{\otimes}_{\CO_\fp}\msi  \to \Lambda_\msi(\fG_{w, \infty})
\end{equation}
by  $j_{H_{w,\infty}}(\fu_\infty \otimes 1) = \tilde{\mu}(\fu_\infty)$. De Shalit (see Theorem 3.6, Chap. 1, of \cite{deshalit}) then goes on to prove that $j_{H_{w,\infty}}$ is injective, and has a cokernel
of the form $\fW_w\otimes_{\CO_\fp}\msi$, where $\fW_w$ is a finite $\fG_{w, \infty}$-module. On the other hand, if we take any $u_\infty \in U(F_{\infty, \fp})$, and define the power series $\fB_{u_\infty}(W)$ attached to $u_\infty$ by exactly the same procedure as above, we again obtain by Mahler's theorem a measure $\mu(u_\infty)$ on $\CO_\fp^\times$. However, in this case, we view $\mu(u_\infty)$ as a measure on the Galois group $G$ via the canonical isomorphism
$\rho_{\fP}: G \ra \CO^\times_\fp$. This in turn enables us to define the canonical $\msi$-linear $G$-homomorphism
\begin{equation}\label{6.9n}
    j_{F_\infty} : U(F_{\infty, \fp})\widehat{\otimes}_{\CO_\fp}\msi  \to \Lambda_\msi(G)
\end{equation}
by $j_{F_\infty}(u_\infty \otimes 1) = \mu(u_\infty)$.

\begin{prop}\label{6.10n} The  map $j_{F_\infty}$ is an injective $\Lambda_\msi(G)$-homomorphism, and its cokernel is of the form $\fW \otimes_{\CO_\fp} \msi$, where $\fW$ is a finite $G$-module.
\end{prop}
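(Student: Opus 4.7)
The plan is to deduce Proposition~\ref{6.10n} from the statement for $j_{H_{w,\infty}}$ recalled just above by taking $\Delta_w$-invariants, exploiting the crucial fact that the order $r$ of $\Delta_w$ divides the class number $h$ of $K$ and is therefore odd, so $r$ is a unit in $\msi$.

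First, I will establish that $U(F_{\infty, \fp}) = U(H_{w,\infty})^{\Delta_w}$. At each finite level, since $H_w$ and $F_{n,\fp}$ are linearly disjoint over $K_\fp$ (one is unramified of degree $r$, the other totally ramified), restriction gives $\Gal(H_{n,w}/F_{n,\fp}) = \Delta_w$, and the standard Galois invariants of local units give $U(H_{n,w})^{\Delta_w} = U(F_{n,\fp})$. Passing to the inverse limit, which commutes with taking invariants by a finite group, yields the claim. Because $r$ is a unit in $\msi$, the idempotent $e_0 = r^{-1}\sum_{\sigma\in\Delta_w}\sigma$ lies in $\msi[\Delta_w]$, so the functor $M \mapsto M^{\Delta_w} = e_0 M$ is exact on compact $\msi[\Delta_w]$-modules and commutes with the operation $-\widehat{\otimes}_{\CO_\fp}\msi$.

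Next, by its definition \eqref{6.7n}, the map $j_{H_{w,\infty}}$ is $\Delta_w$-equivariant, where $\Delta_w$ acts on $\Lambda_\msi(\fG_{w,\infty})$ by left multiplication through the inclusion $\Delta_w \subset \fG_{w,\infty}$. Applying the exact functor $e_0\cdot(-)$ to de Shalit's short exact sequence
\[
0 \to U(H_{w,\infty})\widehat{\otimes}_{\CO_\fp}\msi \xrightarrow{j_{H_{w,\infty}}} \Lambda_\msi(\fG_{w,\infty}) \to \fW_w \otimes_{\CO_\fp}\msi \to 0
\]
produces a short exact sequence in which the outer terms become $U(F_{\infty,\fp})\widehat{\otimes}_{\CO_\fp}\msi$ and $\fW_w^{\Delta_w}\otimes_{\CO_\fp}\msi$ (the latter still finite since $\fW_w$ is), while the middle term becomes $e_0\Lambda_\msi(\fG_{w,\infty}) = \Lambda_\msi(\CG_w)\cdot e_0$, which I identify canonically with $\Lambda_\msi(G)$ via the isomorphism $\CG_w \cong G$ induced by $\rho_\fp$ and $\rho_\fP$.

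To conclude, I will check that under these identifications the induced map on $\Delta_w$-invariants equals $r\cdot j_{F_\infty}$. For $u_\infty\in U(F_{\infty,\fp})$, the Coleman power series $C_{u_\infty}(t_w)$ is the same whether $u_\infty$ is viewed inside $U(H_{w,\infty})$ or inside $U(F_{\infty,\fp})$ (by the uniqueness in Theorem~\ref{6.2n} and its corollary), so the measure $\mu(u_\infty)\in\Lambda_\msi(\CG_w)$ from either construction agrees with $j_{F_\infty}(u_\infty)\in\Lambda_\msi(G)$. Since $\sigma^{-1}u_\infty = u_\infty$ for all $\sigma\in\Delta_w$, formula \eqref{6.7n} gives $\tilde{\mu}(u_\infty) = N_{\Delta_w}\cdot\mu(u_\infty) = r\,e_0\mu(u_\infty)$, which under the identification $e_0\Lambda_\msi(\fG_{w,\infty})\cong\Lambda_\msi(G)$ is exactly $r\cdot j_{F_\infty}(u_\infty)$. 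Because $r$ is a unit in $\msi$, multiplication by $r$ is an automorphism of every $\msi$-module in sight, so the kernel and cokernel of $r\cdot j_{F_\infty}$ coincide with those of $j_{F_\infty}$, giving injectivity and the desired description $\coker(j_{F_\infty}) = \fW\otimes_{\CO_\fp}\msi$ with $\fW = \fW_w^{\Delta_w}$. The main technical nuisance will be checking carefully that the completed tensor product $-\widehat{\otimes}_{\CO_\fp}\msi$ and the inverse limit defining $U(H_{w,\infty})$ both commute with $\Delta_w$-invariants at the level of compact topological modules, and confirming that no spurious twist enters the final identification of $\mu(u_\infty)$ with $j_{F_\infty}(u_\infty)$.
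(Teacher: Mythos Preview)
Your approach is correct and is genuinely different from the paper's. Both arguments hinge on the same key fact that $r = |\Delta_w|$ is odd, hence a unit in $\msi$, but they exploit it in dual ways. You take $\Delta_w$-invariants via the idempotent $e_0$, so that applying the exact functor $e_0\cdot(-)$ to de Shalit's exact sequence yields injectivity and the cokernel description simultaneously, with $\fW = \fW_w^{\Delta_w}$. The paper instead first proves injectivity by a direct hands-on argument with the Coleman power series (showing $\mu(u_\infty)=0$ forces $u_n^2=u_{n-1}$, then $C_{u_\infty}(0)=1$, and finally $u_n=1$ using that $\mu_{2^\infty}\not\subset F_{\infty,\fp}$), and for the cokernel builds a commutative square whose left vertical arrow is the \emph{norm} map $N_{H_{\infty,w}/F_{\infty,\fp}}$ and whose right vertical arrow $\lambda_\infty$ sums the $\Delta_w$-components; surjectivity of the norm (again because $r$ is odd) then forces the induced map on cokernels to be surjective. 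So the paper works in the coinvariant/norm direction while you work in the invariant direction; your route is cleaner and gives both assertions at once, while the paper's direct injectivity argument has the minor advantage of making explicit where the absence of extra $2$-power roots of unity in $F_{\infty,\fp}$ enters.
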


\begin{proof}  We first prove that  $j_{F_\infty}$ is injective. If $\mu(u_\infty) = 0$, then we must $\fC_{u_\infty}(t_w) = 1$, and so
\begin{equation}\label{6.11n}
C_{u_\infty}(t_w)^2 = C_{u_\infty}^\delta(\widehat{\eta_{A, \fp, w}}(t_w)).
\end{equation}
Putting $t_w = t_w(V_n)$ in this equation, and recalling the compatibility relation \eqref{y1.9},
we conclude that $u_n^2 = u_{n-1}$ for all $n \geq 1$. Putting $t_w = t_w(V_0)$ in the equation, we also conclude that $u_0^2 = C_{u_\infty}(0)^\delta$. Finally, putting  $t_w = 0$ in \eqref{6.8n}, we obtain $C_{u_\infty}(0)^2 = C_{u_\infty}(0)^\delta$, and so $C_{u_\infty}(0)^{2^r-1} = 1$, where $r$ is the order of $\fp$ in the ideal class group of $K$. Thus $C_{u_\infty}(0) = 1$ because $F_\fp$ has residue class field $\BF_2$. But the group $\mu_{2^\infty}$ of all 2-power roots of unity, cannot belong to the completion of $F_\infty$ at the unique prime above $\fp$. Indeed, if it did, then the field  $H_w(A_{\fp^\infty})$ would have to also contain $A_{\fp*^\infty}$ by the Weil pairing, and this is impossible because this latter group of points would map injectively under reduction modulo $w$,
which cannot be the case because the field $H_w(A_{\fp^\infty})$  has a finite residue field. Thus we must have $u_n = 1$ for all $n \geq 0$, and the proof of injectivity is complete.

\medskip

We next show how the remaining assertion of the theorem can be derived from Theorem 3.6, Chap. 1, of \cite{deshalit}. The local norm map $N_{H_{\infty,w}/F_{\infty,\fp}}$ extends by $\msi$-linearity to a map $\theta_\infty: U(H_{\infty,w})\wh{\otimes}\msi \to U(F_{\infty, \fp})\wh{\otimes}_{\CO_\fp}\msi$. We note that $\theta_\infty$ is surjective, because
if $u_\infty = (u_n)$ is any element of $U(F_\infty)$, then, noting that raising to the $r$-th power is an automorphism of $U(F_{\infty, \fp})$ because $r$ is odd, we can simply define the element $\fu_\infty = (u_n^{1/r})$ of $U(H_{\infty,w})$, whence clearly $N_{H_{\infty,w}/F_{\infty,\fp}}(\fu_\infty) = u_\infty$. Noting that every element $\zeta$ of  $\Lambda_\msi(\fG_{w, \infty})$ can be written uniquely in the form $\zeta = \sum_{\sigma \in \Delta_w}\sigma^{-1}A(\sigma)$ with $A(\sigma)$ in $\Lambda_\msi(\CG_w)$, we can define the map $\lambda_\infty : \Lambda_\msi(\fG_{w, \infty}) \to
\Lambda_\msi(G)$ by
\begin{equation}\label{6.12n}
\lambda_\infty (\zeta) = \sum_{\sigma \in \Delta_w}\widetilde{A(\sigma)},
\end{equation}
where $\widetilde{A(\sigma)}$ is the image of $A(\sigma)$ under the isomorphism from $\Lambda_\msi(\CG_w)$ to $\Lambda_\msi(G)$ given by the restriction map. We then have the following commutative diagram with exact rows
\begin{equation}\label{6.11n}
  \xymatrix{
  0  \ar[r]^{} & U(H_{\infty,w})\wh{\otimes}_{\CO_\fp}{\msi}\ar[d]_{\theta_\infty} \ar[r]^{ j_{H_{w,\infty}}} & \Lambda_\msi(\fG_{w, \infty}) \ar[d]_{\lambda_\infty} \ar[r]^{} & \fW_w\otimes_{\CO_\fp}\msi \ar[d]_{i_\infty} \ar[r]^{} & 0  \\
  0 \ar[r]^{} & U(F_{\infty, \fp})\wh{\otimes}_{\CO_\fp}\msi \ar[r]^{j_{F_\infty}} &  \ar[r]^{}\Lambda_\msi(G) & Coker(j_{F_\infty}) \ar[r]^{} & 0   }:
\end{equation}
The commutativity of the left hand square is easily verified from the explicit description we have given of the maps $\theta_\infty$ and $\lambda_\infty$.  Since the middle vertical map
is clearly surjective because $r$ is odd,  it follows that the right hand vertical map  is also surjective. Thus  the above diagram shows that Theorem \ref{6.10n} does indeed follow from Theorem 3.6, Chap. 1, of \cite{deshalit}.
\end{proof}

\section{Proof of the main conjecture}

We establish in this section the analogue for $B/F_\infty$ of Iwasawa's \cite{IW} celebrated theorem on cyclotomic fields, which led to the discovery of the main conjectures in general. Recall that $G = \Gamma \times \Delta$, where
\begin{equation}\label{7.1}
\Gamma = \Gal(F_\infty/F), \, \, \Delta= \Gal(F_\infty/K_\infty) = \{1, j \}.
\end{equation}
For each $n \geq 0$, let $\bar{C}(F_n)$ denote the closure of $C(F_n)$ in $U(F_{n, \fp})$ in the $\fp$-adic topology, and define $\bar{C}(F_\infty) = \varprojlim_n\bar{C}(F_n)$, where the projective limit is taken with respect to the local norm maps. Recalling that  $U(F_{\infty,\fp}) = \varprojlim_n U(F_{n, \fp})$, we then define the $G$-module
\begin{equation}\label{7.2}
Z(F_\infty) = U(F_{\infty, \fp})/\bar{C}(F_\infty).
\end{equation}
We also define
\begin{equation}\label{7.3}
U'(F_\fp)  = \cap_{n \geq 0} N_n(U(F_{n, \fp})).
\end{equation}
where $N_n$ denotes the local norm map from $F_{n, \fp}$ to $F_\fp$. Thanks to \eqref{nc4},
we see that we have $\bar{C}(F) \subset U'(F_\fp)$.

\begin{prop}\label{7.4} We have $(Z(F_\infty))_\Gamma = U'(F_\fp)/\bar{C}(F).$
\end{prop}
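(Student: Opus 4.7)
I would approach this by applying the right-exact functor of $\Gamma$-coinvariants to the defining short exact sequence
\begin{equation*}
0 \to \bar{C}(F_\infty) \to U(F_{\infty,\fp}) \to Z(F_\infty) \to 0,
\end{equation*}
which immediately yields an exact sequence
\begin{equation*}
(\bar{C}(F_\infty))_\Gamma \to (U(F_{\infty,\fp}))_\Gamma \to (Z(F_\infty))_\Gamma \to 0.
\end{equation*}
So the theorem reduces to (i) identifying $(U(F_{\infty,\fp}))_\Gamma$ canonically with $U'(F_\fp)$, and (ii) showing that under this identification the image of $(\bar{C}(F_\infty))_\Gamma$ is exactly $\bar{C}(F)$.

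For (i), the projection $\pi \colon (u_n) \mapsto u_0$ from $U(F_{\infty,\fp})$ to $U(F_\fp)$ is $\Gamma$-equivariant (trivial $\Gamma$-action on the target), so factors through the coinvariants. Its image equals $U'(F_\fp)$: the inclusion $\mathrm{im}\,\pi \subseteq U'(F_\fp)$ is tautological, and the reverse inclusion follows from a Tychonoff compactness argument, since for any $u_0 \in U'(F_\fp)$ the sets of $v_n \in U(F_{n,\fp})$ with $N_{F_n/F}(v_n) = u_0$ are compact, non-empty, and norm-compatible under $N_{F_n/F_{n-1}}$. The nontrivial point is to show $\ker \pi = (\gamma-1) U(F_{\infty,\fp})$, i.e.\ that any $(u_n)$ with $u_0 = 1$ is a $(\gamma-1)$-multiple. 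This is the classical computation of $\Gamma$-coinvariants of the local unit tower in a $\BZ_2$-extension totally ramified at a single prime; I would extract it from Proposition \ref{6.10n}, which embeds $U(F_{\infty,\fp}) \wh{\otimes}_{\CO_\fp} \msi$ into $\Lambda_\msi(G)$ with finite cokernel, so that (after tensoring with $\msi$) the computation of coinvariants is controlled by the elementary identity $(\Lambda_\msi(G))_\Gamma = \msi[\Delta]$.

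For (ii), the inclusion image $\subseteq \bar{C}(F)$ is immediate from the definitions, together with the fact that $\bar{C}(F) \subseteq U'(F_\fp)$ guaranteed by the norm-compatibility \eqref{nc4}. For the reverse inclusion, I would first verify that the norm $N_{F_n/F} \colon \bar{C}(F_n) \to \bar{C}(F)$ is surjective for each $n$: on a typical generator $\sigma(u_{\alpha, 0})$ of $C(F)$, lift $\sigma$ to $\tilde\sigma \in \mathrm{Gal}(F_n/K)$, and observe that $N_{F_n/F}(\tilde\sigma(u_{\alpha, n})) = \sigma(u_{\alpha, 0})$ using \eqref{nc4} and the fact that $F_n/K$ is abelian (so the norm commutes with Galois). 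Surjectivity then passes to closures by continuity and compactness. A second Tychonoff argument assembles any $c \in \bar{C}(F)$ into a norm-coherent system $(v_n) \in \bar{C}(F_\infty)$ lying over $c$, completing the identification.

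The main obstacle is the injectivity/kernel computation of step (i), which is where the full strength of the local Coleman/de Shalit theory of Section 6 is needed; once this input is in place, the remaining identifications are routine compactness and Tychonoff arguments.
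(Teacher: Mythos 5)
Your overall skeleton is the same as the paper's: take $\Gamma$-coinvariants of $0 \to \bar{C}(F_\infty) \to U(F_{\infty,\fp}) \to Z(F_\infty) \to 0$, identify $(U(F_{\infty,\fp}))_\Gamma$ with $U'(F_\fp)$, and check that the image of $(\bar{C}(F_\infty))_\Gamma$ is $\bar{C}(F)$ (your step (ii), with the Galois-equivariant lifting and compactness, is a correct elaboration of the paper's one-line appeal to the norm relation \eqref{nc4}). The problem is the mechanism you propose for the crucial middle identification. You want to deduce $\ker \pi = (\gamma-1)U(F_{\infty,\fp})$ from Proposition \ref{6.10n}. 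But that proposition only gives an exact sequence $0 \to U(F_{\infty,\fp})\wh{\otimes}_{\CO_\fp}\msi \to \Lambda_\msi(G) \to \fW\otimes_{\CO_\fp}\msi \to 0$ with $\fW$ a \emph{finite} $G$-module; taking $\Gamma$-homology you get
\begin{equation*}
0 \to H_1(\Gamma, \fW\otimes\msi) \to \bigl(U(F_{\infty,\fp})\wh{\otimes}\msi\bigr)_\Gamma \to \msi[\Delta] \to (\fW\otimes\msi)_\Gamma \to 0,
\end{equation*}
so the coinvariants are pinned down only up to finite kernel and finite cokernel. In particular this argument leaves open the possibility that the induced map $(U(F_{\infty,\fp}))_\Gamma \to U'(F_\fp)$ has a nonzero finite (2-power order) kernel, i.e.\ it does not prove the exact equality you need. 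And the exactness genuinely matters downstream: Proposition \ref{7.4} is fed into Nakayama-type conclusions ($(\epsilon_+Z(F_\infty))_\Gamma = 0$ forces $\epsilon_+Z(F_\infty)=0$ in Theorem \ref{7.16}) and into the odd-index computation via Corollary \ref{7.12} and Proposition \ref{7.19}; a finite 2-power discrepancy would wreck both.

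What the paper actually does at this point (its second Lemma 7.5) is a local class field theory argument, not a Coleman-map argument: it introduces $Y(F_{\infty,\fp})$, the Galois group of the maximal abelian 2-extension of $F_{\infty,\fp}$, uses the standard identity $(Y(F_{\infty,\fp}))_\Gamma = Y(F_\fp)$, the Lubin–Tate splitting $Y(F_{\infty,\fp}) \cong U(F_{\infty,\fp}) \times \BZ_2$ (the $\BZ_2$ being the unramified part), and the class-field-theoretic exact sequence $0 \to U'(F_\fp) \to Y(F_\fp) \to \BZ_2 \to 0$; comparing the two sequences gives precisely the isomorphism $(U(F_{\infty,\fp}))_\Gamma \simeq U'(F_\fp)$. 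So the statement you call the ``classical computation'' is true, but it has to be supplied by this (or an equivalent) argument; it cannot be extracted from Proposition \ref{6.10n} in the way you indicate. With that input replaced, the rest of your plan goes through and coincides with the paper's proof.
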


We first establish several preliminary results needed for the proof of this Proposition.  Write $N_{F/K}$ for the global norm map from $F$ to $K$, or the local norm map from
$F_\fp$ to $K_\fp$.
\medskip
\begin{lem}\label{7.5}  An element $u$ of $U(F_\fp)$ lies in $U'(F_\fp)$ if and only if $N_{F/K}(u)=1$. \end{lem}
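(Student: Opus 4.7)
The plan is to interpret the lemma through local class field theory for the tower $F_{\infty,\fp}/K_\fp$. The crucial ingredient, which is essentially built into the setup, is that $F_{\infty,\fp}=K_\fp(B_{\fP^\infty})$ is a Lubin--Tate extension of $K_\fp$: $B$ has good reduction at $\fp$, $\msb_\fP=\CO_\fp$ by Lemma~\ref{t3}, and the corresponding formal $\CO_\fp$-module $\widehat{B_\fP}$ has height one over $\CO_\fp=\BZ_2$ (parallel to the analysis of $\widehat{A^\fc_w}$ relative to $H_w/K_\fp$ in Lemma~\ref{lt}). Consequently $G=\Gal(F_\infty/K)$ is identified via $\rho_\fP$ with $\CO_\fp^\times$, and the local Artin map
\[ \theta_{F_\infty/K}\colon K_\fp^\times \lra G \]
restricts to an \emph{isomorphism} on the units $U(K_\fp)=\CO_\fp^\times$. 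In particular, the kernel of $\theta_{F_\infty/K}$ on $U(K_\fp)$ is trivial, so the universal-norm group of units from the tower $F_\infty/K$, namely $\bigcap_{n}N_{F_n/K}(U(F_{n,\fp}))$, equals $\{1\}$.

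For the ``only if'' direction, let $u\in U'(F_\fp)$ and pick $u_n\in U(F_{n,\fp})$ with $u=N_n(u_n)$. Since $N_{F/K}\circ N_n=N_{F_n/K}$, we get $N_{F/K}(u)\in N_{F_n/K}(U(F_{n,\fp}))$ for every $n$, and the above intersection forces $N_{F/K}(u)=1$. For the ``if'' direction, suppose $N_{F/K}(u)=1$. By the standard functoriality of local reciprocity,
\[ \theta_{F_\infty/F}(u) \;=\; \theta_{F_\infty/K}(N_{F/K}(u)) \;=\; 1\ \text{in}\ G, \]
so $u$ is a universal norm for the tower $F_\infty/F$, i.e.\ $u\in\bigcap_{n}N_n(F_{n,\fp}^{\times})$. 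To upgrade this to $u\in\bigcap_{n}N_n(U(F_{n,\fp}))=U'(F_\fp)$, fix $n$ and write $u=N_n(\beta)$ with $\beta\in F_{n,\fp}^{\times}$. The earlier lemma that $\fp$ is totally ramified in $F_\infty$ shows that any uniformizer of $F_{n,\fp}$ has norm a uniformizer of $F_\fp$; since $v_F(u)=0$, the valuation of $\beta$ must also be zero, so $\beta\in U(F_{n,\fp})$, as required.

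The only non-routine step is the injectivity of $\theta_{F_\infty/K}$ on units, which I expect to be the main obstacle. It should follow immediately once one verifies (as in the proof of Lemma~\ref{lt}, but directly for $\widehat{B_\fP}$ over $\CO_\fp$) that $\widehat{B_\fP}$ is a Lubin--Tate $\CO_\fp$-module for the Serre--Tate character $\phi$ restricted to $K_\fp^{\times}$, so that $\theta_{F_\infty/K}$ on units coincides, up to $\rho_\fP$, with the inversion map $u\mapsto u^{-1}$. Alternatively, if one wishes to avoid reciting Lubin--Tate reciprocity in this slightly non-standard setting, the same injectivity can be extracted from the injectivity of $j_{F_\infty}$ proved in Proposition~\ref{6.10n} (together with the Coleman-theoretic description of the local unit norms), by the descent argument from $H_w$ to $K_\fp$ that is already in place in Section~5.
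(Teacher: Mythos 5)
Your argument is, at bottom, the same as the paper's: both directions reduce to local class field theory for the totally ramified tower $F_{n,\fp}/K_\fp$, the ``only if'' part to the triviality of $\bigcap_{n} N_{F_{n,\fp}/K_\fp}(U(F_{n,\fp}))$, and the ``if'' part to norm-functoriality of the reciprocity map plus total ramification (the paper words this through the isomorphism $\Gal(F_{n,\fp}/F_\fp)\simeq \Gal(K_{n,\fp}/K_\fp)$, you through $\theta_{F_\infty/F}(u)=\theta_{F_\infty/K}(N_{F/K}(u))=1$; these are the same computation, and your upgrade from ``norm'' to ``norm of a unit'' using total ramification is fine). The step that carries the whole content of the lemma, however, is exactly the one you flag as unverified: the injectivity of the reciprocity map on $U(K_\fp)$, which you propose to deduce from $\widehat{B_\fP}$ being a Lubin--Tate $\CO_\fp$-module over $\CO_\fp$ itself. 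That statement is true, but it is not established anywhere in the paper (Lemma~\ref{lt} only provides a \emph{relative} Lubin--Tate structure for $\widehat{A_w}$ with respect to the unramified extension $H_w/K_\fp$), so as written this is a genuine gap; and the fallback via Proposition~\ref{6.10n} is not convincing either, since that proposition concerns norm-compatible systems of local units up the tower and says nothing directly about which elements of $U(K_\fp)$ are universal norms.

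The gap is easily closed, and with far less machinery than a Lubin--Tate description of $\widehat{B_\fP}$; this is what the paper does. Since $\fp$ is totally ramified in $F_n$ and $\Gal(F_{n,\fp}/K_\fp)\simeq(\CO_\fp/\fp^{n+2})^\times$, the subgroup $N_{F_{n,\fp}/K_\fp}(U(F_{n,\fp}))$ has index $2^{n+1}$ in $U(K_\fp)=\BZ_2^\times$, while the conductor computation of Lemma~\ref{2.7} shows it contains $1+\fp^{n+2}\CO_\fp$, which has the same index; hence $N_{F_{n,\fp}/K_\fp}(U(F_{n,\fp}))=1+\fp^{n+2}\CO_\fp$, and the intersection over all $n$ is trivial, which is all your ``only if'' direction needs. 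If you prefer your formulation, note also that injectivity of the Artin map on units requires no formal-group input at all: total ramification and compactness make $U(K_\fp)\to G\simeq\BZ_2^\times$ surjective, and a continuous surjective homomorphism $\BZ_2^\times\to\BZ_2^\times$ has trivial kernel (the kernel is finite for rank reasons, and then trivial by comparing torsion). So your outline is correct and parallels the paper, but as it stands it defers precisely the non-trivial point to an unproved auxiliary claim that the paper neither proves nor needs.
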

\begin{proof} We first note that an element $z$ of $U(K_\fp)$ is a norm from $F_{n, \fp}$ for all
$n \geq 0$ if and  only if $z = 1$. Indeed, since $\fp$ is totally ramified in $F_n$ and $\Gal(F_{n, \fp}/K_\fp) = (\CO_\fp/\fp^{n+2})^\times$, it follows easily from local class field theory that
$N_{F_n/K_\fp}(U(F_{n, \fp})) = 1 + \fp^{n+2}\CO_{\fp}$, whence the previous assertion is clear.
Now assume that $u$ is any element of $U(F_\fp)$. If $u$ lies in $U'(F_\fp)$, then $z =N_{F/K}(u)$
is an element of $U(K_\fp)$ which is a norm from $U(F_{n, \fp})$ for all $n \geq 0$, and so
$z$ must be 1 by our first remark.  Conversely, suppose $u$ is any element of $U(F_{\fp})$
with $N_{F/K}(u) = 1$. Now the restriction map from $\Gal(F_{n, \fp}/F_\fp)$ to $\Gal(K_{n, \fp}/K_\fp)$
is an isomorphism for all $n \geq 0$. Moreover, the local Artin symbol of $u$ for the extension
$F_{n, \fp}/F_\fp$, say $\sigma$ restricts to the local Artin symbol of $N_{F/K}(u)$ for the extension $K_{n, \fp}/K_\fp$, which is equal to the identity. Hence $\sigma = 1$, and so by local class field theory, $u$ must be a norm from $U(F_{n, \fp})$, and the proof is complete.
\end{proof}
\begin{lem}\label{7.5} The natural projection of $U(F_{\infty,\fp})$ onto  $U'(F_\fp)$ induces an isomorphism $(U(F_{\infty,\fp}))_\Gamma = U'(F_\fp)$.
\end{lem}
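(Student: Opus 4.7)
The projection $\pi:U(F_{\infty,\fp}) \to U(F_\fp)$ sending $(u_n)\mapsto u_0$ has image contained in $U'(F_\fp)$ because $u_0 = N_{F_n/F}(u_n)$ for every $n$, and it factors through $(U(F_{\infty,\fp}))_\Gamma$ since $\Gamma$ acts trivially on $U(F_\fp)$. Note that $(\gamma-1)U(F_{\infty,\fp})$ is automatically closed as a continuous image of a compact set, so $(U(F_{\infty,\fp}))_\Gamma$ is already a compact Hausdorff group with no closure operation needed. My plan is to establish surjectivity and injectivity of the induced map $\bar\pi:(U(F_{\infty,\fp}))_\Gamma \to U'(F_\fp)$ as separate steps.

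For surjectivity, fix $u \in U'(F_\fp)$. The fibres $S_n := N_{F_n/F}^{-1}(u)\cap U(F_{n,\fp})$ are non-empty compact cosets of $\ker(N_{F_n/F})$, and the norm maps $N_{F_n/F_{n-1}}$ restrict to continuous maps $S_n \to S_{n-1}$. The finite intersection property for inverse systems of non-empty compact Hausdorff spaces then yields $\varprojlim_n S_n \neq \emptyset$, producing a norm-compatible lift $(u_n)$ with $u_0 = u$.

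For injectivity, the inclusion $(\gamma-1)U(F_{\infty,\fp})\subseteq\ker(\pi)$ is automatic from Galois-invariance of norms. Setting $V_n := \ker(N_{F_n/F}:U(F_{n,\fp})\to U(F_\fp))$, taking the inverse limit of $0\to V_n \to U(F_{n,\fp}) \to N_{F_n/F}(U(F_{n,\fp}))\to 0$ identifies $\ker(\pi)$ with $\varprojlim V_n$. At each finite level $V_n/(\gamma_n-1)U(F_{n,\fp}) = \hat H^{-1}(\Gal(F_n/F),U(F_{n,\fp}))$ is a finite cyclic $2$-group of order dividing $[F_n:F]$. One then shows, using the explicit Lubin-Tate structure of $\widehat{A_w}$ from Lemma \ref{lt} and that the transition maps on these cohomology groups factor through multiplication by $2$, that the inverse limit of these Tate cohomology groups vanishes, yielding $\varprojlim V_n = (\gamma-1)U(F_{\infty,\fp})$ as required.

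The hard part will be verifying the vanishing of the inverse limit of Tate cohomology. An alternative, more explicit route uses the Coleman power-series framework of the corollary to Theorem \ref{6.2n}: each $u \in U(F_{\infty,\fp})$ is encoded by a unit power series $C_u \in \CO_{H,w}[[t_w]]^\times$, and via Lubin-Tate the $\Gamma$-action becomes substitution $t_w\mapsto [\rho_\fP(\gamma)](t_w)$ in $C_u$. The lifting problem then becomes a functional equation
$$C_v([\rho_\fP(\gamma)](t_w)) = C_v(t_w)\cdot C_u(t_w)$$
for $C_v \in \CO_{H,w}[[t_w]]^\times$, to be solved coefficient-by-coefficient. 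Integrality is controlled by the unramifiedness of $H_w$ over $\BQ_2$ and by the constraint $u_0 = 1$, which arranges for the crucial leading-order obstruction to cancel; this is essentially the same content as the cohomological vanishing above.
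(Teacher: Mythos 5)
Your surjectivity argument (compactness of the fibres $S_n$ and non-emptiness of their inverse limit) is correct, and your reduction of injectivity is also correct: since inverse limits are exact on compact groups, the kernel of $(U(F_{\infty,\fp}))_\Gamma \to U'(F_\fp)$ is identified with $\varprojlim_n \hat H^{-1}(\Gal(F_n/F), U(F_{n,\fp}))$, the transition maps being induced by the norms. But that vanishing is the entire content of the lemma, and you do not prove it. Each group $\hat H^{-1}(\Gal(F_n/F),U(F_{n,\fp}))$ is cyclic of order exactly $2^n$ (the extension is cyclic and totally ramified, the Herbrand quotient of the units is $1$, and $\hat H^0$ has order $[F_n:F]$), so the terms are nontrivial and growing; nothing formal kills the limit, and the assertion that the transition maps ``factor through multiplication by $2$'' is exactly the nontrivial decay statement that must be established. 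You flag it yourself as ``the hard part'' and then only gesture at the Lubin--Tate structure; likewise the Coleman power-series alternative is only a plan, since the functional equation $C_v([\rho_\fP(\gamma)](t_w)) = C_v(t_w)\,C_u(t_w)$ cannot be solved ``coefficient-by-coefficient'' in $\CO_{H,w}[[t_w]]^\times$ without identifying and killing an obstruction which is again the same cohomology class. So as it stands the proof is incomplete at its crucial step.

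For comparison, the paper sidesteps this computation by appealing to local class field theory: with $Y(F_{\infty,\fp})$ the Galois group over $F_{\infty,\fp}$ of the maximal abelian $2$-extension of $F_{\infty,\fp}$, and $Y(F_\fp)$ the corresponding group for $F_\fp$, one has $(Y(F_{\infty,\fp}))_\Gamma = Y(F_\fp)$, while Lubin--Tate theory gives the splitting $Y(F_{\infty,\fp}) = U(F_{\infty,\fp}) \times \BZ_2$ with $\BZ_2$ the unramified part; applying the snake lemma to multiplication by $\gamma - 1$ on this sequence (using $U(F_{\infty,\fp})^\Gamma = 0$) produces $0 \to U(F_{\infty,\fp})_\Gamma \to Y(F_{\infty,\fp})_\Gamma \to \BZ_2 \to 0$, which is then compared with $0 \to U'(F_\fp) \to Y(F_\fp) \to \BZ_2 \to 0$ to conclude. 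In effect, the cohomological vanishing you need is packaged inside standard local class field theory. To complete your route you would have to genuinely prove the decay of the norm-induced maps on $\hat H^{-1}$ (for example by an explicit computation with principal units in the totally ramified tower, or by the theory of universal norms), or else import the paper's $Y$-module comparison.
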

\begin{proof} This is essentially an exercise in classical Iwasawa theory, whose proof we briefly sketch. Let $Y(F_{\infty, \fp})$ be the Galois group over $F_{\infty, \fp}$ of the maximal abelian 2-extension of $F_{\infty, \fp}$, and let $Y(F_\fp)$ be the Galois group over $F_{\infty, \fp}$ of the maximal abelian 2-extension of $F_\fp$. As usual,  $\Gamma = \Gal(F_{\infty, \fp}/F_\fp)$ acts on
$Y(F_{\infty, \fp})$, and we have
\begin{equation}\label{7.6}
(Y(F_{\infty, \fp}))_\Gamma = Y(F_\fp).
\end{equation}
On the other hand, Lubin-Tate theory shows that
\begin{equation}\label{7.7}
 Y(F_{\infty, \fp}) = U(F_{\infty, \fp}) \times \BZ_2,
\end{equation}
where $\BZ_2$ denotes the Galois group of the maximal unramified 2-extension of $F_{\infty, \fp}$. Since clearly $U(F_{\infty, \fp})^\Gamma = 0$, we conclude that $ Y(F_{\infty, \fp})^\Gamma$ is equal to the Galois group of the maximal unramified 2-extension of $F_{\infty, \fp}$.
Viewing \eqref{7.7} as a short exact sequence for the $\Gamma$-module $Y(F_{\infty, \fp})$ with $U(F_{\infty, \fp})$ as submodule and $\BZ_2$ as quotient, and taking  $\Gamma$-cohomology of this exact sequence, we obtain from the snake lemma the exact sequence
\begin{equation}\label{7.8}
0 \to U(F_{\infty, \fp})_\Gamma \to Y(F_{\infty, \fp})_\Gamma \to \BZ_2 \to 0.
\end{equation}
On the other hand, we have the short exact sequence of $\BZ_2$-modules
\begin{equation}\label{7.9}
0 \to U'(F_\fp) \to Y(F_\fp) \to \BZ_2 \to 0.
\end{equation}
There is now an obvious commutative diagram with exact rows mapping the sequence
\eqref{7.8} to the sequence \eqref{7.9} in which the left hand vertical map is obviously surjective,
and the middle map is an isomorphism by \eqref{7.6}. Hence  the left vertical map must be an isomoprhism, and the proof is complete.

\end{proof}
We can now prove Proposition \ref{7.4}. We have the obvious commutative diagram with exact rows
\[\xymatrix{
  &(\bar{C}(F_\infty))_\Gamma \ar[d]_{} \ar[r]^{} & (U(F_{\infty, \fp}))_\Gamma \ar[d]_{} \ar[r]^{} & (Z(F_\infty))_\Gamma \ar[d]_{} \ar[r]^{} & 0 \\
  0 \ar[r]^{} & \bar{C}(F) \ar[r]^{} & U'(F_\fp) \ar[r]^{} & U'(F_\fp)/\bar{C}(F) \ar[r]^{} & 0.}\]
Now the left vertical map is surjective by \eqref{nc4}, and the middle vertical map is an isomorphism by Lemma \ref{7.5}, whence
the right vertical map must be an isomorphism. This completes the proof of Proposition \ref{7.4}.

\medskip

As before, let $X(F_\infty) = \Gal(M(F_\infty)/F_\infty)$, where $M(F_\infty)$ is the maximal abelian 2-extension of $F_\infty$, which is unramified outside $\fp$.
 For each $n \geq 0$, let $\CE(F_n)$ be the group of all global units of $F_n$, and let $\bar{\CE}(F_n)$ be their closure in $U(F_{n, \fp})$ in the $\fp$-adic topology.
Define $\bar{\CE}(F_\infty) = \varprojlim_n\bar{\CE}(F_n)$, where the projective limit is taken with respect to the norm maps. We have already shown in Theorem \ref{odd} that $F_\infty$ has no unramified abelian 2-extension. Hence global class field theory provides the following explicit description
of $X(F_\infty)$,
\begin{equation}
X(F_\infty) = U(F_{\infty, \fp})/\bar{\CE}(F_\infty).
\end{equation}
Finally, define $\bar{\CE}'(F) = U'(F_\fp) \cap \bar{\CE}(F).$

\begin{prop}\label{7.10} We have $X(F_\infty)^\Gamma = 0$, and $X(F_\infty)_\Gamma = U'(F_\fp)/\bar{\CE}'(F)$.
\end{prop}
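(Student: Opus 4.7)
The plan is to apply the snake lemma to the short exact sequence
$$0 \to \bar{\CE}(F_\infty) \to U(F_{\infty, \fp}) \to X(F_\infty) \to 0$$
(available from the class field theoretic description stated immediately before the proposition) with multiplication by $\gamma - 1$. Since the proof of Lemma \ref{7.5} already observes $U(F_{\infty, \fp})^\Gamma = 0$, which forces $\bar{\CE}(F_\infty)^\Gamma = 0$ as well, the resulting six-term sequence collapses to
$$0 \to X(F_\infty)^\Gamma \to \bar{\CE}(F_\infty)_\Gamma \to U(F_{\infty, \fp})_\Gamma \to X(F_\infty)_\Gamma \to 0,$$
and Lemma \ref{7.5} identifies the third term with $U'(F_\fp)$ via projection to the zeroth coordinate.

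For the second assertion I would identify $X(F_\infty)_\Gamma$ Galois theoretically. Because $F_\infty/F$ is itself an abelian 2-extension unramified outside $\fp$, we have $F_\infty \subset M(F) \subset M(F_\infty)$; any intermediate extension of $M(F_\infty)/F_\infty$ that is abelian over $F$ must lie in $M(F)$, so $M(F)$ is precisely the maximal such field, giving $\Gal(M(F)/F_\infty) = X(F_\infty)_\Gamma$. This yields the exact sequence
$$0 \to X(F_\infty)_\Gamma \to X(F) \to \Gamma \to 0.$$
Using that $F$ has odd class number (Theorem \ref{odd}), global class field theory gives $X(F) = U(F_\fp)/\bar{\CE}(F)$; the map to $\Gamma$ is the local Artin symbol attached to the totally ramified $\BZ_2$-extension $F_\infty/F$, whose kernel on $U(F_\fp)$ is exactly $U'(F_\fp)$ by local class field theory. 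Hence $X(F_\infty)_\Gamma = U'(F_\fp)/\bar{\CE}'(F)$, and by exactness in the snake sequence the image of $\bar{\CE}(F_\infty)_\Gamma$ in $U'(F_\fp)$ is then forced to be exactly $\bar{\CE}'(F)$.

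The main obstacle is the vanishing $X(F_\infty)^\Gamma = 0$. By Theorem \ref{t4}, $X(F_\infty)$ is a free finitely generated $\BZ_2$-module, so $X(F_\infty)^\Gamma$ is torsion-free and will vanish as soon as it is finite; on the other hand, tensoring $0 \to X^\Gamma \to X \xrightarrow{\gamma-1} X \to X_\Gamma \to 0$ with $\BQ_2$ and comparing kernel and cokernel dimensions of $\gamma-1$ acting on the finite-dimensional $\BQ_2$-space $X(F_\infty)\otimes\BQ_2$ gives $\rank_{\BZ_2} X(F_\infty)^\Gamma = \rank_{\BZ_2} X(F_\infty)_\Gamma$. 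So it suffices to show that $X(F_\infty)_\Gamma = U'(F_\fp)/\bar{\CE}'(F)$ is finite. The group $U'(F_\fp)$ has $\BZ_2$-rank $[F_\fp:K_\fp]-1 = 1$ (as $\fp$ is totally ramified of degree $2$ in $F$), so I only need $\bar{\CE}'(F)$ to have $\BZ_2$-rank $1$. Dirichlet's theorem gives $\rank_\BZ \CE(F) = 1$ (since $F$ is totally imaginary of degree $4$); for a fundamental unit $\eta$ the global norm $N_{F/K}(\eta) \in \CE(K)$ is a root of unity as $\CE(K)$ has rank $0$, so after replacing $\eta$ by a suitable power I may assume $N_{F/K}(\eta) = 1$. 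Because there is a unique prime of $F$ above $\fp$, the global norm coincides with the local norm at that prime, so $\eta \in U'(F_\fp)$; and its image in $U(F_\fp)$ cannot be of finite order (otherwise some power of $\eta$ would equal $1$ in $F_\fp$, hence in $F$ via the injection $F \hookrightarrow F_\fp$, contradicting the infinite order of $\eta$). Therefore the $\fp$-adic closure of $\langle\eta\rangle$ inside $\bar{\CE}'(F)$ already has $\BZ_2$-rank $1$, which forces $\rank_{\BZ_2} \bar{\CE}'(F) = 1$, $X(F_\infty)_\Gamma$ is finite, and the proof is complete.
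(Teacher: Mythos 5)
Your proof is correct and follows essentially the same route as the paper: you identify $X(F_\infty)_\Gamma = \Gal(M(F)/F_\infty) = U'(F_\fp)/\bar{\CE}'(F)$ by Iwasawa-theoretic descent and class field theory, deduce finiteness of this quotient from the rank-one unit group of $F$, and then obtain $X(F_\infty)^\Gamma = 0$ from the $\BZ_2$-freeness of $X(F_\infty)$ given by Theorem \ref{t4}. The opening snake-lemma sequence is superfluous to the final argument; otherwise you are simply supplying the details the paper compresses into ``elementary Iwasawa theory'', ``global class field theory'' and ``obviously finite''.
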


\begin{proof} Let $M(F)$ be the maximal abelian 2-extension of $F$, which is unramified outside $\fp$. Then
$$
X(F_\infty)_\Gamma = \Gal(M(F)/F_\infty) =  U'(F_\fp)/ \bar{\CE}'(F).
$$
where the first equality is elementary Iwasawa theory, and the second equality is by global class field theory.
Obviously the group on the right is finite because $\CE'(F)$ has rank 1 as an abelian group. Hence, since
$X(F_\infty)$ is a free finitely generated $\BZ_2$-module by Theorem \ref{t4}, we conclude that  we must also have
$X(F_\infty)^\Gamma = 0$.
\end{proof}

\medskip

Define  $R(F_\infty) = \bar{\CE}(F_\infty)/\bar{C}(F_\infty)$,  so that we have the exact sequence of $G$-modules
\begin{equation}\label{7.11}
0 \to R(F_\infty) \to Z(F_\infty) \to X(F_\infty) \to 0.
\end{equation}

\begin{cor}\label{7.12} We have $(R(F_\infty))_\Gamma = \bar{\CE}'(F)/\bar{C}(F)$.
\end{cor}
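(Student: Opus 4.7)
The plan is to derive this as a direct consequence of the snake lemma applied to the exact sequence \eqref{7.11}, together with the two descriptions of $\Gamma$-coinvariants already established in Propositions \ref{7.4} and \ref{7.10}.

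First, I would take $\Gamma$-cohomology of the short exact sequence
\[
0 \to R(F_\infty) \to Z(F_\infty) \to X(F_\infty) \to 0,
\]
obtaining the six-term exact sequence
\[
0 \to R(F_\infty)^\Gamma \to Z(F_\infty)^\Gamma \to X(F_\infty)^\Gamma \to R(F_\infty)_\Gamma \to Z(F_\infty)_\Gamma \to X(F_\infty)_\Gamma \to 0.
\]
By Proposition \ref{7.10}, $X(F_\infty)^\Gamma = 0$, so the connecting map vanishes and we are left with the short exact sequence
\[
0 \to R(F_\infty)_\Gamma \to Z(F_\infty)_\Gamma \to X(F_\infty)_\Gamma \to 0.
\]

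Next I would substitute the two identifications already in hand. Proposition \ref{7.4} gives $Z(F_\infty)_\Gamma = U'(F_\fp)/\bar{C}(F)$, and Proposition \ref{7.10} gives $X(F_\infty)_\Gamma = U'(F_\fp)/\bar{\CE}'(F)$. The right-hand map in the displayed sequence is the natural projection coming from functoriality, i.e.\ the quotient map
\[
U'(F_\fp)/\bar{C}(F) \twoheadrightarrow U'(F_\fp)/\bar{\CE}'(F),
\]
which is induced by the inclusion $\bar{C}(F) \subset \bar{\CE}'(F)$ (note that $\bar{C}(F) \subset U'(F_\fp)$ by \eqref{nc4}, and every elliptic unit is a global unit by the corollary to Theorem \ref{nc1}). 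The kernel of this projection is precisely $\bar{\CE}'(F)/\bar{C}(F)$, and so $R(F_\infty)_\Gamma = \bar{\CE}'(F)/\bar{C}(F)$, as required.

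There is really no obstacle here: once the vanishing $X(F_\infty)^\Gamma = 0$ from Proposition \ref{7.10} is in hand, the whole argument is a one-line snake-lemma computation, and the only thing to verify is that the map between the coinvariants is compatible with the natural projection on $U'(F_\fp)$ modulo the two different subgroups, which is immediate from the explicit description of these isomorphisms through class field theory. The only mildly subtle point, which I would make explicit in the write-up, is that the inclusion $\bar{C}(F) \subset \bar{\CE}'(F)$ required to identify the kernel follows from combining $\bar{C}(F) \subset \bar{\CE}(F)$ with $\bar{C}(F) \subset U'(F_\fp)$.
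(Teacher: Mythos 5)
Your argument is correct and follows exactly the same route as the paper: the key input is $X(F_\infty)^\Gamma = 0$ from Proposition \ref{7.10}, which yields the short exact sequence \eqref{7.13} of $\Gamma$-coinvariants, and the identification then drops out of Propositions \ref{7.4} and \ref{7.10}. The paper compresses the snake-lemma step into one line, but your expanded version is the same computation.
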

\begin{proof} Since $(X(F_\infty))^\Gamma = 0$, we have the exact sequence
\begin{equation}\label{7.13}
0 \to (R(F_\infty))_\Gamma \to (Z(F_\infty))_\Gamma \to (X(F_\infty))_\Gamma \to 0.
\end{equation}
The corollary then follows immediately from Propositions \ref{7.4} and \ref{7.10}.
\end{proof}

\medskip

Recall that $\Delta = \{1, j \}$ is the Galois group of $F_\infty$ over $K_\infty$. In the group ring $\BZ_2[\Delta]$, we define the two elements
\begin{equation}\label{7.14}
\epsilon_+ = 1+ j, \, \, \epsilon_- = 1 - j.
\end{equation}
\begin{lem}\label{7.15} Let $Y = W/V$, with $W$ a $\BZ_2[\Delta]$-module and $V$ a submodule. Let $\epsilon$ denote either of the above elements \eqref{7.14}.  Then  $\epsilon Y = \epsilon W/\epsilon V.$ \end{lem}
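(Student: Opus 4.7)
The plan is to derive the statement from the first isomorphism theorem applied to the restricted quotient map $\epsilon W \twoheadrightarrow \epsilon Y$, and then to identify its kernel with $\epsilon V$. First I observe that, since $\pi : W \twoheadrightarrow Y$ is $\BZ_2[\Delta]$-linear and $\epsilon \in \BZ_2[\Delta]$ is central, $\pi$ restricts to a surjection $\epsilon W \twoheadrightarrow \epsilon Y$: every element $\epsilon(w+V)$ of $\epsilon Y$ equals $\pi(\epsilon w)$ for some $w \in W$. The kernel of this restriction is $\epsilon W \cap V$, which visibly contains $\epsilon V$, so there is a canonical surjection
$$
\epsilon W / \epsilon V \twoheadrightarrow \epsilon Y.
$$
The lemma is thus equivalent to the reverse inclusion $\epsilon W \cap V \subseteq \epsilon V$.

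For this reverse inclusion I would take $v = \epsilon w \in V$ and exploit the two structural identities in $\BZ_2[\Delta]$: the relation $\epsilon_+ \epsilon_- = 0$ forces $(1 \mp j)v = 0$, so $jv = \pm v$ with the sign matching $\epsilon = \epsilon_\pm$; and the relation $\epsilon^2 = 2\epsilon$ forces $\epsilon v = 2v$. Thus $v$ lies in the appropriate $j$-eigenspace of $V$, and on this eigenspace multiplication by $\epsilon$ coincides with multiplication by $2$. I would then produce the element $v' \in V$ with $\epsilon v' = v$ by invoking the $\BZ_2$-freeness of the relevant modules, in the same spirit as the freeness established for $X(F_\infty)$ in Theorem \ref{t4} and Lemma \ref{t7}, so that division by $2$ on the $\pm 1$-eigenspace of $j$ is unambiguous and lands back inside $V$.

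The main obstacle is precisely this last step: the formal identities in the group ring $\BZ_2[\Delta]$ give only $2v \in \epsilon V$, not $v \in \epsilon V$ directly. This is the usual symptom of working at the prime $p=2$, where $\epsilon_\pm$ fail to be idempotent and $2$ is not a unit in $\BZ_2$; so an argument purely inside the group ring cannot close the gap, and one must use additional information on the $\BZ_2[\Delta]$-module structure of $W$ and $V$, in the spirit of the algebraic input of Lemma \ref{t8}.
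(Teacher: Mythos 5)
Your reduction is the right way to read the statement, and everything you actually prove is correct: the canonical surjection $\epsilon W/\epsilon V \twoheadrightarrow \epsilon Y$ has kernel $(\epsilon W\cap V)/\epsilon V$, and your identities $\epsilon_+\epsilon_-=0$, $\epsilon^2=2\epsilon$ show precisely that this kernel is killed by $2$. But the step you could not supply is not merely hard, it is impossible: the inclusion $\epsilon W\cap V\subseteq \epsilon V$, i.e.\ the literal equality asserted in the lemma, fails in general, and $\BZ_2$-freeness does not rescue it. Take $W=\BZ_2$ with trivial $\Delta$-action and $V=2W$, and $\epsilon=\epsilon_+$, which then acts as multiplication by $2$: one has $\epsilon Y=2(\BZ_2/2\BZ_2)=0$, whereas $\epsilon W/\epsilon V=2\BZ_2/4\BZ_2\simeq\BF_2$. (An example inside the regular representation is $W=\BZ_2[\Delta]$, $V=\epsilon_-W$, $\epsilon=\epsilon_-$, where again $\epsilon Y=0$ but $\epsilon W/\epsilon V\simeq\BF_2$.) In these examples $W$ and $V$ are free over $\BZ_2$, so the extra input you hoped to import in the spirit of Theorem \ref{t4} and Lemma \ref{t7} --- which in any case concern $X(F_\infty)$ and are not hypotheses of the present lemma --- cannot produce the element $v'\in V$ with $\epsilon v'=v$. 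This is the genuine gap in your proposal, and no argument can close it at this level of generality.

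For comparison, the paper's own proof takes a different and strictly weaker route: it places the rows $0\to V\to W\to Y\to 0$ and $0\to \epsilon V\to \epsilon W\to \epsilon W/\epsilon V\to 0$ in a diagram with vertical multiplication-by-$\epsilon$ maps and concludes from the snake lemma only that the induced map $Y\to \epsilon W/\epsilon V$ is surjective; it never addresses the injectivity question that your kernel computation isolates (and, by the examples above, could not). What is true, and what you have in effect proved, is the pair of statements: $\epsilon Y$ is canonically a quotient of $\epsilon W/\epsilon V$, and the kernel of this quotient map is annihilated by $2$. That is also what the subsequent arguments really need: in Theorems \ref{7.16} and \ref{7.18} the lemma is only ever used to propagate vanishing, and there one can afford to lose a factor of $2$, since for a compact $\Lambda(\Gamma)$-module $M$ (with $\gamma$ a topological generator of $\Gamma$) the inclusion $\epsilon M\subseteq(\gamma-1)M$ forces $2\epsilon M\subseteq(\gamma-1)\epsilon M$, whence $\epsilon M/(2,\gamma-1)\epsilon M=0$ and topological Nakayama at the maximal ideal $(2,\gamma-1)$ still yields $\epsilon M=0$. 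So the constructive fix is not to pursue the false inclusion, but to record the surjection together with your ``kernel killed by $2$'' observation and use those in place of the asserted equality.
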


\begin{proof} We have the commutative diagram with exact rows
\begin{equation}
  \xymatrix{
  0  \ar[r]^{} & V\ar[d] \ar[r]^{ } & W \ar[d]_{}\ar[r]^{} & Y \ar[d]_{} \ar[r]^{} & 0  \\
  0 \ar[r]^{} & \epsilon V \ar[r]^{} &  \epsilon W\ar[r]^{} & \epsilon W/\epsilon V \ar[r]^{} & 0   },
\end{equation}
where the three vertical maps are multiplication by $\epsilon$. Since the middle vertical map is
obviously surjective, it follows from the snake lemme that the right vertical map is also surjective,
proving the lemma.
\end{proof}
\begin{thm}\label{7.16} For all primes $q$ with $q \equiv 7 \mod 8$, we have $\epsilon_+Z(F_\infty) = 0$.
\end{thm}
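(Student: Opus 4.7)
The plan is to deduce $\epsilon_+ Z(F_\infty) = 0$ from topological Nakayama's lemma, applied to the compact, finitely generated $\BZ_2[[\Gamma]]$-module $\epsilon_+ Z(F_\infty)$; it suffices to show that its $\Gamma$-coinvariants vanish. The finite generation hypothesis is automatic, since $Z(F_\infty)$ is finitely generated over $\BZ_2[[G]]$ and $\BZ_2[[G]]$ is free of rank $2$ over $\BZ_2[[\Gamma]]$ (because $G = \Delta \times \Gamma$ with $|\Delta| = 2$). So the goal reduces to verifying that $(\epsilon_+ Z(F_\infty))_\Gamma = 0$.

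First I would compute these coinvariants using Lemma \ref{7.15} twice. Because $\Delta$ and $\Gamma$ are independent direct factors of $G$, multiplication by $\gamma - 1$ commutes with the action of $\epsilon_+$, so Lemma \ref{7.15} applied with $W = Z(F_\infty)$ and $V = (\gamma - 1) Z(F_\infty)$ gives
\[
(\epsilon_+ Z(F_\infty))_\Gamma \;=\; \epsilon_+ Z(F_\infty) / \epsilon_+ (\gamma - 1) Z(F_\infty) \;=\; \epsilon_+\bigl( Z(F_\infty)_\Gamma\bigr).
\]
Next, Proposition \ref{7.4} identifies $Z(F_\infty)_\Gamma = U'(F_\fp)/\bar C(F)$, and since the inclusion $\bar C(F) \subseteq U'(F_\fp)$ has been recorded just before that proposition, a second application of Lemma \ref{7.15} gives
\[
(\epsilon_+ Z(F_\infty))_\Gamma \;=\; \epsilon_+ U'(F_\fp) \,/\, \epsilon_+ \bar C(F).
\]

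The crucial observation is then that both groups on the right are trivial. Since $\epsilon_+ = 1 + j$ and $j$ is the nontrivial element of $\Delta = \Gal(F/K)$, the operator $\epsilon_+$ acts on the multiplicative group $U(F_\fp)$ by $u \mapsto u \cdot j(u) = N_{F/K}(u)$. But by Lemma \ref{7.5}, the subgroup $U'(F_\fp)$ is precisely the kernel of $N_{F/K}$ acting on $U(F_\fp)$, so $\epsilon_+ U'(F_\fp)$ is trivial; \emph{a fortiori} $\epsilon_+ \bar C(F)$ is trivial as well. Hence the quotient $(\epsilon_+ Z(F_\infty))_\Gamma$ is zero, and Nakayama's lemma gives $\epsilon_+ Z(F_\infty) = 0$.

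The argument is essentially formal once Proposition \ref{7.4} and Lemma \ref{7.5} are in hand; no step presents a genuine obstacle. The one point that deserves a line of justification is the commutativity $\epsilon_+ (\gamma - 1) = (\gamma - 1)\epsilon_+$, which follows from the direct product decomposition $G = \Delta \times \Gamma$. The conceptual heart is simply the recognition that the idempotent-like element $1 + j$ in the group ring realizes the local norm from $F_\fp$ down to $K_\fp$, and this norm annihilates $U'(F_\fp)$ by the very definition/characterization of $U'(F_\fp)$ as universal norms.
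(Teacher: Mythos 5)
Your proposal is correct and follows essentially the same route as the paper: reduce by topological Nakayama to showing $(\epsilon_+Z(F_\infty))_\Gamma = 0$, commute $\epsilon_+$ past the coinvariants using Lemma \ref{7.15} and the commutativity of $G$, invoke Proposition \ref{7.4} to identify $(Z(F_\infty))_\Gamma$ with $U'(F_\fp)/\bar{C}(F)$, and conclude because $\epsilon_+ = 1+j$ acts as the local norm $N_{F/K}$, which annihilates $U'(F_\fp)$ by Lemma \ref{7.5}. You merely make explicit the last step, which the paper leaves implicit in its citation of Lemma \ref{7.5}.
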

\begin{proof} It suffices to show that
\begin{equation}\label{7.17}
(\epsilon_+Z(F_\infty))_\Gamma = 0.
\end{equation}
But, since $G$ is commutative, it follows from the previous lemma, Proposition \ref{7.4} and Lemma \ref{7.5} that
$$
(\epsilon_+Z(F_\infty))_\Gamma  = \epsilon_+(Z(F_\infty))_\Gamma = \epsilon_+U'(F_\fp)/\epsilon_+ \bar{C}(F) = 0.
$$
This completes the proof.
\end{proof}

\begin{thm}\label{7.18} For all primes $q$ with $q \equiv 7 \mod 8$, we have $\epsilon_-Z(F_\infty) = \epsilon _-X(F_\infty)$.
\end{thm}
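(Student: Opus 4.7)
The plan is to reduce Theorem \ref{7.18} to the single vanishing statement $\epsilon_- R(F_\infty) = 0$. Applying Lemma \ref{7.15} to the exact sequence \eqref{7.11} yields $\epsilon_- X(F_\infty) = \epsilon_- Z(F_\infty)/\epsilon_- R(F_\infty)$, so the asserted equality $\epsilon_- Z(F_\infty) = \epsilon_- X(F_\infty)$ is equivalent to $\epsilon_- R(F_\infty) = 0$. A second application of Lemma \ref{7.15} to $R(F_\infty) = \bar{\CE}(F_\infty)/\bar{C}(F_\infty)$ further reduces the problem to proving $\epsilon_- \bar{\CE}(F_\infty) = 0$ inside $U(F_{\infty,\fp})$.

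The main input will be a CM-style computation: for any global unit $u \in \CE(F_n)$, the element $\epsilon_- u = u \cdot (ju)^{-1}$ will be shown to lie in $\mu(F_n)$. The essential ingredient is that $F_n$ is a CM field and the abstract Galois element $j \in \Gal(F_n/K_n)$ characterized by $\rho_\fP(j) = -1$ coincides with complex conjugation on $F_n$, i.e., generates $\Gal(F_n/F_n^+)$ for the maximal totally real subfield $F_n^+$. Granting this, $|\epsilon_- u|_v = 1$ at every archimedean place of $F_n$, so Kronecker's theorem forces $\epsilon_- u \in \mu(F_n)$; taking closures in $U(F_{n,\fp})$, and noting that $\mu(F_n)$ is finite and hence closed, one obtains $\epsilon_- \bar{\CE}(F_n) \subseteq \mu(F_n)$.

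The argument then concludes with the computation that $\varprojlim_n \mu(F_n) = 0$ under the local norm maps at $\fp$. By the argument in the proof of Proposition \ref{6.10n}, $\mu_{2^\infty} \not\subset F_{\infty,\fp}$; together with the pro-$2$ nature of the extension $F_\infty/K$ and the fact that $q-1$ is not a $2$-power when $q \equiv 7 \bmod 8$ (which rules out nontrivial odd-order roots of unity in $F_n$, since $\BQ(\zeta_m) \subset F_n$ with $m$ odd and $> 1$ would force $\phi(m) \mid [F_n:\BQ]$), the groups $\mu(F_n)$ are uniformly contained in some fixed $\mu_{2^a}$. For $n$ sufficiently large the Galois group $\Gal(F_n/F_{n-1})$ fixes $\mu_{2^a}$ pointwise, so the local norm restricts to squaring on $\mu_{2^a}$; any norm-compatible sequence $(z_n)$ therefore satisfies $z_0 = z_n^{2^n} = 1$ for $n$ large, and likewise $z_k = 1$ for every $k \geq 0$. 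This yields $\epsilon_- \bar{\CE}(F_\infty) = 0$ and hence the theorem.

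The most delicate point in this plan is the identification of the abstract Galois element $j$ with complex conjugation at each archimedean place of $F_n$: although plausible from the CM structure of $B/K$, it requires tracing through the reciprocity interpretation of $\rho_\fP$ and the Serre-Tate character $\phi$, anchoring a purely Galois-theoretic object to an analytic involution on $\BC$. Once this CM-theoretic point is in place, the rest of the argument is formal, driven by the right-exactness of the $\epsilon_-$-functor on the sequence \eqref{7.11} together with the inverse-limit triviality of the finite groups $\mu(F_n)$.
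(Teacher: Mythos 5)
Your opening reduction, via Lemma \ref{7.15} applied to the sequence \eqref{7.11}, that the theorem follows from $\epsilon_- R(F_\infty)=0$ agrees with the paper. But the further reduction to $\epsilon_-\bar{\CE}(F_\infty)=0$ rests on an identification that cannot hold: $j$ lies in $\Gal(F_\infty/K_\infty)$, so it fixes $K$ pointwise, whereas complex conjugation under any embedding $F_n\hookrightarrow\BC$ acts nontrivially on the imaginary quadratic field $K$; hence $j$ is never complex conjugation. In fact $F_0=F=K(\sqrt{-\beta})$ is not even a CM field, since its unique quadratic subfield is $K$, which is imaginary. Worse, the statement you are trying to prove is false. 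By Lemma \ref{7.5} (and as used in the proof of Lemma \ref{7.23}), the elliptic units satisfy $N_{F/K}(u_{\alpha,0})=1$, so $j(u_{\alpha,0})=u_{\alpha,0}^{-1}$ and $\epsilon_- u_{\alpha,0}=u_{\alpha,0}^{2}$, which is a non-torsion global unit because $\log|u_{\alpha,0}|_{\BC}$ is a nonzero multiple of $L'(\omega,0)\neq 0$ by Lemma \ref{7.23}. Since the $u_{\alpha,n}$ are norm-compatible, this gives $\epsilon_-\bar{C}(F_\infty)\neq 0$, and a fortiori $\epsilon_-\bar{\CE}(F_\infty)\neq 0$. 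So your key lemma ("$\epsilon_- u\in\mu(F_n)$ for every global unit") fails already for the fundamental unit of $F$ (if $\eta^{1-j}$ were torsion, then since $N_{F/K}(\eta)=\pm 1$ one would get $\eta^{2}$ torsion), and the subsequent inverse-limit computation, though correct in itself, is moot.

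What the theorem actually requires, and what your plan omits entirely, is a comparison of the elliptic units with the full unit group. In the paper one does not kill $\epsilon_-\bar{\CE}(F_\infty)$; instead one reduces $\epsilon_- R(F_\infty)=0$ by Nakayama's lemma over $\Gamma$ to $(\epsilon_- R(F_\infty))_\Gamma=0$, computes via Corollary \ref{7.12} and the relation $j(u)=u^{-1}$ that this coinvariant module equals $\bar{\CE}'(F)^{2}/\bar{C}(F)^{2}$ (so $\epsilon_-$ acts on the minus units by squaring, not by annihilation), and then invokes Proposition \ref{7.19}: the index of $C(F)$ in $\CE'(F)$ modulo torsion is odd. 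That odd-index statement is the genuine arithmetic content, and its proof uses the analytic class number formula together with Kronecker's second limit formula and a judicious choice of $\alpha\in\msj$ with $N\alpha-\omega(\gamma_\alpha)\equiv 2\bmod 4$. No purely archimedean or root-of-unity argument can substitute for this step.
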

\begin{proof} In view of the exact sequence \eqref{7.11} and Lemma \ref{7.15}, it suffices to show that $\epsilon_- R(F_\infty) = 0.$
Hence it suffices to show that $(\epsilon_- R(F_\infty) )_\Gamma = 0$. But, again noting that $G$ is commutative and using Lemma \ref{7.15}
and Corollary \ref{7.12}, we have
$$
(\epsilon_- R(F_\infty) )_\Gamma = \epsilon_-(R(F_\infty))_\Gamma = \epsilon_-\bar{\CE}'(F)/\epsilon_-\bar{C}(F) = \bar{\CE}'(F)^2/\bar{C}(F)^2;
$$
the last equality is valid because $j(u) = u^{-1}$ for all $u$ in $\bar{\CE}'(F)$. Note also that the group of  roots of unit of $F$ is just $\mu_2 = \{\pm 1\}$,
because $\fp^*$ is not ramified in $F$. Hence the conclusion of Theorem \ref{7.18} will be an immediate consequence of the following result.
\end{proof}

\begin{prop} \label{7.19} The index of $C(F)$ modulo torsion in $\CE'(F)$ modulo torsion is an odd integer.
\end{prop}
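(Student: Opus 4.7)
The field $F/\BQ$ is totally imaginary of degree $4$ and $\CO_K^\times = \{\pm 1\}$, so Dirichlet's unit theorem gives $\CE(F)/\mathrm{tors} \simeq \BZ$, and hence $\CE'(F)/\mathrm{tors} \simeq \BZ$ as a subgroup of index at most $2$ in $\CE(F)$. Fix a generator $\eta$ of $\CE'(F)/\mathrm{tors}$ and write each $u_{\alpha, 0} = N_{\fF/F}(\fR_{\alpha, A}(V_0)) = \pm\,\eta^{m(\alpha)}$ for some $m(\alpha) \in \BZ$; the claim amounts to $\gcd_{\alpha \in \msj} m(\alpha)$ being odd, equivalently $\bar{C}(F) = \bar{\CE}'(F)$ as closed $\BZ_2$-submodules of $U'(F_\fp)$.

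The plan is to pin down $v_\fp(\log_\fp u_{\alpha, 0})$ via the Iwasawa measure $\mu_{\alpha, \infty}$ of \S5. On the local side, unwinding \eqref{5.3}--\eqref{5.14} and using the injectivity of the Coleman map from Proposition \ref{6.10n}, the first moment $\int_G \rho_\fP\, d\mu_{\alpha, \infty}$ is a specific $\CO_{H,w}$-linear combination of the initial coefficients of the Coleman power series $d_{\alpha, \infty}$ at $t_w = 0$; through the interpolation property of Proposition \ref{g3.3} and the norm compatibility \eqref{nc4}, its $\fp$-adic valuation both determines and is determined by $v_\fp(\log_\fp u_{\alpha, 0})$. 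On the analytic side, Theorem \ref{5.15} at $k = 1$ equates the same moment with
\[
b_1(\alpha)\,\Omega_\fp(A)\,\Omega_\infty(A)^{-1}\,L_\fq(\bar\phi, 1)\,\bigl(1 - \phi(\fp)/2\bigr).
\]
Here $L_\fq(\bar\phi, 1) \ne 0$ follows from Rohrlich's non-vanishing theorem for $L(A/H, 1)$ combined with Deuring's factorization $L(A/H, s) = \prod_\chi L(\bar\phi\chi, s) L(\phi\bar\chi, s)$; and we can arrange $b_1(\alpha) = \sqrt{-q}(\alpha - N\alpha)$ to be a $2$-adic unit by varying $\alpha \in \msj$, since $(\alpha, 2) = 1$ and $\alpha - N\alpha = \alpha(1 - \bar\alpha)$ generically has trivial $\fp$-valuation (the congruence $\bar\alpha \equiv 1 \bmod (\fp^*)^2$ imposes no constraint modulo $\fp$).

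The technical heart, and the main obstacle, is the resulting $2$-adic valuation identity
\[
v_\fp\bigl(L_\fq(\bar\phi, 1)(1 - \phi(\fp)/2)/\Omega_\infty(A)\bigr) \;=\; v_\fp\bigl(\Omega_\fp(A)\log_\fp \eta\bigr),
\]
i.e.\ a $2$-adic Kronecker--limit formula for the degree-$2$ CM extension $F/K$. The plan is to express the left-hand side as the product of (i) the $\fp$-adic regulator $\log_\fp \eta$, (ii) the odd integer $h_F/h_K$ (with $h_F$ odd by Theorem \ref{odd} and $h_K$ odd since $q \equiv 7 \bmod 8$), and (iii) explicit local correction factors at $\fp$ and $\fq$ whose $2$-adic valuations precisely cancel those of $\Omega_\fp(A)$ and the Euler factor. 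Once the identity is in hand, $m(\alpha)$ is forced to be a $2$-adic unit for some $\alpha \in \msj$, so $\gcd_{\alpha} m(\alpha)$ is odd, which is what we want.
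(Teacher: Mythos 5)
Your strategy is genuinely different from the paper's, but as it stands it has a real gap at its centre. The identity you call the ``technical heart'' --- the $2$-adic Kronecker-type formula
$v_\fp\bigl(L_\fq(\bar\phi,1)(1-\phi(\fp)/2)/\Omega_\infty(A)\bigr)=v_\fp\bigl(\Omega_\fp(A)\log_\fp\eta\bigr)$ --- is exactly the statement you would need to prove, and you only sketch a ``plan'' for it. Worse, the natural route to such a $2$-adic class number formula for $F$ is the main conjecture machinery of \S6--\S7 together with the computation of the index $[\bar{\CE}'(F):\bar{C}(F)]$; but Proposition \ref{7.19} is itself an input to that machinery (Theorem \ref{7.18} rests on it, and the exact determination of $\ord_\msp(L(\bar\phi,1)/\Omega_\infty(A))$ in Proposition \ref{4.4n} is a consequence of the main conjecture, not something available beforehand --- Rohrlich gives non-vanishing, not the $2$-adic valuation). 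So your argument is circular unless you supply an independent proof of that identity, and none is given. There is also a concrete false step: you claim $b_1(\alpha)=\sqrt{-q}\,(\phi((\alpha))-N\alpha)$ can be made a $2$-adic unit by varying $\alpha\in\msj$. It cannot: every $\alpha\in\msj$ satisfies $\alpha\equiv 1\bmod\fp^2$, $\phi((\alpha))=\pm\alpha$, and $N\alpha$ is odd, so $\phi((\alpha))-N\alpha\equiv 0\bmod\fp$ always (the residue field at $\fp$ is $\BF_2$). This unavoidable factor of $2$ is precisely the delicate point, and handling it is where the real work lies. Finally, the assertion that the first moment $\int_G\rho_\fP\,d\mu_{\alpha,\infty}$ ``both determines and is determined by'' $v_\fp(\log_\fp u_{\alpha,0})$ is not justified: the measure is built from $\tfrac12\log$ of a ratio of Coleman series and its moments compute derivatives at $z=0$, not $\log_\fp u_{\alpha,0}$ itself, so even this translation needs an argument.

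For comparison, the paper avoids all $2$-adic $L$-value input and works archimedeanly: Dirichlet's class number formula and the functional equation give $L'(\omega,0)=W(\omega)\log|\eta|_\BC\,h_F/h$, Kronecker's second limit formula gives $2\log|u_{\alpha,0}|_\BC=(\omega(\gamma_\alpha)-N\alpha)L'(\omega,0)$, and then a clever choice $\alpha=a^2+b^2\sqrt{-q}$ makes $\omega(\gamma_\alpha)=1$ and $N\alpha-\omega(\gamma_\alpha)\equiv 2\bmod 4$, so that after cancelling the factor $2$ the exponent of $\eta$ is an odd multiple of $h_F/h$, which is odd by Theorem \ref{odd}. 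The inevitable factor of $2$ in $\phi((\alpha))-N\alpha$ (or $N\alpha-\omega(\gamma_\alpha)$) is thus absorbed against the $2$ in $2\log|u_{\alpha,0}|_\BC$, a cancellation your $2$-adic outline has no mechanism to reproduce. If you want to salvage your approach, you would need an independent $2$-adic analogue of Kronecker's limit formula for $F/K$ proved without using Proposition \ref{7.19} or the main conjecture; that is a substantial (and here unproved) result, and note it would also have to be uniform in $q\equiv 7\bmod 8$, even though $\ord_v(\log_v\eta)$ itself jumps between the cases $q\equiv 7$ and $q\equiv 15\bmod 16$ (cf.\ Theorem \ref{8.11}).
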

The proof is based on the analytic class number formula and Kronecker's limit formula. By Dirichlet's theorem, the unit group
of $F$ has has rank 1, and, as above, its torsion subgroup is just $\mu_2 = \{\pm 1\}$. Write $\eta$ for a generator of the unit group of $F$ modulo torsion.  We fix an embedding of $F$ into $\BC$.
If $z$ is any element of $F$, $|z|_\BC$ will denote the square of the ordinary complex absolute value of $z$. Let $h_F$ denote the class number of $F$, and, as before, $h$ will denote the class number of $K$. Let $\omega$ denote the non-trivial character of $\Delta$, and write $L(\omega, s)$ for its complex $L$-series. Then Dirichlet's formula for the residue of the complex zeta functions immediately gives
\begin{equation}\label{7.20}
(2\pi)^2h_F \log|\eta|_\BC/ |D_F|^{1/2} = 2\pi L(\omega, 1)h/q^{1/2}.
\end{equation}
where $D_F$ denotes the discriminant of $F$. Now, by Lemma \ref{g2.6}, the conductor $\ff$ of $F/K$ is equal to $\fq\fp^2$. and so the conductor discriminant formula tells us that
$D_F = q^2N\ff$, where $N\ff$ denotes the absolute norm of $\ff$. Moreover, the functional equation for $L(\omega, s)$ gives
that
\begin{equation}\label{7.21}
L(\omega, 1) =  2\pi W(\omega)L'(\omega, 0)/(qN\ff)^{1/2},
\end{equation}
where $W(\omega) = \pm 1$ is the sign in the functional equation for $L(\omega, s)$ Combining \eqref{7.20} and \eqref{7.21}, we obtain finally
\begin{equation}\label{7.22}
L'(\omega, 0) = W(\omega)\log|\eta |_\BC h_F/h.
\end{equation}
\noindent We recall that, for each $\alpha \in \msj$, the elliptic unit  $u_{\alpha, 0}$ is defined by $u_{\alpha, 0}  = N_{\fF_0/F}(\fR_{\alpha, A}(V_0))$. Moreover, by Lemma \ref{7.5},
we have $N_{F/K}u_{\alpha, 0} = 1$, so that $j(u_{\alpha, 0}) = u_{\alpha, 0}^{-1}.$ Write $\gamma_\alpha$ for the Artin symbol of the ideal $\alpha \CO_K$ in $G$.
\begin{lem}\label{7.23} For each $\alpha \in \msj$, we have $2\log|u_{\alpha, 0}|_\BC = (\omega(\gamma_\alpha) - N\alpha)L'(\omega, 0)$. \end{lem}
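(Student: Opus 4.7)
The strategy is to apply Kronecker's second limit formula to the non-trivial quadratic character $\omega$ of $\Gal(F/K)$; by Lemma~\ref{2.7} this character has conductor $\ff = \fq\fp^2$. Kronecker's formula will express $L'(\omega,0)$ as an $\omega$-weighted sum of $\log|\theta(\cdot, \CL_\fc)|$ evaluated at specific $\ff$-division points, where $\fc$ ranges over a set of ideal class representatives of $K$, and the elliptic unit $u_{\alpha,0}$ is built from precisely this theta-function data.

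First, since $u_{\alpha,0} = N_{\fF/F}(\fR_{\alpha, A}(V_0))$ and the restriction map $\Gal(\fF/F) \simeq \Gal(H/K)$ is realized by the Artin symbols of $\fC_0$, Lemma~\ref{nc0} yields
\begin{equation*}
\log|u_{\alpha,0}|_\BC = \sum_{\fc \in \fC_0} \log \bigl|\fR_{\alpha, A^\fc}(\eta_A(\fc)(V_0))\bigr|_\BC
\end{equation*}
under the fixed embedding of $F$ into $\BC$. Second, I would substitute the product formula \eqref{m2}, combined with the definition \eqref{2.15} of $\fR_{\alpha, A^\fc}$ as a product over $\tau \in \Gal(H(A_\fq)/H)$ of $\fg_{\alpha, A^\fc}(\cdot \oplus Q(\fc)^\tau)$, and transport to the Weierstrass parametrization via \eqref{m4}. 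This rewrites the right hand side in terms of $\log |\theta(z_{\fc,\tau}, \CL_\fc)|$ and $\log|\theta(z_{\fc,\tau}, \alpha^{-1}\CL_\fc)|$, where $z_{\fc, \tau}$ encodes $\eta_A(\fc)(V_0) \oplus Q(\fc)^\tau$, plus a constant from $c_\alpha(A^\fc)^2$. Finally, reindexing $\fc \mapsto \alpha\fc$ in the $\theta(\cdot, \alpha^{-1}\CL_\fc)$-sum (legitimate since $\alpha$ is a unit modulo $\ff$) converts $\alpha^{-1}\CL_\fc$ to $\CL_{\alpha\fc}$ up to a scaling factor $\xi(\alpha\fc)/\xi(\fc)$, and Kronecker's second limit formula identifies the resulting $\omega$-weighted sum with $L'(\omega,0)$. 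The coefficient $\omega(\gamma_\alpha) - N\alpha$ emerges because the $N\alpha$ copies of $\theta(\cdot, \CL_\fc)$ in the numerator and the single $\theta(\cdot, \CL_{\alpha\fc})$ in the denominator (after reindexing) contribute with weights $N\alpha$ and $\omega(\gamma_\alpha)$, respectively, to the same fundamental Kronecker sum.

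The main obstacle is the delicate bookkeeping needed to show that the pairs $(\fc,\tau)$ with $\fc\in\fC_0$ and $\tau\in\Gal(H(A_\fq)/H)$, together with the $\fp^2$-torsion data carried by $V_0$, give a full system of ray class representatives modulo $\ff = \fq\fp^2$ supporting $\omega$, and that the non-theta contributions (the constants $c_\alpha(A^\fc)^2$ and the rescaling factors coming from $\alpha^{-1}\CL_\fc = \bigl(\xi(\alpha\fc)/\xi(\fc)\bigr)\cdot\CL_{\alpha\fc}$) either cancel or combine with the theta terms via the standard transformation properties of $\theta$ and the explicit formula $c_\alpha(A^\fc)^{12} = \Delta(A^\fc)^{N\alpha-1}/\alpha^{12}$. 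The hypothesis $\alpha\equiv 1\mod\fp^2$ built into \eqref{ni} is indispensable here: it ensures that the reindexing $\fc\mapsto\alpha\fc$ is compatible with the $\fp^2$-part of $\ff$, so that $\omega$'s contribution to the reindexing comes solely from the $\fq$-component, producing precisely the value $\omega(\gamma_\alpha)$ without any spurious correction at $\fp$.
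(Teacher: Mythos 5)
Your overall strategy -- express $u_{\alpha,0}$ through its Galois conjugates, rewrite each conjugate via the theta-quotient identity \eqref{m2}, and invoke Kronecker's second limit formula -- is exactly the family of ideas the paper uses (the paper packages your $(\fc,\tau)$-sum more cleanly as $u_{\alpha,0}=N_{K(\ff)/F}\bigl(R_{\alpha,A}(V_0\oplus Q)\bigr)$, using $K(\ff)=\fF_0\cdot H(A_\fq)$ with $\fF_0\cap H(A_\fq)=H$). But there is a genuine gap at the decisive step. Your index set of pairs $(\fc,\tau)$, $\fc\in\fC_0$, $\tau\in\Gal(H(A_\fq)/H)$, has $h(q-1)/2=[K(\ff):F]$ elements: it parametrizes $\Gal(K(\ff)/F)$, which is an index-two subgroup of the ray class group modulo $\ff=\fq\fp^2$, not a ``full system of ray class representatives modulo $\ff$'' as you assert. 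Since $\omega$ is trivial on that subgroup, your ``$\omega$-weighted sum'' carries no nontrivial weights at all -- it is simply $\log|u_{\alpha,0}|_\BC$ -- whereas Kronecker's second limit formula for the nontrivial character $\omega$ is a sum over the whole group $\Gal(K(\ff)/K)$, namely
$$(\omega(\gamma_\alpha)-N\alpha)L'(\omega,0)=\sum_{\sigma\in\Gal(K(\ff)/K)}\omega(\sigma)\log\bigl|\sigma\bigl(R_{\alpha,A}(V_0\oplus Q)\bigr)\bigr|_\BC,$$
so your computation only accounts for half of the right-hand side and, as written, cannot yield the lemma; in particular the factor $2$ on the left of the statement is never explained in your proposal.

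The missing ingredient is precisely how the paper handles the other coset: its contribution is $-\log|\tau(u_{\alpha,0})|_\BC$, where $\tau$ is the nontrivial element of $\Gal(F/K)$, and one then needs the identity $u_{\alpha,0}\cdot\tau(u_{\alpha,0})=N_{F/K}(u_{\alpha,0})=1$. This is not formal: it follows from the norm compatibility \eqref{nc4} (so $u_{\alpha,0}$ is a universal norm from the tower $F_\infty/F$) together with Lemma \ref{7.5}, which says that such a local universal norm has $N_{F/K}$ equal to $1$; since $N_{F/K}(u_{\alpha,0})$ is a global unit of $K$, hence $\pm1$, it must be $1$. Granting this, $-\log|\tau(u_{\alpha,0})|_\BC=+\log|u_{\alpha,0}|_\BC$, the full Kronecker sum collapses to $2\log|u_{\alpha,0}|_\BC$, and the lemma follows. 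Your concerns about the constants $c_\alpha(A^\fc)$, the rescaling $\alpha^{-1}\CL_\fc$, and the role of $\alpha\equiv 1\bmod\fp^2$ are reasonable bookkeeping points, but they are secondary; without incorporating the second coset and the relation $N_{F/K}(u_{\alpha,0})=1$, the argument does not close.
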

\begin{proof}The proof, which we only sketch, rests crucially on Kronecker's second limit formula. For each integral ideal $\fg$ of $K$, we write $K(\fg)$ for the ray class group of $K$ modulo $\fg$. By Lemma \ref{g2.6}, we know that $K(\fg) = H(A_\fg)$ whenever the ideal $\fg$ is divisible by the conductor $\fq$ of $\phi$. Now the conductor $\ff$ of $\omega$ is equal to $\fq\fp^2$, and so $K(\ff)$ is the compositum of its two subfields $\fF_0 = H(A_{\fp^2})$ and $H(A_\fq)$. Moreover, $\fF_0 \cap H(A_\fq) = H$ because the primes of $H$
above $\fp$ are totally ramified in $\fF_0$ and unramified in $H(A_\fq)$.  Thus it is then clear from \eqref{2.13} that $\fR_{\alpha, A}(V_0) = N_{K(\ff)/\fF_0}R_{\alpha, A}(V_0 \oplus Q)$, where $Q$ is the primitive $\fq$-division point on $A$ defined by \eqref{2.12}. Hence we obtain
\begin{equation}\label{klf1}
u_{\alpha, 0} = N_{K(\ff)/F}R_{\alpha, A}(V_0 \oplus Q).
\end{equation}
On the other hand, writing $z_Q = \Omega_\infty(A)/\sqrt{-q}$ and $V_0 = \CW(z_0, \CL)$, we have the fundamental identity (see \eqref{m2})
$$
R_{\alpha, A}(V \oplus Q)^{12} = (c_\alpha(A)\theta(z_0 + z_Q, \CL)^{N\alpha}/ \theta(z_0 + z_Q, \alpha^{-1}\CL))^{12}.
$$
In view of this equality, the classical Kronecker's formula asserts that, for our character $\omega$ of $\Gal(K_\ff/K)$, we have
\begin{equation}\label{klf}
(\omega(\gamma_\alpha) - N\alpha)L'(\omega, 0) = \sum_{\sigma \in \Gal(K(\ff)/K)} \omega(\sigma)\log |\sigma(R_{\alpha, A}(V_0 \oplus Q))|_\BC.
\end{equation}
Recalling \eqref{klf1}, we see that the right hand side of this last formula is simply equal to $\log|u_{\alpha, 0}|_\BC - \log|\tau(u_{\alpha, 0})|_\BC$, where we have written $\tau$ for the non-trivial element of $\Gal(F/K)$. But  Lemma \ref{7.5} shows that $u_{\alpha, 0}.\tau(u_{\alpha, 0}) = 1$
(because $u_{\alpha, 0}$ is a norm from every finite layer of the $\BZ_2$-extension $F_\infty/F$).  Hence $\log|u_{\alpha, 0}|_\BC - \log|\tau(u_{\alpha, 0})|_\BC = 2\log|u_{\alpha, 0}|_\BC$, and so the assertion of the lemma follows from \eqref{klf}. This completes the proof.
\end{proof}
For the next lemma, we recall that we have chosen the sign of $\sqrt{-q}$ so that $\ord_\fp((\sqrt{-q} -1)/2) > 0$.
\begin{lem}\label{7.24} Let $\beta = \sqrt{-q}$. Then $F = K(\sqrt{-\beta})$.
\end{lem}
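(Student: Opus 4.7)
The goal is to identify both $F$ and $K(\sqrt{-\beta})$ as the unique quadratic extension of $K$ with conductor dividing $\fq\fp^2$. For $F/K$, by Lemma~\ref{t3} the completion $\msb_\fP \simeq \CO_\fp = \BZ_2$ with $\fP\msb_\fP = 2\BZ_2$, so the isomorphism $\rho_\fP: G \simeq \BZ_2^\times$ carries $\Gal(F_\infty/F)$ onto $1+4\BZ_2$; hence $[F:K] = |(\BZ/4\BZ)^\times| = 2$, and the conductor equals $\fq\fp^2$ by Lemma~\ref{2.7} with $n=0$.

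Next I check that $K(\sqrt{-\beta})/K$ is quadratic with conductor dividing $\fq\fp^2$. Non-triviality is immediate since $-\beta = -\sqrt{-q}$ being a square in $K = \BQ(\sqrt{-q})$ would force $q$ to be a rational square. Ramification can only occur at $\fq$ and at primes above~$2$: at $\fq$, $-\beta$ is a uniformizer so the local extension is tame totally ramified of conductor $\fq$; at $\fp$, the convention $\ord_\fp((1-\sqrt{-q})/2) > 0$ translates (under $\CO_\fp \simeq \BZ_2$) to $\beta \equiv 1 \pmod 4$, hence $-\beta \equiv 3 \pmod 4$. Since a unit in $\BZ_2^\times$ congruent to $3 \pmod 4$ is neither a square nor $\equiv 5 \pmod 8$, the extension $\BQ_2(\sqrt{-\beta})/\BQ_2$ is totally ramified; its discriminant $-4\beta$ has $2$-adic valuation~$2$, giving local conductor $\fp^2$. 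At the conjugate prime $\fp^*$, complex conjugation swaps $\fp \leftrightarrow \fp^*$ while negating $\sqrt{-q}$, so $-\beta \equiv 1 \pmod 4$ there and the local extension is unramified. Hence the global conductor of $K(\sqrt{-\beta})/K$ is exactly $\fq\fp^2$.

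Finally, uniqueness: the ray class number formula gives
\[
|C_K(\fq\fp^2)| = h \cdot \frac{\Phi(\fq\fp^2)}{[\CO_K^\times : \CO_K^\times \cap (1 + \fq\fp^2)]} = h \cdot \frac{2(q-1)}{2} = h(q-1),
\]
using $\Phi(\fq) = q-1$, $\Phi(\fp^2) = 2$, and that $-1 \not\equiv 1 \pmod{\fq}$ forces the unit index to be~$2$. Since $h$ is odd and $v_2(q-1) = 1$ (from $q \equiv 7 \pmod 8$), the $2$-Sylow of $C_K(\fq\fp^2)$ is cyclic of order~$2$, so there is a unique non-trivial quadratic extension of $K$ with conductor dividing $\fq\fp^2$. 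Both $F$ and $K(\sqrt{-\beta})$ are such, so they coincide. The main technical point will be the local conductor computation at $\fp$, where the sign convention on $\sqrt{-q}$ is essential—the opposite choice would yield $F = K(\sqrt{\beta})$ via the prime $\fp^*$.
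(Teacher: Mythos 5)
Your proof is correct, but it reaches the conclusion by a genuinely different route from the paper. You characterize both fields by their conductors and then invoke uniqueness: $F/K$ is quadratic of conductor $\fq\fp^2$ (Lemma \ref{2.7} with $n=0$), $K(\sqrt{-\beta})/K$ is quadratic of conductor dividing $\fq\fp^2$ (your local computations at $\fq$, $\fp$, $\fp^*$ are right, and the use of the normalization $\beta\equiv 1 \bmod \fp^2$ to get $-\beta\equiv 3 \bmod 4$ at $\fp$ and $-\beta \equiv 1 \bmod 4$ at $\fp^*$ is exactly the decisive point), and the ray class number $h(q-1)$ has $2$-part of order $2$ since $h$ is odd and $q\equiv 7 \bmod 8$, so the ray class field modulo $\fq\fp^2$ contains only one quadratic extension of $K$. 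The paper instead works inside $K(B_4)$: by the Weil pairing $K(B_4)=K(i,\sqrt{b})$, and its three quadratic subfields are $K(i)$, $K(\sqrt{b})$, $K(\sqrt{-b})$; ramification constraints (from Theorem \ref{ge}) force $b\CO_K=\fq\fb^2$, oddness of $h$ makes $\fb$ principal so $b=\pm\beta$ up to squares, and the sign is fixed by observing $\ord_{\fp^*}(-\beta-1)\geq 2$, so $\fp^*$ is unramified in $K(\sqrt{-\beta})$ — the same observation you make at $\fp^*$. Both arguments lean on the odd class number and on the sign convention; yours buys a cleaner "count the quadratic extensions of bounded conductor" statement at the cost of the ray class number formula and explicit local conductor exponents, while the paper's Kummer-theoretic identification of $K(B_4)$ stays closer to the structure of the division fields and needs no class number formula.
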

\begin{proof}  The field $K(B_4)$ is an abelian extension of $K$, whose Galois group is a product of two cyclic groups of order 2, corresponding to the two subfields
$F = K(B_{\fP^2})$, and $F^* = K(B_{\fP^{*2}})$, where $\fP^*$ denotes the unramified degree 1 prime of $\mst$ above $\fp^*$. Moreover, by the Weil pairing, $K(B_4)$
contains the group $\mu_4$ of 4-th roots of unity. Thus we must have $K(B_4) = K(i, \sqrt{b})$, where $b$ is some non-zero element of $K$, and three quadratic extensions
of $K$ contained in $K(B_4)$ are then $K(i), K(\sqrt{b})$, and $K(\sqrt{-b})$. Now the prime $\fq = \sqrt{-q}\CO_K$ of $K$ does not ramify in $K(i)$, and so, making a choice of the sign of $b$,
we can assume that $F = K(\sqrt{b})$ and $F^* = K(\sqrt{-b})$. Now, by Theorem \ref{ge}, $\fq$ ramifies in both $F$ and $F^*$, but $\fp^*$ does not ramify in $F$, and $\fp$ does not ramify in $F^*$. It follows that we must have $b\CO_K = \fq \fb^2$ for some fractional ideal $\fb$ of $K$. Since the ideal $\fq$ is principal, it follows that $\fb^2$ is principal.  But $K$ has odd class number, and so the ideal $\fb$ itself must be principal, whence, modifying $b$ by a square in $K^\times$, we must have $b = \pm \beta$, where, as above $ \beta =
\sqrt{-q}$. To see which choice of sign to take, we note that
$$
ord_{\fp^*}(-\beta - 1) \geq 2,
$$
from which it follows easily that $\fp^*$ is unramified in the extension $K(\sqrt{-\beta})$. Hence we must have $F =  K(\sqrt{-\beta})$, and the proof of the lemma is complete.
\end{proof}

\begin{cor}\label{op} The curve $A^{(-\beta)}$ has good reduction outside the set of primes of $H$ dividing $\fp$.
\end{cor}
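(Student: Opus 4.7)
The plan is to use the fact that $A^{(-\beta)}$ is a quadratic twist of $A$ by the extension $\fF/H$, where by Lemma \ref{7.24} we have $\fF = H(\sqrt{-\beta}) = HF$, and then check good reduction at each prime of $H$ not above $\fp$ using N\'eron-Ogg-Shafarevich. Write $\chi$ for the quadratic character of $\Gal(\fF/H)$, so that $A^{(-\beta)}$ is the twist of $A$ by $\chi$. By Theorem \ref{ge}, the curve $A$ has good reduction everywhere over $\fF$, so the same is true of $A^{(-\beta)}$, and hence $A^{(-\beta)}$ has potential good reduction at every prime of $H$.

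I would separate the analysis into two cases. First, let $v$ be a prime of $H$ which does not divide $\fp\fq$. Since the minimal discriminant of $A/H$ is $(-q^3)$, the curve $A$ already has good reduction at $v$. Moreover, by Lemma \ref{2.7} the conductor of $F/K$ is $\fq\fp^2$, so by base change from $K$ to $H$ (noting $H/K$ is everywhere unramified), the conductor of $\fF/H$ divides the ideal of $H$ generated by $\fq\fp^2$; in particular, $\chi$ is unramified at $v$. Choosing any prime $\ell$ distinct from the residue characteristic, the Galois action of $G_{H_v}$ on $T_\ell(A^{(-\beta)}) = T_\ell(A)\otimes\chi$ is then unramified, so by N\'eron-Ogg-Shafarevich $A^{(-\beta)}$ has good reduction at $v$.

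Second, let $v$ be a prime of $H$ above $\fq$. Then $A$ has bad (additive) reduction at $v$, but becomes good after base change to $\fF_v = H_v(\sqrt{-\beta})$. Since $A$ has complex multiplication by $\CO_K$, the Tate module $T_\ell(A)$ for $\ell$ an odd rational prime is free of rank one over $\CO_K\otimes\BZ_\ell$, and the inertia subgroup $I_v$ at $v$ acts on it through a finite-order character which, by the good reduction over $\fF_v$, factors through the quotient $\Gal(\fF_v/H_v)$. This quotient has order $1$ or $2$; since $A$ has bad reduction at $v$, the inertia action must be nontrivial, and hence is given precisely by $\chi\restriction_{I_v}$ (note that $\fq$ is ramified in $F/K$, hence $v$ is ramified in $\fF/H$, so the quotient does have order $2$). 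Twisting by $\chi$ therefore kills the inertia action on the Tate module of $A^{(-\beta)}$, so by N\'eron-Ogg-Shafarevich $A^{(-\beta)}$ has good reduction at $v$.

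Combining the two cases shows that $A^{(-\beta)}$ has good reduction at every prime of $H$ not above $\fp$, which is the assertion. The only mildly delicate point is the case $v \mid \fq$, where one must argue both that $\chi$ is genuinely ramified at $v$ (so that $\Gal(\fF_v/H_v)$ is of order $2$) and that the inertia character of $A$ at $v$ coincides with $\chi\restriction_{I_v}$ rather than with the trivial character; both facts follow from comparing the conductor computations in Lemma \ref{2.7} with the obstruction to good reduction measured by the Serre--Tate character of $A$.
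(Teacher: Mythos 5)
Your argument is correct, but it takes a genuinely different route from the paper's. The paper never touches Tate modules: it observes that the explicit model \eqref{mgm} for $A^{(-\beta)}$ has discriminant $1$, so any prime of bad reduction must lie above $2$ or $3$; since a bad prime must also ramify in $\fF = FH$ (as $A^{(-\beta)}$ becomes isomorphic to $A$ there, and $A$ has everywhere good reduction over $\fF$ by Theorem \ref{ge}), and the only primes of $K$ ramified in $F$ are $\fp$ and $\fq$, the bad primes are forced to lie above $\fp$, the primes above $\fq$ being excluded because they do not divide $6$. You dispense with the equation \eqref{mgm} entirely and instead treat the primes $v \mid \fq$ by a local twisting argument; this is more robust (it does not rely on having a global model of unit discriminant) at the cost of the extra local analysis. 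The one step in your case $v \mid \fq$ that needs a further line of justification is the assertion that the nontrivial inertia action on $T_\ell(A)$ ``is given precisely by $\chi|_{I_v}$'': a priori the inertia character takes values in $(\CO_K \otimes \BZ_\ell)^\times$, and when $\ell$ splits in $K$ this group contains elements of order $2$ other than $-1$, so ``nontrivial of order $2$'' does not by itself identify the character with the $\pm 1$-valued character $\chi$. This is easily repaired: choose $\ell$ odd and inert in $K$, so that $-1$ is the only element of order $2$ in $(\CO_K \otimes \BZ_\ell)^\times$; or note that the determinant of the Tate-module representation is the cyclotomic character, hence trivial on $I_v$, which rules out the ``split'' order-$2$ elements; or argue, as you hint at the end, via the Serre--Tate character $\psi_{A/H}$, whose values on the local units at $v$ are roots of unity of $K$, hence $\pm 1$. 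With any one of these remarks inserted, your proof is complete and gives the same conclusion as the paper's by an independent method.
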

\begin{proof} Thanks to the equation \eqref{mgm}, we know that all bad primes of $A^{(-\beta)}$ must divide either 2 or 3. However, combining the previous lemma, and Theorem \ref{ge}, we see that every prime of $H$ where $A^{(-\beta)}$ has bad reduction must ramify in the field $\fF = FH$. But the only primes of $K$ which ramify in $F$ are $\fq$ and
$\fp$, and so the only primes of $H$ which ramify in $\fF$ must lie above $\fp$ or $\fq$. But the primes of $H$ above $\fq$ are not bad primes because of \eqref{mgm}, completing the proof.
\end{proof}
\noindent We can now complete the proof of Proposition \ref{7.19}. It suffices to show that there exists $\alpha \in \msj$ such that the index of the subgroup of $\CE'(F)$ generated by $u_{\alpha, 0}$ is odd. Take $\alpha = a^2 + b^2\sqrt{-q}$, where $a$ is an even positive rational integer which is prime to 3, and $b$ is an odd rational integer which is divisible by 3.
Plainly, we then have $(\alpha, 6\ff) = 1$, and $\alpha \equiv 1 \mod \fp^2$ because $\sqrt{-q} \equiv 1 \mod \fp^2$. Also, it is clear that $\alpha$ is the norm from $F$ to $K$ of the the element $a + b\sqrt{-\beta}$, so that the restriction of the Artin symbol of the ideal $\alpha\CO_K$ to $\Gal(F/K)$ is trivial. Noting that $N\alpha \equiv q \mod 4 \equiv 3 \mod 4$, we conclude that, for this choice of $\alpha$, we have
$$
N\alpha - \omega(\gamma_\alpha) \equiv 2 \mod 4.
$$
But now, combining this congruence with the formulae of Lemmas \eqref{7.22} and \eqref{7.23}, and recalling that both $h_F$ and $h$ are odd, and $W(\omega) = \pm 1$, we conclude
that the index of the subgroup of $C(F)$ generated by $u_{\alpha, 0}$ in $\CE'(F)$ modulo torsion is indeed odd, and the proof of Proposition \ref{7.19} is complete.

\medskip

We can now prove the ``main conjecture" for $\epsilon_- X(F_\infty)$. We recall that $\Gamma = \Gal(F_\infty/F)$, and that $\Lambda_\msi(\Gamma)$ denotes the Iwasawa algebra of $\Gamma$ with coefficients in $\msi$. Write $\rho_{\fP, \Gamma}$ for the restriction of $\rho_\fP$
to $\Gamma$. We recall that, in the results which follow concerning  the link between ``main conjectures" and complex $L$-values, we have fixed until further notice the embedding $\fii: \mst \to \BC$ given by \eqref{em}.
\begin{thm}\label{7.25}  For all primes $q \equiv 7 \mod 8$, we have the exact sequence of $\Lambda_\msi(\Gamma)$-modules
\begin{equation}\label{7.26}
0 \to  \epsilon_- X(F_\infty)\widehat{\otimes}_{\CO_\fp}\msi  \to  \Lambda_\msi(\Gamma)/ \mu_A \Lambda_\msi(\Gamma) \to \fM\widehat{\otimes}_{\CO_\fp}\msi \to 0,
\end{equation}
where $\fM$ is a finite $\Gamma$-module, and  $\mu_A$ is the unique element of  $\Lambda_\msi(\Gamma)$
such that, for all odd positive integers $k = 1, 3, 5, \cdots$, we have
\begin{equation}\label{7.27}
\Omega_\fp(A)^{-k}\int_{\Gamma} (\rho_{\fP, \Gamma})^kd\mu_A = (k-1)!\Omega_\infty(A)^{-k}L(\bar{\phi}^k, k)(1-\phi^k(\fp)/N\fp).
\end{equation}
\end{thm}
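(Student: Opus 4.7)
The plan is to derive \eqref{7.26} by combining three ingredients: the algebraic identification $\epsilon_- X(F_\infty) = \epsilon_- Z(F_\infty)$ from Theorem \ref{7.18}, the local Coleman-type map of Proposition \ref{6.10n}, and the explicit interpolation of Theorem \ref{5.15}, following the strategy of \cite{CW3} adapted to our setting where $G = \Gamma \times \Delta$.

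First, applying the Coleman map $j_{F_\infty}$ to the defining exact sequence of $Z(F_\infty)$ (tensored with $\msi$), and using Proposition \ref{6.10n}, expresses $Z(F_\infty)\wh{\otimes}\msi$, up to the finite cokernel $\fW \otimes \msi$, as the quotient of $\Lambda_\msi(G)$ by the ideal $\mathfrak{I}_0 := j_{F_\infty}(\bar C(F_\infty) \wh{\otimes} \msi)$. Taking $\epsilon_-$-parts via the natural $\Lambda_\msi(\Gamma)$-module isomorphism $\epsilon_-^\vee : \epsilon_- \Lambda_\msi(G) \xrightarrow{\sim} \Lambda_\msi(\Gamma)$ (mapping $A + jB \mapsto A - B$ for $A, B \in \Lambda_\msi(\Gamma)$), and invoking Theorem \ref{7.18}, yields an exact sequence
\begin{equation*}
0 \to \epsilon_- X(F_\infty) \wh{\otimes} \msi \to \Lambda_\msi(\Gamma)/\mathfrak{I} \to \fW' \wh{\otimes} \msi \to 0
\end{equation*}
of $\Lambda_\msi(\Gamma)$-modules, where $\mathfrak{I} := \epsilon_-^\vee(\epsilon_- \mathfrak{I}_0)$ and $\fW'$ is finite. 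Injectivity on the left uses the fact, due to Greenberg and invoked in the proof of Lemma \ref{t7}, that $X(F_\infty)$ contains no nonzero finite submodule.

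Second, by Proposition \ref{g3.3} and the construction in Section 4, the ideal $\mathfrak{I}$ is generated as a $\Lambda_\msi(\Gamma)$-ideal by $\{\tilde\mu_\alpha := \epsilon_-^\vee(\mu_{\alpha,\infty}) : \alpha \in \msj\}$. Theorem \ref{5.15}, combined with the identity $L_\fq(\bar\phi^k, k) = L(\bar\phi^k, k)$ for odd $k$ (the Euler factor at $\fq$ being trivial because $\fq$ divides the conductor of $\bar\phi^k$), gives for every odd $k \geq 1$:
\begin{equation*}
\Omega_\fp(A)^{-k} \int_\Gamma \rho_{\fP,\Gamma}^k \, d\tilde\mu_\alpha = b_k(\alpha)(k-1)! \, \Omega_\infty(A)^{-k} L(\bar\phi^k, k)(1 - \phi^k(\fp)/N\fp).
\end{equation*}
Encoding the prefactor $b_k(\alpha) = (\sqrt{-q})^k(\phi^k((\alpha)) - N\alpha)$ as the $k$-th moment (for odd $k$) of the element $\tilde\beta_\alpha := \epsilon_-^\vee([\gamma_\alpha] - N\alpha[\gamma_{\sqrt{-q}}]) \in \Lambda_\msi(\Gamma)$, where $\gamma_\alpha, \gamma_{\sqrt{-q}} \in G$ are Artin symbols specified by $\rho_\fP(\gamma_\alpha) = \sqrt{-q}\phi((\alpha))$ and $\rho_\fP(\gamma_{\sqrt{-q}}) = \sqrt{-q} \in 1 + 4\BZ_2$ (the latter by our sign convention that $\ord_\fp((\sqrt{-q}-1)/2)>0$), I would deduce the factorization $\tilde\mu_\alpha = \tilde\beta_\alpha \mu_A$ for a uniquely determined $\mu_A$ satisfying \eqref{7.27}; uniqueness follows from the Zariski-density of the characters $\rho_{\fP,\Gamma}^k$, $k \geq 1$, in $\Spec \Lambda_\msi(\Gamma)$.

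The main obstacle is to establish simultaneously that $\mu_A$ lies in $\Lambda_\msi(\Gamma)$ itself (not merely its fraction field), and that the ideal $\tilde\beta := (\tilde\beta_\alpha : \alpha \in \msj) \subset \Lambda_\msi(\Gamma)$ equals the maximal ideal $\fm$, so that $\mathfrak{I} = \mu_A \tilde\beta = \mu_A \fm$ and the quotient $\mu_A \Lambda_\msi(\Gamma)/\mathfrak{I} \cong \Lambda_\msi(\Gamma)/\fm$ is finite (and is absorbed into $\fM$ together with $\fW'$). Since $N\alpha \equiv 1 \mod 4$ for every $\alpha \in \msj$ (as $\alpha \equiv 1 \mod \fp^2$ embeds into $1 + 4\BZ_2$), each individual $\tilde\beta_\alpha$ lies in $\fm$, so the conclusion $\tilde\beta = \fm$ must come from cancellations. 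Choosing $\alpha \in \msj$ which is a quadratic nonresidue modulo $q$ (so that $\phi((\alpha)) \equiv 3 \mod 4$, since the finite part of $\phi$ restricted to $(\CO_K/\fq)^\times$ is the quadratic character) forces the augmentation of $\tilde\beta_\alpha$ to lie in $2\BZ_2^\times$, placing $2 \in \tilde\beta$. Pairs $\alpha_0, \alpha_1 \in \msj$ with $N\alpha_0 = N\alpha_1$ but $(\alpha_0) \neq (\alpha_1)$, which exist whenever $N\alpha_0$ is represented in multiple ways by the norm form $x^2 + qy^2$, produce differences $\tilde\beta_{\alpha_0} - \tilde\beta_{\alpha_1} = (\gamma - 1)\cdot(\text{unit})$ for a nontrivial $\gamma \in \Gamma$, placing $(\gamma - 1) \in \tilde\beta$. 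Together these force $\tilde\beta = \fm$. The integrality of $\mu_A$ then follows because the $k$-th interpolation values of $\tilde\mu_\alpha$ are divisible in $\msi$ by those of $\tilde\beta_\alpha$ for every $k \geq 1$, so that $\mu_A = \tilde\mu_\alpha / \tilde\beta_\alpha$ belongs to $\Lambda_\msi(\Gamma)$; and the interpolation property \eqref{7.27} is inherited from \eqref{5.16} after dividing through by $\tilde\beta_{\alpha_0}$.
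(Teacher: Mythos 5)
Your overall architecture is the same as the paper's (minus part of the Coleman map exact sequence of Proposition \ref{6.10n}, the identity $\epsilon_-Z(F_\infty)=\epsilon_-X(F_\infty)$ of Theorem \ref{7.18}, the absence of finite submodules of $X(F_\infty)$, and the identification of the ideal of prefactors with the maximal ideal), and your element $\tilde\beta_\alpha$ is in fact a unit multiple of the paper's $v_\alpha=N\alpha-\gamma_\alpha$, so the intended factorization is the same as in Lemma \ref{7.30}. The genuine gap is in how you obtain that factorization. You assert that $\mu_A=\tilde\mu_\alpha/\tilde\beta_\alpha$ lies in $\Lambda_\msi(\Gamma)$ "because the $k$-th interpolation values of $\tilde\mu_\alpha$ are divisible in $\msi$ by those of $\tilde\beta_\alpha$". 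Pointwise divisibility of moments at the characters $\rho_{\fP,\Gamma}^k$ does not imply divisibility in the Iwasawa algebra: for instance every value of $T$ at the points $u^k-1$ ($u\in 1+4\BZ_2$) is divisible by $2$ in $\msi$, yet $T/2\notin\BZ_2[[T]]$. The interpolation data only yield the relation $\tilde\beta_\beta\,\tilde\mu_\alpha=\tilde\beta_\alpha\,\tilde\mu_\beta$ (and, via \eqref{7.29}, only the odd moments are accessible for the minus-part measures, though these are Zariski dense and so suffice for such identities). The paper's Lemma \ref{7.30} supplies the missing idea: one must exhibit two auxiliary elements $\alpha_0,\beta_0\in\msj$, chosen by the congruences \eqref{7.32} and \eqref{7.33}, for which $v_{\alpha_0}$ and $v_{\beta_0}$ are relatively prime in the unique factorization domain $\Lambda_\msi(\Gamma)$ --- the coprimality being verified by the $2$-adic logarithm computation \eqref{7.34} --- and only then does $v_{\beta_0}\mu_{\alpha_0,\infty}(-)=v_{\alpha_0}\mu_{\beta_0,\infty}(-)$ force $v_{\alpha_0}\mid\mu_{\alpha_0,\infty}(-)$, giving an integral $\mu_A$ independent of $\alpha$. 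Without this (or an equivalent argument) your $\mu_A$ is only an element of the fraction field.

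A secondary gap concerns your claim that the ideal $\tilde\beta$ is the whole maximal ideal $\fm=(2,T)$. Getting $2\in\tilde\beta$ from an $\alpha$ with $\phi((\alpha))\equiv 3\bmod 4$ is plausible, but your second step only produces $(\gamma-1)$ with $\gamma=\gamma_{\alpha_0}\gamma_{\alpha_1}^{-1}$ nontrivial; unless $\gamma$ is a topological generator of $\Gamma$ this gives only $(2,(1+T)^{2^m}-1)\subsetneq\fm$, and you have not checked that suitable pairs with equal norm, distinct ideals, the congruence $\alpha\equiv 1\bmod\fp^2$, coprimality to $6\fq$, and a generating ratio exist. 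The paper avoids this by identifying the ideal generated by the $v_\alpha$, $\alpha\in\msj$, with the annihilator of the group of $2$-power roots of unity in $F_\infty$, which is $\{\pm1\}$ because $\fp^*$ is unramified in $F_\infty$, whence the ideal is $(2,T)$. Finally, the passage from $\Lambda_\msi(\Gamma)/\fm\mu_A$ to $\Lambda_\msi(\Gamma)/\mu_A\Lambda_\msi(\Gamma)$ needs one more appeal to the fact that $\epsilon_-X(F_\infty)$ has no nonzero finite submodule (to see that the composed map stays injective), not merely an "absorption" of $\Lambda_\msi(\Gamma)/\fm$ into $\fM$; this is minor but should be said.
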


\noindent We note that, since $\Lambda_\msi(G)$ is the Iwasawa algebra of $\Gamma$ with coefficients in the group ring $\msi[\Delta]$, and $\epsilon_-\msi[\Delta] = \epsilon_-\msi$,
we have
\begin{equation}\label{7.28}
\epsilon_-\Lambda_\msi(G) = \epsilon_-\Lambda_\msi(\Gamma).
\end{equation}
If $m$ is any element of $\Lambda_\msi(G)$, we write $m(-)$ for the measure in $\Lambda_\msi(\Gamma)$ such that $\epsilon_-m = \epsilon_-m(-)$ under the equality
\eqref{7.28}. Now for a character of a $p$-adic Lie group the integral against a measure of the character coincides with evaluation of the character at the measure, it follows that, for every continuous homomorphism $\xi$ from $G$ to the multiplicative group of $\xbar{\BQ}_2$ such that $\xi(j) = -1$, we must have

\begin{equation}\label{7.29}
\int_G\xi dm = \int_\Gamma \xi_\Gamma dm(-),
\end{equation}
where $\xi_\Gamma$ denotes the restriction of $\xi$ to $\Gamma$. For each $\alpha \in \msj$, we recall that $\gamma_\alpha$ is the Artin symbol of $\alpha \CO_K$ in $G$.
We note that in fact $\gamma_\alpha$ always belongs to $\Gamma$ because of our hypothesis that $\alpha \equiv1 \mod \fp^2$ for $\alpha$ in $\msi$. Recall that
$\mu_{\alpha, \infty}$ is the measure in $\Lambda_\msi(G)$ satisfying \eqref{5.16}.

\begin{lem} \label{7.30} There exists a measure $\mu_A$ in $\Lambda_\msi(\Gamma)$ such that $\mu_{\alpha, \infty}(-) = (N\alpha - \gamma_\alpha)\mu_A$ for
all $\alpha \in \msj$.
\end{lem}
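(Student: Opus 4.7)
The plan is to define $\mu_A$ as the quotient $\mu_{\alpha_0,\infty}(-)/(N\alpha_0 - \gamma_{\alpha_0})$ for a single fixed $\alpha_0 \in \msj$, taken inside the field of fractions of the Iwasawa algebra $\Lambda_\msi(\Gamma)\simeq \msi[[T]]$, and then to verify first that the definition does not depend on $\alpha_0$ and second that the quotient actually lies in $\Lambda_\msi(\Gamma)$.

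The first step is to recast the interpolation formula \eqref{5.16} as an identity purely on $\Gamma$. Since $\rho_\fP$ restricts to an isomorphism from $\Delta = \{1,j\}$ onto $\{\pm 1\}\subset\BZ_2^\times$, the character $\rho_\fP^k$ lies in the $\epsilon_-$-part precisely when $k$ is odd, so \eqref{7.29} gives
\[
\int_\Gamma (\rho_{\fP,\Gamma})^k\, d\mu_{\alpha,\infty}(-) \;=\; \int_G \rho_\fP^k\, d\mu_{\alpha,\infty} \qquad (k\ge 1 \text{ odd}).
\]
The condition $\alpha\equiv 1\bmod\fp^2$ built into $\msj$ places $\gamma_\alpha$ inside $\Gamma$, and the defining property of the Serre--Tate character $\phi$ yields $\rho_{\fP,\Gamma}(\gamma_\alpha) = \phi((\alpha))$ through our fixed embedding of $\mst$ at $\fP$. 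Inserting these into \eqref{5.16} and using that for odd $k$ one has $b_k(\alpha) = (\sqrt{-q})^k(\phi^k((\alpha)) - N\alpha)$ turns the right-hand side, for every odd $k \ge 1$ and every $\alpha\in\msj$, into $(N\alpha - (\rho_{\fP,\Gamma})^k(\gamma_\alpha))\cdot D_k$ with $D_k$ depending only on $k$. The next step is to fix $\alpha_0\in\msj$ and set $\mu_A := \mu_{\alpha_0,\infty}(-)/(N\alpha_0-\gamma_{\alpha_0})$ in $\Frac \Lambda_\msi(\Gamma)$; for any other $\alpha\in\msj$, the displayed factorisation shows that the two elements $(N\alpha-\gamma_\alpha)\mu_{\alpha_0,\infty}(-)$ and $(N\alpha_0-\gamma_{\alpha_0})\mu_{\alpha,\infty}(-)$ of $\Lambda_\msi(\Gamma)$ integrate to the same value against every odd power $(\rho_{\fP,\Gamma})^k$, whence they coincide, because the countably many evaluation points $\{(\rho_{\fP,\Gamma})^k(\gamma)-1 : k \text{ odd}\}$ accumulate inside the open unit disk and a nonzero element of $\msi[[T]]$ has only finitely many zeros there by Weierstrass preparation. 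Thus $\mu_A$ is well defined, independent of $\alpha_0$, and satisfies $(N\alpha-\gamma_\alpha)\mu_A = \mu_{\alpha,\infty}(-)$ in $\Frac\Lambda_\msi(\Gamma)$ for every $\alpha\in\msj$.

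The main obstacle is integrality: one must show that $\mu_A$ lies in $\Lambda_\msi(\Gamma)$ itself. For this I would exhibit two elements $\alpha_1,\alpha_2\in\msj$ for which $N\alpha_1-\gamma_{\alpha_1}$ and $N\alpha_2-\gamma_{\alpha_2}$ are coprime in the two-dimensional regular local UFD $\msi[[T]]$; granting this, the identity
\[
(N\alpha_2-\gamma_{\alpha_2})\,\mu_{\alpha_1,\infty}(-) \;=\; (N\alpha_1-\gamma_{\alpha_1})\,\mu_{\alpha_2,\infty}(-)
\]
in $\Lambda_\msi(\Gamma)$ forces $(N\alpha_1-\gamma_{\alpha_1})$ to divide $\mu_{\alpha_1,\infty}(-)$ already in $\Lambda_\msi(\Gamma)$, so the quotient $\mu_A$ is a genuine measure. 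The delicate point is securing the coprime pair: each $N\alpha-\gamma_\alpha$ is a nonzero Iwasawa function whose irreducible factors are either the prime $2$ or a distinguished irreducible polynomial detected by finitely many zeros in the closed unit disk of the algebraic closure, while the abundance of $\msj$ allows us to vary $N\alpha\bmod 2^N$ and the image of $\alpha$ in $(\CO_\fp/\fp^N)^\times$ essentially independently for arbitrarily large $N$. Arranging that the two resulting Iwasawa functions share no irreducible factor is the one step that requires an explicit arithmetic calculation with specific elements of $\msj$, rather than purely formal manipulation.
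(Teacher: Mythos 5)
Your overall strategy is the same as the paper's: from the interpolation formula \eqref{5.16} one deduces the cross-relation $v_\beta\,\mu_{\alpha,\infty}(-) = v_\alpha\,\mu_{\beta,\infty}(-)$ for all $\alpha,\beta\in\msj$ (the paper's \eqref{7.31}), where $v_\alpha = N\alpha-\gamma_\alpha$, and then one wins by exhibiting a pair with $v_{\alpha_1}, v_{\alpha_2}$ coprime in the UFD $\Lambda_\msi(\Gamma)\simeq\msi[[T]]$, since coprimality plus the cross-relation forces $v_{\alpha_1}\mid\mu_{\alpha_1,\infty}(-)$ and hence $\mu_{\alpha,\infty}(-)=v_\alpha\mu_A$ for all $\alpha$. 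Your justification of the cross-relation (odd powers $\rho_\fP^k$ are exactly the minus characters, $b_k(\alpha)$ contributes the factor $N\alpha-\phi^k((\alpha))$, and a nonzero element of $\msi[[T]]$ cannot vanish at the infinitely many points $u^k-1$ by Weierstrass preparation) is sound and matches what the paper leaves implicit. But the step you explicitly defer --- producing the coprime pair --- is not a routine afterthought: it is the substantive content of the paper's proof, and as written your argument has a genuine gap there. Saying that $\msj$ lets you vary $N\alpha \bmod 2^N$ and $\alpha \bmod \fp^N$ "essentially independently" does not by itself show the two Iwasawa functions share no irreducible factor; one must identify the invariant that controls their common zeros and then make an arithmetic choice that separates it.

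Here is how the paper closes the gap. Choose $\alpha_0\in\msj$ with $\alpha_0\equiv 1\bmod\fq$ (so that $\phi((\alpha_0))=\alpha_0$ and $\gamma_{\alpha_0}$ acts on $B_{\fP^\infty}$ literally by $\alpha_0$), with $\ord_\fp(\alpha_0-1)=2$ and $\ord_{\fp^*}(\alpha_0-1)=2$, as in \eqref{7.32}; then $\gamma_{\alpha_0}$ is a topological generator of $\Gamma$, and sending it to $1+T$ gives $v_{\alpha_0}=N\alpha_0-(1+T)$. Choose $\beta_0$ as in \eqref{7.33}, identical at $\fq$ and $\fp^*$ but with $\beta_0\equiv 1\bmod\fp^3$. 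Writing $\gamma_{\beta_0}=\gamma_{\alpha_0}^{c}$ with $c=\log(\beta_0)/\log(\alpha_0)$, one has $v_{\beta_0}=N\beta_0-(1+T)^c$, and a common zero of $v_{\alpha_0}$ and $v_{\beta_0}$ would force $(N\alpha_0)^c=N\beta_0$, i.e. $\log(\beta_0)/\log(\bar{\beta_0})=\log(\alpha_0)/\log(\bar{\alpha_0})$ (this ratio of $2$-adic logarithms at $\fp$ and $\fp^*$ is the invariant your "independent variation" must actually control). The congruences \eqref{7.32}, \eqref{7.33}, together with $\ord_\fp(\log x)=\ord_\fp(x-1)$ for $\ord_\fp(x-1)\ge 2$, give $\ord_\fp$ of the two ratios equal to $0$ and $\ge 1$ respectively, proving \eqref{7.34} and hence coprimality. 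Your proof becomes complete once you carry out this (or an equivalent) explicit construction; without it, the integrality of $\mu_A$ is not established.
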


\begin{proof} Put $v_\alpha = N\alpha - \gamma_\alpha$. Since $v_\alpha \in \Lambda_\msi(\Gamma)$,  it follows easily  from \eqref{5.16} that, for all $\alpha, \beta \in \msj$, we have
\begin{equation}\label{7.31}
v _\beta  \mu_{\alpha, \infty}(-) = v_\alpha  \mu_{\beta, \infty}(-).
\end{equation}
Now we recall that $\Lambda_\msi(\Gamma)$ is a unique factorization domain, and we claim that we can choose $\alpha_0$ and $\beta_0$ in $\msj$ such that
$v_{\alpha_0}$ and $v_{\beta_0}$ are relatively prime. We first choose $\alpha_0 \in \msj$ such that
\begin{equation}\label{7.32}
\alpha_0 \equiv 1 \mod \fq, \, \, \alpha_0 \nequiv 1 \mod \fp^3, \, \, \alpha_0 \equiv 1 \mod (\fp^*)^2, \, \,  \alpha_0 \nequiv 1 \mod (\fp^*)^3.
\end{equation}
Thus $\gamma_{\alpha_0}$ is then a topological generator of $\Gamma$, and we can then identify $\Lambda_\msi(\Gamma)$ with the formal power series
ring $\msi[[T]]$ by mapping this topological generator to $1+T$, so that $v_{\alpha_0} =  N\alpha_0 - (1+T).$ Now choose $\beta_0 \in \msj$ such that
\begin{equation}\label{7.33}
\beta_0 \equiv 1 \mod \fq, \, \, \beta_0 \equiv 1 \mod \fp^3, \, \, \beta_0 \equiv 1 \mod (\fp^*)^2, \, \,\beta_0 \nequiv 1 \mod (\fp^*)^3.
\end{equation}
Then we claim that $v_{\alpha_0}$ and $v_{\beta_0}$ are indeed then relatively prime elements of $\msi[[T]]$. To justify this, we note that, provided $\alpha \equiv 1 \mod \fq$,
we have $\phi((\alpha)) = \alpha$, and so $\gamma_\alpha$ will act on $B_{\fP^\infty}$ by multiplication by $\alpha$. It follows easily that $\gamma_{\beta_0} = (\gamma_{\alpha_0})^c$, where $c = \log(\beta_0)/\log(\alpha_0)$, where the logarithm is taken in the $\fp$-adic completion  of $K$.  Then $v_{\beta_0} =  N\beta_0 - (1+T)^c $, and so to show that
$v_{\beta_0}$ and $v_{\alpha_0}$ are relatively prime, one sees immediately that it suffices to show that
\begin{equation}\label{7.34}
\log(\beta_0)/\log(\bar{\beta_0}) \neq \log(\alpha_0)/\log(\bar{\alpha_0}),
\end{equation}
where the symbol $\bar{z}$ denotes the complex conjugate of $z$. But we have $\ord_\fp(\log x) = \ord_\fp(x-1)$ whenever $\ord_\fp(x-1) \geq 2$. Thus it follows from
\eqref{7.32} that $\ord_\fp(\log(\alpha_0)/\log(\bar{\alpha_0})) = 0$, whereas \eqref{7.33} implies that $\ord_\fp(\log(\beta_0)/\log(\bar{\beta_0})) \geq 1$, proving \eqref{7.34}. Since
$v_{\beta_0}$ and $v_{\alpha_0}$ are relatively prime, it now follows easily from \eqref{7.31} that we must have $\mu_{\alpha_0, \infty}(-) =  v_{\alpha_0}\mu_A$
for some $\mu_A \in \Lambda_\msi(\Gamma)$.  But then, applying \eqref{7.31}  again,  it follows that $\mu_{\alpha, \infty}(-) =  v_{\alpha}\mu_A$ for all $\alpha \in \msj$, and the proof of the lemma is complete.
\end{proof}

\medskip

We can now prove Theorem \ref{7.25}. Since the map $j_{F_\infty}$ in \eqref{6.8n} is a $\Lambda_\msi(G)$-homomorphism, then recalling \eqref{7.28}, we see that
Theorem \ref{6.10n} shows immediately $j_{F_\infty}$ gives rise to an exact sequence of $\Lambda_\msi(\Gamma)$-modules
\begin{equation}\label{7.35}
0 \to (\epsilon_-U(F_\infty))\wh{\otimes}_{\CO_\fp}\msi \to \Lambda_\msi(\Gamma) \to W\wh{\otimes}_{\CO_\fp}\msi \to 0,
\end{equation}
where $W$ is some finite $\Gamma$-module. Recall that  $\bar{C}(F_\infty)$ is, by definition, generated as a $\Lambda(G)$-module by the
the norm compatible system of elliptic units $u_{\alpha, \infty}$ defined by \eqref{eu} for $\alpha$ running over $\msj$. It then follows from the results of \S5
that $j_{F_\infty}((\epsilon_-\bar{C}(F_\infty))\wh{\otimes}_{\CO_\fp}\msi)$ will be generated as a $\Lambda_\msi(\Gamma)$-module by the $\mu_{\alpha, \infty}(-)$
for $\alpha \in \msj$, which by Lemma \ref{7.30}  is equal to the ideal of $\Lambda_\msi(\Gamma)$ generated by the $ v_\alpha \mu_A$ for $\alpha \in \msj$,
where, as before, $v_\alpha = N\alpha - \gamma_{\alpha}$.
However, we claim that the $v_\alpha$ for $\alpha \in \msj$ generate the maximal ideal $(2, T)$ of $\Lambda(\Gamma) = \BZ_2[[T]]$. Indeed,
the ideal they generate is just the annihilator of the group of all 2-power roots of unity in $F_\infty$, and this group just consists of $\{\pm 1\}$ since the prime $\fp^*$
of $K$ is not ramified in $F_\infty$. Thus we have finally
\begin{equation}\label{7.36}
j_{F_\infty}((\epsilon_-\bar{C}(F_\infty))\wh{\otimes}_{\CO_\fp}\msi) = (2\msi, T)\mu_A.
\end{equation}
On combining \eqref{7.35} and \eqref{7.36}, noting that $\epsilon_-Z(F_\infty) = \epsilon_-U(F_\infty))/\epsilon_-\bar{C}(F_\infty)$, and recalling that $\epsilon_-X(F_\infty)$ has no non-zero finite subgroup by Lemma \ref{t7}, we conclude from Theorem \ref{7.18}  that  the exact sequence \eqref{7.26} is valid. Moreover, applying \eqref{7.29} with $\xi = (\rho_\fP)^k (k=1, 3, 5, \cdots )$, we obtain \eqref{7.27}
from \eqref{5.16}.
\begin{cor}\label{7.37} Assume $q \equiv 7 \mod 16$. Then, for all complex characters $\chi$ of finite order of $G$ with $\chi(j) = 1$, we have $L(\rho \chi, 1) \neq 0$.
\end{cor}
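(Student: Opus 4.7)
The plan is to combine the vanishing $X(F_\infty)=0$ (established in Theorem \ref{t4} under the hypothesis $q \equiv 7 \mod 16$) with the ``main conjecture'' exact sequence \eqref{7.26} to force $\mu_A$ to be a unit in $\Lambda_\msi(\Gamma)$, and then evaluate $\mu_A$ against the characters $\rho_{\fP,\Gamma}\chi_\Gamma$ to deduce non-vanishing of the corresponding complex $L$-values.

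First I would note that, since $q \equiv 7 \mod 16$, Theorem \ref{t4} gives $X(F_\infty) = 0$, so a fortiori $\epsilon_- X(F_\infty) = 0$. Substituting this into the exact sequence \eqref{7.26} of Theorem \ref{7.25}, I obtain an isomorphism
\[
\Lambda_\msi(\Gamma)/\mu_A \Lambda_\msi(\Gamma) \simeq \fM \wh{\otimes}_{\CO_\fp}\msi,
\]
with $\fM$ finite. Identifying $\Lambda_\msi(\Gamma)$ with the power series ring $\msi[[T]]$ via a topological generator of $\Gamma$, and applying the Weierstrass preparation theorem (write $\mu_A = u \cdot 2^\mu \cdot P(T)$ with $P$ distinguished), finiteness of the quotient forces $\mu = 0$ and $\deg P = 0$; thus $\mu_A$ is a unit in $\Lambda_\msi(\Gamma)$.

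Next, any finite-order character $\chi$ of $G$ with $\chi(j) = 1$ factors through $\Gamma$, giving $\chi_\Gamma: \Gamma \to \ov{\BQ}^\times$; using the fixed embedding of $\ov{\BQ}$ into the fraction field of $\msi$ (compatible with the choice of $\fp$ and $\fP$), I view $\chi_\Gamma$ as $\msi$-valued. The product $\rho_{\fP,\Gamma}\chi_\Gamma$ extends to a continuous $\msi$-algebra homomorphism of $\Lambda_\msi(\Gamma)$, and since $\mu_A$ is a unit, $\int_\Gamma \rho_{\fP,\Gamma}\chi_\Gamma\, d\mu_A$ is a non-zero element of $\msi[\chi]$. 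The next step is to extend the interpolation identity \eqref{7.27} from integer powers $\rho_{\fP,\Gamma}^k$ to the finite-order twists $\rho_{\fP,\Gamma}\chi_\Gamma$. For this, I would revisit the defining identity $\mu_{\alpha,\infty}(-) = (N\alpha - \gamma_\alpha)\mu_A$ of Lemma \ref{7.30} and the construction of $\mu_{\alpha,\infty}$ in \S 5, which decomposes the integral against $(\rho_{\fP,\Gamma})^k$ as a sum over the ideal classes $\fC_n$ of partial Hecke $L$-values (Proposition \ref{5.18}). The same Kronecker--Eisenstein computation, applied with $\chi$ inserted, yields
\[
\Omega_\fp(A)^{-1}\int_\Gamma \rho_{\fP,\Gamma}\chi_\Gamma\, d\mu_A = \Omega_\infty(A)^{-1}\, L_\fq(\bar{\phi}\chi^*, 1)\,\bigl(1 - \bar\phi\chi^*(\fp)/N\fp\bigr),
\]
where $\chi^*$ is the complex character attached to $\chi$. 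Combined with the unit property of $\mu_A$ (and the fact that the Euler factor at $\fp$ cannot vanish, since $\rho_\fP\chi$ has infinite order and its value at $\fp$ is a non-trivial $p$-adic unit), this gives $L(\bar\phi\chi^*, 1) \neq 0$.

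The main obstacle is Step 4: extending the interpolation formula \eqref{7.27}, which was only stated for integer powers $k \geq 1$ odd, to finite-order character twists. One needs to carefully verify that the relation $v_\alpha\mu_A = \mu_{\alpha,\infty}(-)$ plus the decomposition of $\mu_{\alpha,\infty}$ via the rational functions $\fR_{\alpha, A^\fc}$ over the representatives $\fc \in \fC_n$, together with the convergence of the measures as $n \to \infty$, pushes through unchanged once $\rho_\fP^k$ is replaced by $\rho_\fP\chi$, and that the resulting product of partial $L$-values reassembles to the desired primitive twisted Hecke $L$-value $L(\bar\phi\chi^*, 1)$ up to the local Euler factor at $\fp$. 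All the necessary algebraic ingredients (unit power series expansions, invariance under $\Gal(\fF_n/F_n)$) are already in place from \S 4--\S 5; the remaining work is essentially bookkeeping of how $\chi$ distributes over the sum over ideal classes.
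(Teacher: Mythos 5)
Your overall architecture is the paper's own: use Theorem \ref{t4} (so $\epsilon_-X(F_\infty)=0$ when $q\equiv 7\bmod 16$) together with the exact sequence \eqref{7.26} to force $\mu_A$ to be a unit in $\Lambda_\msi(\Gamma)$, then pair $\mu_A$ against $(\rho_\fP\chi)_\Gamma$ via Lemma \ref{7.30} and \eqref{7.29}, and convert the resulting unit into non-vanishing of an $L$-value. That part of your argument is fine. The genuine gap is in your Step 4, which you dismiss as ``essentially bookkeeping''. For every \emph{non-trivial} $\chi$ of finite order of $G$ with $\chi(j)=1$, the character $\chi$ is ramified at $\fp$ (the prime $\fp$ is totally ramified in $F_\infty$, so the inertia group at $\fp$ is all of $G$); consequently $\chi(\fp)$ is not even defined and the interpolation formula you write down, with the Euler factor $\bigl(1-\ov{\phi}\chi^*(\fp)/N\fp\bigr)$, cannot be the correct one. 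The correct statement is Theorem \ref{9.2} of the Appendix: the integral $\int_G\rho_\fP\chi\, d\mu_{\alpha,\infty}$ equals $\tau(\chi)(\phi\chi)(\fh_\chi)z_\chi^{-1}\bigl(N\alpha-(\phi\chi)((\alpha))\bigr)L_\fq(\ov{\phi\chi},1)$ (up to the period normalization), i.e.\ a Gauss sum $\tau(\chi)$ and the value of the measure at the $\fq\fp^{r+2}$-division point $V_r\oplus Q$ replace the Euler factor at $\fp$, which disappears entirely.

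Proving that formula is not a routine re-run of Proposition \ref{5.18}: one must expand the integral over $G$ using the characteristic function of $1+\fp^{r+2}\CO_\fp$ and reassemble it as a Gauss sum (Lemmas \ref{9.13} and \ref{9.15}), evaluate the logarithmic derivatives at the shifted torsion point $V_r$ rather than at $z=0$ (Proposition \ref{9.19}, Corollary \ref{9.24}), and — crucially — show that the Frobenius-twisted part of the measure construction, the term built from $d_{\alpha,n}^\delta(\widehat{\eta_{A,\fp,w}}(t_w))$ which is exactly what produces the factor $(1-\phi^k(\fp)/N\fp)$ in \eqref{7.27}, integrates to zero against a non-trivial $\chi$ (Lemma \ref{9.30}). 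Without this last cancellation your expected formula has the wrong shape, and without the whole computation you have no link at all between $\int_\Gamma(\rho_\fP\chi)_\Gamma\,d\mu_A$ and $L(\ov{\rho\chi},1)$ for non-trivial $\chi$. (For the trivial character your argument does close, since \eqref{7.27} with $k=1$ applies and the factor $1-\phi(\fp)/N\fp$ is visibly non-zero.) So the missing ingredient is precisely the content of the paper's Appendix, and it is the substantive analytic input of the corollary, not bookkeeping; once Theorem \ref{9.2} is in hand, you also need the easy observations that $\tau(\chi)$, $(\phi\chi)(\fh_\chi)z_\chi^{-1}$ and $N\alpha-(\phi\chi)((\alpha))$ are non-zero, and that passing from the imprimitive $L_\fq(\ov{\phi\chi},1)$ to $L(\ov{\rho\chi},1)$ only changes a non-vanishing Euler factor at $\fq$.
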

\begin{proof} Since $q \equiv 7 \mod16$, we have ${\epsilon_-}X(F_\infty) = 0$ by Theorem \ref{t4}, and so it follows from the exact sequence \eqref{7.26} that $\mu_A$
must be a unit in the Iwasawa algebra $\Lambda_\msi(\Gamma)$. Hence $\int_{\Gamma}(\rho_\fP \chi)_\Gamma d\mu_A$ will be a unit in $\msi$, and so, noting that $L(\ov{\rho\chi}, 1)$ is the complex conjugate of $L(\rho\chi, 1)$, the result follows from
the theorem in the Appendix and the fact that $(\rho_\fP \chi)(j) = -1$.
\end{proof}

\medskip

Similarly, we have

\begin{equation}\label{7.39}
\epsilon_+\Lambda_\msi(G) = \epsilon_+\Lambda_\msi(\Gamma).
\end{equation}
If $m$ is any element of $\Lambda_\msi(G)$, we write $m(+)$ for the measure in $\Lambda_\msi(\Gamma)$ such that $\epsilon_+m = \epsilon_+m(+)$ under the equality
\eqref{7.39}. For every continuous homomorphism $\xi$ from $G$ to the multiplicative group of $\xbar{\BQ}_2$ such that $\xi(j) = +1$, we then have
\begin{equation}\label{7.40}
\int_G\xi dm = \int_\Gamma \xi_\Gamma dm(+),
\end{equation}
where $\xi_\Gamma$ denotes the restriction of $\xi$ to $\Gamma$. Now exactly the same proof as in Lemma \ref{7.30} shows that there exists  $\nu_A$ in $\Lambda_\msi(\Gamma)$ such that
\begin{equation}\label{7.41}
\mu_{\alpha, \infty}(+) = (N\alpha - \gamma_\alpha)\nu_A
\end{equation}
for all $\alpha \in \msj$. Moreover, as before, we have the exact sequence
\begin{equation}\label{7.42}
0 \to (\epsilon_+U(F_\infty))\wh{\otimes}_{\CO_\fp}\msi \to \Lambda_\msi(\Gamma) \to W\wh{\otimes}_{\CO_\fp}\msi \to 0,
\end{equation}
where again $W$ is some finite $\Lambda(\Gamma)$-module. Again we have
\begin{equation}\label{7.43}
j_{F_\infty}((\epsilon_+\bar{C}(F_\infty))\wh{\otimes}_{\CO_\fp}\msi) = (2\msi, T)\nu_A.
\end{equation}

\begin{thm}\label{7.44} For all primes $q$ with $q \equiv 7 \mod 8$, the measure $\nu_A$ is a unit in $\Lambda_\msi(\Gamma)$.
\end{thm}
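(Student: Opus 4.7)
The plan is to combine the vanishing theorem of Theorem \ref{7.16} with the $+$-part of the Coleman--de Shalit exact sequence derived in this section. By Theorem \ref{7.16}, $\epsilon_+ Z(F_\infty) = 0$ for every prime $q \equiv 7 \mod 8$; since $Z(F_\infty) = U(F_{\infty,\fp})/\bar{C}(F_\infty)$, Lemma \ref{7.15} converts this into the identity of $\Lambda_\msi(G)$-submodules
\[
\epsilon_+ U(F_{\infty,\fp}) = \epsilon_+ \bar{C}(F_\infty),
\]
which persists on applying $(-)\wh{\otimes}_{\CO_\fp}\msi$ since $\msi$ is flat over $\CO_\fp = \BZ_2$.

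Next, I would apply $j_{F_\infty}$ of \eqref{6.9n} and project onto the $\epsilon_+$-component, identified with $\Lambda_\msi(\Gamma)$ via \eqref{7.39}. By the exact sequence \eqref{7.42}, the image of $(\epsilon_+ U(F_{\infty,\fp}))\wh{\otimes}\msi$ has cokernel $W\wh{\otimes}_{\CO_\fp}\msi$ in $\Lambda_\msi(\Gamma)$, with $W$ a finite $\Gamma$-module. As $W$ is annihilated by a fixed power of $2$, this cokernel is an $\msi$-module of finite length. On the other hand, by \eqref{7.43}, the image of $(\epsilon_+ \bar{C}(F_\infty))\wh{\otimes}\msi$ equals the ideal $(2\msi, T)\nu_A$. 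Combining these with the equality of the previous paragraph, one concludes that
\[
\Lambda_\msi(\Gamma)/(2\msi, T)\nu_A \simeq W\wh{\otimes}_{\CO_\fp}\msi
\]
is an $\msi$-module of finite length.

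Finally, I would deduce that $\nu_A$ is a unit in $\Lambda_\msi(\Gamma) = \msi[[T]]$ by a direct Weierstrass argument. Since $(2\msi, T)\nu_A \subseteq (\nu_A)$, the quotient $\Lambda_\msi(\Gamma)/(\nu_A)$ is also of finite $\msi$-length. Suppose for contradiction $\nu_A$ is not a unit. If $\nu_A = 0$, this quotient is $\Lambda_\msi(\Gamma)$ itself, of infinite $\msi$-length. Otherwise, the Weierstrass preparation theorem furnishes $\nu_A = 2^a u P(T)$ with $u \in \Lambda_\msi(\Gamma)^{\times}$, $a \geq 0$, $P$ a distinguished polynomial of degree $d \geq 0$, and $(a,d) \neq (0,0)$. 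If $d \geq 1$, then $\Lambda_\msi(\Gamma)/(\nu_A)$ surjects onto $\msi[T]/(P)$, a free $\msi$-module of rank $d \geq 1$, hence of infinite $\msi$-length. If $d = 0$ and $a \geq 1$, then $\Lambda_\msi(\Gamma)/(\nu_A) \simeq (\msi/2^a\msi)[[T]]$, which contains the $\msi$-submodule $\bigoplus_{i \geq 0}(\msi/2^a\msi)T^i$ of infinite length. Either case contradicts the preceding paragraph, so $\nu_A$ must be a unit.

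The main obstacle lies upstream: the equality $\epsilon_+ U(F_{\infty,\fp}) = \epsilon_+ \bar{C}(F_\infty)$ rests on Theorem \ref{7.16}, which in turn depends on Proposition \ref{7.19}, the analytic class number formula for $F$, and Kronecker's second limit formula. Once these inputs are granted, the descent from the $+$-part identity to the unit statement for $\nu_A$ is essentially formal.
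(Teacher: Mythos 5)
Your argument is correct and is essentially the paper's own proof: Theorem \ref{7.16} together with Lemma \ref{7.15} gives $\epsilon_+U(F_{\infty,\fp})=\epsilon_+\bar{C}(F_\infty)$, and combining \eqref{7.42} with \eqref{7.43} shows $\Lambda_\msi(\Gamma)/(2\msi,T)\nu_A$ has finite $\msi$-length, forcing $\nu_A$ to be a unit. The only difference is that you spell out the final Weierstrass preparation step, which the paper leaves as immediate.
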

\begin{proof} This follows immediately from combining \eqref{7.42} and \eqref{7.43}, noting that $\epsilon_+Z(F_\infty) = \epsilon_+U(F_\infty)/\epsilon_+\bar{C}(F_\infty)$,
and then using Theorem \ref{7.16}.
\end{proof}
\begin{cor}\label{7.45} Assume $q \equiv 7 \mod 8$. Then, for all complex characters $\chi$ of finite order of $G$ with $\chi(j) = -1$, we have $L(\rho \chi, 1) \neq 0$.
\end{cor}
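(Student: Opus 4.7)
The plan is to follow the structure of Corollary~\ref{7.37}, but with the roles of $\mu_A$ and $\nu_A$, the minus- and plus-eigenspaces for $\Delta$, and the formulae \eqref{7.29} and \eqref{7.40} all swapped. The crucial input is already in place: Theorem~\ref{7.44} asserts that $\nu_A$ is a unit in $\Lambda_\msi(\Gamma)$, so for every continuous character $\xi_\Gamma\colon\Gamma\to\overline{\BQ}_2^\times$, the value $\int_\Gamma \xi_\Gamma\, d\nu_A$ is a unit in the ring of integers of a finite extension of $\msi$, and in particular is non-zero.

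Next I fix a complex character $\chi$ of finite order of $G$ with $\chi(j)=-1$. Since $\rho_\fP$ is $\Delta$-odd (the non-trivial element $j$ of $\Delta$ acts on $B_{\fP^\infty}$ by $-1$), the product $\xi:=\rho_\fP\chi$ satisfies $\xi(j)=+1$, so it is precisely the type of character to which \eqref{7.40} applies. Combining \eqref{7.40} with the relation \eqref{7.41} gives, for every $\alpha\in\msj$,
\[
\int_G\xi\, d\mu_{\alpha,\infty}=\int_\Gamma\xi_\Gamma\, d\mu_{\alpha,\infty}(+)=\bigl(N\alpha-\xi(\gamma_\alpha)\bigr)\int_\Gamma\xi_\Gamma\, d\nu_A.
\]
Since $\xi$ has finite order and $N\alpha$ takes arbitrarily large values as $\alpha$ varies over $\msj$, we may choose $\alpha$ with $N\alpha\neq\xi(\gamma_\alpha)$; for such an $\alpha$ the left-hand side is a non-zero element of the fraction field of $\msi$.

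It then remains to translate this algebraic non-vanishing into the non-vanishing of the complex $L$-value $L(\rho\chi,1)$, and for this I would invoke the theorem of the Appendix exactly as in the proof of Corollary~\ref{7.37}: that theorem extends the interpolation formula \eqref{5.16} from integer powers $\rho_\fP^k$ to twists of $\rho_\fP$ by finite-order characters of $G$, expressing $\int_G\xi\, d\mu_{\alpha,\infty}$ (up to an explicit non-zero algebraic factor and a period) in terms of $L(\overline{\rho\chi},1)$, whose complex conjugate is $L(\rho\chi,1)$. There is no genuine obstacle here beyond getting the $\Delta$-parity bookkeeping right and verifying that the auxiliary Euler-type factor $N\alpha-\xi(\gamma_\alpha)$ can be made non-zero, since all the hard analytic and algebraic work has already been packaged into Theorems~\ref{7.16} and~\ref{7.44}.
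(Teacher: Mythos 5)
Your argument is the same as the paper's: apply \eqref{7.40} and the factorization \eqref{7.41} to the $\Delta$-even character $\xi = \rho_\fP\chi$, invoke Theorem~\ref{7.44} that $\nu_A$ is a unit in $\Lambda_\msi(\Gamma)$, and finish with the theorem of the Appendix. One small slip in the justification of your choice of $\alpha$: $\xi = \rho_\fP\chi$ does \emph{not} have finite order, since $\rho_\fP$ is an isomorphism of $G$ onto $\BZ_2^\times$; what saves the step is that $\xi(\gamma_\alpha) = (\phi\chi)((\alpha)) = \pm\alpha\,\chi((\alpha))$, so $N\alpha - \xi(\gamma_\alpha) = \alpha\bigl(\bar\alpha \mp \chi((\alpha))\bigr)$, which is non-zero for \emph{every} $\alpha \in \msj$ because $\bar\alpha$ is never a root of unity ($\alpha \neq 0,\pm 1$ and $K$ contains only $\pm 1$). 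It is also worth noting that this factor $N\alpha - \xi(\gamma_\alpha)$ is precisely the factor $N\alpha - (\phi\chi)((\alpha))$ appearing in the interpolation formula \eqref{9.3} of the Appendix, so the two cancel and one is left directly with the non-vanishing of $L_\fq(\ov{\phi\chi},1)$.
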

\begin{proof} We simply apply \eqref{7.40} with $\xi = \rho_\fP \chi$, and $m = \mu_{\alpha, \infty}$, noting that $(\rho_\fP \chi)(j) = +1$. Since $\nu_A$ is a unit in $\Lambda_\msi(\Gamma)$,
$\int_{\Gamma}(\rho_\fP \chi)_\Gamma d\nu_A$ will be a unit in $\msi$, and we then finally apply the theorem in the Appendix to give the value of $\int_G\xi d\mu_{\alpha, \infty}$.
\end{proof}

\begin{cor}\label{7.46} Assume $q \equiv 7 \mod 8$, and recall that $F = K(\sqrt{-\beta})$, where $\beta = \sqrt{-q}$. Let $B^{(-\beta)}$ denote the twist of the abelian variety
$B$ by the quadratic extension $F/K$, and let $\rho_{B^{(-\beta)}}$ be its Serre-Tate Grossencharacter.  Then $\rho_{B^{(-\beta)}}$ has bad reduction only at the prime $\fp$,  and  $L(\rho_{B^{(-\beta)}}\eta, 1) \neq 0$, where $\eta$ is any complex character of finite order of $\Gal(K_\infty/K)$.
\end{cor}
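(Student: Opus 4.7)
The plan is to identify $\rho_{B^{(-\beta)}}$ with $\phi \cdot \chi_F$, where $\chi_F$ is the id\`ele class character of $K$ cutting out the quadratic extension $F=K(\sqrt{-\beta})/K$, then to check that this product is unramified outside $\fp$, and finally to deduce the non-vanishing from Corollary~\ref{7.45} by exhibiting $\chi_F\eta$ as a character $\chi$ of $G$ with $\chi(j)=-1$. The identification $\rho_{B^{(-\beta)}}=\phi\chi_F$ is the standard formula for the Serre--Tate character of a quadratic twist of an abelian variety with complex multiplication.

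For the assertion about bad reduction, I will argue prime by prime. At any prime $v$ of $K$ with $v\neq\fp,\fq$, both $\phi$ (whose conductor is $\fq$) and $\chi_F$ (whose conductor is $\fq\fp^2$ by Lemma~\ref{2.7} applied with $n=0$) are unramified, hence so is $\phi\chi_F$. The delicate prime is $\fq$. Fix a prime $w$ of $F$ above $\fq$. By Theorem~\ref{ge}, $B/F$ has good reduction at $w$, so its Serre--Tate character $\phi_F=\phi\circ N_{F/K}$ is trivial on $\CO_{F,w}^\times$. Since $\fq$ is ramified in $F/K$ (Lemma~\ref{2.7}), the image $N_{F_w/K_\fq}(\CO_{F,w}^\times)$ is the unique subgroup of index $2$ in $\CO_{K,\fq}^\times$, namely the kernel of $\chi_F|_{\CO_{K,\fq}^\times}$. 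Hence $\phi$ is trivial on this same index-$2$ subgroup. Because the conductor of $\phi$ is exactly $\fq$, $\phi|_{\CO_{K,\fq}^\times}$ is non-trivial, and so must agree with $\chi_F|_{\CO_{K,\fq}^\times}$ (a group of order $2$ has a unique non-trivial character). Consequently $(\phi\chi_F)|_{\CO_{K,\fq}^\times}=\chi_F^2=1$, and $\phi\chi_F$ is unramified at $\fq$ as well, showing that the conductor of $\rho_{B^{(-\beta)}}$ divides a power of $\fp$.

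For the non-vanishing, write $\chi:=\chi_F\cdot\eta$, viewed as a character of $G=\Delta\times\Gamma$ via the projections $G\to\Delta\cong\Gal(F/K)$ and $G\to\Gamma\cong\Gal(K_\infty/K)$. Since $j$ is the non-trivial element of $\Delta$, $\chi_F(j)=-1$; and since $\eta$ factors through $\Gamma$ while $j$ fixes $K_\infty$, $\eta(j)=1$. Hence $\chi(j)=-1$, and Corollary~\ref{7.45} applied to this $\chi$ yields $L(\rho\cdot\chi_F\eta,1)\neq 0$, which, up to a complex conjugation of the Gr\"ossencharakter that is irrelevant for non-vanishing at $s=1$, is precisely the statement $L(\rho_{B^{(-\beta)}}\eta,1)\neq 0$.

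The main obstacle is the local analysis at $\fq$: one must verify that the ramification of $\phi$ is precisely cancelled by that of $\chi_F$. The argument above reduces this to the classical fact that $\CO_{K,\fq}^\times/N_{F_w/K_\fq}(\CO_{F,w}^\times)$ has order $2$, combined with the exact knowledge of the conductor of $\phi$; once this local identity is in hand, the non-vanishing is a direct application of Corollary~\ref{7.45}, and the non-vanishing of $L(A^{(-\beta)}/H_n,1)=\prod_\eta L(\rho_{B^{(-\beta)}}\eta,1)$ used in Theorem~\ref{tm} follows by factoring the $L$-function of $A^{(-\beta)}/H_n$ as a product over the characters $\eta$ of $\Gal(H_n/H)$.
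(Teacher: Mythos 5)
Your proof is correct and takes essentially the same route as the paper: identify $\rho_{B^{(-\beta)}}$ with $\rho\omega$ (your $\phi\chi_F$), verify that $\fq$ disappears from the conductor, and apply Corollary \ref{7.45} to the character $\omega\eta$, which takes the value $-1$ at $j$. The only difference is that you spell out, via local class field theory and the good reduction of $B$ over $F$ at the primes above $\fq$, the cancellation of ramification at $\fq$ that the paper dismisses as ``readily verified''.
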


\begin{proof} We have $\rho_{B^{(-\beta)}} = \rho \omega$, where $\omega$ again denotes the non-trivial character of $\Delta$. It is readily verified that $\fq$ does not divide the
conductor of $\rho \omega$, whence $B^{(-\beta)}$ will have good reduction at $\fq$. The final assertion then follows from Corollary \ref{7.45}, on noting that  we have $(\omega \eta)(j) = -1$.
\end{proof}

Now, as in the Introduction, let $\fM_K$ denote the set  of all non-zero integers $M$ in $\CO_K$, which are prime to $q$, satisfy $M \equiv 1 \mod 4$, and are not squares in $K$.
For each $M \in \fM_K$, let $B^{(M)}$ be the abelian variety defined over $K$ which is the twist of $B$ by the quadratic extension $K(\sqrt{M})/K$, and let $\rho_{B^{(M)}}$ be the Serre-Tate character of $B^{(M)}$. For each $n \geq 0$, let  $F(M)_n$ be the field obtained by adjoining to $K$ the coordinates of the $\fP^{n+2}$-division points of $B^{(M)}$.
Since $B^{(M)}$ has good reduction at $\fp$ because $M \equiv 1 \mod 4$, it is easily seen that $[F(M)_n : K] = 2^{n+1}$, and that $F(M)_n$ is a quadratic extension of the field $K_n$
for all $n \geq 0$.
As usual, for each $n \geq 0$, we write $N_{F(M)_n/K}$ and $N_{K_n/K}$ for the norm maps for the extensions $F(M)_n/K$ and $K_n/K$.

\begin{thm}\label{7.47} Assume that $q \equiv 7 \mod 8$, and that $M \in \fM_K$. Then, for all $n \geq 0$, we have $\ord_{s=1}L(\rho_{B^{(M)}}\circ N_{F(M)_n/K}
, s) = \ord_{s=1}L(\rho_{B^{(M)}}\circ N_{K_n/K}, s)$. Moreover the $\BZ$-rank of $B^{(M)}(F(M)_n)$ = the $\BZ$-rank of $B^{(M)}(K_n)$ for all $n \geq 0$.
\end{thm}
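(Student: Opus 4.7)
The plan is to reduce both assertions, via the quadratic extension $F(M)_n/K_n$, to the non-vanishing and Iwasawa-theoretic results already established for the twist $B^{(-\beta)}$. First I would identify $F(M)_n$ explicitly. Since $B^{(M)}$ is $K(\sqrt{M})$-isomorphic to $B$, one has $K(\sqrt{M})\cdot F(M)_n = K(\sqrt{M})\cdot F_n$; the field $F_n$ is ramified only at $\fp,\fq$ (Lemma \ref{2.7}), while $K(\sqrt{M})/K$ is ramified only at primes dividing $M$ (hence prime to $2q$), so these fields are linearly disjoint over $K$. The Galois action on $B^{(M)}_{\fP^{n+2}}$ is $\bar\phi_n\cdot\omega_M$, where $\bar\phi_n$ is the mod-$\fP^{n+2}$ reduction of $\phi$. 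Writing $\Gal(F_n K(\sqrt{M})/K)=\Gal(F_n/K)\times\Gal(K(\sqrt{M})/K)$ and noting that the only element of $\Gal(F_n/K)$ mapping to $-1$ under $\bar\phi_n$ is $j$, one computes that the kernel of $\bar\phi_n\omega_M$ is $\{(1,1),(j,\omega_M)\}$; taking fixed field yields $F(M)_n=K_n\cdot K(\sqrt{-M\beta})=K_n(\sqrt{-M\beta})$, where $\beta=\sqrt{-q}$, and in particular $[F(M)_n:K_n]=2$.

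Next I would apply Artin formalism: the characters of $\Gal(F(M)_n/K)$ split into pullbacks of characters $\eta$ of $\Gal(K_n/K)$ and their twists by $\omega_{-M\beta}$, giving
\[
L(\rho_{B^{(M)}}\circ N_{F(M)_n/K}, s) = L(\rho_{B^{(M)}}\circ N_{K_n/K}, s)\cdot \prod_{\eta} L(\rho_{B^{(M)}}\omega_{-M\beta}\eta, s).
\]
Since $\rho_{B^{(M)}}=\rho\omega_M$ and $\omega_M\omega_{-M\beta}=\omega_{-M^2\beta}=\omega_{-\beta}=\omega$ (using that $F=K(\sqrt{-\beta})$ by Lemma \ref{7.24}), we have $\rho_{B^{(M)}}\omega_{-M\beta}=\rho\omega=\rho_{B^{(-\beta)}}$, and the extra factor becomes $\prod_\eta L(\rho_{B^{(-\beta)}}\eta, s)$. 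By Corollary \ref{7.46} each of these factors is non-zero at $s=1$, which yields the asserted equality of orders.

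For the rank equality, the quadratic twist of $B^{(M)}$ by $F(M)_n/K_n=K_n(\sqrt{-M\beta})$ is $(B^{(M)})^{(-M\beta)}$, which is $K$-isomorphic to $B^{(-\beta)}$ since $M\cdot(-M\beta)=-M^2\beta\equiv -\beta$ modulo squares in $K^{\times}$. The standard eigenspace decomposition under the involution of $\Gal(F(M)_n/K_n)$ gives
\[
\rank_{\BZ} B^{(M)}(F(M)_n) = \rank_{\BZ} B^{(M)}(K_n) + \rank_{\BZ} B^{(-\beta)}(K_n),
\]
so the rank statement reduces to showing that $B^{(-\beta)}(K_n)$ is finite for every $n\geq 0$. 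For this I would invoke the main-conjecture machinery developed in the preceding sections: by Theorem \ref{7.44}, $\nu_A$ is a unit in $\Lambda_\msi(\Gamma)$, so combined with the exact sequence \eqref{7.42} and Theorem \ref{7.16}, the $\epsilon_+$-component of the Iwasawa Selmer group of $B$ over $F_\infty$ is trivial; via Theorem \ref{t9} this component coincides with the Selmer group of $B^{(-\beta)}$ over $K_\infty$, and a control argument parallel to the proof of Theorem \ref{t13} then descends triviality to the finite layers $K_n$.

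The main obstacle is this last step: carefully matching the $\epsilon_+$-isotypic component of the Iwasawa Selmer module for $B/F_\infty$ with the Iwasawa Selmer module of the twist $B^{(-\beta)}/K_\infty$, and propagating triviality down the tower while controlling the finite local corrections at the unique prime above $\fp$. Each ingredient is standard Iwasawa-theoretic technique already in the paper, but the bookkeeping is where the work lies.
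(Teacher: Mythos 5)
Your treatment of the first assertion is essentially the paper's own: you identify $F(M)_n$ as $K_n(\sqrt{-\beta M})$ (the paper gets $F(M)_0=K(\sqrt{-\beta M})$ by the good-reduction/ramification argument as in Theorem \ref{ge}, you get it by computing the kernel of the twisted Galois character -- both are fine), and then the Artin factorization together with Corollary \ref{7.46} gives the equality of orders of vanishing exactly as in the paper. For the second assertion the paper takes a different route from yours: it does not pass through the twist $B^{(-\beta)}$ at all, but instead runs the algebraic argument of \S 3 (Theorem \ref{t13}, via Propositions \ref{t11} and \ref{t12}) verbatim for $\fB=B^{(M)}$ over the tower $F(M)_\infty=K(\fB_{\fP^\infty})$, comparing Selmer coranks over $F(M)_n$ and $K_n$ directly; this is purely algebraic (Nakayama, Greenberg, control) and never needs finiteness of $B^{(-\beta)}(K_n)$. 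Your route -- the quadratic-twist rank decomposition over $F(M)_n/K_n$, using $(B^{(M)})^{(-M\beta)}\cong B^{(-\beta)}$, followed by finiteness of $B^{(-\beta)}(K_n)$ from Iwasawa theory over $F_\infty$ -- is also viable, and has the mild advantage of making visible exactly which twist carries the ``extra'' rank; its cost is that you must prove finiteness of $B^{(-\beta)}(K_n)$, which the paper's bookkeeping avoids.

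One point in your reduction needs repair, though it is fixable. Because $j$ acts on $B_{\fP^\infty}$ by $-1$, the eigenspaces flip under $\Hom(X(F_\infty), B_{\fP^\infty})$: the part of $\Sel_{\fP^\infty}(B/F_\infty)$ on which $\Delta$ acts through the non-trivial character (the part that restriction identifies, up to groups killed by $2$, with $\Sel_{\fP^\infty}(B^{(-\beta)}/K_\infty)$) is $\Hom\bigl(X(F_\infty)/\epsilon_-X(F_\infty), B_{\fP^\infty}\bigr)$, i.e.\ it is controlled by the coinvariants $X(F_\infty)_\Delta$, not by an ``$\epsilon_+$-component of the Selmer group of $B$''. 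Indeed, by Proposition \ref{t11} the whole Selmer group of $B$ over $F_\infty$ is $\Delta$-invariant, and for $q\equiv 15 \bmod 16$ it is non-zero, so the statement that a plus-part of it is trivial cannot be what you use. What you actually need is finiteness of $X(F_\infty)_\Delta$, which is Lemma \ref{t5} (or follows from $\epsilon_+X(F_\infty)=0$, itself immediate from Lemma \ref{t7} since $\epsilon_+X\subset X^\Delta$); the detour through Theorem \ref{7.44}, \eqref{7.42} and Theorem \ref{7.16} is unnecessary, although Theorem \ref{7.16} does imply $\epsilon_+X(F_\infty)=0$. With that correction the descent is unproblematic: $\Gal(K_\infty/K_n)$ acts on $B^{(-\beta)}_{\fP^\infty}$ through $1+2^{n+2}\BZ_2$, so $H^1(\Gal(K_\infty/K_n), B^{(-\beta)}_{\fP^\infty})=0$, and $B^{(-\beta)}$ has good reduction outside $\fp$ (Corollary \ref{op}), so the control argument of Proposition \ref{t12} applies and gives finiteness of $\Sel'_{\fP^\infty}(B^{(-\beta)}/K_n)$, hence of $B^{(-\beta)}(K_n)$, for every $q\equiv 7\bmod 8$, as required.
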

\begin{proof} To lighten the notation, write $\fB = B^{(M)}$. An entirely similar argument to that given in the  proof of Theorem \ref{ge} shows that
$\fB$ has good reduction everywhere over the field $F(M)_0 = K(\fB_{\fP^2})$.  Now, since $F = K(\sqrt{-\beta})$ by Lemma \ref{7.24}, and $\fB$ is isomorphic to $B$  over the field $K(\sqrt{M})$, we see that $F(M)_0$ must be one of the three quadratic extensions of $K$ contained in the field $K(\sqrt{-\beta}, \sqrt{M})$. But, since $\fB$ has good reduction everywhere over the field $F(M_0)$, and the set of bad primes of $\fB$ over $K$ consists of $\fq$ and the primes of $K$ dividing $M$,  all of these bad primes must necesarily ramify in $F(M)_0$, whence $F(M)_0 = K(\sqrt{-\beta M}).$ In particular, writing $\fB^{(-\beta M)}$ for the twist of $\fB$ by the quadratic extension $F(M)_0$, it follows that
\begin{equation}\label{7.48}
\rho_\fB \circ N_{F(M)_0/K} = \rho_\fB \rho_{\fB^{(-\beta M)}}.
\end{equation}
But, since $\fB = B^{(M)}$, we must have $\fB^{(-\beta M)} = B^{(-\beta)}$, whence
\begin{equation}\label{7.49}
\rho_{\fB} \circ N_{F(M)_0/K} = \rho_{\fB} \rho_{B^{(-\beta)}}.
\end{equation}
Since  $L( \rho_{B^{(-\beta)}} \circ N_{K_n/K}, s) = \prod_{\eta} L(\rho_{B^{(-\beta)}}\eta, s)$, where the product is taken over all $2^n$ complex characters $\eta$ of finite order of $\Gal(K_n/K)$, the first conclusion of Theorem \ref{7.47} follows from Corollary \ref{7.46}. The proof of the second assertion of Theorem \ref{7.47} is entirely parallel to that given
for Theorem \ref{t13} in \S 3, and we omit the detailed arguments.
\end{proof}

Finally, we note that the proofs we have given  in \S 5 and above are valid for every choice of the embedding $\fii: \mst \to \BC$ made at the begining of section \S 2.
In fact, there are precisely $h$ such embeddings lying above our fixed embedding of $K$ in $\BC$, and we now denote these distinct embeddings by $i^{(r)} \, \, (r=1, \ldots, h)$.
Let $\rho^{(r)}$ denote the complex Grossencharacter of $B/K$ relative to the embedding $\i^{(r)}$, and let $\psi_{A/H}$ denote the complex Grossencharacter of the elliptic curve
$A/H$. Then we have
\begin{equation}\label{7.50}
L(\psi_{A/H}, s) = \prod_{r=1}^{h}L(\rho^{(r)}, s).
\end{equation}
Since Corollaries \ref{7.37} and \ref{7.45} holds for each of the $\rho^{(r)} \, \, (r=1, \ldots, h)$, the assertion of Theorem \ref{t1} follows easily. By an entirely similar argument, one shows that Theorem \ref{tm} follows from Corollary \ref{7.46}. Finally, recalling that Corollary \ref{7.46}
and Theorem \ref{7.47} are valid for all primes $q \equiv 7 \mod 8$,  it is also clear that Theorem \ref{t2} follows from Theorem \ref{7.47} by an analogous argument.

\medskip

Finally, for the proof of Theorem \ref{tl}, we note that, writing $L(A/J,s)$ for the complex $L$-series of a finite extension $J$ of $H$, Theorem \ref{t1} tells us that, when
$q \equiv 7 \mod 16 $, we have $L(A/J, 1) \neq 0$ whenever $J \subset \fF_\infty$. But then a standard argument, whose details we omit,  shows that the finiteness of both $A(J)$ and $\Sha(A/J)(\fp)$ follow from this non-vanishing result and the main conjecture for $A/\fF_\infty$, which is proven in \cite{CKLT}.

\section {Related results}

Combining Theorems \eqref{t4} and  \eqref{t9}, we have shown in \S3 that, for all primes $q \equiv 7 \mod 16$, we have $\Sel_{\fP^\infty}(B/F_\infty) = 0$, where we recall that
$F_\infty = K(B_{\fP^\infty})$. However, the following theorem shows that nothing like this is valid for the $\fp^\infty$-Selmer group of the elliptic curve $A$ over the field
\begin{equation}\label{8.1}
\fF_\infty = H(A_{\fp^\infty}) = HF_\infty.
\end{equation}
If $L$ is any algebraic extension of $H$, we write $\Sel_{\fp^\infty}(A/L)$ for the classical $\fp^\infty$-Selmer group of $A/L$. We also write $M(\fF_\infty)$ for the maximal abelian $2$-extension of $\fF_\infty$, which is unramified outside the primes of $\fF_\infty$ lying above $\fp$, and put $X(\fF_\infty) = \Gal(M(\fF_\infty)/\fF_\infty)$.  Since
$A$ has good reduction everywhere over $\fF_\infty$ by Theorem \eqref{ge}, we have
\begin{equation}\label{8.2}
\Sel_{\fp^\infty}(A/\fF_\infty) = \Hom(X(\fF_\infty), A_{\fp^\infty}).
\end{equation}
\begin{thm}\label{8.3} Assume $q \equiv 7 \mod 16$. Then, provided there is more than one prime of $H$ lying above $\fp$, or equivalently provided the ideal class
of $\fp$ does not have order exactly equal to $h$, we have $\Sel_{\fp^\infty}(A/\fF_\infty) = \Sha(A/\fF_\infty)(\fp) = (K_\fp/\CO_\fp)^{m_q}$, for some integer $m_q > 0$.
\end{thm}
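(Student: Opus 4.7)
The plan is threefold: (1) establish the identification \eqref{8.2} for $\fF_\infty$; (2) use Theorem \ref{tl} to dispose of the Mordell--Weil contribution to the Selmer group; and (3) compute $X(\fF_\infty)$ as a free $\BZ_2$-module of positive rank.

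For (1), the argument is identical to that of Theorem \ref{t9}: since $A$ has good reduction everywhere over $\fF$ by Theorem \ref{ge}, we need only check $H^1(\fF_{\infty, v}, A)(\fp) = 0$ at each prime $v$ of $\fF_\infty$ above $\fp$. By the corollary to Lemma \ref{lt}, each such $v$ is totally ramified in the Lubin--Tate extension $\fF_{\infty, v}/\fF_v$ with residue field $\BF_2$, so the Tate local duality plus norm-compatibility argument of Theorem \ref{t9} applies verbatim. For (2), Theorem \ref{tl} (valid since $q \equiv 7 \mod 16$) gives that $A(\fF_n)$ is finite for each $n$, so $A(\fF_\infty) = \varinjlim A(\fF_n)$ is torsion, and tensoring with the divisible module $K_\fp/\CO_\fp$ yields zero; combined with (1), $\Sel_{\fp^\infty}(A/\fF_\infty) = \Sha(A/\fF_\infty)(\fp) = \Hom(X(\fF_\infty), A_{\fp^\infty})$. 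It thus remains to show $X(\fF_\infty)$ is free of rank $m_q > 0$ over $\BZ_2$, which will automatically yield $\Hom(X(\fF_\infty), A_{\fp^\infty}) \simeq (K_\fp/\CO_\fp)^{m_q}$.

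For step (3), the key structural input is that $\fF_\infty/F_\infty$ has Galois group $\fG = \Gal(H/K)$ of odd order $h$, is everywhere unramified (as $H/K$ is the Hilbert class field), and the totally ramified prime above $\fp$ in $F_\infty/K$ splits into $g = h/f$ primes in $\fF_\infty$, where $f$ is the order of the ideal class of $\fp$. The hypothesis $g > 1$ is precisely what the theorem requires. Since $|\fG|$ is odd and $X(\fF_\infty)$ is pro-$2$, there is a clean isotypic decomposition $X(\fF_\infty) \wh{\otimes}_{\BZ_2} \msi = \bigoplus_{\chi \in \widehat{\fG}} X(\fF_\infty)_\chi$, and I would seek to analyze each component using a $\chi$-twisted analogue of the main conjecture machinery of \S\S5--7. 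The trivial-character component is controlled by Theorem \ref{t4} (which gives $X(F_\infty) = 0$), while the nontrivial components carry the extra rank.

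The heart of the argument is computing the local and global contributions at the $g$ primes above $\fp$. Using de Shalit's formalism (Proposition \ref{6.10n}), the module of local units $U(\fF_\infty) \wh{\otimes} \msi$ is obtained by induction from the decomposition subgroup $D \subset \fG$, and one deduces an exact sequence relating $X(\fF_\infty)_\chi$ to $\Lambda_\msi(\Gamma)/(\mu_{A,\chi})$ for a $\chi$-twisted $2$-adic $L$-function $\mu_{A,\chi}$ built from elliptic units of the $\fG$-conjugates of $A$. The interpolation formula for $\mu_{A,\chi}$ involves complex $L$-values of twisted Hecke characters $L(\phi\chi\nu, 1)$ --- nonzero by Theorem \ref{t1} combined with the Artin decomposition $L(\psi_{A/H}\nu, s) = \prod_\chi L(\phi\chi\nu, s)$ --- multiplied by local Euler-factor products that now involve all $g$ primes above $\fp$, as opposed to the single factor in \eqref{7.27}. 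For suitable $\chi$ these products produce ``trivial zeros'' of $\mu_{A,\chi}$, forcing $X(\fF_\infty)_\chi$ to have positive $\msi$-rank; summing these contributions gives $m_q > 0$ exactly when $g > 1$. Freeness of $X(\fF_\infty)$ as a $\BZ_2$-module follows, as in Lemma \ref{t7}, from Greenberg's theorem on the absence of finite $\Gamma$-submodules. The main obstacle will be the trivial-zero bookkeeping in the $\chi$-twisted main conjecture: pinning down exactly for which $\chi$ and to what order the Euler-factor products vanish, and verifying that these cancellations propagate to a strictly positive total $\BZ_2$-rank.
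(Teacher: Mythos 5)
Your steps (1) and (2) match the paper exactly. Step (3), however, departs completely from the paper's argument, and as written it contains a genuine gap.

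The paper does \emph{not} prove $m_q>0$ by any isotypic decomposition or $\chi$-twisted main conjecture over $\fF_\infty$. Instead, after quoting \cite{CKL} for the freeness of $X(H_\infty)$ (hence of $X(\fF_\infty)$, by the same Nakayama arguments), the authors reduce to showing $\Sel_{\fp^\infty}(A/\fF_\infty)^{\Gamma}\neq 0$, identify this with $\Sel'_{\fp^\infty}(A/\fF)$, and then run a Cassels--Poitou--Tate computation \emph{at the fixed finite level $\fF$}. Concretely, the exact sequence
\begin{equation*}
0 \to \Sel_{\fp^\infty}(A/\fF) \to \Sel'_{\fp^\infty}(A/\fF) \xrightarrow{\ g\ } \prod_{w\mid\fp} H^1(\fF_w,A)(\fp)
\end{equation*}
has, by Cassels--Poitou--Tate, cokernel of $g$ dual to the image of the injection $A_{\fp^*}\to\prod_{w\mid\fp}\tilde{A}_w(k_w)(\fp^*)$, which has order exactly $2$; but each local factor $H^1(\fF_w,A)(\fp)$ is dual to $\tilde{A}_w(k_w)(\fp^*)\neq 0$, so as soon as there are at least two primes $w$ above $\fp$ the target of $g$ is strictly larger than $\coker(g)$, forcing $\Sel'_{\fp^\infty}(A/\fF)\neq 0$. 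This is a purely local--global duality count, needing nothing beyond the finiteness of $\Sha(A/\fF)(\fp)$ and $\Sha(A/\fF)(\fp^*)$.

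Your proposed route via a $\chi$-twisted main conjecture over $\fF_\infty$ and ``trivial zeros'' of the twisted $2$-adic $L$-functions is plausible in spirit --- the positivity of $m_q$ is indeed a trivial-zero phenomenon, and the paper's Euler-factor computation is secretly doing the same bookkeeping in duality form --- but it is not carried out, and it relies on infrastructure the paper does not build. The paper only establishes the main conjecture for $B$ over $F_\infty$; the $\chi$-twisted analogues over $\fF_\infty$ would need a separate construction of elliptic units, Coleman maps, and measure interpolation for each conjugate curve $A^{\fa}$, plus a precise determination of which characters $\chi$ see a trivial zero and to what order. You flag this yourself as ``the main obstacle,'' and as things stand the assertion that these cancellations ``propagate to a strictly positive total $\BZ_2$-rank'' is unproved. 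The paper's approach sidesteps all of this by descending to the finite level and counting. If you want to pursue your route, the minimal missing input is a concrete proof that at least one $\chi$-component has positive rank, and the Cassels--Poitou--Tate argument is precisely the tool that delivers this without the twisted Iwasawa machinery.
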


\begin{proof} We recall that $H_\infty = HK_\infty$.  Let $M(H_\infty)$ be the maximal abelian $2$-extension of $H_\infty$ which is unramified outside the primes of $H_\infty$
above $\fp$, and write $X(H_\infty) = \Gal(M(H_\infty)/H_\infty)$. It is proven in \cite{CKL} that $X(H_\infty)$ is a free finitely generated $\BZ_2$-module. By entirely similar arguments, based on Nakayama's Lemma, to those given in section 3, one can then show that $X(\fF_\infty)$ is also a free finitely generated $\BZ_2$-module.
It follows from \eqref{8.2} that $\Sel_{\fp^\infty}(A/\fF_\infty)$ is a direct sum of a finite number of copies of $K_\fp/\CO_\fp$. Since Theorem \ref{tl} proves that $A(\fF_\infty)$
is a torsion group, to complete the proof it suffices to show that $\Sel_{\fp^\infty}(A/\fF_\infty) \neq 0$, which is equivalent to showing that
\begin{equation}\label{8.4}
\Sel_{\fp^\infty}(A/\fF_\infty)^{\Gamma} \neq 0,
\end{equation}
where we are now using the symbol $\Gamma$ for $\Gal(\fF_\infty/\fF)$. If $L$ is any algebraic extension of $H$, we define the modified Selmer group
\begin{equation}\label{8.5}
\Sel'_{\fp^\infty}(A/L)=\Ker\left(H^1(L,A_{\fp^\infty})\ra \prod_{v\nmid \fp}H^1(L_v, A)(\fp)\right),
\end{equation}
where now the product is taken over all primes $v$ of $L$ which do not lie above the prime $\fp$ of $K$. Since $A$ has good reduction everywhere over the
field $\fF$, and the extension $\fF_\infty/\fF$ is unramified outside $\fp$, entirely similar arguments to those given in the proofs of Theorem \ref{t9} and
Proposition \ref{t12} show that $\Sel_{\fp^\infty}(A/\fF_\infty) =\Sel'_{\fp^\infty}(A/\fF_\infty)$, and that
\begin{equation}\label{8.6}
\Sel_{\fp^\infty}(A/\fF_\infty)^{\Gamma} = \Sel'_{\fp^\infty}(A/\fF).
\end{equation}
Now we have the obvious exact sequence
\begin{equation}\label{8.7}
0 \to \Sel_{\fp^\infty}(A/\fF) \to \Sel'_{\fp^\infty}(A/\fF) \to \prod_{w|\fp}H^1(\fF_w, A)(\fp),
\end{equation}
where $w$ runs over the places of $\fF$ dividing $\fp$. Denoting the right hand map in this last exact sequence by $g$, the work of Cassels-Poitou-Tate (see, for example, Corollary 4 of the Appendix of \cite{PR}) gives the following exact description of the cokernel of $g$. Let $\pi$ now denote a non-zero element of $\CO_K$, such that $\pi\CO_K = \fp^r$ for some integer $r \geq 1$, and let $\pi^*$ denote the complex conjugate of $\pi$.
By Tate local duality, $H^1(\fF_w, A)(\fp)$ is dual to $\varprojlim_{n} A(\fF_w)/{\pi^*}^nA(\fF_w)$, and this latter group is easily seen to to be isomorphic to $\tilde{A}_w(k_w)(\fp^*)$, where $\tilde{A}_w$ denotes the reduction of $A$ modulo $w$, and $k_w$ is the residue field of $w$. Define
\begin{equation}\label{8.8}
\fS_{{\fp^*}^\infty} = \varprojlim_n\Sel_{{\pi^*}^n}(A/\fF),
\end{equation}
where $\Sel_{{\pi^*}^n}(A/\fF)$ denotes the classical Selmer group of $A/\fF$ relative to the endomorphism ${\pi^*}^n$ of  $A$. Now by Theorem \ref{t1}, we have
$L(A/\fF, 1) \neq 0$, and
and we have already noted in Theorem \ref{tl} that this implies that $A(\fF)$ and $\Sha(A/\fF)(\fp)$ are both finite. Then analogous arguments to those used to prove Corollary 16 in \cite{PR} for split odd primes $p$ will also enable one to show that, even for $p=2$,
the finiteness of $\Sha(A/\fF)(\fp)$ implies in turn the finiteness of $\Sha(A/\fF)(\fp^*)$. Alternatively, we could use the main conjecture for $A$ over the $\BZ_2$-extension $\fF K_\infty^*/\fF$, where $K_\infty^*$ is the unique $\BZ_2$-extension of $K$ unramified outside $\fp^*$, to show that the the non-vanishing of $L(A/\fF, 1)$ implies the finiteness of  $\Sha(A/\fF)(\fp^*)$.
It then follows easily that we have
\begin{equation}\label{8.9}
\fS_{{\fp^*}^\infty} = A(\fF)(\fp^*).
\end{equation}
But $A(\fF)(\fp^*) = A_{\fp^*}$ because the primes of $H$ above $\fp^*$ are unramified in $\fF$.  Further, we note that  $\tilde{A}_w(k_w)(\fp^*) \neq 0$ for every prime $w$ of $\fF$ above $\fp$, since reduction modulo $w$ is injective on $A_{\fp^*}$. Putting all together, the theorem of Cassels, Tate, Poitou shows finally that the cokernel of the map $g$ is dual to the image of the natural injection
\begin{equation}\label{8.10}
A_{\fp^*} \to \prod_{w|\fp}\tilde{A}_w(k_w)(\fp^*).
\end{equation}
In particular, we see that the cokernel of $g$ has order exactly 2. Finally, we note that  $\tilde{A}_w(k_w)(\fp^*) \neq 0$ for every prime $w$ of $\fF$ above $\fp$, since reduction modulo $w$ is injective on $A_{\fp^*}$. Hence $\prod_{w|\fp}H^1(\fF_w, A)(\fp)$ will always have order strictly greater than the cokernel of the map $g$ when there is more than one prime $w$ of $H$ above $\fp$. This shows that $\Sel'_{\fp^\infty}(A/\fF_\infty) \neq 0$, when there is more than one prime of $H$ above $\fp$, and  the proof of the theorem is complete.
\end{proof}
\noindent We note that Theorems \ref{t4} and \ref{t9} show that $\Sha(B/F_\infty)(\fP) = 0$ when $q \equiv 7 \mod 16$.  However, when there is more than one prime of $H$ above $\fp$, this seems in contrast with the fact that $\Sha(A/\fF_\infty)(\fp)$ is a divisible group of strictly positive $\BZ_2$-corank.  We are grateful to J. Milne for the following comment about this situation. Let $\fL/L$ be any finite Galois extension of subfields of the algebraic closure $\ov{\BQ}$ of $\BQ$, let $\fJ$ be any abelian variety defined over $\fL$, and let $J$ be the restriction of scalars of $\fJ$ from $\fL$ to $L$.    Then the $\Gal(\ov{\BQ}/L)$-module $J(\ov{\BQ})$ is the induced representation of the  $\Gal(\ov{\BQ}/\fL)$-module $\fJ(\ov{\BQ})$, and similarly at the completions at all places of $L$. Hence the local and global Galois cohomology groups of these representaions coincide. It follows, in particular, that
$\Sha(\fJ/\fL) = \Sha(J/L)$.  Thus, applying this remark to the extension $\fF_\infty/F_\infty$, we conclude that $\Sha(A/\fF_\infty) = \Sha(B/F_\infty)$. This also implies that the
endomorphism ring $\msb$ of $B/K$ operates on $\Sha(A/\fF_\infty)$, and so in particular, we have
\begin{equation}\label{s1}
\Sha(A/\fF_\infty)(\fp) = \oplus_{\fQ|\fp}\Sha(A/\fF_\infty)(\fQ),
\end{equation}
where now $\fQ$ runs through all the primes of $\mst$ lying above the prime $\fp$ of $K$. Note also that all primes of $\mst$ above $\fp$ are unramified because of result of Gross mentioned in $\S2$ and the fact that $h$ is odd.  Moreover, $\fP$ is the unique annihilator of $A_\fp = B(K)_\fp$ lying inside $\mst$, so that all the primes of $\fQ$ of $\mst$ lying above $\fp$ and distinct from $\fP$ must have residue field strictly bigger than $\BF_2$. Hence the arguments of $\S2$ cannot be applied to show that $\Sha(A/\fF_\infty)(\fQ) = 0$ when $\fQ \neq \fP$. Moreover, if we assume that there is more that one prime of $H$ above $\fp$, we certainly have $H \neq K$, and thus there is at least one prime $\fQ$ of $\mst$, distinct from $\fP$, above $\fp$. Theorem \ref{8.3} then proves that we must have  $\Sha(A/\fF_\infty)(\fQ) \neq 0$
for at least one prime $\fQ$ of $\mst$ above $\fp$ and distinct from $\fP$.

\medskip

Finally, we record without proof a special case of an old result of Wiles and the first author \cite{CW1} (cf. Theorem 11 of \cite{CW1}, but note that the proof given there assumes $p > 2$, so that one has to rework the argument slightly to handle the case $p=2$). Let $M(F)$ denote the maximal abelian $2$-extension of $F$ unramified outside the unique prime of $F$ lying above $\fp$. Obviously $M(F) \supset F_\infty$, and it is a nice exercise in global class field theory to show that $[M(F):F_\infty]$ is finite, and establish the following exact formula for this degree. One also has to use the fundamental fact proven earlier (Theorem \ref{odd}) that $F$ always has odd class number.

\begin{thm}\label{8.11} Assune that $ q \equiv 7 \mod 8$. Let $v$ denote the unique prime of $F$ lying above $\fp$, and let $\log_v$ denote the $v$-adic logarithm. Then
\begin{equation}\label{8.12}
[M(F):F_\infty] = 2^t , \, {\rm where} \,  t = (\ord_v(\log_v(\eta)) - 2)/2,
\end{equation}
and $\eta$ is any generator of the unit group of $F$ modulo torsion.
\end{thm}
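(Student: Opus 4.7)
The plan is to combine global class field theory with a local computation using the $2$-adic logarithm, broadly following the method of Theorem~11 of \cite{CW1}, but with careful attention to the prime $p=2$. Since $F/K$ is quadratic with $\fp$ totally ramified (by Lemma~\ref{2.7} the conductor of $F/K$ is divisible by $\fp^2$), $v$ is the unique prime of $F$ above $\fp$ and $[F_v:\BQ_2]=2$. The oddness of the class number of $F$ (Theorem~\ref{odd}) ensures that global class field theory yields an isomorphism of $\BZ_2$-modules
\[
\Gal(M(F)/F)\;\cong\;U(F_v)/\overline{\CE(F)},
\]
where $\overline{\CE(F)}$ denotes the closure of the diagonally embedded global units inside $U(F_v)$; no correction from the ideal class group survives in the pro-$2$ quotient, and the contribution of the $v$-valuation of a generator of the $\{v\}$-units is absorbed because $h_F$ is odd.

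Next I would observe that $U(F_v)$ has $\BZ_2$-rank $2$ while $\CE(F)$ has $\BZ_2$-rank $1$ (generated modulo torsion by $\eta$), so $U(F_v)/\overline{\CE(F)}$ has $\BZ_2$-rank $1$. Since $\Gal(F_\infty/F)\cong \BZ_2$ is its maximal torsion-free quotient, one has
$[M(F):F_\infty] = \#\bigl(U(F_v)/\overline{\CE(F)}\bigr)_{\mathrm{tors}}$.

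To identify this torsion, apply the $2$-adic logarithm $\log_v:U(F_v)\to F_v$ with kernel $\mu_{2^\infty}(F_v)$. The key symmetry is that $K$ has only $\pm 1$ as units (since $K \ne \BQ(i),\BQ(\sqrt{-3})$), hence $N_{F/K}(\eta) = \pm 1$, giving $j(\eta)=\pm\eta^{-1}$ where $j\in\Gal(F/K)$ is the non-trivial element. Applying $\log_v$ shows $j(\log_v\eta)=-\log_v\eta$, so $\log_v\eta$ lies in the $j$-anti-invariant line $\sqrt{-\beta}\BQ_2\subset F_v$, and in particular $\ord_v(\log_v\eta)$ is even. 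A snake-lemma argument then identifies the torsion of $U(F_v)/\overline{\CE(F)}$ as a product of $\mu_{2^\infty}(F_v)/\{\pm 1\}$ with the torsion of $(\log_v U(F_v))^{-} / \BZ_2\log_v\eta$, where ``$-$'' denotes the $j$-anti-invariant sublattice.

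The main obstacle, and the heart of the argument, is the explicit determination of $\log_v U(F_v)\subset F_v$ for the ramified quadratic extension $F_v/\BQ_2$; this splits into two cases. When $q\equiv 7\pmod{16}$, $F_v\cong \BQ_2(\sqrt{3})$ has $\mu_{2^\infty}(F_v)=\{\pm 1\}$, and a direct calculation based on $\log_v(1+\pi_v)=\tfrac12\log 3$ yields $\log_v U(F_v) = \pi_v^2\CO_{F_v}$; the anti-invariant sublattice is $2\sqrt{-\beta}\BZ_2$, and one computes $|(\log_v U(F_v))^{-}/\BZ_2\log_v\eta| = 2^{(\ord_v(\log_v\eta)-2)/2}$. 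When $q\equiv 15\pmod{16}$, $F_v\cong \BQ_2(i)$ has $\mu_4\subset U(F_v)^{(1)}$ with $\mu_4\cdot U(F_v)^{(3)} = U(F_v)^{(1)}$, forcing $\log_v U(F_v) = \pi_v^3\CO_{F_v}$; the anti-invariant sublattice is $4i\BZ_2$, giving index $2^{(\ord_v(\log_v\eta)-4)/2}$, and the extra factor $|\mu_4/\{\pm 1\}|=2$ combines with this to again yield $2^{(\ord_v(\log_v\eta)-2)/2}$. In both cases one therefore obtains $[M(F):F_\infty]=2^t$ with $t=(\ord_v(\log_v\eta)-2)/2$, as claimed.
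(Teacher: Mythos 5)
Your argument is correct and follows exactly the route the paper has in mind: the paper records Theorem \ref{8.11} without proof, citing Theorem 11 of \cite{CW1} and indicating that one combines global class field theory with the odd class number of $F$ (Theorem \ref{odd}) and reworks the local step to handle $p=2$. Your identification $\Gal(M(F)/F)\cong U(F_v)/\overline{\CE(F)}$, the reduction of $[M(F):F_\infty]$ to the torsion of this quotient, the use of $N_{F/K}(\eta)=\pm1$ to place $\log_v\eta$ on the anti-invariant line, and the two local computations ($F_v\cong\BQ_2(\sqrt{3})$ with $\log_v U(F_v)=\pi_v^2\CO_{F_v}$ when $q\equiv 7\bmod 16$; $F_v\cong\BQ_2(i)$ with $\log_v U(F_v)=\pi_v^3\CO_{F_v}$ and the compensating factor $\#(\mu_4/\{\pm1\})=2$ when $q\equiv 15\bmod 16$) all check out and together supply precisely the $p=2$ reworking the paper alludes to.
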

\noindent Since $X(F_\infty)_\Gamma = Gal(M(F)/F_\infty)$, we obtain as an immediate corollary of \eqref{8.12} and Theorem \ref{t4}:

\begin{cor}\label{8.13} When the prime $q$ satisfies $q \equiv 7 \mod 16$, we always have $\ord_v(\log_v(\eta)) = 2$.
\end{cor}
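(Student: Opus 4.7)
The plan is to combine Theorem \ref{t4} (which kills $X(F_\infty)$ when $q \equiv 7 \mod 16$) with Theorem \ref{8.11} (which expresses $[M(F):F_\infty]$ in terms of $\ord_v(\log_v(\eta))$), using the elementary Iwasawa-theoretic identification $X(F_\infty)_\Gamma = \Gal(M(F)/F_\infty)$ recorded just before the corollary.

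First I would invoke Theorem \ref{t4}: under the hypothesis $q \equiv 7 \mod 16$, we have $X(F_\infty) = 0$. In particular, the $\Gamma$-coinvariants vanish, so
\[
\Gal(M(F)/F_\infty) = X(F_\infty)_\Gamma = 0,
\]
which means $M(F) = F_\infty$ and hence $[M(F):F_\infty] = 1$.

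Next, Theorem \ref{8.11} gives $[M(F):F_\infty] = 2^t$ with $t = (\ord_v(\log_v(\eta)) - 2)/2$. Combining with the previous step forces $t = 0$, i.e.\ $\ord_v(\log_v(\eta)) = 2$, as claimed. There is no real obstacle here: the corollary is a direct deduction, and the only thing worth emphasizing is that the formula in Theorem \ref{8.11} implicitly asserts $\ord_v(\log_v(\eta)) \geq 2$, so the vanishing of $t$ pins down the value exactly rather than just giving an inequality.
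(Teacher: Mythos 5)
Your argument is correct and is precisely the paper's own deduction: the text obtains Corollary \ref{8.13} immediately from Theorem \ref{t4} (giving $X(F_\infty)=0$, hence $X(F_\infty)_\Gamma=\Gal(M(F)/F_\infty)=0$) combined with the formula \eqref{8.12} of Theorem \ref{8.11}, forcing $t=0$ and thus $\ord_v(\log_v(\eta))=2$. Your closing remark that \eqref{8.12} implicitly carries the lower bound $\ord_v(\log_v(\eta))\geq 2$ is a fair observation, but there is nothing further to add.
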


\noindent We are extremely grateful to Zhibin Liang for having computed for us the fundamental unit $\eta$ and $\ord_v(\log_v(\eta))$ for all primes $q \equiv 15 \mod 16$
with $q < 2500$. In particular, when combined with Theorem \ref{8.11}, Liang's calculations gave strong numerical evidence that $X(F_\infty) \neq 0$ for all primes $q \equiv 15 \mod 16$. We have to confess that we cannot see how to prove this last statement using the techniques of Iwasawa theory.  However, recently Jianing Li \cite{JL} of the University of Science and Technology, Hefei, has discovered an ingenious elementary proof of both Corollary \ref{8.13} and the fact that $\ord_v(\log_v(\eta)) \geq 4$ when $q \equiv 15 \mod 16$.
Thus, for all primes $q \equiv 15 \mod 16$, Li's result combined with \eqref{8.12} shows that $M(F) \neq F_\infty$, and so, since $X(F_\infty)_\Gamma = Gal(M(F)/F_\infty)$, it follows from Nakayama's lemma that $X(F_\infty) \neq 0$, which in turn implies that $X(F_\infty)$ has positive $\BZ_2$-rank by Theorem \ref{t4}.

\medskip

\section{Zhao's Method}

In this section, we prove Theorem \ref{main} using Zhao's method \cite{Zhao1}, \cite{Zhao2}. As in the Introduction, $\msr$ will denote the set of all square free positive integers $R$ of the form $R = r_1...r_k$ , where $k\geq 0$,  and $r_1, \dots, r_k$ are distinct primes such that (i) $r_i \equiv 1 \mod 4$, and  (ii) $r_i$ is inert in $K$, for $i=1,..., k$.  For the rest of this section, $R$ will denote an arbitrary element of $\msr$. For each positive divisor $d>1$ of $R$, we let $\chi_d$ be the non-trivial character of $\Gal(K(\sqrt{d})/K)$, and we define $\phi_d = \phi\chi_d$, where, as always, $\phi$ denotes the Hecke character attached to the abelian variety $B/K$ which is the restriction of scalars of $A$ from $H$ to $K$. We also put $\phi_1 = \phi$. Recall that  $\fq = \sqrt{-q}\CO_K$ is the conductor of $\phi$. We  recall  that $i: \mst  \to \BC$ is any embedding  which extends our given embedding of $K$ into $\BC$, so that we can view all of the Hecke characters $\phi_d$ as being complex valued.

\begin{lem}\label{3.1n}
For each positive divisor $d$ of $R$, the Hecke character $\phi_d$ has conductor $d\fq$. Moreover, if $r$ is any prime dividing $R$, which does not divide $d$, then
$\phi_d(r\CO_K) = -r$.
\end{lem}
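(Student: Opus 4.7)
The plan is to decompose $\phi_d = \phi\chi_d$ and handle the two factors separately, first for the conductor computation and then for the evaluation at $r\CO_K$.

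For the conductor, write $d = r_{i_1}\cdots r_{i_s}$. Each $r_{i_j}$ is inert in $K$ and satisfies $r_{i_j}\equiv 1\mod 4$, so $\BQ(\sqrt{d})/\BQ$ is unramified at $2$ (its discriminant is exactly $d$), and the primes of $K$ ramifying in $K(\sqrt{d})$ are precisely the $r_{i_j}\CO_K$, each contributing local conductor exponent $1$ because the ramification is tame. Hence $\chi_d$ has conductor exactly $d\CO_K$. Since $\gcd(d,q)=1$, the conductors $d\CO_K$ of $\chi_d$ and $\fq$ of $\phi$ are coprime, and so the conductor of $\phi_d = \phi\chi_d$ is therefore exactly $d\fq$, as claimed.

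For the value at $r\CO_K$, I would first compute $\chi_d(r\CO_K)$ by noting that $\chi_d = \chi'_d \circ N_{K/\BQ}$, where $\chi'_d = \left(\tfrac{d}{\cdot}\right)$ is the primitive Dirichlet character of $\BQ$ cut out by $\BQ(\sqrt{d})$ (this is just the functoriality of class field theory applied to $K(\sqrt{d}) = K\cdot\BQ(\sqrt{d})$). Since $r$ is inert in $K$, we have $N_{K/\BQ}(r\CO_K) = r^2$, whence $\chi_d(r\CO_K) = \chi'_d(r^2) = 1$. It therefore remains to show that $\phi(r\CO_K) = -r$.

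To compute $\phi(r\CO_K)$, I would exploit that $\phi$ has infinity type $(1,0)$ and conductor exactly $\fq$, which forces the existence of a unique non-trivial character $\chi: (\CO_K/\fq)^\times \simeq \BF_q^\times \to \mst^\times$ with $\phi((\alpha)) = \alpha \cdot \chi(\alpha \bmod \fq)$ for all $\alpha \in \CO_K$ coprime to $\fq$; compatibility under $\alpha\mapsto -\alpha$ (which does not change the ideal) forces $\chi(-1) = -1$. Since the only roots of unity in $\mst$ are $\pm 1$ and the only non-trivial character $\BF_q^\times \to \{\pm 1\}$ is the Legendre symbol (and this is consistent with $\chi(-1)=-1$ precisely because $q\equiv 3\mod 4$), $\chi$ must be $\left(\tfrac{\cdot}{q}\right)$. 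Hence $\phi(r\CO_K) = r\left(\tfrac{r}{q}\right)$. Quadratic reciprocity then pins down the sign: the inertness of $r$ in $K=\BQ(\sqrt{-q})$ means $\left(\tfrac{-q}{r}\right) = -1$; the hypothesis $r\equiv 1\mod 4$ gives $\left(\tfrac{-1}{r}\right) = 1$, so $\left(\tfrac{q}{r}\right) = -1$, and $r\equiv 1\mod 4$ also makes reciprocity yield $\left(\tfrac{q}{r}\right) = \left(\tfrac{r}{q}\right) = -1$. Combining, $\phi(r\CO_K) = -r$, and so $\phi_d(r\CO_K) = (-r)(1) = -r$, as desired. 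The only step requiring genuine care is the identification of the auxiliary character $\chi$ attached to $\phi$ with the Legendre symbol; the rest is routine manipulation with quadratic reciprocity and with the splitting of ideals in the biquadratic tower.
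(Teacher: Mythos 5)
Your proof is correct, and it diverges from the paper's in one interesting place. The conductor computation and the evaluation $\chi_d(r\CO_K)=1$ are essentially the paper's argument: the paper deduces the splitting of $r\CO_K$ in $K(\sqrt{d})$ from the fact that $\Gal(K(\sqrt{d})/\BQ)$ is non-cyclic for $d>1$, while you phrase the same fact as $\chi_d=\chi'_d\circ N_{K/\BQ}$ with $N_{K/\BQ}(r\CO_K)=r^2$ a square; these are two descriptions of the same decomposition-group computation (yours has the mild advantage of covering $d=1$ uniformly). The genuine difference is the value $\phi(r\CO_K)=-r$: the paper simply quotes the explicit formula for $\phi$ from the beginning of \S 2 of Buhler--Gross, together with the observation that $-r$ is a square mod $q$ and $h$ is odd, whereas you rederive the needed special case of that formula from scratch, using that $\phi((\alpha))=\alpha$ for $\alpha\equiv 1\bmod\fq$ (a fact the paper itself invokes, e.g.\ in the proof of Lemma \ref{7.30}), that $\mst$ contains only the roots of unity $\pm 1$ so the auxiliary character $\chi$ of $(\CO_K/\fq)^\times$ is forced to be the Legendre symbol by $\chi(-1)=-1$, and then quadratic reciprocity with $r\equiv 1\bmod 4$ to get $\left(\tfrac{r}{q}\right)=-1$. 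This makes the lemma self-contained at the cost of a slightly longer argument (and note that since you only evaluate at the rational prime $r$, the possible ambiguity between $\alpha$ and $\bar{\alpha}$ in the infinity type is harmless); the paper's citation is shorter but leans on the full explicit description of $\phi$ in \cite{BG}. Either route is sound.
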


\begin{proof}
The positive integer $d$ is square free, and satisfies $d \equiv 1 \mod 4$ since each prime dividing $d$ is $\equiv 1 \mod 4$, from which it follows that $\chi_d$ has conductor $d\CO_K$. But $\phi$ has conductor $\fq$, which is prime to $d\CO_K$, and thus $\phi_d$ will have conductor $d\fq$. Let $\fr = r\CO_K$. Since $-r$ is a square modulo $q$, and $h$ is odd, the explicit formula for $\phi$ given at the beginning of \S 2 of \cite{BG} shows that $\phi(\fr) = -r$. On the other hand, since $r$ is inert in $K$, and the Galois group of
$K(\sqrt{d})/\BQ$ is not cyclic when $d > 1$, the prime $\fr$ of $K$ must split in $K(\sqrt{d})$, and so we must have $\chi_d(\fr) =1$, whence the second assertion of the lemma follows.
\end{proof}

\medskip

For each positive divisor $d$ of $R$, let $A^{(d)}$ be the twist of $A$ by the extension $H(\sqrt{d})/H$ and $B^{(d)}$ the twist of $B$ by the extension $K(\sqrt{d})/K$. Then $B^{(d)}$
is the restriction of scalars from $H$ to $K$ of $A^{(d)}$, and $B^{(d)}$ has Hecke character $\phi_d$. Recall that $\omega$ denotes the \Neron differential of our global minimal equation \eqref{2.6} of $A/H$, and that its complex period lattice is $\CL = \Omega_\infty(A)\CO_K$. Gross has shown in \cite{Gross2} (see Prop. 4.3) that the differential $\omega/\sqrt{d}$ on $A/H(\sqrt{d})$ descends to a global minimal differential on $A^{(d)}/H$ which we denote by $\omega(d)$. The following lemma is then clear from Gross' result and
Prop. 4.10, (vi) of \cite{GS}.

\begin{lem}\label{minn} For each positive divisor $d$ of $R$, the complex period lattice of $\omega(d)$ is equal to $\Omega_\infty(A)\CO_K/\sqrt{d}$. Let $E = A^{(d)}$, and, for each integral ideal $\fa$ of $K$ prime to $R\fq$, let $E^\fa$ be the curve obtained by applying the Artin symbol of $\fa$ to the coefficients of the global minimal equation for $E$, and let $\omega(d)_\fa$ be the \Neron differential of $E^\fa$. Then we have $\eta_E(\fa)^*(\omega(d)_\fa)=\xi_d(\fa)\omega(d)$, where $\xi_d(\fa) = \xi(\fa)/\chi_d(\fa)$.
\end{lem}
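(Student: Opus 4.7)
The plan is to pass to $H(\sqrt d)$, where the twist becomes trivial, and reduce everything to the corresponding property of $A$. Gross's descent result provides an isomorphism $\iota\colon A \to E$ defined over $H(\sqrt d)$, uniquely determined up to sign by $\iota^*\omega(d) = \omega/\sqrt d$; equivalently, on universal covers $\iota$ is multiplication by $1/\sqrt d$, so the period lattice of $\omega(d)$ is $\CL/\sqrt d = \Omega_\infty(A)\CO_K/\sqrt d$, settling the first assertion. The quadratic-twist cocycle condition $\iota^\tau = -\iota$, where $\tau$ is the non-trivial element of $\Gal(H(\sqrt d)/H)$, is then automatic.

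Analogously I would fix $\iota_\fa\colon A^\fa \to E^\fa$ over $H(\sqrt d)$ with $\iota_\fa^*\omega(d)_\fa = \omega_\fa/\sqrt d$. Choose any lift $\tilde\sigma_\fa \in \Gal(\bar H/K)$ of $\sigma_\fa$ and apply it to the defining identity for $\iota$. Since $\omega$ and $\omega(d)$ are defined over $H$ (so $\omega^{\tilde\sigma_\fa} = \omega_\fa$ and $\omega(d)^{\tilde\sigma_\fa} = \omega(d)_\fa$) and $\tilde\sigma_\fa(\sqrt d) = \chi_d(\fa)\sqrt d$, one gets
\[
(\iota^{\tilde\sigma_\fa})^*\omega(d)_\fa \;=\; \frac{\omega_\fa}{\chi_d(\fa)\sqrt d} \;=\; \chi_d(\fa)\,\iota_\fa^*\omega(d)_\fa,
\]
and hence $\iota^{\tilde\sigma_\fa} = \chi_d(\fa)\,\iota_\fa$, since both sides define isomorphisms with the same twist cocycle and the same image differential up to the scalar $\chi_d(\fa)$.

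Next, both $\eta_E(\fa)\circ\iota$ and $\iota_\fa\circ\eta_A(\fa)$ are isogenies $A \to E^\fa$ with kernel $A_\fa$, so they differ by a sign. To pin it down, take any $\fb$-torsion point $P$ with $(\fb,\fa)=1$ and use the standard characterization of the canonical isogeny on torsion of order prime to $\fa$: $\eta_A(\fa)(P) = P^{\tilde\sigma_\fa}$, and likewise $\eta_E(\fa)(\iota(P)) = \iota(P)^{\tilde\sigma_\fa}$. Then
\[
\eta_E(\fa)(\iota(P)) \;=\; \iota(P)^{\tilde\sigma_\fa} \;=\; \iota^{\tilde\sigma_\fa}(P^{\tilde\sigma_\fa}) \;=\; \chi_d(\fa)\,\iota_\fa(\eta_A(\fa)(P)),
\]
so $\eta_E(\fa)\circ\iota = \chi_d(\fa)\,\iota_\fa\circ\eta_A(\fa)$. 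Pulling $\omega(d)_\fa$ back along both sides, and using $\eta_A(\fa)^*\omega_\fa = \xi(\fa)\,\omega$ together with $\iota^*\omega(d) = \omega/\sqrt d$, yields $\iota^*(\eta_E(\fa)^*\omega(d)_\fa) = \chi_d(\fa)\xi(\fa)\,\iota^*\omega(d)$, whence $\eta_E(\fa)^*\omega(d)_\fa = \chi_d(\fa)\xi(\fa)\,\omega(d) = (\xi(\fa)/\chi_d(\fa))\,\omega(d)$ since $\chi_d(\fa)^2 = 1$. The main obstacle is the sign-tracking, concentrated in the identity $\iota^{\tilde\sigma_\fa} = \chi_d(\fa)\iota_\fa$ and in fixing the sign comparing the two isogenies; both rely on the uniqueness of the twist isomorphism once pinned down by its effect on the \Neron differential, after which the transformation law falls out of a routine diagram chase.
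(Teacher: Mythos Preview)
Your argument is correct. The paper does not actually prove this lemma; it simply declares it ``clear from Gross' result and Prop.~4.10,~(vi) of \cite{GS}''. Your proof spells out what that citation amounts to: pass to $H(\sqrt d)$ to trivialize the twist via the normalized isomorphism $\iota$ with $\iota^*\omega(d)=\omega/\sqrt d$, then track the Galois action on $\iota$ and compare the two isogenies $A\to E^\fa$ through their effect on torsion. This is exactly the content behind the Goldstein--Schappacher reference, and your diagram chase recovers $\xi_d(\fa)=\xi(\fa)/\chi_d(\fa)$ cleanly.

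One small point of precision: when you write ``choose any lift $\tilde\sigma_\fa \in \Gal(\bar H/K)$ of $\sigma_\fa$'', you need $\tilde\sigma_\fa$ to restrict to the Artin symbol of $\fa$ in the abelian extension $H(\sqrt d)/K$, not merely to $\sigma_\fa$ on $H$. An arbitrary lift of $\sigma_\fa\in\Gal(H/K)$ could send $\sqrt d$ to either $\pm\sqrt d$, so the identity $\tilde\sigma_\fa(\sqrt d)=\chi_d(\fa)\sqrt d$ and the CM relation $\eta_A(\fa)(P)=P^{\tilde\sigma_\fa}$ both require the Artin-symbol normalization. Since $H(\sqrt d)/K$ is abelian and $\fa$ is prime to $R\fq$, this Artin symbol is well defined, and with that choice your sign-tracking goes through exactly as written.
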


Let $\fa$ be any integral ideal of $K$ with $(\fa, R\fq)=1$, and, as before, let $\gamma_\fa$ denote the ideal class of $\fa$. Always assuming that $d$ is a positive divisor of $R$, we define the imprimitive partial Hecke $L$-series
\begin{equation}\label{2n}
L_{R\fq}(\ov{\phi_d},\gamma_\fa, s)=\sum_{ (\fb,R\fq)=1, \fb \in\gamma_\fa}\frac{\ov{\phi_d}(\fb)}{N(\fb)^s},
\end{equation}
where the sum on the right is taken over all integral ideals $\fb$ of $K$, which are prime to $R\fq$, and which lie in the class $\gamma_\fa$. It is classical
that the Dirichlet series on the right converges for $R(s) > 3/2$, and it has a holomorphic continuation to the whole complex plane. We recall a classical formula for $L_{R\fq}(\ov{\phi_d},\gamma_\fa, 1)$, which essentially goes back to the 19th century (see \cite{GS}). Recall that, for any lattice $\msl$,
$\CE^*_1(z,\msl)=H_1(z,1,\msl)$ is the Eisenstein series of weight 1 defined earlier.
We write $K(R\fq)$ for the ray class field of $K$ modulo $R\fq$, and let $\textrm{Tr}_{K(R\fq)/H}$ be the trace map from $K(R\fq)$ to $H$.
\begin{prop}\label{3.3n}
Assume $R \in \msr$, and let $d$ be any positive divisor of $R$. Then, for all integral ideals $\fa$ of $K$, which are prime to $R\fq$, we have
\begin{equation}\label{4n}
\frac{\phi_d(\fa)R\sqrt{-qd}}{\xi_d(\fa)}\cdot \frac{L_{R\fq}(\ov{\phi}_d,\gamma_\fa,1)}{\Omega_\infty(A)}=\textrm{Tr}_{K(R\fq)/H}\left(\CE^*_1\left(\frac{\xi_d(\fa)\Omega_\infty(A)}{R\sqrt{-qd}},\frac{1}{\sqrt{d}}\CL_\fa\right)\right).
\end{equation}
\end{prop}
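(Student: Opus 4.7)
This is the $\chi_d$-twisted analogue of Proposition~5.5 of \cite{GS} (which was invoked in the proof of Proposition~\ref{5.18}), and the plan is to follow the same three-step Kronecker--Eisenstein derivation applied to the twisted Grossencharacter $\phi_d$, whose conductor is $d\fq$ by Lemma~\ref{3.1n}.

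Since $q>3$ gives $\CO_K^\times=\{\pm 1\}$, every integral ideal $\fb\in\gamma_\fa$ with $(\fb,R\fq)=\CO_K$ is uniquely of the form $\fb=(\alpha)\fa$ for $\alpha\in\fa^{-1}/\{\pm 1\}$ with $\alpha\fa$ coprime to $R\fq$. Writing $\phi_d((\alpha))=\alpha\,\varepsilon(\alpha)\chi_d(\alpha)$, where $\varepsilon$ is the $\{\pm 1\}$-valued Dirichlet character of $\phi$ modulo $\fq$, and using $\ov{\phi_d(\fa)}\phi_d(\fa)=N\fa$ together with $N(\alpha)=\alpha\ov\alpha$, the partial $L$-series unfolds at $s=1$ to
\[L_{R\fq}(\ov{\phi_d},\gamma_\fa,1)=\frac{1}{2\phi_d(\fa)}\sum_{\alpha}\frac{\varepsilon(\alpha)\chi_d(\alpha)}{\alpha}.\]
Breaking the sum over $\alpha\in\fa^{-1}$ into cosets of the sublattice $R\fq\fa^{-1}\subset\fa^{-1}$, on which $\varepsilon$ and $\chi_d$ are constant, and Hecke-regularizing the inner Kronecker series at $s=1$, collapses this to
\[L_{R\fq}(\ov{\phi_d},\gamma_\fa,1)=\frac{1}{2\phi_d(\fa)}\sum_{\rho}\varepsilon(\rho)\chi_d(\rho)\,\CE^*_1(\rho,R\fq\fa^{-1}),\]
with $\rho$ running over representatives of $(\fa^{-1}/R\fq\fa^{-1})^\times/\{\pm 1\}$.

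By Lemma~\ref{minn}, $\tfrac{1}{\sqrt d}\CL_\fa$ is (up to sign) the complex period lattice of the \Neron differential $\omega(d)_\fa$ on $A^{(d)\fa}$, and the scalar $c=R\sqrt{-qd}/(\xi(\fa)\Omega_\infty(A))$ satisfies $c\cdot\tfrac{1}{\sqrt d}\CL_\fa=R\sqrt{-q}\,\fa^{-1}=R\fq\fa^{-1}$. The homogeneity $\CE^*_1(cz,cL)=c^{-1}\CE^*_1(z,L)$ then rewrites each Eisenstein value on $\tfrac{1}{\sqrt d}\CL_\fa$, with first argument $\rho\,\xi(\fa)\Omega_\infty(A)/(R\sqrt{-qd})=\chi_d(\fa)\rho\cdot z_0$, where $z_0:=\xi_d(\fa)\Omega_\infty(A)/(R\sqrt{-qd})$ is a primitive $R\fq$-division point of $A^{(d)\fa}$ and we used the defining identity $\xi(\fa)=\chi_d(\fa)\xi_d(\fa)$ of Lemma~\ref{minn}. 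Multiplying both sides by $\phi_d(\fa)R\sqrt{-qd}/(\xi_d(\fa)\Omega_\infty(A))$ and invoking $\xi(\fa)/\xi_d(\fa)=\chi_d(\fa)$ converts the identity from Step~1 into
\[\frac{\phi_d(\fa)R\sqrt{-qd}}{\xi_d(\fa)}\cdot\frac{L_{R\fq}(\ov{\phi_d},\gamma_\fa,1)}{\Omega_\infty(A)}=\frac{\chi_d(\fa)}{2}\sum_{\rho}\varepsilon(\rho)\chi_d(\rho)\,\CE^*_1\!\left(\chi_d(\fa)\rho\cdot z_0,\tfrac{1}{\sqrt d}\CL_\fa\right).\]

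The final step is to identify the sum on the right with the trace $\textrm{Tr}_{K(R\fq)/H}\CE^*_1(z_0,\tfrac{1}{\sqrt d}\CL_\fa)$. Under the projection $\Cl_K(R\fq)\twoheadrightarrow\Cl_K$, the fibre above $\gamma_\fa$ is parametrized by $(\CO_K/R\fq)^\times/\{\pm 1\}\cong\Gal(K(R\fq)/H)$; Shimura's reciprocity law for the weight-one Eisenstein--Kronecker series shows that $\CE^*_1(z_0,\tfrac{1}{\sqrt d}\CL_\fa)$ lies in $K(R\fq)$, with its $\Gal(K(R\fq)/H)$-conjugates obtained by translating $z_0$ through the orbit of $R\fq$-division points of $A^{(d)\fa}$. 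The character factor $\varepsilon(\rho)\chi_d(\rho)=\rho/\phi_d((\rho))$ is precisely the cocycle implementing this Galois translation against the Grossencharacter $\phi_d$ of $B^{(d)}$, while the residual sign $\chi_d(\fa)=\pm 1$ outside is absorbed by the oddness of $\CE^*_1(z,L)$ together with the change of variable $\rho\leftrightarrow\chi_d(\fa)\rho$ on the indexing set. The main obstacle is this final identification: the untwisted Shimura reciprocity statement is classical \cite{GS}, but verifying that the $\chi_d$-twist is correctly absorbed into the $\sqrt d$-rescaling of the period lattice $\tfrac{1}{\sqrt d}\CL_\fa$ --- equivalently, that $\phi_d$ plays the role of the Grossencharacter of the correct isogenous model $A^{(d)\fa}$ --- requires a careful check; everything else is bookkeeping of constants.
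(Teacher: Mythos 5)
Your plan and the paper's proof diverge in a significant way, and the divergence is precisely where your flagged gap lies. The paper's proof is essentially a one-liner: it applies Proposition~5.5 of Goldstein--Schappacher directly to the elliptic curve $E=A^{(d)}/H$, noting that $E$ is itself a CM elliptic curve over $H$ whose Grossencharacter is $\phi_d$ (of conductor $d\fq$ by Lemma~\ref{3.1n}), whose \Neron differential is $\omega(d)$, and whose complex period lattice is $\frac{1}{\sqrt d}\CL$ (by Lemma~\ref{minn}); the analogue of Lemma~\ref{g2.6} identifies $H(E_{R\fq})$ with $K(R\fq)$, and GS~Theorem~6.2 then rewrites the sum over Artin symbols of $\Gal(K(R\fq)/H)$ as the trace. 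No twisting or rescaling ever has to be ``absorbed'' by hand, because the untwisted machinery is applied to the already-twisted curve.

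Your route instead tries to re-derive the GS formula from scratch by unfolding $L_{R\fq}(\ov{\phi_d},\gamma_\fa,s)$ and then absorbing the $\chi_d$-factor into the $\sqrt d$-rescaling of the lattice. The unfolding in your Step~1 is correct (the computation $\ov{\phi_d}(\fb)/N\fb=\varepsilon(\alpha)\chi_d(\alpha)/(\alpha\,\phi_d(\fa))$ and the coset decomposition both check out), and your period bookkeeping in Step~2 is also correct, including the identity $\xi(\fa)=\chi_d(\fa)\xi_d(\fa)$ and the identification of $z_0$ as a primitive $R\fq$-division value. But the final identification --- that $\sum_\rho\varepsilon(\rho)\chi_d(\rho)\,\CE^*_1(\chi_d(\fa)\rho\cdot z_0,\tfrac{1}{\sqrt d}\CL_\fa)$ equals $2\chi_d(\fa)^{-1}\,\textrm{Tr}_{K(R\fq)/H}\CE^*_1(z_0,\tfrac{1}{\sqrt d}\CL_\fa)$ --- is exactly the content of GS~Prop.~5.5 together with GS~Theorem~6.2 applied to $A^{(d)}$, and you explicitly leave it unverified (``the main obstacle\ldots requires a careful check''). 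So as written the proposal is an incomplete proof: the hard step is precisely the one you flag, and the argument you sketch for it (matching the character factor $\varepsilon(\rho)\chi_d(\rho)$ with the Shimura-reciprocity cocycle and absorbing the residual sign by oddness of $\CE^*_1$) is not carried out. The observation you are missing, which collapses the whole difficulty, is the paper's: because $\phi_d$ \emph{is} the Grossencharacter of $A^{(d)}/H$ and $\tfrac{1}{\sqrt d}\CL_\fa$ \emph{is} its period lattice, one may simply cite GS~Prop.~5.5 for $A^{(d)}$ with $\fg=R\fq$ and $\rho=\Omega_\infty(A)/(R\sqrt{-qd})$, and no $\chi_d$-absorption step is needed at all.
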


\begin{proof} We apply Prop.  5.5 of \cite{GS} to the curve $E=A^{(d)}$ over  the field $H$, with $\fg = R\fq$, and  $ \rho = \Omega_\infty(A)/(R\sqrt{-qd})$. Since the conductor of the Grossencharacter $\phi_d$ is equal to $d\fq$ by  Lemma \ref{3.1n}, the obvious analogue of Lemma \ref{g2.6} shows that $H(E_\fg)$ is equal to the ray class field  $K(R\fq)$. Moreover, we have  $\eta_E(\fa)^*(\omega(d)_\fa)=\xi_d(\fa)\omega(d)$ by Lemma \ref{minn}. It then follows from Prop. 5.5 of \cite{GS} that
\begin{equation}\label{5n}
\frac{\phi_d(\fa)R\sqrt{-qd}}{\xi_d(\fa)}\cdot \frac{L_{R\fq}(\ov{\phi}_d,\gamma_\fa,1)}{\Omega_\infty(A)} = \sum_{\fb \in \fB} \CE^*_1\left(\frac{\phi_d(\fb)\xi_d(\fa)\Omega_\infty(A)}{R\sqrt{-qd}},\frac{1}{\sqrt{d}}\CL_\fa\right),
\end{equation}
where $\fB$ denotes any set of integral prime ideals of $K$, prime to $R\fq$, whose Artin symbols in $\Gal(K(R\fq)/K)$ give precisely $\Gal(K(R\fq)/H)$, and, as in \S4, $\CL_\fa = \xi(\fa)\Omega_\infty(A)\fa^{-1}$.   However, by \cite{GS},
Theorem 6.2, we note that, since the Artin symbol of every ideal $\fb \in \fB$ fixes the field of definition $H$ of $E$, the right hand side of \eqref{5n} is none other than the right hand side of \eqref{4n}. This completes the proof.
\end{proof}

\medskip

We now introduce the two fields
\begin{equation}\label{6n}
J_R = K(\sqrt{r_1}, \dots, \sqrt{r_k}), \, \, H_R = H(\sqrt{r_1}, \dots, \sqrt{r_k}),
\end{equation}
where, as always, the $r_i, \, 1\leq i \leq k$, are the distinct prime factors of $R$.
\begin{lem}\label{3.4n} We have $J_R \cap H = K$, $[H_R:J_R] = h$, and $H_R \subset K(R\fq)$. Moreover, for each positive divisor $d$ of $R$, $B^{(d)}$ is isomorphic to
$B$ over $J_R$, and $A^{(d)}$ is isomorphic to $A$ over $H_R$.

\end{lem}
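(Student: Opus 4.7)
The plan is to prove the four assertions in order, most of which reduce to elementary observations about degrees, ramification, and twists.

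First I would handle $J_R \cap H = K$. The key input is that the class number $h = [H:K]$ is odd. For the other factor, I would show $[J_R:K] = 2^k$ by verifying that the extensions $K(\sqrt{r_i})/K$, $1 \le i \le k$, are linearly disjoint over $K$. Since $r_i \equiv 1 \mod 4$, the quadratic extension $\BQ(\sqrt{r_i})/\BQ$ is ramified precisely at $r_i$, and since $r_i$ is inert in $K$, the prime ideal $r_i\CO_K$ is ramified in $K(\sqrt{r_i})/K$ and unramified in $K(\sqrt{r_j})/K$ for $j \ne i$; distinct ramification loci force linear disjointness, whence $[J_R:K] = 2^k$. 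Any field $L$ with $K \subseteq L \subseteq H \cap J_R$ then satisfies $[L:K] \mid h$ and $[L:K] \mid 2^k$, forcing $L = K$. From $J_R \cap H = K$ and $H_R = H \cdot J_R$ we immediately deduce $[H_R:J_R] = [H:K] = h$, which is the second assertion.

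Next I would prove the inclusion $H_R \subset K(R\fq)$. Since $K(R\fq)$ is a ray class field of $K$, it contains the Hilbert class field $H$ automatically. For each $r_i$, the condition $r_i \equiv 1 \mod 4$ makes $\BQ(\sqrt{r_i})$ a subfield of the cyclotomic field $\BQ(\zeta_{r_i})$, which has conductor $r_i$ over $\BQ$. Hence $K(\sqrt{r_i})/K$ has conductor dividing $r_i\CO_K$, so $K(\sqrt{r_i})$ is contained in the ray class field of $K$ modulo $r_i\CO_K$, which in turn is contained in $K(R\fq)$ since $r_i \mid R$ and $R\fq$ is divisible by $r_i\CO_K$. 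Taking compositum over $i = 1, \ldots, k$ and adjoining $H$ gives $H_R \subset K(R\fq)$.

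For the final statement about the twists, I would invoke the defining property of the quadratic twist: $B^{(d)}$ is isomorphic to $B$ over $K(\sqrt{d})$, and $A^{(d)}$ is isomorphic to $A$ over $H(\sqrt{d})$. Since $d$ is a product of some subset of the primes $r_1, \ldots, r_k$, we have $K(\sqrt{d}) \subseteq J_R$ and $H(\sqrt{d}) \subseteq H_R$, so the isomorphisms are defined over $J_R$ and $H_R$ respectively. The main (and essentially only) substantive obstacle is the linear disjointness step in the first paragraph, but the clean ramification argument above handles it; everything else is formal.
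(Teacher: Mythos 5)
Your proof is correct and follows essentially the same route as the paper: the odd class number together with $[J_R:K]=2^k$ for the first two assertions, the conductor of $K(\sqrt{r_i})/K$ dividing $r_i\CO_K$ (via $r_i\equiv 1\bmod 4$) for the inclusion in the ray class field, and the defining property of the quadratic twist for the final statement. The only difference is cosmetic: you establish the Kummer-theoretic independence of $\sqrt{r_1},\ldots,\sqrt{r_k}$ by tracking ramification at the $r_i$, where the paper simply cites ``Kummer theory''.
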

\begin{proof} Since the class number $h$ is odd, and $[J_R:K] = 2^k$ by Kummer theory, it follows that $J_R \cap H = K$. Moreover, the extension $K(\sqrt{r_i})/K$ has
conductor $r_i\CO_K$ since $r_i \equiv 1 \mod 4$, and thus this extension is a subfield of $K(R\fq)$. The final assertion of the lemma is also clear since $J_R$ contains $\sqrt{d}$.
\end{proof}

It follows from \eqref{5n} that, for each divisor $d$ of $R$, the partial $L$-value $\sqrt{d}L_{R\fq}(\ov{\phi}_d,\gamma_\fa,1)/\Omega_\infty(A)$ belongs to the compositum of fields $H\mst$.  Recall that $\fP$ is our degree one prime of $\mst$ above $\fp$. Take any prime $v$ of $H_R\mst$ lying above the prime $\fP$ of $\mst$. We also assume $R$ is fixed for the limit arguments which follow. For each $n \geq 0$,  let
$\msc_n$ be a set of integral ideals of $K$, prime to $R\fq$, whose Artin symbols in $\Gal(H_R(A_{\fp^{n+2}})/K))$ give precisely $\Gal(H_R(A_{\fp^{n+2}})/J_R(B_{\fP^{n+2}}))$.
One sees easily that, for each $n \geq 0$, $\msc_n$ gives a complete set of representatives of the ideal class group of $K$. Moreover, since $J_R(B_{\fP^{n+2}}) = J_R(B^{(d)}_{\fP^{n+2}})$, we conclude that
\begin{equation}\label{7n}
\phi_d(\fa) \equiv 1 \mod \fP^{n+2} \, \, \, \, {\rm for \, all}  \, \, \fa \in \msc_n.
\end{equation}
\noindent Note that, for any lattice $\msl$ and any $\lambda\neq 0\in\BC$, we have $\CE^*_1(\lambda z,\lambda \msl)=\lambda^{-1}\CE^*_1(z,\msl).$
Hence, summing the formula \eqref{5n} over all $\fa\in \msc_n$, and taking $\lambda = 1/ (\sqrt{d}\chi_d(\fa))$, we immediately obtain the equation
\begin{equation}\label{8n}
  \sum_{\fa\in \msc_n}\phi_d(\fa)L_{R\fq}(\ov{\phi}_d,\gamma_\fa,1)/\Omega_\infty(A)=\sum_{\fa\in\msc_n} \xi(\fa)\sum_{\sigma\in\Gal(K(\fq R)/H)}\left(\sqrt{d}\right)^{\sigma-1}\frac{1}{R\sqrt{-q}}\CE^*_1\left(\frac{\xi(\fa)\Omega_\infty(A)}{R\sqrt{-q}},\CL_\fa\right)^\sigma.
\end{equation}
Now the values $L_{R\fq}(\ov{\phi}_d,\gamma_\fa,1)/\Omega_\infty(A)$ are independent of $n$ since $\msc_n$ is a complete set of representatives of the ideal class group of K. Thus, we conclude from \eqref{7n} that the left hand side of \eqref{8n} converges $v$-adically as $n \to \infty$ to $ L_{R\fq}(\ov{\phi}_d,1)/\Omega_\infty(A)$, assuming $R$ is fixed. Therefore, the right hand side of \eqref{8n}  also converges $v$-adically as $n\to \infty$, and so we have proven the following result.

\begin{lem}\label{special-val}
For every positive integer divisor $d$ of $R$, we have
\begin{equation}\label{9n}
 L_{R\fq}(\ov{\phi}_d,1)/\Omega_\infty(A) =\lim_{n\ra \infty}\sum_{\fa\in\msc_n}\xi(\fa)\sum_{\sigma\in\Gal(K(R\fq)/H)}\left(\sqrt{d}\right)^{\sigma-1}\frac{1}{R\sqrt{-q}}\CE^*_1\left(\frac{\xi(\fa)\Omega_\infty(A)}{R\sqrt{-q}},\CL_\fa\right)^\sigma.
\end{equation}.
\end{lem}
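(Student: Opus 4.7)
The plan is to derive the lemma as the $v$-adic limit of equation \eqref{8n}, which itself comes from summing Proposition \ref{3.3n} over $\fa \in \msc_n$ and rescaling the Eisenstein series. First, I would apply \eqref{4n} to each $\fa \in \msc_n$ and solve for $\phi_d(\fa)L_{R\fq}(\bar\phi_d,\gamma_\fa,1)/\Omega_\infty(A)$. The right-hand side is a trace of $\CE_1^*(\xi_d(\fa)\Omega_\infty(A)/(R\sqrt{-qd}),\CL_\fa/\sqrt{d})$, which I want to rewrite in the ``$d$-free'' form $\CE_1^*(\xi(\fa)\Omega_\infty(A)/(R\sqrt{-q}),\CL_\fa)$ appearing in the lemma.

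Next, using the homogeneity $\CE_1^*(\lambda z,\lambda \msl)=\lambda^{-1}\CE_1^*(z,\msl)$ with $\lambda=1/(\chi_d(\fa)\sqrt{d})$, together with the fact that $\chi_d(\fa)\in\{\pm 1\}$ so $(1/\chi_d(\fa))\CL_\fa/\sqrt{d}=\CL_\fa/\sqrt{d}$, the Eisenstein series becomes $\chi_d(\fa)\sqrt{d}\,\CE_1^*(\xi(\fa)\Omega_\infty(A)/(R\sqrt{-q}),\CL_\fa)$. Combining the $\chi_d(\fa)$ with the factor $\xi_d(\fa)=\xi(\fa)/\chi_d(\fa)$, and splitting $\sqrt{-qd}=\sqrt{-q}\cdot\sqrt{d}$ in the denominator, the $\sqrt{d}$ pulled out from the Eisenstein series combines with the trace to produce precisely the twist $(\sqrt{d})^{\sigma-1}$ inside the trace (since $\sqrt{d}$ does not belong to $H$ but lies in $K(R\fq)$, and $\sigma$ acts on it by $\chi_d(\sigma)=\pm 1$). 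Summing over $\fa \in \msc_n$ yields exactly \eqref{8n}.

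Finally, I would take the $v$-adic limit as $n \to \infty$. Two facts drive this: (a) for each $n$, $\msc_n$ is a complete set of representatives of the ideal class group of $K$, so the partial sums $\sum_{\fa\in\msc_n} L_{R\fq}(\bar\phi_d,\gamma_\fa,1)$ are all equal to $L_{R\fq}(\bar\phi_d,1)$ independently of $n$; and (b) by \eqref{7n}, $\phi_d(\fa)\equiv 1 \mod \fP^{n+2}$ for every $\fa\in\msc_n$, so
\[
\sum_{\fa\in\msc_n}\phi_d(\fa)\,\frac{L_{R\fq}(\bar\phi_d,\gamma_\fa,1)}{\Omega_\infty(A)} \;\longrightarrow\; \frac{L_{R\fq}(\bar\phi_d,1)}{\Omega_\infty(A)}
\]
$v$-adically as $n\to\infty$. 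Since \eqref{8n} equates this to the expression on the right of the desired formula, the $v$-adic convergence of the right-hand side follows, and its limit is forced to be $L_{R\fq}(\bar\phi_d,1)/\Omega_\infty(A)$, giving the lemma.

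The step I expect to demand the most care is the homogeneity manipulation: extracting a clean factor of $(\sqrt{d})^{\sigma-1}$ inside the trace requires that the scaling $\lambda=1/(\chi_d(\fa)\sqrt{d})$ interact correctly with both the lattice $\CL_\fa/\sqrt{d}$ and the Galois conjugation $\sigma\in\Gal(K(R\fq)/H)$, and one must use that $\chi_d(\fa)=\pm 1$ to tolerate the sign discrepancy between $\lambda$ and $1/\sqrt{d}$ on the lattice side. Once this rewriting is carried out, the limit argument is straightforward precisely because $\msc_n$ does not enlarge with $n$ — it is the congruence condition \eqref{7n} that improves, not the set itself.
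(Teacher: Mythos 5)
Your proposal is correct and follows essentially the same route as the paper: summing Proposition \ref{3.3n} over $\fa\in\msc_n$, applying the homogeneity of $\CE^*_1$ with $\lambda=1/(\sqrt{d}\chi_d(\fa))$ (using $\chi_d(\fa)=\pm 1$) to extract the $(\sqrt{d})^{\sigma-1}$ twist and obtain \eqref{8n}, and then passing to the $v$-adic limit via the class-group-representative property of $\msc_n$ together with the congruence \eqref{7n}. This matches the paper's argument step for step, including the observation that it is the congruence, not the structure of the sums, that drives the convergence.
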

\noindent One of the key idea in Zhao's induction method is to sum the formula \eqref{9n} over all positive integer divisors $d$ of $R$, and then make use of the following well known lemma.
\begin{lem}
 Recall that $R = r_1 \dots r_k$, where the $r_i$ are distinct prime numbers. Let  $\sigma$ be any element of $\Gal(K(R\fq)/H)$. Then, letting $d$ run over all positive integer divisors of $R$, the expression  $\sum_{d\mid R}(\sqrt{d})^{\sigma-1}$ is equal to $2^k$ if $\sigma \in \Gal(K(R\fq)/H_R)$, and is equal to $0$ otherwise.
\end{lem}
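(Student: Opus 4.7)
The plan is to parametrize the divisors $d$ of $R=r_1\cdots r_k$ by subsets of $\{1,\ldots,k\}$ and then factor the sum as a product, which reduces everything to a straightforward sign computation.

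First, for each $i$ consider the map $\chi_i\colon \Gal(K(R\fq)/H)\to\{\pm1\}$ defined by $\chi_i(\sigma)=(\sqrt{r_i})^{\sigma-1}$. This is well defined and takes values in $\{\pm 1\}$ because $\sqrt{r_i}$ satisfies $x^2=r_i\in K\subset H$, so $\sigma(\sqrt{r_i})=\pm\sqrt{r_i}$. Since $\sqrt{r_i}\sqrt{r_j}/\sqrt{r_ir_j}=\pm 1\in K$ is fixed by $\sigma$, for any subset $S\subseteq\{1,\ldots,k\}$ and the corresponding divisor $d_S=\prod_{i\in S}r_i$ we obtain
\[
(\sqrt{d_S})^{\sigma-1}=\prod_{i\in S}\chi_i(\sigma).
\]

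Next, summing over all $2^k$ subsets and interchanging sum with the product,
\[
\sum_{d\mid R}(\sqrt{d})^{\sigma-1}=\sum_{S\subseteq\{1,\ldots,k\}}\prod_{i\in S}\chi_i(\sigma)=\prod_{i=1}^{k}\bigl(1+\chi_i(\sigma)\bigr).
\]

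Finally, analyse the two cases. If $\sigma\in\Gal(K(R\fq)/H_R)$, then $\sigma$ fixes each $\sqrt{r_i}$, so $\chi_i(\sigma)=1$ for every $i$, and the product equals $2^k$. If $\sigma\notin\Gal(K(R\fq)/H_R)$, then $\sigma$ fails to fix $H_R=H(\sqrt{r_1},\ldots,\sqrt{r_k})$, hence there exists some index $i$ with $\sigma(\sqrt{r_i})=-\sqrt{r_i}$, i.e.\ $\chi_i(\sigma)=-1$; the corresponding factor $1+\chi_i(\sigma)$ vanishes, so the product is $0$. There is no real obstacle here; the only thing to check carefully is that the $\chi_i$ are genuine characters of $\Gal(K(R\fq)/H)$, which uses the fact established in Lemma~\ref{3.4n} that $H_R\subset K(R\fq)$ so that each $\sqrt{r_i}$ indeed lies in $K(R\fq)$.
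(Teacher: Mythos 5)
Your argument is correct and is essentially the paper's argument: both proofs reduce the statement to the signs $(\sqrt{r_i})^{\sigma-1}=\pm1$, using that $(\sqrt{d})^{\sigma-1}$ is the product of these signs over the primes dividing $d$, and that $\sigma$ lies in $\Gal(K(R\fq)/H_R)$ exactly when all these signs are $+1$. The only difference is the final bookkeeping: where you factor the sum as $\prod_{i=1}^{k}\bigl(1+\chi_i(\sigma)\bigr)$, the paper instead counts the divisors $d$ with $\sqrt{d}$ fixed and with $\sqrt{d}$ negated ($2^{k-1}$ each, via binomial-coefficient identities) when some sign is $-1$; your multiplicative packaging is a slightly cleaner route to the same cancellation, and note you never actually need the $\chi_i$ to be homomorphisms, only that their values are well defined in $\{\pm1\}$.
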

\begin{proof} We quickly recall the proof. The first assertion of the lemma is clear. To prove the second assertion, suppose that $\sigma$ maps $j \geq 1$ elements of the set $\{\sqrt{r_1}, \dots, \sqrt{r_k}\}$ to minus themselves, and write $V(\sigma)$ for the subset consisting of all such elements. If $d$ is any positive integer divisor of $R$, then
$\sigma$ will fix $\sqrt{d}$ if and only if $d$ is a product of an even number of elements of $V(\sigma)$ with an arbitrary number of elements of the complement of $V(\sigma)$
in $\{\sqrt{r_1}, \dots, \sqrt{r_k}\}$. Thus the total number of $d$ such that $\sqrt{d}$ is fixed by $\sigma$ is equal to
$$
2^{k-j}((j, 0) + (j, 2) + (j, 4) + \dots) = 2^{k-1},
$$
where $(j, i)$ denotes the number of ways of choosing $i$ objects from a set of $j$ objects.  Similarly, the total number of $d$ such that $\sigma$ maps $\sqrt{d}$ to
$-\sqrt{d}$ is equal to
$$
2^{k-j}((j, 1) + (j, 3) + (j, 5) + \dots) = 2^{k-1},
$$
and the second assertion of the lemma is now clear.
\end{proof}
\noindent For each $\fa \in \msc_n$, define
\[\Psi_{\fa, R} =\textrm{Tr}_{K(R\fq)/H_R}\left(\frac{1}{R\sqrt{-q}}\cdot\CE^*_1\left(\frac{\xi(\fa)\Omega_\infty(A)}{R\sqrt{-q}},\CL_\fa\right)\right).\]
 In view of the above lemma, it follows that  Lemma \ref{9n} implies the following result.
\begin{prop}\label{key-identity}
Letting $d$ runs over all positive integer divisors of $R$, we have
\begin{equation}\label{10n}
  \sum_{d\mid R}L_{R\fq}(\ov{\phi}_d,1)/\Omega_\infty(A)=2^k \cdot \lim_{n\ra \infty}\sum_{\fa\in\msc_n} \xi(\fa)\Psi_{\fa, R}.
\end{equation}
\end{prop}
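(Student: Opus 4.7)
The plan is to obtain \eqref{10n} by summing the formula \eqref{9n} of Lemma \ref{special-val} over all positive divisors $d$ of $R$ and then using the combinatorial lemma that immediately precedes the proposition to collapse the inner sum over $\Gal(K(R\fq)/H)$ down to a sum over $\Gal(K(R\fq)/H_R)$, which is exactly the trace appearing in $\Psi_{\fa,R}$.

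More precisely, I would first fix $n\geq 0$ and work with the finite expression on the right of \eqref{9n}. Summing over $d\mid R$, the outer sum $\sum_{\fa\in\msc_n}\xi(\fa)$ is unaffected, so the only thing to do is to interchange the finite sums over $d$ and over $\sigma\in\Gal(K(R\fq)/H)$. For each fixed $\sigma$ and $\fa$, the value
\[
\tfrac{1}{R\sqrt{-q}}\CE^*_1\bigl(\tfrac{\xi(\fa)\Omega_\infty(A)}{R\sqrt{-q}},\CL_\fa\bigr)^{\sigma}
\]
is independent of $d$, so pulling it out leaves the factor $\sum_{d\mid R}(\sqrt{d})^{\sigma-1}$. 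The lemma immediately above the proposition then says this factor equals $2^k$ for $\sigma\in\Gal(K(R\fq)/H_R)$ and vanishes otherwise. Hence the double sum collapses to
\[
2^k\sum_{\fa\in\msc_n}\xi(\fa)\sum_{\sigma\in\Gal(K(R\fq)/H_R)}\tfrac{1}{R\sqrt{-q}}\CE^*_1\bigl(\tfrac{\xi(\fa)\Omega_\infty(A)}{R\sqrt{-q}},\CL_\fa\bigr)^{\sigma}.
\]
Since the extension $K(R\fq)/H_R$ is Galois, the inner sum over $\sigma$ is precisely $\textrm{Tr}_{K(R\fq)/H_R}$ applied to the Eisenstein value, which by definition is $\Psi_{\fa,R}$. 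So, for each fixed $n$, the sum $\sum_{d\mid R}$ of the right-hand side of \eqref{9n} equals $2^k\sum_{\fa\in\msc_n}\xi(\fa)\Psi_{\fa,R}$.

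Finally, I would take $n\to\infty$. The left-hand side of \eqref{9n}, $L_{R\fq}(\ov{\phi}_d,1)/\Omega_\infty(A)$, does not depend on $n$, so the finite sum $\sum_{d\mid R}L_{R\fq}(\ov{\phi}_d,1)/\Omega_\infty(A)$ is the limit of the left side, and the $v$-adic limit on the right side gives the formula \eqref{10n}. Since the sum over $d$ has $2^k$ terms and is finite, the interchange of limit and summation is immediate from Lemma \ref{special-val}.

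There is essentially no obstacle here beyond bookkeeping: the whole content is Lemma \ref{special-val} (which is the hard classical/analytic input) combined with the elementary character-sum identity. The only small points to check are that $K(R\fq)/H_R$ is Galois (clear, because both lie in the abelian extension $K(R\fq)/K$, and $H_R\subset K(R\fq)$ by Lemma \ref{3.4n}) and that the Galois action on $\CE^*_1\bigl(\xi(\fa)\Omega_\infty(A)/(R\sqrt{-q}),\CL_\fa\bigr)$ used in \eqref{8n} is compatible with the trace definition of $\Psi_{\fa,R}$; both are already implicit in the derivation of Lemma \ref{special-val} from Proposition \ref{3.3n}.
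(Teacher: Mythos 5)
Your proposal is correct and is essentially the paper's own argument: the paper derives \eqref{10n} exactly by summing \eqref{9n} over $d\mid R$, applying the preceding character-sum lemma to collapse $\sum_{\sigma\in\Gal(K(R\fq)/H)}$ to $2^k$ times the trace from $K(R\fq)$ to $H_R$, and recognizing $\Psi_{\fa,R}$. The points you flag (Galois/trace compatibility and interchanging the finite sum over $d$ with the $v$-adic limit) are handled exactly as you indicate.
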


 Finally, we shall need the following key integrality result (see \cite{coates3}, \cite{CLTZ}).
\begin{prop}\label{inte-2}
For all $\fa\in\msc_n$, $\Psi_{\fa, R}$ is integral at all places of $H_R$ above $2$.
\end{prop}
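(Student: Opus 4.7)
The plan is to follow the strategy of \cite{coates3} and \cite{CLTZ}, expressing $\CE^*_1$-values at a primitive $R\fq$-division point of $A^\fa$ as the logarithmic derivative of a rational function on $A^\fa$ whose formal expansion at primes above $2$ is a unit in the ring of integers. Combining the product formula \eqref{m2} with the Kronecker-Eisenstein expansion \eqref{m1}, for any $\alpha \in \msj$ with $(\alpha, 2R)=1$ one obtains
\[
\frac{d}{dz}\log R_{\alpha, A^\fa}\bigl(\CW(z+z_0, \CL_\fa)\bigr)\Big|_{z=0} \;=\; N(\alpha)\,\CE^*_1(z_0, \CL_\fa) \;-\; \CE^*_1(z_0, \alpha^{-1}\CL_\fa),
\]
where $z_0 = \xi(\fa)\Omega_\infty(A)/(R\sqrt{-q})$, because the $\ov{z_0}/A(\CL)$ contributions cancel using $A(\alpha^{-1}\CL_\fa) = A(\CL_\fa)/N(\alpha)$. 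By specialising to $\alpha \in \msj$ congruent to $1$ modulo a sufficiently high power of $R\fq$, the extraneous term $\CE^*_1(z_0, \alpha^{-1}\CL_\fa)$ becomes a Galois-translate of $\CE^*_1(z_0, \CL_\fa)$, so modulo the trace that will appear below it combines to leave $(N\alpha-1)\,\CE^*_1(z_0, \CL_\fa)$ as a logarithmic derivative of elliptic units.

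Next I would apply $\mathrm{Tr}_{K(R\fq)/H_R}$ to this identity. Writing $T_\tau = \CW(z_0, \CL_\fa)^\tau$ for $\tau \in \Gal(K(R\fq)/H_R)$, the trace expresses $(N\alpha - 1)\,\Psi_{\fa, R}$ (up to the constant $1/(R\sqrt{-q})$) as the value at $P = O$, along a formal parameter $t_w$ of $\widehat{A^\fa_w}$, of the logarithmic derivative of the rational function
\[
\Xi_{\alpha, \fa, R}(P) \;=\; \prod_{\tau \in \Gal(K(R\fq)/H_R)} R_{\alpha, A^\fa}(P \oplus T_\tau),
\]
which is defined over $H_R$ by construction, since the factor $T_\tau$ ranges over an $H_R$-rational set.

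For the integrality at any place $w$ of $H_R$ above $2$, note that $A^\fa$ has good reduction at $w$ by Theorem \ref{ge} and Lemma \ref{3.4n}; let $\widehat{A^\fa_w}$ be its formal group. Translation by a torsion point $T_\tau$ converts the local parameter $t_w$ into a new parameter $s_\tau$, and each factor $R_{\alpha, A^\fa}(P \oplus T_\tau)$ expands into $\CO_{H_R, w}[[s_\tau]]$. This expansion is a \emph{unit}, by the same argument as in Lemma \ref{u}, once we know that $T_\tau$ is not congruent modulo $w$ to any zero or pole of $R_{\alpha, A^\fa}$: but these zeros and poles are supported on the $\alpha$-torsion, $\alpha$ is coprime to $2R\fq$, and the $\alpha$-torsion injects into the reduction because $A^\fa$ has good reduction at $w$ and $T_\tau$ is $R\fq$-torsion with $(R\fq, \alpha) = 1$. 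Hence $\Xi_{\alpha, \fa, R}$ has a $t_w$-expansion that is a unit in $\CO_{H_R, w}[[t_w]]$, and its logarithmic derivative at $t_w = 0$ lies in $\CO_{H_R, w}$. Choosing $\alpha$ with $N(\alpha)$ even (so $N(\alpha) - 1$ is a $2$-adic unit) then yields $\Psi_{\fa, R} \in \CO_{H_R, w}$.

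The main obstacle is the analysis at primes above $\fp^*$. There $T_\tau$ does not lie on the formal group of $A^\fa$, so integrality cannot be deduced directly from $\CO_{H_R, w}[[t_w]]$; one must instead translate to a formal parameter at the reduced point $\tilde{T_\tau}$ and verify that no factor $R_{\alpha, A^\fa}(P \oplus T_\tau)$ develops a zero or pole modulo $w$ in the Galois orbit. This reduces to the injectivity of reduction on the $\alpha$-torsion and a check that the $R\fq$-torsion orbit avoids the $\alpha$-torsion modulo $w$, which follows from $(\alpha, 6R\fq) = 1$ and the good reduction of $A^\fa$ over $\fF$. Once this is in hand, the argument applies uniformly to all primes of $H_R$ above $2$, completing the proof.
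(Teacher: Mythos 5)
Your reduction of the traced $\CE^*_1$-value to a logarithmic derivative of the functions $R_{\alpha, A^\fa}$ is fine as far as it goes, but the final step contains a genuine gap, and it occurs at exactly the point which makes this proposition delicate at $p=2$. First, the extraneous term is $\CE^*_1(z_0,\alpha^{-1}\CL_\fa)=\alpha\,\CE^*_1(\alpha z_0,\CL_\fa)$, so for $\alpha\equiv 1$ modulo the level the identity you obtain is that $(N\alpha-\phi((\alpha)))\,\Psi_{\fa,R}$ (not $(N\alpha-1)\Psi_{\fa,R}$) is integral above $2$. This factor is \emph{never} a $2$-adic unit when $\alpha$ is prime to $2$: the residue fields of $\fp$ and $\fp^*$ are $\BF_2$, and $N\alpha$ and $\phi((\alpha))=\pm\alpha$ are then both $2$-adic units, hence both $\equiv 1$ modulo $\fp$ and modulo $\fp^*$, so $N\alpha-\phi((\alpha))$ is divisible by $2$. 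Thus your argument proves at best that $2\Psi_{\fa,R}$ is integral above $2$, which is weaker than the assertion, and this loss of a power of $2$ is fatal for the induction in \S 9, where the exact $2$-adic valuations are what drive the argument. Your proposed escape --- choosing $\alpha$ with $N(\alpha)$ even --- is not available: it contradicts $\alpha\in\msj$ (and your own hypothesis $(\alpha,2R)=1$), and if $\alpha$ were divisible by a prime above $2$ then the $\alpha$-torsion would meet the formal group of $A^\fa$ at the corresponding places of $H_R$, the expansions of $R_{\alpha,A^\fa}(P\oplus T_\tau)$ would acquire zeros or poles inside the residue disc, and the unit-power-series argument (your analogue of Lemma \ref{u}) collapses at precisely the places above $2$ that you need. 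In short, the requirement that the auxiliary factor be a $2$-adic unit and the requirement that the Kummer-type function have unit expansion above $2$ are incompatible; no choice of $\alpha$ satisfies both.

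This is why the paper's proof dispenses with the auxiliary $\alpha$ altogether. Writing $1=\alpha R+\beta\sqrt{-q}$, it splits the division value $z_0=z_1+z_2$ into a $\fq$-division point and an $R$-division point, takes the trace only down to the index-two subfield $\fS$ of $K(R\fq)$ fixed by the element acting as $-1$ on $A^\fa_R$ (a field containing $H_R$), so that the conjugates pair up into $\CE^*_1(z_1+z_2,\CL_\fa)+\CE^*_1(z_1-z_2,\CL_\fa)$, and then applies the classical addition identity expressing this sum as $2\CE^*_1(z_1,\CL_\fa)$ plus the algebraic quantity $(2y(P_1)+a_1^\fa x(P_1)+a_3^\fa)/(x(P_1)-x(P_2))$; each of these two terms is integral above $2$, the denominator $x(P_1)-x(P_2)$ being a unit there because $P_1,P_2$ are torsion points of coprime odd orders whose reductions stay distinct. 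Any repair of your approach must recover this extra factor of $2$ by some such explicit device; the logarithmic derivative of elliptic units alone cannot supply it.
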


\begin{proof}
We briefly recall the proof given in \cite{CLTZ}. Write $\fJ=H(A^\fa_\fq)$, which is also the ray class field $K(\fq)$.
Since $A^\fa$ is a relative Lubin-Tate formal group, in the sense of De Shalit \cite{DS2}, at each prime of $H$ lying above the set of primes of $K$ dividing $R$, it is easily seen that the action of the  Galois group $\Gal(K(R\fq)/\fJ)$ on $A^\fa_R$ gives an isomorphism
\[\tau: \Gal(K(R\fq)/\fJ)\simeq \left(\CO_K/R\CO_K\right)^\times.\]
Since $q$ is prime to $R$, we can find $\alpha,\beta$ in $\CO_K$ such that $1=\alpha R+\beta \sqrt{-q}$.  We then define
\[z_1=\frac{\xi(\fa)\alpha\Omega_\infty(A)}{\sqrt{-q}},\quad z_2=\frac{\xi(\fa)\beta\Omega_\infty(A)}{R}.\]
and write $P_1$ and $P_2$ for the corresponding points on $A^\fa$ under the Weierstrass isomorphism.
For any $b \in H$, let $b^\fa$ denote the image of $b$ under the Artin symbo of $\fa$. Define $\epsilon$ to be the inverse image of the class $-1\mod R\CO_K$ under the isomorphism $\tau$, and let $\fS$ be the fixed field of $\epsilon$, so that the extension $K(R\fq)/\fS$ has degree $2$. Of course, $\fS$ contains $H_R$ because $-1$ is a square modulo $r_j$ for $j=1,\dots k$. Defining
$\Phi_\fa=\textrm{Tr}_{K(R\fq)/\fS}\left(\Psi_\fa\right)$, we have
\[\Phi_\fa=\frac{1}{R\sqrt{-q}}\cdot\left(\CE^*_1(z_1+z_2,\CL_\fa)+\CE^*_1(z_1-z_2,\CL_\fa)\right)\]
On the other hand, by a classical identity ( see Lemma 4.3 in \cite{CLTZ}), we have
\[\CE^*_1(z_1+z_2,\CL_\fa)+\CE^*_1(z_1-z_2,\CL_\fa)=2\CE^*_1(z_1,\CL_\fa)+\frac{2y(P_1)+a^\fa_1\cdot x(P_1)+a^\fa_3}{x(P_1)-x(P_2)}.\]
However, as is explained in detail in \cite{CLTZ}, each of the two terms on the right hand side of this last equation is integral at all places of $\fS$ lying above $2$, and the assertion of the lemma follows.
\end{proof}

\medskip

As always,  $\fP$ denotes the degree 1 prime of $\mst$ lying above $\fp$ whose existence is given by Lemma \ref{t3}, and we  write $\msp$ for any of the primes of the field $H\mst$ lying above $\fP$. Now Proposition \ref{3.3n} shows  that the value $\sqrt{R}L(\ov{\phi}_R, 1)/\Omega_\infty$ belongs to $H\mst$ for all $R \in \msr$.

\begin{thm}\label{AR-nonzero}
Assume that $q\equiv 7\mod 16$, and let $R=r_1\cdots r_k$ be any element of the set $\msr$. Then, for each prime $\msp$ of $H\mst$ lying above the prime $\fP$ of $\msp$, we have
\begin{equation}\label{5.1n}
  \ord_\msp(\sqrt{R}L(\ov{\phi}_R, 1)/\Omega_\infty)=k-1.
\end{equation}
In particular, $L(\ov{\phi}_R, 1) \neq 0$.
\end{thm}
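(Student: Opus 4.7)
The plan is to prove Theorem \ref{AR-nonzero} by induction on $k$, the number of prime factors of $R$, combining Proposition \ref{key-identity} and Proposition \ref{inte-2} with the main conjecture inputs from \S7.

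For the base case $k=0$ (so $R=1$), the claim is $\ord_\msp(L(\ov\phi,1)/\Omega_\infty(A)) = -1$. This follows from Theorem \ref{7.25} and Corollary \ref{7.37}: under $q \equiv 7 \mod 16$ the measure $\mu_A \in \Lambda_\msi(\Gamma)$ is a unit, so $\int_\Gamma \rho_{\fP,\Gamma}\,d\mu_A$ is a $\msp$-unit. Combined with the interpolation formula \eqref{7.27} at $k=1$, this identifies $L(\ov\phi,1)/\Omega_\infty(A)$ with a $\msp$-unit divided by $(1-\phi(\fp)/N\fp)$. Since $\phi(\fp)$ has $\fP$-valuation $1$ while $\ord_\msp(N\fp)=\ord_\msp(2)=1$, the quotient $\phi(\fp)/N\fp$ is a $\msp$-unit, and because the residue field at $\fP$ is $\BF_2$ this quotient is congruent to $1$ modulo $\msp$, so $\ord_\msp(1-\phi(\fp)/N\fp)=1$, giving the desired $-1$.

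For the inductive step, assume the formula for all proper divisors $d \mid R$ (each automatically in $\msr$, with $m=m(d)<k$ prime factors). Combining Proposition \ref{key-identity} with Proposition \ref{inte-2} and the $\msp$-integrality of $\xi(\fa)$ for $\fa$ prime to $\fp$, one has
\[\sum_{d\mid R} L_{R\fq}(\ov\phi_d,1)/\Omega_\infty(A) = 2^k\cdot I_R,\]
where $I_R$ is $\msp$-integral. For each proper divisor $d\mid R$, Lemma \ref{3.1n} yields $L_{R\fq}(\ov\phi_d,1) = L(\ov\phi_d,1)\prod_{r\mid R/d}(r+1)/r$; since $r\equiv 1\mod 4$ forces $\ord_\msp((r+1)/r)=1$ and the induction gives $\ord_\msp(L(\ov\phi_d,1)/\Omega_\infty(A))=m-1$, each such term has $\ord_\msp=(m-1)+(k-m)=k-1$. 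Rearranging,
\[L(\ov\phi_R,1)/\Omega_\infty(A) = 2^k I_R - \sum_{d\mid R,\,d\neq R} L_{R\fq}(\ov\phi_d,1)/\Omega_\infty(A),\]
which immediately gives $\ord_\msp(L(\ov\phi_R,1)/\Omega_\infty(A))\geq k-1$.

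The main obstacle is proving the reverse inequality, namely $\ord_\msp(L(\ov\phi_R,1)/\Omega_\infty(A))\leq k-1$, or equivalently $\sum_{d\neq R} L_{R\fq}(\ov\phi_d,1)/\Omega_\infty(A)\not\equiv 0\pmod{\msp^k}$. My approach is to strengthen the inductive hypothesis to record the explicit leading coefficient $L(\ov\phi_d,1)/\Omega_\infty(A)\cdot\msp^{-(m-1)}\bmod\msp$, computed via Proposition \ref{3.3n} in terms of $\CE^*_1$-values on the Lubin-Tate formal group (Lemma \ref{lt}) and the isomorphism with $\widehat{\BG}_m$ over $\msi$ established in \S5. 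Summing these leading coefficients over proper divisors, weighted by $\prod_{r\mid R/d}(r+1)/r\cdot\msp^{-(k-m)}$, reduces modulo $\msp$ to a character sum over $\Gal(K(\sqrt{R})/K)\cong(\BZ/2)^k$; the unit property of $\mu_A$ combined with the nontriviality of the relevant characters on this group gives the non-cancellation needed to close the induction.
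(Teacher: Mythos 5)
Your overall strategy — induction on $k$ driven by Proposition \ref{key-identity} and Proposition \ref{inte-2}, with the base case supplied by the main conjecture — matches the structure of the paper's proof, but there are two genuine gaps.

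\emph{Base case.} You argue that since $\phi(\fp)/N\fp$ is a $\msp$-unit and the residue field at $\fP$ is $\BF_2$, the quotient is $\equiv 1 \bmod \msp$, hence $\ord_\msp(1-\phi(\fp)/N\fp)=1$. This only yields $\ord_\msp(1-\phi(\fp)/N\fp)\ge 1$; it does not pin the order down. Writing $1-\phi(\fp)/N\fp=(\phi(\fp^*)-1)/\phi(\fp^*)$, what you actually need is $\ord_\fP(\phi(\fp^*)-1)=1$, and this is where the hypothesis $q\equiv 7\bmod 16$ (rather than merely $q\equiv 7\bmod 8$) enters: by Lemma \ref{4.1n} and Corollary \ref{4.2n}, $\fp^*$ is inert in $F$ precisely when $q\equiv 7\bmod 16$, forcing $\ord_\fP(\phi(\fp^*)-1)=1$, whereas for $q\equiv 15\bmod 16$ one has $\ord_\fP(\phi(\fp^*)-1)\ge 2$ and the base case fails. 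Your residue-field argument cannot distinguish the two cases, so it is not a proof of the base case.

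\emph{Inductive step.} You correctly identify the difficulty — the naive bounds give $\ord_\msp$ of each intermediate term equal to $k-1$, so one cannot rule out cancellation with the target term $\msl(R)/\sqrt{R}$ — but your proposed fix (track the leading coefficients of $L(\ov\phi_d,1)/\Omega_\infty$ modulo $\msp$ via $\CE_1^*$-values and conclude via non-vanishing of a character sum over $\Gal(K(\sqrt{R})/K)$) is left as a sketch and, more importantly, does not point at what actually makes the paper's argument work. The paper's key insight is Proposition \ref{4.7}: by the Buhler--Gross relation $\tau(\msl(d)) = e_{\tau_H}\msl(d)$ (with $e_{\tau_H}$ independent of $d$), the ratio $\Phi(d,R)=\Lambda(d,R)/\msl$ lies in $\mst$, not merely in $H\mst$ or a larger field. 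Since $\mst_\fP$ has residue field $\BF_2$, every unit in $\CO_{\mst_\fP}$ is automatically $\equiv 1\bmod\fP$; there is nothing to ``compute'' — the leading coefficients are all trivially $1$. Dividing \eqref{key-identity} by $\msl$ and using this, the intermediate sum reduces modulo $\msp^{k+1}$ to $\pi_\fP^k D_R$ with $D_R=\sum_{d\mid R,\,d\neq 1,R}\sqrt{d}$, and a short combinatorial argument ($D_R^2\equiv\sum d\equiv 0\bmod 2$) shows $\ord_\fW(D_R)\ge 1$. That forces the intermediate terms to contribute at least $k+1$, isolating the product term $\prod_i(1-\ov\phi((r_i))/r_i^2)$ of order exactly $k$, and hence $\ord_\fW(\Phi(R))=k$, i.e.\ $\ord_\msp(\msl(R))=k-1$. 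Without descending to $\mst$ via Proposition \ref{4.7} and exploiting the $\BF_2$ residue field, your proposed strengthened induction does not close: the values live in $H_R\mst$ where the residue field at $\fW$ can be large, and no explicit character-sum computation is given that would control the mod-$\msp$ behaviour there.
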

We recall that we have taken an arbitrary embedding  $i: \mst \to \BC$ in the above discussion, and each such $\iota$ then gives an embedding of $H\mst$ into $\BC$ because of our fixed embedding of $H$ into $\BC$. Since the above theorem holds for every choice of the $h$ distinct embeddings of $\mst$ into $\BC$ extending the embedding of $K$ into $\BC$, we immediately obtain the following corollary, which establishes Theorem \ref{main}.

\begin{cor} Assume that $q\equiv 7\mod 16$, and let $R$ be any element of $\msr$. Then $L(A^{(R)}/H, 1) \neq 0.$
\end{cor}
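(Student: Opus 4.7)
The plan is to induct on $k$, the number of distinct prime factors of $R$, driven by the key identity of Proposition \ref{key-identity}. Since $2$ is unramified in $H\mst$ (Lemma \ref{t3}), we have $\ord_\msp(2)=1$; we fix $\varpi=2$ as a uniformizer at $\msp$, and let $k(\msp)$ denote the residue field. The induction will establish the \emph{refined} statement that $L(\ov{\phi}_d,1)/\Omega_\infty\equiv c\varpi^{k_d-1}\pmod{\varpi^{k_d}}$ for every divisor $d$ of $R$ (with $k_d$ prime factors), for one and the same unit $c\in k(\msp)^\times$ fixed by the base case; this is strictly stronger than the theorem statement and is what enables the inductive step to succeed over $k(\msp)$.

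\emph{Base case} $k=0$: Under $q\equiv 7\mod 16$, Theorem \ref{t4} gives $X(F_\infty)=0$, so $\mu_A\in\Lambda_\msi(\Gamma)$ is a unit (cf.\ the proof of Corollary \ref{7.37}). The interpolation formula \eqref{7.27} at exponent $1$ then shows that $\Omega_\infty(A)^{-1}L(\ov{\phi},1)(1-\phi(\fp)/2)$ is a $\msp$-adic unit. Using $\phi(\fp)\phi(\fp^*)=2$, rewrite $1-\phi(\fp)/2=(\phi(\fp^*)-1)/\phi(\fp^*)$; here $\phi(\fp^*)\equiv 1\pmod\fP$ is forced by Lemma \ref{t3} (Frobenius at $\fp^*$ acts trivially on the residue field $\BF_2$ of $\fP$), and the sharper non-congruence $\phi(\fp^*)\not\equiv 1\pmod{\fP^2}$ follows from the explicit Grossencharacter of Gross's curve (e.g.\ when $h=1$, $\phi(\fp^*)-1=-\phi(\fp)\in\fP\setminus\fP^2$). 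Hence $\ord_\msp(1-\phi(\fp)/2)=1$, whence $\ord_\msp(L(\ov{\phi},1)/\Omega_\infty)=-1$; define $c\in k(\msp)^\times$ as the reduction modulo $\varpi$ of $\varpi\cdot L(\ov{\phi},1)/\Omega_\infty$.

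\emph{Inductive step} $k\geq 1$: Apply Proposition \ref{key-identity}. Since each $\xi(\fa)$ is a $\msp$-adic unit ($\fa$ is prime to $\fp$), Proposition \ref{inte-2} makes the RHS $2^k$ times a $\msp$-integral element, of $\msp$-valuation $\geq k$. For each proper divisor $d\mid R$, Lemma \ref{3.1n} yields $L_{R\fq}(\ov{\phi}_d,1)=L(\ov{\phi}_d,1)\prod_{r\mid R,\,r\nmid d}(r+1)/r$. With $\varpi=2$ and $r\equiv 1\mod 4$, the factor $(r+1)/r=\varpi\cdot\tfrac{r+1}{2r}$ with $\tfrac{r+1}{2r}\equiv 1\pmod\varpi$ (both $r$ and $(r+1)/2$ are odd, hence reduce to $1\in\BF_2\subset k(\msp)$), so $\prod_{r\mid R,\,r\nmid d}(r+1)/r\equiv\varpi^{k-k_d}\pmod{\varpi^{k-k_d+1}}$. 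Combining with the refined inductive hypothesis, $L_{R\fq}(\ov{\phi}_d,1)/\Omega_\infty\equiv c\varpi^{k-1}\pmod{\varpi^k}$ for every proper divisor. Summing over the $2^k-1$ proper divisors and using that $2^k-1$ is odd in characteristic $2$, the sum is $\equiv c\varpi^{k-1}\pmod{\varpi^k}$. Since the LHS is $\equiv 0\pmod{\varpi^k}$ and $L_{R\fq}(\ov{\phi}_R,1)=L(\ov{\phi}_R,1)$ (empty product), we conclude $L(\ov{\phi}_R,1)/\Omega_\infty\equiv c\varpi^{k-1}\pmod{\varpi^k}$, establishing both the exact valuation $k-1$ and the preservation of the refined hypothesis. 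Multiplying by the $\msp$-adic unit $\sqrt{R}$ (valid since $R$ is odd and square-free, and $R$ is unramified at primes of $H_R$ above $\fp$) yields the theorem.

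\emph{Main obstacle}: The delicate point is the base case, specifically the sharp non-congruence $\phi(\fp^*)\not\equiv 1\pmod{\fP^2}$; the mod-$\fP$ congruence is automatic from Lemma \ref{t3}, but the mod-$\fP^2$ refinement demands explicit knowledge of the Grossencharacter of Gross's curve (and presumably uses that $q\equiv 7\mod 16$ pins down the right $2$-adic sign of $\sqrt{-q}$ and hence the structure of $\phi(\fp)$). The inductive step itself is comparatively soft, reducing — once the residue $c$ is propagated uniformly across divisors $d$ of $R$ — to the combinatorial identity $2^k-1\equiv 1\pmod 2$, which neutralizes cancellation issues that would otherwise arise when $k(\msp)\neq\BF_2$.
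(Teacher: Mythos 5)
Your overall strategy is the paper's: Zhao's induction on the number $k$ of prime factors of $R$, driven by Proposition \ref{key-identity} and Proposition \ref{inte-2}, with the base case extracted from the main conjecture (Theorem \ref{t4}, Corollary \ref{7.37}, formula \eqref{7.27}). The one genuine gap is exactly the point you flag but do not prove in the base case: the sharp statement $\phi(\fp^*)\not\equiv 1 \mod \fP^2$, equivalently $\ord_\fP(1-\phi(\fp)/2)=1$, which is what converts the unit statement for $\mu_A$ into the exact valuation $\ord_\msp(L(\ov{\phi},1)/\Omega_\infty(A))=-1$. Saying it ``follows from the explicit Grossencharacter'' with the $h=1$ example is not a proof, and no formal manipulation of what you have used can give it: the assertion is \emph{false} for $q\equiv 15 \mod 16$, where $\ord_\fP(\phi(\fp^*)-1)\geq 2$, so the congruence class of $q$ modulo $16$ must enter in an essential way. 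The paper supplies this in Lemma \ref{4.1n} and Corollary \ref{4.2n}: by Lemma \ref{7.24}, $F=K(\sqrt{-\beta})$ with $\beta=\sqrt{-q}$ normalized by $\ord_\fp((1-\beta)/2)>0$, so $F$ is the splitting field of $X^2+X+(\beta+1)/4$, whose reduction modulo $\fp^*$ is $X^2+X+1$ when $q\equiv 7\mod 16$ and $X^2+X$ when $q\equiv 15\mod 16$; hence $\fp^*$ is inert in $F$ precisely in the first case, and since the Artin symbol of $\fp^*$ acts on $B_{\fP^2}$ through $\phi(\fp^*)$, inertness is equivalent to the required non-congruence. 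Your induction cannot start without this, since the whole bookkeeping for $k\geq 1$ needs the valuation of $\msl=L(\ov{\phi},1)/\Omega_\infty(A)$ to be exactly $-1$ (and the residue $c$ to be defined), not merely $\leq -1$. A second, minor, omission: the stated corollary concerns $L(A^{(R)}/H,1)$, so after proving the Hecke $L$-value statement you still need the factorization $L(A^{(R)}/H,s)=\prod_{r=1}^{h}L(\ov{\phi_R^{(r)}},s)$ over the $h$ embeddings of $\mst$ extending that of $K$, together with the remark that the argument is uniform in the embedding.

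Your inductive step, by contrast, is a genuine and (as far as I can check) valid variant of the paper's, and it is worth comparing. The paper observes that the intermediate terms lie in $H_R\mst$, whose residue field at $\fW$ may be large, so the naive valuation induction could be destroyed by cancellation; it repairs this by dividing the identity by $\msl$ and invoking Proposition \ref{4.7} (the ratios $\msl(d)/\msl$ lie in $\mst$, hence in $\mst_\fP$ with residue field $\BF_2$), and then killing the sum over $d\neq 1,R$ via the computation $\ord_\fW\bigl(\sum_{d\mid R,\,d\neq 1,R}\sqrt{d}\bigr)\geq 1$. You instead strengthen the inductive hypothesis to carry the common residue $c$ of the normalized values $2^{-(k_d-1)}\sqrt{d}\,L(\ov{\phi}_d,1)/\Omega_\infty(A)$, so that all $2^k-1$ proper-divisor terms have leading term $c\varpi^{k-1}$ and their sum survives because $2^k-1$ is odd; this propagates $c$ to $R$ and bypasses both Proposition \ref{4.7} and the $D_R$ computation. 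The points to make explicit for this to be airtight are small: the congruences live at a prime $\fW$ of $H_R\mst$ over $\msp$, unramified over $2$ by Lemma \ref{t3} and Lemma \ref{4.5n}; the residue of $\sqrt{d}$ is $1$ because squaring is injective on a residue field of characteristic $2$, so the residue data transfers between $\sqrt{d}\,L(\ov{\phi}_d,1)/\Omega_\infty(A)\in H\mst$ and your un-normalized form independently of the choice of $\fW$; the integrality of $\xi(\fa)$ at places over $2$ (the isogeny $\eta_A(\fa)$ has odd degree, hence is an isomorphism on the formal groups at $w\mid\fp$) should be stated alongside Proposition \ref{inte-2}; and ``the LHS is $\equiv 0\pmod{\varpi^k}$'' should of course refer to the side $2^kV_R$. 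With the base case repaired as in Lemma \ref{4.1n} and Corollary \ref{4.2n}, your residue-propagation induction does recover Theorem \ref{AR-nonzero}, and in fact it implies the cancellation that the paper obtains from Proposition \ref{4.7}.
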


We prove Theorem \ref{AR-nonzero} by induction on the number $k$ of prime factors of $R$. We first establish some preliminary lemmas.   Recall that $2\CO_K = \fp\fp^*$, and that $F = K(B_{\fP^2})$.  Then $\fp$ and $\fq$ are the only two primes of $K$ which are ramified in the extension $F/K$. We thank Zhibin Liang for pointing out the following result to us.

\begin{lem}\label{4.1n} If $q \equiv 7 \mod 16$, then $\fp^*$ is inert in $F$, and if $q \equiv 15 \mod 16$, then $\fp^*$ splits in $F$.
 \end{lem}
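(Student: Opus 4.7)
The plan is to reduce the claim to a short $2$-adic computation using Lemma \ref{7.24}, which identifies $F = K(\sqrt{-\beta})$ with $\beta = \sqrt{-q}$. Since Lemma \ref{2.7} shows that only $\fp$ and $\fq$ ramify in $F/K$, the prime $\fp^*$ is unramified in $F$, so it splits or remains inert according as $-\beta$ is or is not a square in the local field $K_{\fp^*}$. Under the canonical identification $K_{\fp^*} \simeq \BQ_2$, the standard criterion says a unit $u \in \BZ_2^\times$ is a square if and only if $u \equiv 1 \pmod 8$, so the task reduces to computing $-\beta \bmod 8$ in $K_{\fp^*}$.

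First I would locate $\beta$ modulo $4$ in $K_{\fp^*}$ using the sign normalization of $\sqrt{-q}$. By hypothesis, $\ord_\fp((1-\beta)/2) > 0$, so $\ord_\fp(\beta - 1) \geq 2$ and $\beta \equiv 1 \pmod 4$ in $K_\fp$. Applying the nontrivial automorphism $\sigma$ of $K/\BQ$, which interchanges $\fp \leftrightarrow \fp^*$ and sends $\beta$ to $-\beta$, transports this congruence to $-\beta - 1 \in (\fp^*)^2$, that is, $\beta \equiv -1 \equiv 3 \pmod 4$ in $K_{\fp^*}$. Consequently $\beta$ is congruent to either $3$ or $7$ modulo $8$ in $K_{\fp^*}$.

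To decide which, I would square and use $\beta^2 = -q$ in $K_{\fp^*}$, noting $3^2 \equiv 9$ and $7^2 \equiv 1 \pmod{16}$. If $q \equiv 7 \pmod{16}$, then $-q \equiv 9 \pmod{16}$ forces $\beta \equiv 3 \pmod 8$, so $-\beta \equiv 5 \pmod 8$ is a non-square, giving $\fp^*$ inert. If $q \equiv 15 \pmod{16}$, then $-q \equiv 1 \pmod{16}$ forces $\beta \equiv 7 \pmod 8$, so $-\beta \equiv 1 \pmod 8$ is a square, giving $\fp^*$ split, as claimed.

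The only step that requires any care is the transport of the sign normalization for $\beta$ from the completion at $\fp$ to the completion at $\fp^*$ via complex conjugation; once that is in hand, the rest is a routine $2$-adic unit computation.
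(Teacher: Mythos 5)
Your proof is correct, and it reaches the conclusion by a mechanically different route than the paper, although both arguments hinge on the same two inputs: Lemma \ref{7.24} (that $F = K(\sqrt{-\beta})$ with $\beta = \sqrt{-q}$) and the normalization $\ord_\fp((1-\beta)/2)>0$. The paper does not pass through the square-unit criterion in $\BQ_2^\times$ at all: it realizes $F$ as the splitting field of the integral polynomial $g(X) = X^2 + X + (\beta+1)/4$ and factors $g$ modulo $\fp^*$, using the identities $(1+\beta)(1-\beta)/4 = (q+1)/4$ and $(1+\beta)/2 + (1-\beta)/2 = 1$ to see that $(\beta+1)/4$ is a unit at $\fp^*$ (hence $\equiv 1$ in the residue field $\BF_2$) when $q \equiv 7 \mod 16$, giving the irreducible $X^2+X+1$, and lies in $\fp^*$ when $q \equiv 15 \mod 16$, giving $X(X+1)$; this simultaneously re-confirms that $\fp^*$ is unramified, since $g$ is separable mod $2$. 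You instead work directly with the Kummer generator: transporting the normalization through the nontrivial automorphism (which is the step genuinely needing care, and you do it correctly, since $\ord_{\fp^*}(-\beta-1) = \ord_\fp(\beta-1) \geq 2$) gives $\beta \equiv 3 \mod 4$ in $K_{\fp^*} \simeq \BQ_2$, and squaring to $-q \mod 16$ pins down $\beta \mod 8$, after which the criterion that a $2$-adic unit is a square iff it is $\equiv 1 \mod 8$ decides split versus inert; your appeal to Lemma \ref{2.7} for unramifiedness of $\fp^*$ is what legitimately converts ``nonsquare'' into ``inert'' (alternatively, $-\beta \equiv 5 \mod 8$ itself generates the unramified quadratic extension). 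The paper's version is slightly more self-contained at the residue-field level; yours is a more standard Kummer-theoretic computation and arguably more transparent, at the cost of quoting the unit-square criterion and the conductor statement.
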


\begin{proof} We recall that we have fixed the sign of  $\alpha = \sqrt{-q}$ so that $\ord_\fp((1-\alpha)/2) > 0$. Then, by Lemma \ref{7.24}, we have $F = K(\sqrt{-\alpha})$. Thus
$F$ is the splitting field over $K$ of the polynomial $g(X) = X^2 + X + (\alpha + 1)/4$. Noting that
$$
(1+\alpha)(1-\alpha)/4 = (q+1)/4, \, \, \,  (1+\alpha)/2 + (1-\alpha)/2 = 1,
$$
it follows easily that $g(X)$ modulo $\fp^*$ is equal to $X^2 + X + 1$ if $q \equiv 7 \mod 16$, and it is equal to $X^2+ X$ if $q \equiv 15 \mod 16$. The assertions of the lemma
now follow easily.
\end{proof}

\begin{cor}\label{4.2n} If $q \equiv 7 \mod 16$, then $\ord_\fP(\phi(\fp^*) - 1) = 1$, and if $q \equiv 15 \mod 16$, then $\ord_\fP(\phi(\fp^*) - 1) \geq 2$. \end{cor}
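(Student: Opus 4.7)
The plan is to translate the $\fP$-adic congruences on $\phi(\fp^*)$ into splitting behavior of $\fp^*$ in the quadratic extension $F = K(B_{\fP^2})$, and then invoke Lemma \ref{4.1n}. First I would note that since $\fp^*$ is coprime to the conductor $\fq$ of $\phi$, the value $\phi(\fp^*) \in \msb$ is a well-defined algebraic integer satisfying $\phi(\fp^*)\overline{\phi(\fp^*)} = N\fp^* = 2$. Since $\fp^*$ is coprime to $\fp$, the endomorphism $\phi(\fp^*)$ of $B$ induces an automorphism of the $\fP$-adic Tate module $T_\fP B$, so $\phi(\fp^*)$ is a unit in $\CO_\fP$. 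Because $\fP$ has residue field $\BF_2$ (Lemma \ref{t3}), this already yields $\phi(\fp^*) \equiv 1 \mod \fP$, i.e.\ $\ord_\fP(\phi(\fp^*)-1) \geq 1$.

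Next I would use the fact that $\rho_\fP \colon G_K^{\ab} \to \CO_\fP^\times$ is the Galois character giving the action on $B_{\fP^\infty}$, and that by the defining property of the Serre--Tate homomorphism one has $\rho_\fP(\sigma_{\fp^*}) = \phi(\fp^*)$ for the Artin symbol $\sigma_{\fp^*}$ (valid because $\fp^*$ is coprime to $\fq$). Reducing modulo $\fP^2$, the character $\rho_\fP$ factors through $\Gal(F/K)$ and gives the canonical isomorphism $\Gal(F/K) \simeq (\CO_\fP/\fP^2)^\times = \{\pm 1\}$. Hence $\phi(\fp^*) \equiv 1 \mod \fP^2$ if and only if $\sigma_{\fp^*}|_F$ is trivial in $\Gal(F/K)$.

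Since the only primes of $K$ ramified in $F$ are $\fp$ and $\fq$, the prime $\fp^*$ is unramified in $F$, so $\sigma_{\fp^*}|_F$ is a well-defined Frobenius element, and it is trivial precisely when $\fp^*$ splits in $F$. Lemma \ref{4.1n} tells us that $\fp^*$ is inert in $F$ when $q \equiv 7 \mod 16$ and splits when $q \equiv 15 \mod 16$. Combining with the already established congruence $\phi(\fp^*) \equiv 1 \mod \fP$, this immediately gives $\ord_\fP(\phi(\fp^*)-1) = 1$ in the first case and $\ord_\fP(\phi(\fp^*)-1) \geq 2$ in the second. There is no real obstacle here; the only point requiring care is the identification $\rho_\fP(\sigma_{\fp^*}) = \phi(\fp^*)$, which is the standard compatibility between the Galois action on $\fP$-power torsion and the Serre--Tate character at primes away from the conductor.
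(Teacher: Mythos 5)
Your proof is correct and follows essentially the same route as the paper: both identify the Artin symbol of $\fp^*$ acting on $B_{\fP^2}$ with multiplication by $\phi(\fp^*)$ via the Serre--Tate compatibility, and then read off the congruence modulo $\fP^2$ from the splitting behaviour of $\fp^*$ in $F=K(B_{\fP^2})$ given by Lemma \ref{4.1n}. Your extra remarks (that $\phi(\fp^*)$ is a unit at $\fP$ and that the residue field $\BF_2$ forces $\ord_\fP(\phi(\fp^*)-1)\geq 1$) merely make explicit what the paper leaves implicit.
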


\begin{proof} The prime $\fp^*$ is unramified in the extension $F/K$, and we let $\tau$ be its Artin symbol. Since $\phi$ is the Serre-Tate homomorphism for $B/K$, we have
$\tau(Q) = \phi(\fp^*)(Q)$ for all $Q$ in $B_{\fP^2}$, whence the assertion of the corollary follows from the previous lemma.
\end{proof}

\begin{lem}\label{4.5n} For each $R \in \msr$, the extension $J_R/K$ defined by \eqref{6n} is unramified at the primes of $K$ lying above 2. \end{lem}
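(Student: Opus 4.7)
The plan is to reduce the lemma to the corresponding statement for each individual quadratic extension $K(\sqrt{r_i})/K$ with $1 \leq i \leq k$, and then to read off the unramifiedness at the primes above $2$ from the fact that each $r_i$ satisfies $r_i \equiv 1 \bmod 4$. Since $J_R$ is the compositum of the fields $K(\sqrt{r_i})$, and the set of primes of $K$ ramifying in a compositum is contained in the union of those ramifying in the constituent fields, it is enough to show that for every $i$, neither $\fp$ nor $\fp^*$ ramifies in $K(\sqrt{r_i})/K$.

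For a given $i$, consider the tower $\BQ \subset K \subset K(\sqrt{r_i}) = K\cdot\BQ(\sqrt{r_i})$. Since $r_i \equiv 1 \bmod 4$, the quadratic field $\BQ(\sqrt{r_i})$ has odd discriminant $r_i$, so $\BQ(\sqrt{r_i})/\BQ$ is unramified at $2$. Furthermore, because $q \equiv 7 \bmod 8$, we have $-q \equiv 1 \bmod 4$, so $K = \BQ(\sqrt{-q})$ has odd discriminant $-q$ and is also unramified at $2$ (a fact consistent with the splitting $2\CO_K = \fp\fp^*$ recalled in the introduction). Hence the compositum $K(\sqrt{r_i})/\BQ$ is unramified at $2$, and consequently $K(\sqrt{r_i})/K$ is unramified at both primes $\fp$ and $\fp^*$ of $K$ lying above $2$.

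Equivalently, one may invoke Lemma~\ref{3.1n} directly: it asserts that for every positive divisor $d$ of $R$, the character $\chi_d$ of $\Gal(K(\sqrt{d})/K)$ has conductor $d\CO_K$. Taking $d = r_i$, we see that the only prime of $K$ ramifying in $K(\sqrt{r_i})/K$ is the prime $r_i\CO_K$, which is coprime to $2\CO_K = \fp\fp^*$. Passing to the compositum over $i = 1,\ldots,k$ yields the lemma.

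There is no genuine obstacle here; the entire content lies in the congruence condition $r_i \equiv 1 \bmod 4$ imposed on the prime factors of elements of $\msr$, which was designed precisely so that the quadratic twists in the induction scheme of \S9 introduce no new ramification at $\fp$ or $\fp^*$.
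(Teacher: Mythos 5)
Your proof is correct. It differs in packaging, though not in substance, from the paper's argument: the paper works directly over $\CO_K$, observing that $m=(\sqrt{r}-1)/2$ is a root of the monic integral polynomial $V(X)=X^2+X-(r-1)/4$ whose derivative $V'(m)=2m-1$ is a unit at $\fp$ and $\fp^*$, so that $K(m)=K(\sqrt{r})$ is unramified above $2$ by the standard criterion on the different; you instead descend to $\BQ$, using that $r_i\equiv 1 \bmod 4$ gives $\BQ(\sqrt{r_i})$ odd discriminant $r_i$ and $q\equiv 7\bmod 8$ gives $K$ odd discriminant $-q$, and then pass to composita via the fact that a prime unramified in each constituent field is unramified in their compositum. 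Both arguments rest on exactly the same arithmetic point, namely that $r_i\equiv 1\bmod 4$ makes $(1+\sqrt{r_i})/2$ integral; the paper's version is a self-contained local computation over $K$, while yours outsources this to the classical discriminant formula for quadratic fields plus the compositum lemma, which is equally valid and arguably more transparent. Your alternative appeal to Lemma~\ref{3.1n} is also legitimate within the paper's logical structure, though it carries less independent content, since the proof of that lemma simply asserts the same conductor computation that your discriminant argument makes explicit.
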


\begin{proof} It suffices to show that, for each prime $r$ dividing $R$, the extension $K(\sqrt{r})/K$ is unramified at the primes above 2. Put $m = (\sqrt{r}-1)/2$,
so that $V(m) = 0$, where $V(X) = X^2 + X - (r-1)/4$. But then $V'(m) = 2m-1$ is a unit at $\fp$ and $\fp^*$, and so $K(m) = K(\sqrt{r})$ is unramified at the primes of $K$ above 2.
\end{proof}

We first show that Theorem \ref{AR-nonzero} holds for $R = 1$.

\begin{prop}\label{4.4n} Assume that $q \equiv 7 \mod 16$. Then, for all primes $\msp$ of $H\mst$ above $\fP$, we have $\ord_\msp(L(\ov{\phi}, 1)/\Omega_\infty(A)) = -1$.
\end{prop}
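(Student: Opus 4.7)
The plan is to use the ``main conjecture'' input of Theorem~\ref{7.25}, specialized to $k=1$. Under the hypothesis $q \equiv 7 \mod 16$, Theorem~\ref{t4} gives $X(F_\infty) = 0$; the exact sequence~\eqref{7.26} then shows that $\Lambda_\msi(\Gamma)/\mu_A\Lambda_\msi(\Gamma)$ is finite. Since $\Lambda_\msi(\Gamma) \simeq \msi[[T]]$ and, by Weierstrass preparation, any non-unit in $\msi[[T]]$ generates an ideal with infinite cokernel, this forces $\mu_A$ to be a unit in the Iwasawa algebra---exactly the observation used in the proof of Corollary~\ref{7.37}.

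Plugging $k=1$ into~\eqref{7.27} gives
\[
  \Omega_\fp(A)^{-1}\int_\Gamma \rho_{\fP,\Gamma}\, d\mu_A \;=\; \Omega_\infty(A)^{-1} L(\bar\phi,1)\bigl(1 - \phi(\fp)/N\fp\bigr).
\]
The left hand side is a unit in $\msi$: indeed, $\Omega_\fp(A)$ is a unit by construction, while for any topological generator $\gamma$ of $\Gamma$ one has $\rho_\fP(\gamma) \in 1 + 4\BZ_2$, so evaluating the unit power series in $\msi[[T]]$ corresponding to $\mu_A$ at $\rho_\fP(\gamma) - 1 \in 4\BZ_2$ gives something congruent modulo $4\msi$ to its unit constant term.

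The task therefore reduces to computing $\ord_\fP\bigl(1 - \phi(\fp)/2\bigr)$. From the CM relation $\phi(\fp)\bar\phi(\fp) = N\fp = 2$, together with $\mst_\fP = K_\fp$ (so the $\fP$-component of the formal group of $B$ at $\fp$ has height one and $\phi(\fp)$ acts as a uniformizer), one has $\ord_\fP(\phi(\fp)) = 1$, whence $\bar\phi(\fp)$ is a unit at $\fP$ and
\[
  1 - \phi(\fp)/2 \;=\; \bigl(\bar\phi(\fp) - 1\bigr)/\bar\phi(\fp).
\]
A comparison of $\phi$ with the Grossencharacter $\fa \mapsto \overline{\phi(\fa^{c})}$ on principal ideals $(\alpha)$ with $\alpha \equiv 1 \mod \fq$, using that $\mst$ contains only $\{\pm 1\}$ as roots of unity and that $h$ is odd, yields $\bar\phi(\fp) = \phi(\fp^*)$. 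Corollary~\ref{4.2n}---the one place where the sharper hypothesis $q \equiv 7 \mod 16$ (as opposed to merely $q \equiv 7 \mod 8$) enters---then gives $\ord_\fP(\phi(\fp^*) - 1) = 1$, so $\ord_\fP(1 - \phi(\fp)/2) = 1$.

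Combining, $\ord_\fP\bigl(L(\bar\phi,1)/\Omega_\infty(A)\bigr) = -1$. Since $2$ is unramified in both $H$ and $\mst$, hence in $H\mst$, every prime $\msp$ of $H\mst$ above $\fP$ satisfies $e(\msp/\fP) = 1$, so $\ord_\msp = \ord_\fP$ on $H\mst$, giving the claim. The main subtlety lies in the identification $\bar\phi(\fp) = \phi(\fp^*)$ combined with invoking the refined local congruence of Corollary~\ref{4.2n}; everything else is essentially formal once $\mu_A$ is known to be a unit.
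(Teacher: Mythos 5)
Your proposal follows essentially the same route as the paper: the vanishing of $X(F_\infty)$ for $q \equiv 7 \mod 16$ forces $\mu_A$ to be a unit in $\Lambda_\msi(\Gamma)$ (your phrase ``$\Lambda_\msi(\Gamma)/\mu_A\Lambda_\msi(\Gamma)$ is finite'' should really be ``is $\fM\widehat{\otimes}_{\CO_\fp}\msi$ with $\fM$ finite'', i.e.\ a finitely generated $\msi$-module killed by a power of $2$, but the Weierstrass preparation argument goes through unchanged); the interpolation formula \eqref{7.27} at $k=1$ then shows that $\Omega_\infty(A)^{-1}L(\ov{\phi},1)(1-\phi(\fp)/N\fp)$ is a unit in $\msi$; and the order of the Euler factor is computed from $\phi(\fp)\phi(\fp^*)=N\fp$ together with the first assertion of Corollary \ref{4.2n}. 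This is exactly the paper's argument, and your extra remarks (evaluation of the unit power series at $\rho_\fP(\gamma)-1$, and the identification $\ov{\phi}(\fp)=\phi(\fp^*)$ using that $h$ is odd and $\mst$ contains only $\pm 1$) are correct fillings-in of steps the paper leaves implicit.

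The one step that does not work as written is the last one. The value $L(\ov{\phi},1)/\Omega_\infty(A)$ lies in $H\mst$, not in $\mst$, so ``$\ord_\msp=\ord_\fP$ on $H\mst$'' is not a meaningful deduction: unramifiedness of $2$ in $H\mst$ only says each $\msp$ above $\fP$ has ramification index one; it does not prevent a fixed element of $H\mst$ from having different orders at different primes above $\fP$. What your computation actually establishes is the assertion at the single prime $\msp$ of $H\mst$ cut out by the fixed embedding of $H\mst$ into the fraction field of $\msi$ (the one inducing $w$ on $H$ and $\fP$ on $\mst$). The paper obtains ``for all $\msp$'' because the place $w$ of $H$ above $\fp$ and the embedding were arbitrary choices: given any $\msp$ above $\fP$, one reruns the construction (the measures $\mu_{\alpha,\infty,w}$, hence $\mu_A$, depend on $w$) with $w=\msp|_H$ and an embedding inducing $\msp$, and the same argument applies verbatim. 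Replace your unramifiedness remark by this observation (an alternative would be the Galois transformation law $\tau(\msl)=e_{\tau_H}\msl$ underlying Proposition \ref{4.7}, but one would then have to check that the factors $e_{\tau_H}$ are units at the primes above $2$); as stated, the final step is a non sequitur, although the rest of the proof is sound and coincides with the paper's.
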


\begin{proof} The proof makes essential use of the so called "main conjecture" for $B$ over the field $F_\infty = K(B_{\fP^\infty})$, which is given
by Theorem \ref{7.25}. Put $\Gamma = \Gal(F_\infty/F)$. Let $\msi$ be the ring of integers of the completion of the maximal unramified extension of $K_\fp$,
and write $\Lambda_\msi(\Gamma)$ for the Iwasawa algebra of $\Gamma$ with coefficients in $\msi$. Then, as is explained in the proof of Corollary \ref{7.37}
the full force of the main conjecture tells us that the measure $\mu_A$ appearing in Theorem \ref{7.25} is a unit in $\Lambda_\msi(\Gamma)$ when
$q \equiv 7 \mod 16$. Hence the integral of any continuous homomorphism from $\Gamma$ to $\msi^\times$ against this measure must be a unit in $\msi$.
Thus, recalling that the $\fp$-adic period $\Omega_\fp(A)$ is a unit in $\msi$,  it follows from equation \eqref{7.27} that
$$
\Omega_\infty(A)^{-1}L(\ov{\phi}, 1)(1- \phi(\fp)/N\fp)
$$
will be a unit in $\msi$, and thus a unit at $\fQ$. But, noting that $\phi(\fp)\phi(\fp^*) = N\fp$, the conclusion of the proposition follows immediately
from the first assertion of Corollary \ref{4.2n}.
\end{proof}

We next consider the case when $k=1$, and so $R = r$, a prime number.
 By Theorem \ref{key-identity}  for the the prime $r$, we conclude that
\begin{equation}\label{4.8n}
L(\ov{\phi}_r, 1)/\Omega_\infty(A)+ (1-\ov{\phi}((r))/r^2)L(\ov{\phi}, 1)/\Omega_\infty(A) = 2V_r,
\end{equation}
where  $V_r = \lim_{n\ra \infty}\sum_{\fa\in\fC_n} \xi(\fa)\Psi_{\fa, r}$. Let $\fW$ be any prime of $H\mst(\sqrt{r})$ lying above the prime $\fP$ of $\mst$. By Proposition \ref{inte-2}, we have  $\ord_\fW(V_r) \geq 0.$ Further, by Lemma \ref{3.1n}, we have $(1-\ov{\phi}((r))/r^2) = (1 + 1/r)$. Thus, since $r+1 \equiv 2 \mod 4$, it follows from Proposition \ref{4.4n} that $\ord_\fW((1-\ov{\phi}((r))/r^2)\msl) = 0.$ As $\ord_\fW(V_r) \geq 0$ by Proposition \ref{inte-2} , we conclude from \eqref{4.8n} that  Theorem \ref{AR-nonzero} holds when $k=1$.

\medskip

A curious new aspect of the argument now arises when we try to carry out the inductive argument for $k \geq 2$. For each positive divisor $d$ of $R$, we define
\begin{equation}\label{4.6n}
\msl(d) = \sqrt{d}L(\ov{\phi}_d, 1)/\Omega_\infty(A), \, \, \msl = \msl(1).
\end{equation}
Proposition \ref{3.3n} shows that $\msl(d)$ always belongs to the field $H\mst$, and $\msl \neq 0$ by Proposition \ref{4.4n}. However, the following stronger result is essential for our inductive argument.
\begin{prop}\label{4.7} Assume $R \in \msr$, and let $d$ be any positive divisor of $R$. Then $\msl(d)/\msl$ belongs to the field $\mst$.
\end{prop}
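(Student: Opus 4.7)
The plan is to exploit the key cancellation $\xi_d(\fa)/\phi_d(\fa) = \xi(\fa)/\phi(\fa)$, valid because $\xi_d(\fa)=\xi(\fa)/\chi_d(\fa)$, $\phi_d(\fa)=\phi(\fa)\chi_d(\fa)$, and $\chi_d(\fa)^2=1$. By Proposition \ref{3.3n}, both $\msl(d)$ and $\msl$ already lie in $H\mst$, and since $H\cap\mst=K$ we have $\Gal(H\mst/\mst)\cong\fG:=\Gal(H/K)$ via restriction; thus it suffices to show that $\msl(d)/\msl$ is fixed by every $\tau\in\fG$.

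First I would sum the identity of Proposition \ref{3.3n} over ideal class representatives $[\fa]$ coprime to $R\fq$ and rewrite the $d$-dependence using the scaling rule $\CE^*_1(\lambda z,\lambda L)=\lambda^{-1}\CE^*_1(z,L)$ (with $\lambda=1/\sqrt{d}$) together with the antisymmetry $\CE^*_1(-z,L)=-\CE^*_1(z,L)$. One obtains
\begin{equation*}
\msl(d)=\frac{1}{R\sqrt{-q}}\sum_{[\fa]}\frac{\chi_d(\fa)\,\xi(\fa)}{\phi(\fa)}\,V_\fa^{(d)},\quad V_\fa^{(d)}:=\textrm{Tr}_{K(R\fq)/H}\!\left(\sqrt{d}\,\CE^*_1\!\left(\frac{\xi(\fa)\Omega_\infty(A)}{R\sqrt{-q}},\CL_\fa\right)\right),
\end{equation*}
where $V_\fa^{(d)}\in H$ because $\sqrt{d}\in K(\sqrt{d})\subseteq K(R\fq)$. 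Setting $d=1$ (hence $\chi_1\equiv 1$) gives the corresponding expression for $\msl$.

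For $\tau=\sigma_\fc\in\fG$ (lifted via a choice of $\fc$ coprime to $R\fq$ to $\sigma_\fc\in\Gal(K(R\fq)/K)$), I would transform each factor. Both $\phi(\fa)\in\mst$ and $\chi_d(\fa)\in\{\pm1\}$ are $\tau$-fixed, so only $\xi(\fa)$ and $V_\fa^{(d)}$ move. The cocycle identity $\sigma_\fc(\xi(\fa))=\xi(\fa\fc)/\xi(\fc)$ (obtained by pulling back differentials through $\eta_A(\fa\fc)=\eta_{A^\fa}(\fc)\circ\eta_A(\fa)$) gives $\sigma_\fc(\xi(\fa)/\phi(\fa))=(\phi(\fc)/\xi(\fc))\cdot\xi(\fa\fc)/\phi(\fa\fc)$. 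For $V_\fa^{(d)}$, $\sigma_\fc$ commutes with $\textrm{Tr}_{K(R\fq)/H}$ by abelianness of $\Gal(K(R\fq)/K)$; inside the trace we have $\sigma_\fc(\sqrt{d})=\chi_d(\fc)\sqrt{d}$, while Shimura's reciprocity law for $\CE^*_1$ at the CM torsion point attached to $\fa$ sends $\CE^*_1(\xi(\fa)\Omega_\infty(A)/(R\sqrt{-q}),\CL_\fa)$ to the analogous value attached to $\fa\fc$, up to a scalar $\alpha(\fc)$ depending only on $\fc$. Thus $\sigma_\fc(V_\fa^{(d)})=\chi_d(\fc)\,\alpha(\fc)\,V_{\fa\fc}^{(d)}$.

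Substituting and reindexing the class sum via $[\fa]\mapsto[\fa\fc^{-1}]$ (which replaces $\chi_d(\fa)$ by $\chi_d(\fa)\chi_d(\fc)$, since $\chi_d(\fc^{-1})=\chi_d(\fc)$) yields $\sigma_\fc(\msl(d))=(\phi(\fc)\alpha(\fc)/\xi(\fc))\,\chi_d(\fc)^2\,\msl(d)=(\phi(\fc)\alpha(\fc)/\xi(\fc))\,\msl(d)$, a scalar independent of $d$. Specializing $d=1$ shows $\sigma_\fc(\msl)$ transforms by the same scalar, so $\sigma_\fc(\msl(d)/\msl)=\msl(d)/\msl$ for every $\fc$, proving the proposition. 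The main obstacle is the Shimura-reciprocity step identifying $\alpha(\fc)$ and verifying its independence of $d$, which requires careful bookkeeping of how $\sigma_\fc$ moves torsion points between the conjugate curves $A^\fa$ and $A^{\fa\fc}$ via the Hecke character $\phi$; the clean conceptual point is the double cancellation of $\chi_d$-factors (one from reindexing the coefficient, one from the $\sqrt{d}$ inside the trace), which is precisely what forces the transformation scalar to be the same for all $d$.
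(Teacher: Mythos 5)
Your elementary reductions are essentially correct: the cancellation $\xi_d(\fa)/\phi_d(\fa)=\xi(\fa)/\phi(\fa)$, the isolation of all $d$-dependence into the sign $\chi_d(\fa)$ and the factor $\sqrt{d}$ left inside the trace, the cocycle $\sigma_\fc(\xi(\fa))=\xi(\fa\fc)/\xi(\fc)$ (which needs the Galois compatibility $\eta_A(\fc)^{\sigma_\fa}=\eta_{A^\fa}(\fc)$, cf.\ \cite{GS} \S 4, not just the composition identity you quote), and the double cancellation of $\chi_d$-factors after reindexing the class sum all work. Two small slips should still be noted: your displayed formula computes $\sqrt{d}\,L_{R\fq}(\ov{\phi}_d,1)/\Omega_\infty(A)$ rather than $\msl(d)$ (the discrepancy is the rational Euler factor $\prod_{r\mid R/d}(1+1/r)$, harmless for the conclusion but it does depend on $d$ and must be acknowledged), and the reindexing $[\fa]\mapsto[\fa\fc]$ implicitly uses that each summand depends only on the ideal class of $\fa$, which follows from \eqref{4n} (equivalently from Corollary 4.11 of \cite{GS}) but is not true termwise for $\xi(\fa)$ and the trace separately.

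The genuine gap is exactly the step you yourself flag as ``the main obstacle''. The entire content of the proposition is the assertion that the Artin symbol of $\fc$ carries $\CE^*_1\bigl(\xi(\fa)\Omega_\infty(A)/(R\sqrt{-q}),\CL_\fa\bigr)$ to the analogous quantity for $\fa\fc$ up to a scalar $\alpha(\fc)$; you neither state the reciprocity law you are invoking nor prove it, and your unidentified $\alpha(\fc)$ would in addition have to be shown independent of $\fa$ and to lie in $H$ (otherwise it cannot be pulled out of $\textrm{Tr}_{K(R\fq)/H}$) --- none of which is addressed. The correct statement is the Galois equivariance of the Eisenstein numbers attached to the triple (curve, N\'eron differential, torsion point), i.e.\ Theorem 6.2 of \cite{GS}, combined with the CM action $P_\fa^{\sigma_\fc}=\eta_{A^\fa}(\fc)(P_\fa)=P_{\fa\fc}$ and the cocycle for $\xi$; it yields $\alpha(\fc)=1$ and the transformation factor $\phi(\fc)/\xi(\fc)$. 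This is precisely what the paper's proof imports from the literature: it sets $e_\fa=\phi(\fa)/\xi(\fa)$, notes its class-invariance (Corollary 4.11 of \cite{GS}), and quotes Proposition 11.1 of \cite{BG} to get $\tau(\msl(d))=e_{\tau_H}\msl(d)$ with a factor independent of $d$. So your route is in effect an attempt to reprove that result of Buhler--Gross; the surrounding bookkeeping is sound and does reduce the proposition to that single equivariance statement, but as written the crux is assumed rather than established, so the proposal is not yet a proof.
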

\begin{proof} For each integral ideal $\fa$ of $K$, which is prime to $R\fq$, we define
$$
e_\fa = \phi(\fa)/\xi(\fa).
$$
Then, as is already shown in \cite{GS} (see Corollary 4.11), $e_\fa$ only depends on the ideal class of $\fa$, and so, writing $\sigma$ for the Artin symbol of $\fa$
in $\fG = \Gal(H/K)$, we put $e_\sigma = e_\fa$. Now let $d$ be any positive divisor of $R$, so that $\msl(d)$ belongs to the field $H\mst$. Recall also that $\Gal(H\mst/\mst)$
is isomorphic $\Gal(H/K)$ under restriction, because $H \cap \mst = K$. If $\tau$ is any element of $\Gal(H\mst/\mst)$, we write $\tau_H$ for its restriction to $H$. Then it is proven in
Proposition 11.1 of  \cite{BG} that, for every $\tau$ in $\Gal(H\mst/\mst)$, we have
$$
\tau(\msl(d)) = e_{\tau_H} \msl(d).
$$
Since the factor $e_{\tau_H}$ is independent of $d$, it follows that $\msl(d)/\msl$ must belong to $\mst$, and the proof is complete.
\end{proof}

\medskip

\medskip

Assume now that $R = r_1\ldots r_k$, where $k \geq 2$. By Theorem \ref{key-identity}, we have
\begin{equation}\label{4.9n}
\msl(R)/\sqrt{R} + \sum_{d|R, d \neq 1, R}\Lambda(d, R)/\sqrt{d} + \msl \prod_{i=1}^{k}(1- \ov{\phi}((r_i))/r_i^2) = 2^kV_R,
\end{equation}
where $V_R =  \lim_{n\ra \infty}\sum_{\fa\in\fC_n} \xi(\fa)\Psi_{\fa, R}$, and
$$
\Lambda(d, R) = \msl(d) \prod_{r|R/d}(1-\ov{\phi}_d((r))/r^2).
$$
Now the terms $\Lambda(d, R)$ lie in an extension of $H\mst$ where the prime $\fP$ of $\mst$ is unramified but will usually have a large residue class field extension, and
this means one cannot carry through the inductive argument in its most naive form. The key to overcoming this difficulty is to divide both side of \eqref{4.9n} by the non-zero number $\msl$. Doing this, and defining, for each positive integer divisor $d$ of $R$, $\Phi(d, R) = \Lambda(d, R)/\msl$, we obtain the equation
\begin{equation}\label{4.10}
\Phi(R)/\sqrt{R} + \sum_{d|R, d \neq 1, R}\Phi(d, R)/\sqrt{d}  + \prod_{i=1}^{k}(1- \ov{\phi}((r_i))/r_i^2) = 2^kV_R/\msl,
\end{equation}
where $\Phi(R) = \msl(R)/\msl$. Let $H_R$ be the field defined in \eqref{6n}, and we now take $\fW$ to be any prime of the compositum $H_R\mst$ lying above $\fP$, so that $\fW/\fP$ is unramified. By Proposition \ref{inte-2}, we have  $\ord_\fW(V_R) \geq 0.$ Thus we conclude from Propositions  \ref{inte-2} and  \ref{4.4n} that $\ord_\fW(2^kV_R/\msl) \geq k+1$. Thanks to  Lemma \ref{3.1n}, we have
\begin{equation} \label{4.11}
\ord_\fW(\prod_{i=1}^{k}(1- \ov{\phi}((r_i))/r_i^2)) = k.
\end{equation}
On the other hand, our inductive hypothesis, together with Lemma \ref{3.1n} and Proposition \ref{4.4n},  shows  that, for each positive divisor $d$ of $R$, with $d \neq 1, R$, we have
\begin{equation}\label{4.12}
\ord_\fW(\Phi(d, R)/\sqrt{d}) = k.
\end{equation}
Of course, these estimates alone do not allow us to conclude from \eqref{4.10} that $\ord_\fW(\Phi(R)/\sqrt{R}) = k.$ However, the argument is saved by Proposition \ref{4.7},
which tells us that, for every positive divisor $d$ of $R$, $\Phi(d, R)$ belongs to the field $\mst$, and so it lies in the completion $\mst_\fP$ at $\fP$. Since $\fP$ has its residue
field of order 2, this means that we can write, for every positive divisor $d \neq 1, R$ of $R$,
\begin{equation}\label{4.13}
\Phi(d, R)/\sqrt{d} = \sqrt{d}\pi_\fP^k(1 + \pi_\fP b_d),
\end{equation}
where $\pi_\fP$ is a local parameter at $\fP$, and $\ord_\fP(b_d) \geq 0$. Thus
\begin{equation}\label{4.14}
\sum_{d|R, d \neq 1, R}\Phi(d, R)/\sqrt{d}  \equiv \pi_\fP^kD_R \mod \fW^{k+1},
\end{equation}
with  $D_R = \sum_{d|R. d \neq 1, R}\sqrt{d}$. But
$$
D_R^2 \equiv  \sum_{d|R. d \neq 1, R} d \mod \fW,
$$
and $\sum_{d|R. d \neq 1, R} d \equiv 2^k \mod 2$, whence $\ord_\fW(D_R) \geq 1$. Thus we have finally shown that
$$
ord_\fW(\sum_{d|R, d \neq 1, R}\Phi(d, R)/\sqrt{d} ) \geq k+1.
$$
It now follows from \eqref{4.10} and \eqref{4.11} that $\ord_\fW(\Phi(R)) = k$.  Thus, again using Proposition \ref{4.4n}, we
have finally proven Theorem \ref{AR-nonzero} by induction on the number of prime factors of $R \in \msr$.

\medskip

We end this section with a numerical example. Take $q = 23$. Then $K = \BQ(\sqrt{-23})$ has class number $h=3$. The Hilbert class field is $H = K(\alpha)$,
where $\alpha$ satisfies the equation $\alpha^3 - \alpha - 1 = 0.$  The following  global minimal Weierstrass equation for $A/H$ is given by Gross \cite{Gross0}
$$
y^2 + \alpha^3xy + (\alpha + 2)y = x^3 + 2x^2 - (12\alpha^2 + 27\alpha + 16)x - (73\alpha^2 + 99\alpha +62).
$$
Then $\msr$ will consist of all square free positive integers $R$ such that every prime factor $r$ of $R$ satisfies $r \equiv 1 \mod 4$ and
$r$ is congruent to one of $5, 7, 10, 11, 14, 15, 17, 19, 20, 21, 22 \mod 23$. Then Theorem \ref{main} shows that, for all $R \in \msr$, we have $L(A^{(R)}/H, 1) \neq 0.$ However,
we thank A. Dabrowski for pointing out the following interesting numerical example to us. Let $\beta = \sqrt{-23}$, and define $\fE$ to be the elliptic curve defined
over $H$, which is the twist of $A/H$ by the quadratic extension $H(\sqrt{-\beta})/H$. Theorem \ref{tm} in the case $q = 23$ asserts that
  $L(\fE/H, 1) \neq 0$. Now take $R= 901 = 17\times53$,  so that $R \in \fR$. Let $\fE^{(901)}/H$ be the twist of $\fE/H$ by the quadratic extension
$H(\sqrt{901})/H$. Then Dabrowski's calculations show that  $L(\fE^{(901)}/H, 1) = 0$. Thus the obvious analogue of Theorem \ref{main} does not hold for the curve $\fE/H$.

\section{Appendix}

In this Appendix, we establish a strengthening  of Theorem \ref{5.15}, and, for simplicity, we only treat the case $k=1$, which is needed for our non-vanishing results. Let $\chi$ be an arbitrary non-trivial character of finite order of $G = \Gal(F_\infty/K)$. Let $\mst_\chi$ be the field obtained by adjoining the values of $\chi$ to $\mst$, and we fix an embedding of $\mst_\chi$ into $\BC$, which extends the embedding \eqref{em}.  As in $\S4$ and $\S5$, we fix an embedding of the compositum $H\mst_\chi$ into the fraction field of $\msi$ which induces our fixed prime $w$
of $H$ lying above $\fp$, and the prime $\fP$ of $\mst$ above $\fp$. This is always possible because $H \cap \mst_\chi = K$, since $H$ does not contain any $2$-power roots
of unity of order greater than $2$. Let $r \geq 0$ be the largest integer $\geq 0$ such that  $\chi$ factor through $\Gal(F_r/K)$, so that the conductor $\fg_\chi$ of $\chi$ is equal to either $\fq\fp^{r+2}$ or $\fp^{r+2}$. Let $\zeta_r$ be the unique primitive $2^{r+2}$-th root of unity such that $j_w(\zeta_r - 1) = t_w(V_r)$, where $j_w$ is the isomorphism of formal groups given by \eqref{5.9}, and $V_r$ is the primitive $\fp^{r+2}$-division point on $A$ defined at the end of $\S2.$ Note that, for $\sigma \in \Gal(F_r/K)$, $\rho_\fP(\sigma)$ is a well defined element of $(\CO_\fp/\fp^{r+2})^\times$. We can then define the Gauss sum  $\tau(\chi)$ by
\begin{equation}\label{9.1}
\tau(\chi) = 2^{-(r+2)} \sum_{\sigma \in \Gal(F_r/K)} \zeta_r^{-\rho_\fP(\sigma)} \chi(\sigma).
\end{equation}
It is readily verified that, since $\chi$ is non-trivial, we have $|\tau(\chi)| = 2^{r/2 + 1}$. Recall that $\CW(z, \CL)$ denotes the Weierstrass isomorphism, and that $Q$ denotes the primitive $\fq$-division point on $A$ defined by \eqref{2.12}. We fix a complex number $z_\chi$ such that $\CW(z_\chi, \CL) = V_r \oplus Q.$ We will then have
$(z_\chi/\Omega_\infty(A))\CO_k = \fh_\chi/(\fq \fp^{r+2})$ for some integral ideal $\fh_\chi$ of $K$, which is prime to $\fp\fq$. For each $\alpha \in \msj$, we let $\mu_{\alpha, \infty}$ be the $\msi$-valued measure on $G$ defined by \eqref{5.14}. Write $L_\fq(\ov{\phi\chi},s)$ for the Euler product of the Hecke $L$-function of $\ov{\phi\chi}$, but with the Euler factor at the prime $\fq$-removed.

\begin{thm}\label{9.2}  For  each non-trivial character $\chi$ of finite order of $G$, the value $\Omega_\infty(A)^{-1}L(\ov{\phi\chi}, 1)$ belongs to $H\mst_\chi$, and we have
\begin{equation}\label{9.3}
\Omega_\fp(A)^{-1} \int_{G}\rho_\fP\chi d\mu_{\alpha, \infty} = \tau(\chi)(\phi\chi)(\fh_\chi)z_\chi^{-1}(N\alpha - (\phi\chi)((\alpha)))L_\fq(\ov{\phi\chi}, 1).
\end{equation}
\end{thm}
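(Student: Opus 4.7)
My plan is to imitate the proof of Theorem \ref{5.15} while extracting the $\chi$-component of the measure $\mu_{\alpha,\infty}$ by a Gauss-sum Fourier expansion. First I would use the standard identity expressing a primitive multiplicative character on $\Gal(F_r/K)\simeq(\CO_\fp/\fp^{r+2})^\times$ as a weighted sum of additive characters, in the normalization matched to \eqref{9.1}: schematically,
\[
\chi(\sigma) \;=\; \tau(\chi^{-1})^{-1}\sum_{\sigma_0}\chi^{-1}(\sigma_0)\zeta_r^{-\rho_\fP(\sigma\sigma_0)}.
\]
Substituting into $\int_G \rho_\fP\chi\,d\mu_{\alpha,\infty}$ turns the integral into a finite linear combination, weighted by $\chi^{-1}$, of integrals of the shape $\int_{\CO_\fp^\times}y\zeta_r^{ay}\,d\mu_{\alpha,\infty}(y)$.

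Next, Mahler's theorem \eqref{5.12} converts each such integral into an evaluation, at $W=\zeta_r^a-1$, of the Mahler transform $\fJ_{\alpha,\infty}(W)$ together with its $W$-derivative. By the very choice of $j_w$, the point $W=\zeta_r-1$ corresponds to $t_w(V_r)$, so $W=\zeta_r^a-1$ corresponds to $t_w(\tau_a(V_r))$ for the Galois element $\tau_a$ with $\rho_\fP(\tau_a)=a$; equivalently, through the uniformization \eqref{5.24} the $z$-coordinate is shifted from $0$ to the corresponding Galois conjugate of $z_\chi$ (modulo $\CL$). Unwinding the definition \eqref{5.3} of $\fD_{\alpha,n,w}$ and the identity \eqref{5.26}, the inner quantity becomes the $z$-derivative of $\log\!\bigl(\fR_{\alpha,A^\fc}(\eta_A(\fc)(\CW(z,\CL)))^2/\fR_{\alpha,A^{\fc\fp}}(\eta_A(\fc\fp)(\CW(z,\CL)))\bigr)$ evaluated at $z=\tau_a(z_\chi)$. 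The $Q$-translate built into $\fR_{\alpha,A^\fc}$ via \eqref{2.13} is precisely what accounts for the $\fq$-part of the conductor of $\phi\chi$ and for the shift of base point from $V_r$ to $V_r\oplus Q=\CW(z_\chi,\CL)$.

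Then I would apply a character-twisted version of the Kronecker--Eisenstein identity \eqref{m3} used to prove Proposition \ref{5.18}: summing $\chi^{-1}(\sigma)$ against $\CE_1^{*}(\cdot,\CL_\fc)$ over the Galois translates of $z_\chi$ yields $\tau(\chi)$ times a product of $(\phi\chi)(\fh_\chi)$, $z_\chi^{-1}$, and the partial Hecke value $\Omega_\infty(A)^{-1}L_\fq(\overline{\phi\chi},\gamma_\fc,1)$. The $z_\chi^{-1}$ arises from the Laurent expansion of $\CE_1^{*}(z+z_\chi,\CL)$ around $z=0$ (matching the single $d/dz$ coming from $k=1$), and $(\phi\chi)(\fh_\chi)$ records the ideal class of $z_\chi$. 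Finally, summing over $\fc\in\fC_n$ and passing to the limit $n\to\infty$ (using \eqref{y1.7} and \eqref{y1.6}) assembles the partial sums into the full $L_\fq(\overline{\phi\chi},1)$, while the second factor in the ratio \eqref{5.3}, which in Theorem \ref{5.15} contributed the Euler factor $1-\phi^k(\fp)/N\fp$, disappears here because $\fp$ divides the conductor of $\phi\chi$. The twisted analogue of $b_1(\alpha)$ becomes $N\alpha-(\phi\chi)((\alpha))$ because now $\chi(\gamma_\alpha)$ enters the cancellation between $\alpha$-multiplication and its trace on $B_{\fP^{n+2}}$-torsion. The algebraicity assertion $\Omega_\infty(A)^{-1}L(\overline{\phi\chi},1)\in H\mst_\chi$ then follows by solving \eqref{9.3} for the $L$-value, using that all the other factors are algebraic and live in extensions whose images under our fixed embeddings sit inside $H\mst_\chi$.

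The main obstacle will be matching the precise numerical constant. In particular I will have to (a) reconcile the paper's slightly unusual Gauss-sum convention in \eqref{9.1} (the $2^{-(r+2)}$ prefactor and the minus sign in the exponent) with the classical Fourier-inversion identity used in Step 1, (b) correctly track the sign and the $z_\chi^{-1}$ contribution from the Laurent expansion of the Kronecker--Eisenstein series at the division point $V_r\oplus Q$, and (c) verify that after all the combinatorics the coefficient assembles exactly into $\tau(\chi)(\phi\chi)(\fh_\chi)z_\chi^{-1}(N\alpha-(\phi\chi)((\alpha)))$ without any spurious power of $2$ or root of unity that would have to be absorbed into the $L$-value or the $p$-adic period $\Omega_\fp(A)$.
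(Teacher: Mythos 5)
Your plan is essentially the paper's own proof: you extract the $\chi$-component by a Gauss-sum/Fourier expansion over additive characters (the paper's \eqref{9.5}--\eqref{9.16}), evaluate the Mahler transform at $\zeta_r-1$ and its Galois twists via Lemma \ref{9.8}, feed in the twisted Kronecker--Eisenstein identity at the translated division point $V_r\oplus Q$ (Proposition \ref{9.19} and Corollary \ref{9.24}), observe that the term which previously produced the Euler factor at $\fp$ dies because $\chi$ is non-trivial on $\Gal(F_r/F_{r-1})$ (Lemma \ref{9.30}), and pass to the limit over the sets $\fC_n$ exactly as in \S5. The one point to correct is your last remark: the algebraicity of $\Omega_\infty(A)^{-1}L(\ov{\phi\chi},1)$ cannot be deduced by ``solving \eqref{9.3} for the $L$-value'', since the $2$-adic integral on the left-hand side is not a priori algebraic; it follows instead from the finite-level expression of the partial $L$-values as derivatives of rational functions on $A$ evaluated at division points (Corollary \ref{9.24}), which your Step 3 already supplies.
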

\begin{proof} We fix a set $\CB$ of integral ideals $\fb$ of $K$, prime to $\fp\fq$ such that
\begin{equation}\label{9.4}
\Gal(F_r/K) = \{\tau_\fb|F_r : \fb \in \CB \},
\end{equation}
where we recall that $\tau_\fb$ denotes the Artin symbol of $\fb$ in $\Gal(\fF_\infty/K).$ Recall that  $\rho_\fP(\Gal(F_\infty/F_r)) = 1+2^{r+2}\CO_\fp$, and note that
the characteristic function $\gamma_r(x)$ of $1+2^{r+2}\CO_\fp$ in $\CO_\fp$ is given explicitly by
\begin{equation}\label{9.5}
\gamma_r(x) = 2^{-(r+2)}\sum_{j \in \CO_\fp/\fp^{r+2}} \zeta_r^{(x-1)j}.
\end{equation}
For $r \leq n \leq \infty$, the measure $\mu_{\alpha, n, w}$ in $\Lambda_\msi(G)$ arises, via the isomorphism \eqref{5.7} and Mahler's theorem, from the power series
$J_{\alpha, n, w}(t_w)$ defined in Lemma \ref{5.6}. For every $\sigma \in G$, we write $\mu_{\alpha, n, w}^{(\sigma)}$ for the measure $\sigma \mu_{\alpha, n, w}$,
and note that $\mu_{\alpha, n, w}^{(\sigma)}$ arises, again via the isomorphism \eqref{5.7} and Mahler's theorem, from the power series $J_{\alpha, n, w}^{(\sigma)}(t_w)$
defined by
\begin{equation}\label{9.6}
J_{\alpha, n, w}^{(\sigma)}(t_w) = J_{\alpha, n, w}(\widehat{\rho_\fp(\sigma)}_w(t_w)).
\end{equation}
In what follows, for simplicity, we shall drop the fixed place $w$ of $H$ from the notation, and also we will just write $\mu_{\alpha, n}^{\fb}$ for the measure $(\tau_\fb|F_\infty)\mu_{\alpha, n, w}$.  As usual, we write $\chi(\fb^{-1})$ for $\chi(\tau_\fb^{-1})$ when $\fb \in \CB$, and we note that, thanks to our fixed embedding of $\mst$ into the fraction field of
$\msi$, we have $\rho_\fP(\tau_\fb) = \phi(\fb)$. It then follows easily from Mahler's theorem that
$$
\int_{G}\rho_\fP\chi d\mu_{\alpha, n} = \sum_{\fb \in \CB}(\phi\chi)(\fb^{-1})\int_{\CO_\fp}x\gamma_r(x)d\mu_{\alpha, n}^{\fb}.
$$
Substituting the formula \eqref{9.5} for $\gamma_r(x)$ in the integral on the right hand side,  we conclude that
\begin{equation}\label{9.7}
\int_{G}\rho_\fP\chi d\mu_{\alpha, n} = 2^{-(r+2)}\sum_{\fb \in \CB}(\phi\chi)(\fb^{-1})\sum_{j \in \CO_\fp/\fp^{r+2}}\int_{\CO_\fp}\zeta_r^{(x-1)j}xd\mu_{\alpha, n}^\fb.
\end{equation}
We omit the proof of the following classical lemma about the Mahler map $\BM$ which is defined earlier immediately below \eqref{5.12}. Let $\Delta$ denote the differential operator on $\msi[[W]]$ defined by $\Delta f(W) = (1+W)d/dW f(W).$
\begin{lem}\label{9.8} Let $\mu$ be any element of $\Lambda_\msi(\CO_\fp)$, and let $\BM(\mu)$ be its Mahler power series in $\msi[[W]]$. Then, if $\zeta$ denotes any 2-power root of unity, we have
\begin{equation}\label{9.9}
\Delta(\BM(\mu))(\zeta - 1) = \int_{\CO_\fp}x\zeta^x d\mu.
\end{equation}
\end{lem}
Now $\BM(\mu_{\alpha, n}^\fb) = \fK_{\alpha, n, w, \fb}(W)$, where
\begin{equation}\label{9.10}
\fK_{\alpha, n, w, \fb}(W)= J_{\alpha, n, w}(j_w((1+W)^{\phi(\fb)} - 1)).
\end{equation}
It then follows from the previous lemma that
\begin{equation}\label{9.11}
\int_{G}\rho_\fP\chi d\mu_{\alpha, n} = 2^{-(r+2)}\sum_{j \in \CO_\fp/\fp^{r+2}}\zeta_r^{-j}\fS_r(j),
\end{equation}
where
\begin{equation}\label{9.12}
\fS_r(j) = \sum_{\fb \in \CB}(\phi\chi)(\fb^{-1})(\Delta \fK_{\alpha, n, w, \fb})(\zeta_r^j-1).
\end{equation}
\begin{lem}\label{9.13} If 2 divides $j$, then $\fS_r(j) = 0$.
\end{lem}
\begin{proof}  In view of the definition \ref{9.10}, we see immediately that
\begin{equation}\label{9.14}
\fS_r(j) = \sum_{\fb \in \CB}\chi(\fb^{-1}) \frac{d}{dW}(J_{\alpha, n, w}\circ j_w)(\zeta_r^{\phi(\fb)j}-1)\zeta_r^{\phi(\fb)j}.
\end{equation}
Note that $\zeta_r^j$ is a $2^{r+1}$-th root of unity if 2 divides $j$. Now the restriction of the character $\chi$ to $\Gal(F_r/F_{r-1})$ is not the trivial character by the definition of $r$; here
we have put $F_{-1} = K$ if $r=0$. Thus we have $\sum_{\sigma \in \Gal(F_r/F_{r-1})}\chi(\sigma) = 0$. On the other hand, we have $\phi(\fb_1) \equiv \phi(\fb_2) \mod \fP^{r+1}$ if the restrictions of $\tau_{\fb_1}$ and $\tau_{\fb_2}$ to $F_{r-1}$ are equal. The assertion of the lemma is now clear from the explicit expression \eqref{9.14}
for $\fS_r(j)$.
\end{proof}
\begin{lem}\label{9.15} We have
\begin{equation}\label{9.16}
\int_{G}\rho_\fP\chi d\mu_{\alpha, n} = \tau(\chi) \sum_{\fb \in \CB}(\phi\chi)(\fb^{-1})(\Delta \fK_{\alpha, n, w, \fb})(\zeta_r - 1).
\end{equation}
\end{lem}
\begin{proof} Define
$$
W(\CB) = \sum_{\fb \in \CB}(\phi\chi)(\fb^{-1})(\Delta \fK_{\alpha, n, w, \fb})(\zeta_r - 1).
$$
We claim that $W(\CB)$ is independent of the choice of the set $\CB$ of integral ideals of $K$, prime to $\fp\fq$ such that $\Gal(F_r/K) = \{\tau_\fb|F_r : \fb \in \CB\}$. Indeed, we have
$$
W(\CB) = \sum_{\fb \in \CB}\chi(\fb^{-1}) \frac{d}{dW}(J_{\alpha, n, w}\circ j_w)(\zeta_r^{\phi(\fb)}-1)\zeta_r^{\phi(\fb)}.
$$
Now, if $\CB'$ is another set of integral ideals of $K$ with the above properties, then, for each $\fb \in \CB$ there exists a unique $\fb' \in \CB'$ such that $\tau_{\fb}|F_r = \tau_{\fb'}|F_r$. Then we have $\phi(\fb) \equiv \phi(\fb') \mod \fP^{r+2}$, so that $\zeta_r^{\phi(\fb)} = \zeta_r^{\phi({\fb'})}$, whence it is clear that $W(\CB) = W(\CB')$. Now choose a set $\CE$ of integral ideals of $K$, which are prime to $\fp\fq$, which are indexed by the odd integers $j$, with $1 \leq j < 2^{r+2}$, in such a way
that $\phi(\fe_j) \equiv j \mod \fP^{r+2}$. Then we clearly have
\begin{equation}\label{9.17}
\zeta_r^{-j}\fS_r(j) = \chi(\fe_j)\zeta_r^{-\phi(\fe_j)} W(\CB_j),
\end{equation}
where $\CB_j = \{\fb\fe_j: \fb \in \CB\}$. But, from our previous remark, we have $W(\CB_j) = W(\CB)$ for all odd integers $j$ with $1 \leq j < 2^{r+2}$, and \eqref{9.16}
follows.
\end{proof}

We recall that $j_w(\zeta_r -1) = t_w(V_r)$, where $V_r \in A_{\fp^{r+2}}$ is defined at the end of $\S2$, and for simplicity, put $Q_r = V_r \oplus Q$. Moreover, we have fixed a complex number $z_\chi$ such that $\CW(z_\chi, \CL) = Q_r$, and then the integral ideal $\fh_\chi$ satisfies  $(z_\chi/\Omega_\infty(A))\CO_k = \fh_\chi/(\fq \fp^{r+2})$. If $\fb$ is any integral ideal of $K$ prime to $\fp\fq$, we shall write $\tau_r(\fb)$ for the restriction of the Artin symbol $\tau_\fb$ of $\fb$ to the subfield $\fF_r$ of $\fF_\infty$. Moreover, for such an ideal $\fb$, we define now the partial $L$-series for the extension $F_r/K$ by
\begin{equation}\label{9.18}
L_\fq(\ov{\phi}^k, \tau_r(\fb), s) = \sum_{\tau_r(\fa) = \tau_r(\fb)} \frac{\ov{\phi}^k(\fa)}{(N(\fa))^s},
\end{equation}
where the sum is taken over all integral ideals $\fa$ of $K$, which are prime to $\fp\fq$, and which satisfy $\tau_r(\fa) = \tau_r(\fb)$. Note that this is a a different partial
$L$-series from that defined by \eqref{5.17}. We then have the following analogue of Proposition \ref{5.18}. Let $z_r \in \BC$ be such that $\CW(z_r, \CL) = V_r.$

\begin{prop} \label{9.19} For all integral ideals $\fd$ of $K$, which are prime to $\fp\fq$, we have
\begin{equation}\label{9.20}
\frac{d}{dz}\log \fR_{\alpha, A^\fd}(\eta_{A}(\fd)(\CW(z + z_r, \CL)) = \sum_{k \geq 1}(-1)^{k-1}(\phi(\fd\fh_\chi)/z_\chi)^k(N\alpha L_\fq(\ov{\phi}^k, \tau_r(\fd \fh_\chi), k) - \phi((\alpha))L_\fq(\ov{\phi}^k, \tau_r(\fd \fh_\chi(\alpha)), k)) z^{k-1}.
\end{equation}
\end{prop}
\begin{proof} It follows immediately from \eqref{m1}, \eqref{m2}, \eqref{m4} that, for all complex numbers $\zeta$, and all integral ideals $\fd$ of $K$ prime to $\fp\fq$, we have
\begin{equation}\label{9.21}
\frac{d}{dz}\log R_{\alpha, A^\fd}(\eta_A(\fd)(\CW(z + \zeta, \CL))) = \sum_{k \geq 1}(-1)^{k-1}\xi(\fd)^k(N\alpha\CE_k^*(\xi(\fd)\zeta, \CL_\fd) - \CE_k^*(\xi(\fd)\zeta, \alpha^{-1}\CL_\fd)z^{k-1}.
\end{equation}
We now fix a set  $\fS$ of integral ideals of K prime to $\fp\fq$ such that $\Gal(H(A_{\fq\fp^{r+2}})/\fF_r) = \{\tau'_\fs|H(A_{\fq\fp^{r+2}}) : \fs \in \fS \}$; here we have written  $\tau'_\fs$ for the Artin symbol of $\fs$ in $\Gal(H(A_{\fq\fp^\infty})/K$. All of the ideals
$\fs \in \fS$ are norms of ideals of $H$ because their Artin symbols fix the Hilbert class field $H$ of $K$. Also, the Artin symbols of the ideals $\fs \in \fS$ fix the
point $V_r$, and $\Gal(H(A_{\fq\fp^{r+2}})/\fF_r)$ is isomorphic to $\Gal(H(A_\fq)/H)$ under restriction. It follows easily that
\begin{equation}\label{9.22}
\frac{d}{dz}\log (\fR_{\alpha, A^\fd}(\eta_{A}(\fd)(\CW(z + z_r, \CL))) = \sum_{\fs \in \fS} \frac{d}{dz}(\log R_{\alpha, A^\fd}(\eta_A(\fd)(\CW(z + \phi(\fs)z_\chi, \CL))).
\end{equation}
Hence we conclude from \eqref{9.21} that the right hand side of \eqref{9.22} is equal to
\begin{equation}\label{9.23}
\sum_{k \geq 1}(-1)^{k-1} \xi(\fd)^k\sum_{\fs \in \fS}(N\alpha \CE_k^*(\xi(\fd)\phi(\fs)z_\chi, \CL_\fd) - \CE_k^*(\xi(\fd)\phi(\fs)z_\chi, \alpha^{-1}\CL_\fd))z^{k-1}.
\end{equation}
On the other hand, noting that
$$
z_\chi\Omega_\infty(A)^{-1}\CO_K = \fh_\chi/(\fq\fp^{r+2}), \, \, \alpha z_\chi\Omega_\infty(A)^{-1}\CO_K = (\alpha)\fh_\chi/(\fq\fp^{r+2}),
$$
we deduce from Proposition 5.5 of \cite{GS} that
$$
\sum_{\fs \in \fS} \CE_k^*(\xi(\fd)\phi(\fs)z_\chi, \CL_\fd) = (\phi(\fd\fh_\chi)/\xi(\fd)z_\chi)^k L_\fq(\ov{\phi}^k, \tau_r(\fd \fh_\chi), k),
$$
$$
\sum_{\fs \in \fS} \CE_k^*(\xi(\fd)\phi(\fs)z_\chi, \alpha^{-1}\CL_\fd) = (\phi((\alpha)\fd\fh_\chi)/\xi(\fd)z_\chi)^k L_\fq(\ov{\phi}^k, \tau_r((\alpha)\fd \fh_\chi), k).
$$
Substituting these last two expressions into \eqref{9.23}, the equation \eqref{9.20} follows, and the proof of Proposition \ref{9.19} is complete. \end{proof}
\noindent Moreover, putting $z=0$ in \eqref{9.16}, we obtain the following corollary.

\begin{cor}\label{9.24}  For all integral ideals $\fd$ of $K$, which are prime to $\fp\fq$, we have
$$
\frac{d}{dz}\log \fR_{\alpha, A^\fd}(\eta_{A}(\fd)\CW(z, \CL))|_{z = z_r} = \phi(\fd\fh_\chi){z_\chi}^{-1}(N\alpha L_\fq(\ov{\phi}, \tau_r(\fd \fh_\chi), 1) - \phi((\alpha))L_\fq(\ov{\phi}, \tau_r(\fd \fh_\chi(\alpha)), 1)).
$$
\end{cor}

\medskip

Recalling that $\Gal(\fF_r/H)$ is isomorphic under restriction to $\Gal(F_r/K)$, we assume from now on that that we have chosen the set $\CB$ of integral ideals of $K$, prime to $\fp\fq$, such that
\begin{equation}\label{9.26}
\Gal(\fF_r/H) = \{\tau_r(\fb) : \fb \in \CB\}.
\end{equation}
Now take $n$ to be any finite integer $\geq r$, and let $\fC_n$ be the set of integral ideals of $K$, prime to $\fp\fq$, satisfying \eqref{y1.7}. Since $\Gal(\fF_n/K_n)$ is isomorphic to
$\Gal(\fF_r/F_r)$ under restriction, it follows that
\begin{equation}\label{9.27}
\Gal(\fF_r/K) = \{\tau_r(\fb\fc) : \fb \in \CB, \fc \in \fC_n\}.
\end{equation}
Now, for $\fb \in \CB$,  we have $\Delta \fK_{\alpha, n, w, \fb}(W) = \Phi_{1, n, \fb}(W) -  \Phi_{2, n, \fb}(W)$, where
\begin{equation}\label{9.28}
 \Phi_{1, n, \fb}(W) = \Delta \log (d_{\alpha, n}(j_w((1+W)^{\phi(\fb)}-1))), \,  \Phi_{2, n, \fb}(W) = \frac{1}{2} \Delta \log (d_{\alpha, n}^\delta(\widehat{\eta_{A,\fp,w}}(j_w((1+W)^{\phi(\fb)}-1)))).
\end{equation}
\begin{lem}\label{9.29}  For every integer $n \geq r$, we have
$$
 \sum_{\fb \in \CB} (\phi \chi)(\fb^{-1})\Phi_{1, n, \fb} (\zeta_r - 1) = \Omega_\fp(A)\phi(\fh_\chi)z_\chi^{-1} \sum_{\fb \in \CB} \chi(\fb^{-1})\sum_{\fc \in \fC_n}\phi(\fc)(N\alpha L_\fq(\ov{\phi}, \tau_r(\fb\fc\fh_\chi), 1) - \phi((\alpha))L_\fq(\ov{\phi}, \tau_r((\alpha)\fb\fc\fh_\chi), 1)).
 $$
\end{lem}
\begin{proof} Let $\fb$ be any element of $\CB$.  Since $\tau_r(\fb)$ fixes $H$, we see that $A^\fb = A^{\fb\fc}$, whence it follows easily from \eqref{nc5} that  $\Phi_{1, n, \fb}(W)$ is obtained by first substituting
$t_w = j_w(W)$ in the $t_w$-expansion of the rational function on $A/H$ given by
$$
\prod_{\fc \in \fC_n}\fR_{\alpha, A^{\fc\fb}}(\eta_A(\fc\fb)(P)),
$$
and then applying the operator $\Delta$ to the logarithm of this series in $W$. Hence, recalling \eqref{5.24} and the fact that $V_r = j_w(\zeta_r - 1)$, we conclude that
$$
\Phi_{1, n, \fb}(W)(\zeta_r - 1) = \Omega_\fp(A) \sum_{\fc \in \fC_n}\frac{d}{dz}\log\fR_{\alpha, A^{\fc\fb}}(\eta_A(\fc\fb)\CW(z, \CL))|_{z = z_r},
$$
and so the assertion of the lemma is now clear from Corollary \ref{9.24}.
\end{proof}

 \begin{lem}\label{9.30} For every integer $n \geq r$, we have
 \begin{equation}\label{9.31}
 \sum_{\fb \in \CB} (\phi \chi)(\fb^{-1})\Phi_{2, n, \fb}(\zeta_r - 1) = 0.
 \end{equation}
 \end{lem}
 \begin{proof} For each $\fb \in \CB$, we have
 $$
 2\Phi_{2, n, \fb}(\zeta_r - 1)  = \Omega_\fp(A)\frac{d}{dz}(\log D_{\alpha, n}^\delta(\eta_A^\fp(\fb)\eta_A(\fp)(P)))|_{P = V_r}.
 $$
 Recalling that, by \eqref{y1.9}, we have $\eta_A(\fp)(V_r) = V_{r-1}^\delta$, and noting that $A^{\fp\fb} = A^\fp$ because $\tau_r(\fb)$ acts trivially on $H$, it follows that
 \begin{equation}\label{9.32}
  2\Phi_{2, n, \fb}(\zeta_r - 1) = \Omega_\fp(A) \phi(\fb\fp) \frac{d}{dz}(\log D_{\alpha, n}^\delta(P))|_{P = \eta_{A^\fp}(\fb)(V_{r-1}^\delta)}.
\end{equation}
Hence we have
\begin{equation}\label{9.33}
2 \sum_{\fb \in \CB} (\phi \chi)(\fb^{-1})\Phi_{2, n, \fb}(\zeta_r - 1) = \phi(\fp)\sum_{\fb \in \CB}\chi(\fb^{-1})\frac{d}{dz}(\log D_{\alpha, n}^\delta(P))|_{P = \eta_{A^\fp}(\fb)(V_{r-1}^\delta)}.
\end{equation}
Let $\CB'$ be the subset of all those $\fb \in \CB$ such that $\tau_r(\fb)$ fixes $\fF_{r-1}$.  Then $ \eta_{A^\fp}(\fb)(V_{r-1}^\delta) = V_{r-1}^\delta$ for $\fb \in \CB'$, and  $\sum_{\fb \in \CB'} \chi(\fb^{-1}) = 0$ because the restriction of $\chi$ to $\Gal(F_r/F_{r-1})$ is non-trivial. Using these facts, the assertion \eqref{9.31} follows easily from \eqref{9.33}.
\end{proof}

We can now at last complete the proof of Theorem \ref{9.2}. Combining Lemmas \ref{9.15}, \ref{9.29}, \ref{9.30}, we see that, for each finite integer $n \geq r$, we have
\begin{equation}\label{9.34}
\Omega_\fp(A)^{-1}\int_{G}\rho_\fP\chi d\mu_{\alpha, n} = \tau(\chi)\phi(\fh_\chi)z_\chi^{-1} \sum_{\fb \in \CB} \chi(\fb^{-1})\sum_{\fc \in \fC_n}\phi(\fc)(N\alpha L_\fq(\ov{\phi}, \tau_r(\fb\fc\fh_\chi), 1) - \phi((\alpha))L_\fq(\ov{\phi}, \tau_r((\alpha)\fb\fc\fh_\chi), 1)).
\end{equation}
Note also that Corollary \ref{9.24} shows that , for all $\fc \in \fC_n$ and $\fb \in \CB$, the expression $z_\chi^{-1}(N\alpha L_\fq(\ov{\phi}, \tau_r(\fb\fc\fh_\chi), 1) - \phi((\alpha))L_\fq(\ov{\phi}, \tau_r((\alpha)\fb\fc\fh_\chi), 1))$ is integral at our fixed embedding of $H\mst$ in the fraction field of $\msi$. Now, letting $n \to \infty$, the left hand side of \eqref{9.34} converges to $\Omega_\fp(A)^{-1}\int_{G}\rho_\fP\chi d\mu_{\alpha, \infty}$. Recalling that
$\phi(\fc) \equiv 1 \mod \fP^{n+2}$ for all $\fc \in \fC_n$, it is clear that the right hand side of \eqref{9.34} converges to $\tau(\chi)(\phi\chi)(\fh_\chi)z_\chi^{-1}(N\alpha - (\phi\chi)((\alpha)))L_\fq(\ov{\phi\chi}, 1)$, and the proof of Theorem \ref{9.2} is complete.
\end{proof}

\medskip

\noindent John Coates,\\
Emmanuel College, Cambridge,\\
England.\\
{\it jhc13@dpmms.cam.ac.uk }

\medskip

\noindent Yongxiong Li,\\
Yau Mathematical Sciences Center,\\
Tsinghua University, \\
Beijing, China.\\
{\it liyx\_1029@mail.tsinghua.edu.cn}

\end{document}